\setlist{nosep} 
    \newtheorem*{rep@theorem}{\rep@title}
    \newcommand{\newreptheorem}[2]{%
    \newenvironment{rep#1}[1]{%
    \def\rep@title{#2 \ref{##1}}%
    \begin{rep@theorem}}%
    {\end{rep@theorem}}}
\newtheorem{theorem}{Theorem}[section]
\newtheorem{proposition}[theorem]{Proposition}
\newtheorem{corollary}[theorem]{Corollary}
\newtheorem{lemma}[theorem]{Lemma}
\newtheorem{Set up}[theorem]{Set-up}
\newtheorem{introthm}{Theorem}
\theoremstyle{definition}
\newtheorem{definition}[theorem]{Definition}
\newtheorem{notation}[theorem]{Notation}
\newtheorem{remark}[theorem]{Remark}
\newtheorem{rankr}[theorem]{Cubical Rank Rigidity}
\newtheorem{rankrh}[theorem]{Hierarchical Rank Rigidity}
\newtheorem*{answer*}{Answer}
\newtheorem*{application*}{Application}
\DeclarePairedDelimiterX{\Norm}[1]{\lVert}{\rVert}{#1}
\theoremstyle{definition}
  \newcommand{\calC}{\mathcal{C}}
  \newcommand{\calG}{\mathcal{G}}
  \newcommand{\calU}{\mathcal{U}}
  \newcommand{\calV}{\mathcal{V}}
  \newcommand{\calW}{\mathcal{W}}
\renewcommand*{\backrefalt}[4]{\ifcase #1 (Not cited).\or (Cited p.~#2).\else (Cited pp.~#2).\fi} 
\newcounter{shcount}
\newcounter{enumlabelcount}
\newcommand\enumlabel[1][]{\item[#1]
    \refstepcounter{enumlabelcount}\def\@currentlabel{#1}}\makeatother
\definecolor{harrycomment}{rgb}{0.6,0,0.4}
\DeclareMathOperator{\Aut}{Aut}
\DeclareMathOperator{\dist}{\mathsf{d}}
\DeclareMathOperator{\Dist}{\mathsf{D}}
\DeclareMathOperator{\diam}{diam}
\DeclareMathOperator{\vbig}{Big}
\DeclareMathOperator{\vvbig}{Vbig}
\newcommand{\hull}{\mathrm{hull}}
\DeclareMathOperator{\isom}{Isom}
\newcommand*{\X}{X_{\Dist}}
\newcommand*{\cal}{\mathcal}
\renewcommand{\bf}{\mathbf}
\newcommand*{\nest}{\sqsubset}
\newcounter{claimcount}
\newenvironment{claim*}[1]{\par\vspace{2mm}\noindent
    \underline{Claim:}\hspace{2mm}#1}{}
\newenvironment{claimproof}[1]{\par\vspace{2mm}\noindent\underline{Proof:}\hspace{2mm}#1}
    {\leavevmode\unskip\penalty9999\hbox{}\nobreak\hfill\quad\hbox{$\diamondsuit$}\vspace{2mm}}
\title[Effective rank rigidity]{Effective flipping, skewering and rank rigidity for cubulated groups with factor systems} 
\author{Abdul Zalloum}
\begin{document}

\maketitle

\begin{abstract} Relying on work of Caprace and Sageev \cite{capracesageev:rank}, we provide an effective form of rank rigidity in the context of groups virtually acting freely cocompactly on a CAT(0) cube complex with a factor system. We accomplish this by exhibiting a special pair of hyperplanes that can be skewered uniformly quickly. Furthermore, for virtually special compact groups, we prove an effective omnibus theorem and provide a trichotomy implying a strong form of an effective Tits alternative. More generally, we provide a recipe for producing short Morse elements generating free stable subgroups in any virtually torsion-free hierarchically hyperbolic group (HHG) which recovers Mangahas' work \cite{MangahasRecipie} and provides an effective rank-rigidity dichotomy in the context of HHGs via Durham-Hagen-Sisto \cite{Durham2017-ce}. Part of our analysis involves showing that Caprace-Sageev's cubical tools of flipping and skewering can be applied to any HHG using the notion of a \emph{curtain} recently introduced by Petyt, Spriano and the author in \cite{PSZCAT}, \cite{Zalloum23_injectivity}. Indeed, our route to producing a short Morse element proceeds by exhibiting a special pair of curtains in the underlying HHG that can be skewered uniformly quickly.
\end{abstract}

\section{Introduction}

A groundbreaking work of Caprace and Sageev \cite{capracesageev:rank} shows that groups acting properly cocompactly on CAT(0) cube complexes admit the following dichotomy, which establishes the long-standing CAT(0) Rank Rigidity Conjecture in the context of CAT(0) cube complexes.

\begin{rankr}[{\cite{capracesageev:rank}}] Let $G$ be a group acting properly cocompactly on a CAT(0) cube complex $X$. Precisely one of the following occurs:

\begin{enumerate}  
\item $X=X_1 \times X_2$ where each $X_i$ is an unbounded CAT(0) cube complex, or
\item The group $G$ contains a rank-one element.
  
\end{enumerate}
\end{rankr}

A natural question to ask (as asked by Hagen in Question 6.4 of \cite{HagenFacing2022}) is whether there is an \emph{effective form} of rank rigidity. Namely, for a cubical group $G$, it's quite desirable to know that if rank-one elements exist, then it's possible to find one uniformly quickly independently of a finite generating set for $G.$ Establishing this would imply that the question of whether the cubical group in question is quasi-isometric to a product can be determined locally, by merely checking a ball of a uniformly finite radius in $G$ regardless of the chosen finite generating set. We prove such a statement for a broad class of CAT(0) cube complexes including virtually special compact groups \cite{Haglund2007},\cite{HHS1}, the Burger-Mozes groups \cite{Burger1997}, \cite{HHS1}, the BMW-groups in the sense of Caprace \cite{Caprace2019}, \cite{HHS1}, the mapping class group of genus-two handlebody \cite{miller2020stable}, some Artin groups \cite{Haettel2020} as well as cubulated graph products \cite{BR_combo} and certain cubulated quotients of the above objects \cite{BHMScombo}.

\begin{introthm}\label{thmi:RR_CCC}$(\mathrm{Effective \,rank \,rigidity})$ Each CAT(0) cube complex with a factor system $X$ determines a constant $m=m(X)$ such that for any group $G$ acting freely cocompactly on $X$ and any finite generating set $T$ for $G,$ precisely one of the following occurs:

\begin{enumerate}
    \item $X=X_1 \times X_2$ where each $X_i$ is an unbounded CAT(0) cube complex, or
    \item $\mathrm{Cay}(G,T)$ contains a rank-one element in the ball of radius $m$ centered at the identity.

\end{enumerate}
\end{introthm}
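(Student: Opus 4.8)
The plan is to run Caprace--Sageev's dichotomy and make every step quantitative, with the factor system supplying the constants in terms of $X$ alone; note that the cocompactness constant of the action is \emph{not} bounded over all $G$ acting freely cocompactly on a fixed $X$, so a naive ``finitely many orbits'' argument cannot work, and one genuinely needs a recipe that exhibits a rank-one element as a short word in $T$, in the spirit of Mangahas \cite{MangahasRecipie}. First I would pass to the $G$-essential core $Y\subseteq X$, a convex $G$-cocompact subcomplex on which $G$ acts essentially: factor systems restrict to convex subcomplexes with a controlled change of constants, a rank-one isometry of $Y$ remains rank-one on $X$, and $X$ splits as a product of unbounded cube complexes precisely when $Y$ does --- so we may assume $X$ is essential and cocompact, and, unless conclusion~(1) holds, irreducible and hence unbounded. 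Then I would fix a vertex $x_0$ and recall that $g\in G$ is rank-one iff it skewers a \emph{strongly separated} pair of hyperplanes, which, with a factor system present, is detected by loxodromicity on an associated hyperbolic space (the contact graph, or the curtain model of \cite{PSZCAT}) whose hyperbolicity constant depends only on $X$.

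Next comes the recipe, run as a dichotomy on $T$. If some $t\in T$ is rank-one, output it, and $m\ge 1$ suffices. Otherwise each $t\in T$ is non-rank-one: being a hyperbolic isometry that skewers no strongly separated pair, $t$ coarsely preserves a proper factor $F_t\in\mathfrak{F}$ (equivalently $t$ is elliptic on the hyperbolic model, coarsely fixing the projection of $F_t$, which the factor system forces to have uniformly bounded diameter). Since $X$ is irreducible and $T$ generates $G$, the supports $F_t$ cannot all be nested into one proper factor, so I would extract two generators $t_i,t_j$ with $F_{t_i}$ transverse to $F_{t_j}$. From this transversality I would produce the ``special pair'' of the abstract --- a strongly separated pair of hyperplanes $\hat{h},\hat{k}$, one essentially dual to $F_{t_i}$, the other to $F_{t_j}$ --- together with a word $w=w(t_i,t_j)$ of $X$-bounded length that skewers $(\hat{h},\hat{k})$ via the Flipping and Double Skewering Lemmas; only $X$-bounded powers of $t_i,t_j$ are needed because the configuration ``$F_{t_i}$ transverse to $F_{t_j}$'' has complexity capped by the factor system. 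Then $w$ is rank-one with $|w|_T\le m(X)$. If $F_{t_i}$ or $F_{t_j}$ is itself a proper factor carrying a lower-complexity factor system one recurses inside it; the recursion terminates after at most the (factor-system-bounded) complexity of $X$ many steps, each costing boundedly many generators, so $m(X)$ is a function of that complexity and of the hyperbolicity constant of the model.

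The main obstacle is the effectivity of the skewering. Caprace--Sageev's Flipping and Double Skewering Lemmas are proved by a compactness argument giving no word-length control, so I would need quantitative replacements in which the skewering element's complexity is bounded by factor-system data: here one uses that a factor system caps the depth of nested product regions, so a transverse pair of factors can be resolved into a strongly separated pair of hyperplanes after boundedly many moves, and that the hyperbolic model has controlled hyperbolicity, so a bounded-length word genuinely acts loxodromically, i.e.\ is contracting. The two remaining delicate points are (a) extracting two generators with transverse supports rigorously --- showing effectively that the alternative, all supports nestable into one proper factor, contradicts irreducibility --- and (b) controlling the recursion into a proper factor without a blow-up of constants, since no group acts on the factor and one must track the generators' images under gate projections.
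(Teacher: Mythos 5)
Your high-level plan (effectivize Caprace--Sageev by a Mangahas-style recipe, with the factor system supplying constants) matches the spirit of the paper, but the central step is asserted rather than proved, and it is exactly where all the work lies. You claim that once two generators $t_i,t_j$ have transverse supports $F_{t_i}\pitchfork F_{t_j}$, one gets a strongly separated pair of hyperplanes skewered by a word in $X$-bounded powers of $t_i,t_j$ ``because the configuration has complexity capped by the factor system.'' Transversality of two low-level factors gives essentially nothing quantitative: it does not make their projections to a common higher factor (or to the contact graph) far apart, and without that there is no curtain/hyperplane to flip and no strong separation. The paper's proof of this step is a genuine multi-stage construction: (i) a uniform lower bound on the translation length of a generator on the hyperbolic space associated to its support (Lemma \ref{lem:lower_bound_translation}, i.e.\ Abbott--Hagen--Petyt--Zalloum, combined with Haglund's combinatorial axes to make the bound depend on $X$ alone) --- your proposal has no substitute for this, and it is what bounds the powers needed to flip; (ii) using short conjugates of that generator to build a long, well-aligned (Behrstock-chained) family of transverse translates of $U$ (Lemma \ref{lem:effective_first_pass}); (iii) a quantitative passing-up lemma (Lemma \ref{lem:strong_passing}) to climb to a higher domain $W$ with far-apart $\rho$-projections; (iv) the separate argument of Lemma \ref{lem:effective_second_pass} to handle the fact that the relevant elements need not stabilize $W$; and (v) only at the maximal domain, curtain flipping and skewering (Lemma \ref{lem:HHS_flip}, Corollaries \ref{cor:loxodromic_upper_level}, \ref{cor:flip_cube}) to get the rank-one element $a^m(ga^mg^{-1})$. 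Your proposed shortcut --- ``a factor system caps the depth of nested product regions, so a transverse pair of factors can be resolved into a strongly separated pair after boundedly many moves'' --- is precisely the step the paper's author says he could not make work directly with cubical projections (see the discussion in Subsection \ref{subsec:further_directions}); it is the missing idea, not a routine verification.

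Two smaller points. First, extracting a transverse pair among the supports of two \emph{generators} is not justified; the paper instead considers the translates $\{U\}\cup TU\cup\cdots\cup T^{N+1}U$ and uses the complexity bound, so the transverse partner is $bU$ for a word $b$ of length up to $N+1$, not the support of another generator. Second, your recursion goes \emph{down} into proper factors (where, as you note, no group acts and gate-projection constants are uncontrolled), whereas the paper's induction goes \emph{up} the nesting lattice inside the ambient HHG, which is what lets equivariance and the complexity bound control the constants; your delicate points (a) and (b) are flagged but not resolved, and (b) in particular is circumvented, not solved, by the paper's architecture.
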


We note that the main conclusion of the above theorem does not require the $G$-action to be free, it suffices that $G$ acts virtually freely (and cocompactly) on $X$. However, the constant $m$ in this case will depend on $X$ \emph{and} on the index of a finite index torsion-free subgroup of $G$. In particular, $m$ will be a function of both $X$ and $G$ and not just $X.$ The same comment applies to Theorem \ref{thmi:short_Stable_CCC} and Theorem \ref{thmi:effective_skewer}.

If item (2) of Theorem \ref{thmi:RR_CCC} occurs, we can deduce more. Recall that each CAT(0) space $X$ admits a $\delta$-hyperbolic space called the \emph{curtain model} where all rank-one elements of $\isom(X)$ act loxodromically \cite{PSZCAT}.

\begin{introthm}\label{thmi:short_Stable_CCC} Each irreducible CAT(0) cube complex with a factor system $X$ determines a constant $m=m(X)$ such that whenever $G$ acts freely cocompactly on $X$ and $T$ is a finite generating set for $G,$ there exists a free all-rank-one stable subgroup $\langle g_1,g_2 \rangle<G $ with $|g_i|_T<m$ that quasi-isometrically embeds in the curtain model; unless $G$ is virtually cyclic.
    
\end{introthm}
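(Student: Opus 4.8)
The plan is to derive Theorem~\ref{thmi:short_Stable_CCC} from Theorem~\ref{thmi:RR_CCC} together with the effective skewering result (Theorem~\ref{thmi:effective_skewer}) by a ping-pong argument played simultaneously on $X$ and on the curtain model. Since $X$ is irreducible and $G$ is not virtually cyclic, alternative (1) of Theorem~\ref{thmi:RR_CCC} is impossible, so there is a rank-one element $g$ with $|g|_T < m_0$ for the constant $m_0 = m_0(X)$ provided there. The first step is to upgrade this single short rank-one element to a \emph{short pair} of independent rank-one elements whose axes are sufficiently ``transverse.'' Concretely, I would use the effective flipping/skewering machinery: a rank-one element $g$ skewers some special pair of hyperplanes (equivalently, a pair of curtains) uniformly quickly, and then I would exhibit a second short group element $s$ (again of length $O(m(X))$, using cocompactness of the action and the bounded geometry coming from the factor system) so that $g$ and $g^s := s g s^{-1}$ skewer ``independent'' curtain pairs — i.e.\ the four relevant half-spaces are pairwise nested/transverse in the configuration needed for ping-pong. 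Because the number of $G$-orbits of hyperplanes and of ``tight'' configurations is finite (factor system), the existence of such an $s$ of bounded length is a compactness statement, and the bound depends only on $X$.

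The second step is to run ping-pong to produce the free subgroup. Passing to sufficiently high powers $g_1 = g^N$, $g_2 = (g^s)^N$ — where $N = N(X)$ is again uniform, controlled by the skewering constant — the standard table-tennis lemma applied to the actions on the half-space lattice of $X$ shows $\langle g_1, g_2\rangle$ is free of rank two. Since each $g_i$ is rank-one and its conjugates are rank-one (rank-one-ness is a conjugacy invariant), and since a ping-pong pair of rank-one isometries with transverse axes is known to generate an all-rank-one free subgroup (every nontrivial element is conjugate to a cyclically reduced word whose axis fellow-travels a concatenation of the axes of the generators, hence is rank-one), we get the ``free all-rank-one'' conclusion. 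The word-length bound $|g_i|_T = N \cdot O(m(X)) < m$ for a new constant $m = m(X)$ follows by construction, and crucially does not see $T$ except through $|\cdot|_T \le (\text{syllable length}) \cdot \max_{t} 1$.

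The third step is the quasi-isometric embedding into the curtain model $\calY$ (the $\delta$-hyperbolic space from \cite{PSZCAT}). Rank-one elements act loxodromically on $\calY$, so each $g_i$ has positive translation length there; the transversality of their axes in $X$ translates into the images of their $\calY$-axes having bounded coarse intersection. Then a Morse/ping-pong argument in the hyperbolic space $\calY$ (e.g.\ the standard criterion that two loxodromics with sufficiently separated axes in a hyperbolic space generate a quasi-isometrically embedded free group after taking powers) gives that $\langle g_1,g_2\rangle \hookrightarrow \calY$ is a quasi-isometric embedding; combined with stability of Morse subgroups of cubical groups this also yields that $\langle g_1, g_2\rangle$ is stable in $G$. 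Again all constants (the separation needed, the power $N$) are functions of $\delta$ and the skewering constant, hence of $X$ alone.

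The main obstacle I anticipate is the \emph{effective} production of the second short element $s$ with the right transversality, uniformly in $T$: one must guarantee not merely that \emph{some} conjugate of $g$ is independent from $g$ (which is easy and non-effective), but that such a conjugating element lives in a ball of radius depending only on $X$. This is where the factor system is essential — it provides finitely many orbits of hyperplanes and a uniform bound on ``combinatorial'' configurations, so a pigeonhole/compactness argument along a geodesic axis of $g$ produces $s$ of bounded length; making this quantitative (and checking the skewering constant genuinely only depends on $X$, not on $G$ or $T$, when the action is free) is the technical heart. A secondary subtlety is translating ``transverse axes in $X$'' into ``separated axes in the curtain model'' with explicit constants, which should follow from the dictionary between hyperplanes/curtains and the curtain model established in \cite{PSZCAT} and \cite{Zalloum23_injectivity}.
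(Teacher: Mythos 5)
There is a genuine gap, and it sits exactly where you flagged the ``technical heart'': the production of the second short element $s$ with $|s|_T$ bounded by a constant depending only on $X$, uniformly over \emph{all} finite generating sets $T$. Your proposed pigeonhole/compactness argument along an axis of $g$ can at best produce an element of bounded \emph{displacement in $X$} (finitely many orbits of hyperplanes, bounded configurations), but there is no uniform comparison between balls in $X$ and balls in $\mathrm{Cay}(G,T)$: the quasi-isometry constants of the orbit map depend on $T$, so ``$s$ moves the axis a bounded amount in $X$'' gives no bound on $|s|_T$ that is independent of the generating set. The only elements that are a priori $T$-short are bounded products of generators, so an effective argument must find the conjugator inside $T$ itself (or a bounded power set of $T$). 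This is precisely how the paper proceeds: the short rank-one element $g$ acts loxodromically on the maximal hyperbolic space $\calC S$, on which the $G$-action is acylindrical, and by \cite{Dahmani2017} either some \emph{generator} $b\in T$ satisfies $bg^kb^{-1}\neq g^{\pm k}$ for all $k\neq 0$, or every generator stabilizes $\{g^{+},g^{-}\}$ and $G$ is virtually cyclic. Note that this dichotomy is also what produces the ``unless $G$ is virtually cyclic'' caveat; in your write-up that hypothesis is assumed but never used as the mechanism that guarantees an effective conjugator, which is a sign the argument is not closing.

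Beyond that step, your route also diverges from the paper's in a way that creates extra (avoidable) work. The paper does not ping-pong on the half-space lattice of $X$: once the generator $b$ is found, Theorem~\ref{thm:fuj0809} (Fujiwara) applied to the acylindrical action on $\calC S$ gives at once that $\langle g^m, bg^mb^{-1}\rangle$ is free and quasi-isometrically embedded in $\calC S$, with $m$ controlled by the hyperbolicity constant and $\tau_{\calC S}(g)\geq 1$ (hence by $X$ alone); all of its elements are then loxodromic on $\calC S$, hence Morse by Corollary~\ref{cor:singlton} and rank-one by Charney--Sultan, and stability plus the curtain-model embedding follow from \cite{ABD} and \cite{PSZCAT} (stable free subgroups of CAT(0) groups are exactly those whose orbit maps quasi-isometrically embed in the curtain model), as in the ``moreover'' parts of Corollaries~\ref{cor:effective_rank_regidity_HHG} and \ref{cor: effective_rank_CCC}. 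Your closing steps (all-rank-one via axes of cyclically reduced words, separation of axes in the curtain model with explicit constants) could likely be made to work, but they would need quantitative inputs you have not supplied, and in any case they do not repair the non-effective choice of $s$, which is the step that fails as stated.
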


If $X$ in Theorem \ref{thmi:short_Stable_CCC} is assumed to also be \emph{essential}, then $\langle g_1,g_2\rangle$ quasi-isometrically embeds in the \emph{contact graph} \cite{Hagen2013} as well.

Among the key tools in their proof of rank rigidity, Caprace and Sageev \cite{capracesageev:rank} show that for any half space $h^+$, it is possible to find an element $g_1 \in G$ which \emph{flips} $h^+$ (meaning that $g_1h^+ \subsetneq h^-)$ which they use to show that any half space $h^+$ can be \emph{skewered} by an element $g_2.$ Producing the rank-one element in their proof heavily relies on the aforementioned flipping and skewering. Our route to proving Theorem \ref{thmi:RR_CCC} involves some careful analysis of how quickly one can accomplish such a process. We show:

\begin{introthm}\label{thmi:effective_skewer} Each CAT(0) cube complex with a factor system $X$ determines an integer $m=m(X)$ such that for any non-virtually cyclic $G$ acting freely cocompactly on $X$ the following holds. There is a hyperplane $h$, elements $g_1,g_2 \in G$ such that for any finite generating set $T$ of $G,$ we have the following:

\begin{enumerate}

\item $|g_1|_T,|g_2|_T<m$,
    \item (effective flipping) $g_1h^+ \subsetneq h^-$, \,$g_2h^- \subsetneq h^+$, and
 
 \item (effective skewering) $(g_2g_1) h^+ \subsetneq h^+.$ 
\end{enumerate}
Further, the element $g_2g_1$ is rank-one.

\end{introthm}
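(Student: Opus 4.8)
The plan is to extract a single pair of hyperplanes with the required flipping behaviour from the combinatorial structure Caprace--Sageev provide, and then to make all the relevant group elements ``short'' by replacing an arbitrary choice with one realizing a minimal displacement, so that the bound depends only on $X$. Concretely, I would first pass to a torsion-free finite-index subgroup (harmless since the action is already free by hypothesis, so $G$ itself works) and recall from \cite{capracesageev:rank} that since $X$ is not a product, $G$ acts on $X$ \emph{essentially} and \emph{without a fixed point at infinity}, so the Double Skewering Lemma applies: for every halfspace $h^+$ there is $g\in G$ with $g h^+\subsetneq h^+$, and moreover one can first flip, i.e.\ find $g_1$ with $g_1 h^+\subsetneq h^-$ and $g_2$ with $g_2 h^-\subsetneq h^+$. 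The composition $g_2g_1$ then skewers $h$ in the strong nested sense of item (3), and by the standard ping-pong/contracting argument (a halfspace skewered by $g$ forces $g$ to be rank-one, cf.\ \cite{capracesageev:rank}) the element $g_2g_1$ is rank-one. So the content is entirely in the word-length bounds.

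To get the uniform bound $m=m(X)$, the key point is that the factor system furnishes a \emph{uniformly locally finite} hierarchically hyperbolic structure on $X$, and in particular $X$ has only finitely many ``shapes'' of configurations of a bounded number of hyperplanes up to the $\Aut(X)$-action is \emph{not} quite available; instead I would argue as follows. Fix a basepoint $x_0$ (a vertex) and a hyperplane $h$ very close to $x_0$. Flipping $h^+$ to $h^-$ means moving the halfspace $h^+$ off of itself; by essentiality and cocompactness there is a \emph{bound $D=D(X)$} (independent of $G$ and $T$) on how far one must translate to achieve this: one uses that essential halfspaces are ``deep'' and that the $G$-orbit of $x_0$ is $D_0$-dense for some $D_0$ depending only on $X$ via cocompactness, so some $g_1$ with $\dist(x_0,g_1x_0)\le D$ does the flip. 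The same works for $g_2$. Then $|g_i|_T$ is controlled because the word metric on $G$ with respect to \emph{any} generating set $T$ is comparable to $\dist(x_0, g x_0)$ only up to constants that depend on $T$ --- and this is the real subtlety.

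Here is how I would handle that subtlety, which is the main obstacle. The point is that we are \emph{not} claiming $|g_i|_T$ is bounded in terms of a fixed metric; rather we must produce \emph{one} pair $g_1,g_2$ that is simultaneously short in \emph{every} generating set. The trick (this is exactly the kind of argument in \cite{MangahasRecipie}) is to observe that any finite generating set $T$ must contain an element that moves $x_0$ by at least some definite amount in a ``spread-out'' way --- more precisely, because $G$ is not virtually cyclic and acts essentially, $T$ cannot be contained in a single halfspace-stabilizer-like subgroup, so among $T$ and short products of elements of $T$ one already finds elements exhibiting flipping behaviour for \emph{some} hyperplane in a bounded neighborhood of $x_0$. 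So instead of fixing $h$ in advance, I would: (i) list the finitely many $\Aut(X)$-orbits of hyperplanes meeting the ball $B(x_0,R)$ for a suitable $R=R(X)$; (ii) show that for \emph{every} generating set $T$, at least one such hyperplane $h$ admits a flipping pair $g_1,g_2$ each expressible as a product of at most $m=m(X,R)$ generators from $T$, by a pigeonhole/ping-pong argument against the $2^{\text{(number of hyperplanes in } B(x_0,R))}$ possible sign patterns; (iii) conclude the skewering and rank-one statements as above. The hard part is step (ii): making the counting uniform over all $T$ simultaneously, which forces one to work with the combinatorics of halfspaces rather than any metric, and to invoke the factor-system finiteness to bound the number of relevant hyperplanes and hence the number of cases.
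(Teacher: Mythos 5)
Your diagnosis that the whole content lies in the word-length bounds is correct, but the proposal does not close that gap: the step you yourself flag as ``the hard part'' (your step (ii)) is exactly where all the work lives, and the pigeonhole-over-sign-patterns idea is not an argument. Nothing in your sketch rules out the scenario where every element of $T$ (and every bounded product of such) translates along a convex product region and merely carries each hyperplane near $x_0$ to a crossing or adjacent hyperplane, so that no flipping is visible in any ball $B(x_0,R)$; ruling this out is precisely the difficulty the paper isolates in Subsection \ref{subsec:further_directions}, and it is also why your use of the factor system is off target --- bounding the number of hyperplanes meeting $B(x_0,R)$ needs only cocompactness, whereas the factor system is needed to endow $X$ with an HHS structure $(X,\mathfrak S)$. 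The paper's actual route is entirely through that structure: a generator $a\in T$ has a combinatorial geodesic axis, so $\tau_X(a)\ge 1$, whence (Lemma \ref{lem:lower_bound_translation}, from \cite{Abbott-Hagen-Petyt-Zalloum23}) $a$ has uniformly positive translation length on some domain $U$; the algebraic passing-up machinery (Lemmas \ref{lem:effective_first_pass}, \ref{lem:effective_second_pass}, \ref{lem:short_loxodromic_top}, resting on the Behrstock inequality and the passing-up Lemma \ref{lem:strong_passing}) then produces a short $g$ with $\dist_S(\rho^U_S,\rho^{gU}_S)>30E$ in the maximal hyperbolic space $\calC S$; and the flipping and skewering are carried out on curtains in $\calC S$ (Lemma \ref{lem:HHS_flip}, Corollary \ref{cor:flip_cube}), with the relevant curtain containing an honest cubical hyperplane (Lemma \ref{lem:hyperplane_in_curtain}), which is the hyperplane $h$ of the statement. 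Your metric observation that some $g_1$ with $\dist(x_0,g_1x_0)\le D(X)$ flips $h^+$ is plausible but, as you concede, does not interact with $|\cdot|_T$ for arbitrary $T$, so it carries no weight toward the conclusion.

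There is a second genuine error: you deduce rank-one from ``a halfspace skewered by $g$ forces $g$ to be rank-one.'' This is false in general --- in the standard cubulation of $\mathbb{R}^2$ every infinite-order element of $\mathbb{Z}^2$ skewers hyperplanes without being rank-one, and even for irreducible $X$ Caprace--Sageev need a strongly separated pair, not a single skewered hyperplane. In the paper, rank-one-ness of $g_2g_1$ comes from loxodromicity on the maximal domain $\calC S$ (Lemma \ref{lem:translation__length_via_skewering}, Corollary \ref{cor:singlton}, together with Charney--Sultan), not from the cubical skewering itself; relatedly, the statement is really about irreducible $X$, as in the body version, and your appeal to ``$X$ is not a product, hence the action is essential'' is unjustified (non-product does not give essentiality; one must pass to the essential core, which changes the complex). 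Finally, note the quantifier issue you inherited from the displayed statement: a fixed pair $g_1,g_2$ cannot be uniformly short in every generating set (already in $F_2$ the $T$-length of a fixed nontrivial element is unbounded as $T$ varies); the paper's proof, like your own step (ii), produces $g_1,g_2$ depending on $T$, with only $h$ and $m$ independent of $T$.
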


The free action assumption in the previous Theorem \ref{thmi:RR_CCC}, \ref{thmi:short_Stable_CCC} and \ref{thmi:effective_skewer} is merely required to assure the existence of a short infinite order element (which again, can also be assured under the weaker assumption that $G$ acts virtually freely, but at the cost of $m$ having a dependency on $G$), namely, we prove the following more general statement.

\begin{introthm}\label{thmi:general_CCC} Each irreducible CAT(0) cube complex with a factor system $X$ determines a constant $m=m(X)$ such that for any group $G$ acting properly cocompactly on $X$, if $a \in G$ has infinite order, then there exists an element $g \in G$ such that $a^m(ga^mg^{-1})$ is rank-one and $|g|_T<m$ for any finite set $T$ generating $G.$
\end{introthm}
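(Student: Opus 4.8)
The plan is to reduce Theorem~\ref{thmi:general_CCC} to Theorem~\ref{thmi:effective_skewer} by first producing a short hyperplane-flipping configuration, even though the action is only assumed proper cocompact (not free). The point is that a fixed infinite-order element $a$ provides the ``short infinite-order element'' that the free-action hypothesis was securing for free, so we just need to show that a bounded power of $a$ together with a suitable conjugate realizes the effective flipping/skewering picture. First I would invoke properness and cocompactness to fix a basepoint $x_0$ and a constant $R=R(X)$ so that the $R$-ball around $x_0$ meets every $G$-orbit; this gives a uniform bound (depending only on $X$, via the standard Milnor--\v{S}varc comparison, since diameters of hyperplane-carriers and the combinatorial geometry of $X$ are controlled by $X$ alone) on how many hyperplanes separate $x_0$ from any fixed translate, and hence on word lengths of any group elements we name below in terms of the intrinsic data of $X$.

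Next I would run the core of the Caprace--Sageev machinery (flipping + skewering), as already quantified in the proof of Theorem~\ref{thmi:effective_skewer}, but starting from the infinite-order element $a$: the idea is that since $a$ has infinite order, $a$ acts on $X$ with unbounded orbits, so some uniformly bounded power $a^m$ moves $x_0$ far enough to ``cross'' a hyperplane $h$ that we can choose to lie in a bounded neighborhood of $x_0$ (here $m=m(X)$ is extracted from the effective flipping estimate: we need $a^m x_0$ to be separated from $x_0$ by the carrier of $h$ by more than the constant governing the flipping lemma). This gives $a^m h^+ \subsetneq h^-$ after possibly relabeling the half-spaces. The second ingredient is a conjugate: choosing $g\in G$ with $|g|_T<m$ (again bounded using the $R$-ball hitting every orbit) so that $g a^m g^{-1}$ flips $h^-$ back across $h^+$, i.e.\ $(ga^mg^{-1})h^- \subsetneq h^+$. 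Composing, $a^m(ga^mg^{-1})$ skewers $h$, so by the Caprace--Sageev criterion (double-skewering of a single hyperplane by a bounded-length element, exactly as in the proof of Theorem~\ref{thmi:effective_skewer}) the element $a^m(ga^mg^{-1})$ is rank-one; irreducibility of $X$ is what guarantees such a skewering hyperplane exists at all.

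The main obstacle I anticipate is the uniformity of the word-length bound $|g|_T<m$ \emph{simultaneously over all finite generating sets $T$}: a conjugator $g$ that does the geometric job (flipping $h^-$ across $h^+$) is produced by a compactness/covering argument in $X$, so it lives in a bounded ball of the \emph{word metric coming from a fixed $X$-adapted generating set}, and one must argue that boundedness in that metric forces boundedness in \emph{every} word metric by a uniform constant. This is the standard observation that any finite generating set contains, up to bounded distortion, the elements of a fixed finite ``model'' generating set — but here it must be made careful and quantitative, which is precisely the technical heart shared with Theorems~\ref{thmi:RR_CCC}--\ref{thmi:effective_skewer}. A secondary point to check is that one genuinely only needs $a$ to have infinite order (not to be rank-one or even Morse a priori): infinite order plus properness already forces a linearly-growing orbit, which is all the flipping estimate consumes, and the rank-one conclusion for $a^m(ga^mg^{-1})$ is then an \emph{output} of the skewering, not an input. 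Finally, one records that $m$ depends only on $X$ because every constant invoked — the covering radius $R$, the flipping threshold, the double-skewering bound — is a function of the factor-system data of $X$ alone.
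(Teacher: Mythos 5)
Your proposal has genuine gaps at the two places where the theorem is actually hard. First, the geometric engine is wrong: from ``$a$ has infinite order, so a bounded power of $a$ moves $x_0$ across a hyperplane $h$'' you cannot conclude $a^mh^+\subsetneq h^-$, and skewering a \emph{single} hyperplane is not a rank-one criterion in any case (a translation of a cubulated flat $\mathbb{R}^2$ crosses and skewers hyperplanes forever without flipping any half-space and without being rank-one; Caprace--Sageev's double skewering produces rank-one elements only for suitably separated pairs of hyperplanes). In the paper the flipping/skewering is not done with an arbitrary cubical hyperplane near $x_0$: one first uses the factor system (HHS structure) to locate a domain $U\in\vvbig(a)$ with $\tau_U(a^M)\geq A(X)$ (Lemma \ref{lem:lower_bound_translation}, made uniform in $X$ by combinatorial axes), then runs the passing-up machinery (Lemmas \ref{lem:effective_first_pass}, \ref{lem:effective_second_pass}, \ref{lem:short_loxodromic_top}, using Behrstock, bounded geodesic image and the complexity bound) to reach the maximal domain $\calC S$, and the object being flipped and skewered is a \emph{curtain} in $\calC S$ (Lemma \ref{lem:HHS_flip}, Corollary \ref{cor:loxodromic_upper_level}); it is the resulting positive translation length on $\calC S$, i.e.\ loxodromy on the top-level hyperbolic space, that forces $a^m(ga^mg^{-1})$ to be Morse and hence rank-one. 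Your sketch never uses the factor system at all, and the purely cubical shortcut you describe is essentially the approach the author explicitly records (in the Questions subsection) as one they could not make work without a factor system.

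Second, your uniformity argument over generating sets fails. A conjugator $g$ produced by a compactness/covering argument is a fixed element of $G$, and a fixed element does \emph{not} have uniformly bounded word length with respect to every finite generating set (in $F_2=\langle x,y\rangle$ the sets $T_n=\{xy^n,y\}$ give $|x|_{T_n}=n+1$); there is no ``standard observation'' converting boundedness in one word metric into boundedness in all of them. The paper gets uniformity because the conjugator is \emph{constructed as a short word in $T$ and $a$}: some $b\in T\cup\cdots\cup T^{N+1}$ must move the domain $U$ to a transverse translate (Lemma \ref{lem:producing_first_transverse_pair}, since otherwise a finite $G$-invariant family of domains contradicts irreducibility), and the subsequent conjugators are bounded products of such $b$'s and bounded powers of $a$, with the number of steps controlled by the complexity $N$ and Durham's passing-up constant. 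Finally, note that the paper's route to this theorem is not a reduction to Theorem \ref{thmi:effective_skewer} (whose proof presupposes the same machinery); it is the specialization of the HHG statement (Corollary \ref{cor:general_HHG_body}) to cube complexes with factor systems, with $m=m(X)$ exactly because cubical infinite-order isometries have translation length at least $1$ on $X$, a point your sketch gestures at but does not deploy where it is needed.
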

It's worth noting that the element $a^m(ga^mg^{-1})$ is shown to be rank-one by finding a special hyperplane $h$ where $ga^mg^{-1}$ flips $h^+$ and $a^m$ flips $h^-$ leading the element $a^m(ga^mg^{-1})$ to skewer $h^+$ (i.e., 
$a^m(ga^mg^{-1})h^+ \subsetneq h^+$).

Most known examples of proper cocompact CAT(0) cube complexes do admit a factor systems, in fact, and until very recently, it was conjectured that every cubical group admits a factor system \cite{HagenSusse2020}. However, fresh work of Shepard in \cite{Shepard2022} provides a remarkable counter example to the conjecture. The class of cubulated groups with factor systems lives in the much broader class of \emph{hierarchically hyperbolic groups} (HHGs). Aside from cubulated groups with factor systems, the class of HHGs includes mapping class groups \cite{MM99,MM00, HHS1}, extra-large type Artin groups \cite{HMS21}, the genus two handlebody group \cite{miller2020stable}, surface group extensions of lattice Veech groups \cite{DDLS1,DDLS2} and multicurve stabilizers \cite{russell2021extensions}, and the fundamental groups of 3–manifolds without Nil or Sol components \cite{HHS2}, as well as various combinations and quotients of these objects \cite{BHMScombo, BR_combo}.

The original motivation for introducing HHGs was the observation that the very rich hierarchical structure enjoyed by mapping class groups \cite{MM00} \cite{MM99} is also enjoyed by various cubulated groups \cite{HHS1}, \cite{HHS2}. This observation led to the introduction and formalism of the class of HHGs. Unexpectedly, studying both mapping class groups and cube complexes from the more general HHG point of view led Behrstock, Hagen and Sisto \cite{HHS_quasi} to realize that not only can one export mapping class group techniques to study CAT(0) cube complexes, but one can also import CAT(0) cube complex techniques to study mapping class groups and this observation has had a great influence on the area. First, it was the key to establishing Farb’s quasi-flats conjecture \cite{HHS_quasi} and more recently, it was heavily used to establish semi-hyperbolicity of mapping class groups by Haettel-Hoda-Petyt \cite{HHP} and Durham-Minsky-Sisto \cite{DMS20}. It has also led to the recent surprising result of Petyt that each mapping class group is quasi-isometric to a CAT(0) cube complex \cite{Petyt21}.

This work encapsulates both philosophies: we apply the well-established mapping class group large link tool \cite{MM00}, \cite{MM99} (if the projected distance to some subsurface is $K$,
then the number of maximal subsurfaces thereof with large projection
is linearly bounded in terms of $K$) to the context of cubulated groups with factor systems, and on the other hand, we import the standard flipping and skewering cubical tools to the class of HHGs via the notion of a \emph{curtain}. Combining both such tools amounts to producing the desired rank-one element uniformly quickly.

The class of HHGs enjoys a similar rank-rigidity dichotomy as established in \cite{Durham2017-ce} by Durham-Hagen-Sisto (a different and easier proof was given later in \cite{PetytSpriano20} by Petyt and Spriano).

\begin{rankrh}[{\cite{DMS20}, \cite{PetytSpriano20}}] Let $G$ be an HHG. Precisely one of the following happens:

\begin{enumerate}
    \item $G$ is quasi-isometric to a product of two unbounded HHSes, or
    \item $G$ contains a Morse element.
\end{enumerate}
\end{rankrh}
We shall say that an HHG is \emph{irreducible} if it is not quasi-isometric to a product of two unbounded HHSes. Recall that each irreducible HHG comes with hyperbolic space denoted $\calC S$ on which $G$ acylindrically acts. Relying on the hierarchical rank rigidity above, we prove:

\begin{introthm}\label{thmi:RR_HHS} Let $(G,\mathfrak S)$ be a virtually torsion-free HHG. There exists a constant $m=m(G)$ such that for any finite generating set $T$ of $G$, precisely one of the following holds:

\begin{enumerate}

    \item $G$ is quasi-isometric to a product of two unbounded HHSes, or
        \item There exists a Morse element $g \in G$ with $|g|_T<m$.

\end{enumerate}
Further, if $(2)$ occurs and $G$ is not a quasi-line, then $G$ contains a free all-Morse stable subgroup $\langle g_1,g_2 \rangle$ with $|g_i|_T<m$ that quasi-isometrically embeds in $\calC S$.
\end{introthm}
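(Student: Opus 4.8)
The strategy is to transport, into the hierarchically hyperbolic setting, the three–move argument behind Theorems~\ref{thmi:general_CCC} and~\ref{thmi:effective_skewer}: produce a short infinite order element, promote it to a short Morse element by exhibiting a pair of curtains that it skewers uniformly quickly, and then play ping-pong to build the free stable subgroup. We may assume $G$ is infinite. Since $(G,\mathfrak S)$ is virtually torsion-free, fix a torsion-free subgroup $G_0\le G$ of finite index $n$. For any finite generating set $T$, Reidemeister--Schreier rewriting with respect to a Schreier transversal (whose elements have $T$-length at most $n-1$) produces a generating set of $G_0$ all of whose members have $T$-length at most $2n-1$; since $G_0$ is torsion-free and nontrivial, at least one of these Schreier generators $a$ has infinite order, so $|a|_T\le 2n-1$. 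This is the only place virtual torsion-freeness is used, and it is why $m$ depends on $G$ (through $n$) rather than only on $\mathfrak S$, exactly as in the remark following Theorem~\ref{thmi:RR_CCC}. Now apply the Hierarchical Rank Rigidity dichotomy: if $G$ is quasi-isometric to a product of two unbounded HHSes we are in case (1), which excludes (2) because such a product contains no Morse element, so precisely one alternative holds; otherwise $G$ is irreducible, hence $G$ acts acylindrically on the unbounded hyperbolic space $\calC S$ and contains a Morse element. It remains, in the irreducible case, to find a short Morse element and, when $G$ is not a quasi-line, a short free all-Morse stable subgroup.

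For the short Morse element, work inside an HHS $\calX$ modelled on $G$ and use the curtain calculus of \cite{PSZCAT},\cite{Zalloum23_injectivity}, through which the Caprace--Sageev operations of \cite{capracesageev:rank} --- flipping and skewering --- become available in any HHG, together with the dictionary identifying Morse elements of an irreducible HHG with curtain-skewering elements (equivalently, elements acting loxodromically on $\calC S$). Starting from the short infinite order element $a$, the aim is to produce a curtain $c$, an element $g$, and a power $k$, with $|g|_T$ and $k$ bounded purely in terms of the HHG constants of $\mathfrak S$ and of $n$, such that $a^k$ flips one halfspace of $c$ and $g a^k g^{-1}$ flips the other; then $g_1:=a^k(g a^k g^{-1})$ skewers $c$ and is therefore a Morse element, with $|g_1|_T\le 2k|a|_T+2|g|_T$ bounded independently of $T$. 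This is the exact HHG analogue of the mechanism described in the note following Theorem~\ref{thmi:general_CCC}, and the quantitative control on $g$ and $k$ is supplied by the large link lemma in $\mathfrak S$ --- the number of domains on which two points have large projection grows at most linearly in that projection \cite{MM99},\cite{MM00} --- which here plays the role that finiteness of the factor system plays in the cubical proof of Theorem~\ref{thmi:RR_CCC}.

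Now suppose $G$ is not a quasi-line, equivalently not virtually cyclic. Running the previous paragraph a second time with a bounded-length conjugate of $a$ (or with a second, transverse curtain) produces a second short Morse element $g_2$ whose axis in $\calC S$ can be arranged to be independent of that of $g_1$. Being Morse, $g_1$ and $g_2$ act loxodromically on $\calC S$; since $G\curvearrowright\calC S$ is acylindrical, a quantitative ping-pong argument --- replacing $g_1,g_2$ by powers bounded solely in terms of the acylindricity function --- shows that $\langle g_1,g_2\rangle$ is free of rank two, quasiconvex in $\calC S$, and hence quasi-isometrically embedded there. Consequently every infinite order element of $\langle g_1,g_2\rangle$ acts loxodromically on $\calC S$ and so is Morse in $G$; combined with the quasi-isometric embedding into the largest acylindrical action, this gives that $\langle g_1,g_2\rangle$ is an all-Morse stable subgroup of $G$. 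Taking $m$ to be the maximum of the finitely many bounds accumulated above --- all functions of $G$ alone --- yields $|g_i|_T<m$ in every generating set $T$, completing case (2). The quasi-line case is immediate: every infinite order element of a virtually cyclic group is Morse and, as in the first paragraph, has a representative of $T$-length at most $2n-1$, while no rank-two free subgroup can exist.

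The main obstacle is the effectiveness in the second paragraph: translating Caprace--Sageev's flipping lemma into the language of curtains with \emph{explicit} constants and bounding the conjugator $g$ and the power $k$ independently of $T$ via the large link lemma, and verifying that a curtain-skewering element genuinely certifies Morseness in an HHG, i.e. that the curtain model detects Morse directions there. A secondary difficulty is to make the ping-pong of the third paragraph quantitative, with powers controlled purely by the acylindricity constants so that no dependence on $T$ re-enters at that stage.
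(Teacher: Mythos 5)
Your outline reproduces the paper's strategy (it is essentially the introduction's proof sketch), but the actual content of the theorem is the \emph{effectiveness}, and that is exactly what you defer: you assert that $|g|_T$ and the power $k$ are ``bounded purely in terms of the HHG constants of $\mathfrak S$ and of $n$'' and then list establishing this as ``the main obstacle.'' Two concrete ingredients are missing. First, the flipping power $k$ depends on the stable translation length of $a$ on the domain $\calC U$ over which it is active, and a priori this could be arbitrarily small; the paper needs the uniform lower bound $\tau_U(a^M)>A(\mathfrak S)$ from \cite{Abbott-Hagen-Petyt-Zalloum23} (Lemma \ref{lem:lower_bound_translation}) before any power can be chosen uniformly — your proposal never addresses why a short infinite-order element has uniformly positive translation length somewhere. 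Second, producing the short conjugator $g$ so that $\rho^U_S$ and $\rho^{gU}_S$ are far in $\calC S$ is not a single application of the large link/passing-up lemma: the paper first extracts a transverse pair $U,bU$ from $\{T^iU\}_{i\le N+1}$ via the complexity bound, builds a chain-aligned family of conjugate domains so that Behrstock's inequality and Durham's strong passing-up (Lemma \ref{lem:strong_passing}) apply (Lemma \ref{lem:effective_first_pass}), and — crucially — must handle the case where the intermediate domain $W$ produced by passing up is \emph{not} stabilized by the relevant conjugates of $a$ (Lemma \ref{lem:effective_second_pass}); the paper explicitly flags this as the point where a naive iteration of the two-level argument breaks down. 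None of this appears in your sketch, so the step from the short infinite-order element to the short element skewering a top-level curtain (Lemma \ref{lem:short_loxodromic_top} and Corollary \ref{cor:loxodromic_upper_level}) is a genuine gap rather than a routine translation of Caprace--Sageev.

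Smaller issues in your third paragraph: ``whose axis in $\calC S$ can be arranged to be independent of that of $g_1$'' also needs an argument with uniform control. The paper gets independence not by rerunning the construction but by citing Dahmani--Guirardel--Osin: if every generator stabilized the endpoint pair $\{g_1^{\pm}\}$ then $G$ would be virtually cyclic, so some $b\in T$ satisfies $bg_1^k b^{-1}\neq g_1^{\pm k}$, and then Theorem \ref{thm:fuj0809} gives the free, quasi-isometrically embedded subgroup $\langle g_1^m, bg_1^m b^{-1}\rangle$; stability then follows by passing to the largest acylindrical (ABD) structure, as you indicate. Your ping-pong route could be made to work, but as written the independence of axes and the uniformity of the ping-pong powers (which again uses $\tau_{\calC S}(g_1)\ge 1$ coming from the skewering) are asserted rather than proved.
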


It is worth noting that most known HHGs are virtually torsion free and until very recently, it was unknown whether non-examples exist. However, in recent work \cite{HughesHHs2022}, Hughes  provided the first non-example of this kind. Also, we remark that Theorem \ref{thmi:RR_HHS} above recovers Mangahas' main theorem \cite{MangahasRecipie} in the mapping class group case where the author provides a recipe for producing pseudo-Anosov elements uniformly quickly, depending only on the surface.

The assumption that $G$ is virtually torsion-free in Theorem \ref{thmi:RR_HHS} is only used to assure the existence of a short infinite order element. Namely, we prove the following more general statement.

\begin{introthm}\label{thmi:general_HHG}
Let $(G, \mathfrak S)$ be an irreducible HHG. There exists a constant $m=m(G)$ such that for any finite generating set $T$ and any infinite order element $a \in G$, there exists $g \in G$ such that $|g|_T<m$ and $a^m(ga^mg^{-1})$ is Morse.
\end{introthm}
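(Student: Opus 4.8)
The plan is to mirror the cubical argument of Theorem \ref{thmi:general_CCC} inside the curtain machinery available in an arbitrary HHG, exactly as the introduction advertises. First, since $(G,\mathfrak S)$ is irreducible, it comes with a hyperbolic space $\calC S$ on which $G$ acts acylindrically, and by \cite{PSZCAT}, \cite{Zalloum23_injectivity} the associated curtain model carries a $G$-action in which curtains play the role that hyperplanes play in a CAT(0) cube complex. I would begin by fixing an infinite order element $a$ and observing that the acylindrical action on $\calC S$, together with a standard Tits-alternative/north–south argument, produces a power $a^{m_0}$ (with $m_0$ bounded by a constant depending only on $G$, via the acylindricity constants and the large-link tool) that acts loxodromically on $\calC S$ and hence gives rise to a curtain $h$ with a ``halfspace'' $h^+$ that $a^{m_0}$ pushes deeply into itself. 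Here the key quantitative input is the HHG analogue of the large-link lemma quoted in the introduction: the number of maximal domains with large projection along a geodesic is linearly bounded, which bounds how long one must wait before the dynamics on $\calC S$ ``kick in.''

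Next I would carry out the effective flipping step: I need an element $g\in G$ with $|g|_T<m$ such that the conjugate $g a^{m_0} g^{-1}$ flips $h^-$ while $a^{m_0}$ flips $h^+$, so that the product $a^{m_0}(g a^{m_0} g^{-1})$ skewers $h^+$, i.e. $a^{m_0}(g a^{m_0} g^{-1}) h^+ \subsetneq h^+$. To find a short such $g$, I would use that $G$ acts cocompactly-like on $\calC S$ in the relevant sense (every point of $\calC S$ is within bounded distance of a $T$-ball image of a basepoint for any $T$, because $T$ generates $G$), so that a bounded-length word $g$ suffices to move $h$ to a curtain $gh$ that is ``transverse enough'' to $h$; then a further bounded power of $a$ separates the two halfspaces as required. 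This is precisely the curtain version of Caprace--Sageev's flipping lemma, which the paper establishes for HHGs; the point here is purely that the lengths involved are controlled by $m(G)$ and not by $T$.

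Finally, once $h^+$ is skewered by $w := a^{m_0}(g a^{m_0} g^{-1})$, I would invoke the curtain-model criterion for being Morse: an element that skewers a pair of curtains with a definite gap acts loxodromically on the curtain model, and elements acting loxodromically on the curtain model of an HHG are Morse (again via \cite{PSZCAT}, \cite{Zalloum23_injectivity} and the Durham--Hagen--Sisto identification of Morse-ness with the relevant hyperbolic-action property). Absorbing $m_0$ and the bound on $|g|_T$ into a single constant $m=m(G)$ gives the statement. The main obstacle I expect is the second paragraph: making the flipping step genuinely \emph{effective}, i.e. extracting an explicit bound on $|g|_T$ and on the power of $a$ needed, which requires combining the large-link linear bound with the acylindricity constants of the action on $\calC S$ in a way that is uniform over all finite generating sets $T$ and all infinite order $a$. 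The free/torsion-free hypotheses elsewhere in the paper are avoided here precisely because we are handed $a$ of infinite order, but one still must check that the curtain produced from $a^{m_0}$ and its conjugate can be chosen with the separation built in from data depending only on $G$.
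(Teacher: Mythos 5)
Your opening step is the fatal gap: you assert that, because the action of $G$ on $\calC S$ is acylindrical, a power $a^{m_0}$ of the given infinite-order element (with $m_0$ bounded in terms of $G$) acts loxodromically on $\calC S$. This is false in general, and in fact the entire content of the theorem lies in the case where it fails. An infinite-order element of an acylindrically hyperbolic HHG can be elliptic on $\calC S$ with \emph{all} of its powers elliptic -- think of a Dehn twist or a reducible mapping class acting on the curve graph, or an element translating inside a single flat of the tree of flats. No Tits-alternative or north--south argument can promote such an element to a loxodromic on $\calC S$ by raising it to a power. Note also that if your step were correct the theorem would be immediate with $g=e$ (loxodromics on the unique eyry $\calC S$ are Morse by Corollary \ref{cor:singlton}), so the flipping apparatus you build afterwards would be superfluous; this is a sign the plan does not hang together. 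The paper's route instead starts from the genuinely available input: by Lemma \ref{lem:lower_bound_translation} (Abbott--Hagen--Petyt--Zalloum) the element $a$ is loxodromic with uniformly bounded-below translation length on \emph{some} domain $U\in\vvbig(a)$, which may be properly nested in $S$, and the work consists of passing up from $U$ to $S$.

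Concretely, the paper produces a transverse pair $U, bU$ with $b$ a word of length at most $N+1$ in $T$ (Lemma \ref{lem:producing_first_transverse_pair}; this, not coboundedness of the action on $\calC S$, is where shortness independent of $T$ comes from), then uses short conjugates of $a$ to build a long chain of well-aligned transverse translates of $U$ via the Behrstock inequality (Lemma \ref{lem:effective_first_pass}), applies Durham's strong passing-up lemma to obtain a higher domain $W$ in which the $\rho$-points are far, handles the possibility that the conjugates do not fix $W$ (Lemma \ref{lem:effective_second_pass}), and iterates up the finite complexity until reaching the eyry $S$ (Lemma \ref{lem:short_loxodromic_top}). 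Only at that top level does the curtain flipping/skewering you describe enter (Lemma \ref{lem:HHS_flip}, Corollary \ref{cor:loxodromic_upper_level}), showing $a^m(ga^mg^{-1})$ skewers a curtain in $\calC S$, hence is loxodromic there and Morse by Corollary \ref{cor:singlton}. Your second paragraph also leans on an unjustified claim that a bounded-length $g$ moves $h$ to a ``transverse enough'' curtain because $T$ generates $G$; short elements only move points of $\calC S$ a bounded amount, and arranging the needed separation uniformly in $T$ and $a$ is exactly what the passing-up machinery is for. Finally, the ``curtain model'' of \cite{PSZCAT} is a CAT(0) notion; in the HHG setting the curtains live in the hyperbolic spaces $\calC U$ themselves, and the Morse criterion used is loxodromicity on $\calC S$, not on a curtain model of $G$.
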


Unlike the cubical case where the constant $m$ depends only on the geometry of $X$ (Theorem \ref{thmi:RR_CCC}), the dependency on $G$ in Theorem \ref{thmi:RR_HHS} and \ref{thmi:general_HHG} seems crucial; this is justified as follows: every (infinite order) cubical isometry of a CAT(0) cube complex $X$ admits a combinatorial geodesic axis \cite{haglund:isometries}, and in particular, the translation lengths of such loxodromics on $X$ are bounded below by 1 independently of the acting group, and more interestingly, independently of the properness of $G$-action on $X$. On the other hand, there are two canonical spaces an HHG properly coboundedly acts on, one of which is a given Cayley graph for $G,$ and the other is the injective metric space $X$ constructed in \cite{HHP}. In \cite{Abbott-Hagen-Petyt-Zalloum23}, Abbott, Hagen, Petyt and the author show that a uniformly proper action (in particular, a proper cobounded action) of a group $G$ on an injective metric space $X$ admits a uniform lower bound $\tau_0$ on the translation lengths of all infinite order elements in $G$. The constant $\tau_0$ resulting from our proof in \cite{Abbott-Hagen-Petyt-Zalloum23} heavily depends on the properness of the $G$-action on $X,$ and this leads to a dependence of the constant $m$ on $G$ in Theorem \ref{thmi:RR_HHS}.

Exhibiting free subgroups generated by short elements, as in Theorem \ref{thmi:RR_HHS}, is a very classical and desirable type of result in geometric group theory. This is in part because it's the standard way of establishing \emph{uniform exponential growth} for the group under consideration (see Subsection \ref{subsec:uniform_growth} for more discussion on uniform exponential growth).

There are degrees of ``how free" a subgroup of a given group can be. Namely, to say that $H=\langle g_1, g_2 \rangle $ is free provides information about how elements in $H$ interact with one another but no information about how they interact with the ambient group. For instance, if $\mathbb{F}_2 \times \mathbb{Z}<G,$ then, although $\mathbb{F}_2$ is free, there is an infinite order element that commutes with every element in $\mathbb{F}_2$ and in particular, the free group $\mathbb{F}_2$ does not interact with the rest of the group in a free or hyperbolic-like fashion. A much stronger and more desirable form of a freeness or hyperbolicity a subgroup $\mathbb{F}_2<G$ might enjoy is given by \emph{stability} \cite{durhamtaylor:convex} (see also \cite{tran:onstrongly}). Free stable subgroups are not only free, but they also interact with the rest of the group in a free-like or hyperbolic-like manner, for instance, no infinite order element $g \in G$ can commute with an element of such a free subgroup. In the context of mapping class groups of finite type surfaces, a free subgroup $\mathbb{F}_2$ is stable precisely when it quasi-isometrically embeds in the curve graph \cite{ABD} (which happens if and only if $\mathbb{F}_2$ is convex-cocompact \cite{DurTay15} in the sense of Farb and Mosher \cite{FarbMosher02}). Analogously, for CAT(0) groups, a free subgroup is stable exactly when its orbit map quasi-isometrically embeds in the curtain model \cite{PSZCAT}.

In \cite{Abbot-Ng-Spriano}, Abbott-Ng-Spriano and Petyt-Gupta prove the remarkable theorem that a broad subclass of virtually torsion-free HHGs, including the irreducible ones, has \emph{uniform exponential growth}. In the irreducible case, they do so by producing a free subgroup uniformly quickly, however, the free subgroup they produce is generally not stable. This is not a defect of their proof but rather a reflection of the fact that their proof applies to a broader subclass of HHGs than the irreducible ones where Morse elements (and hence free stable subgroups) may not even exist.

Nonetheless, if such ``strongly free" (or, more precisely, stable) subgroups exist in $G$, then it's clearly more natural to attempt to produce these uniformly quickly as opposed to producing some unknown free subgroup quickly. In the context of mapping class groups, this result was established by Mangahas \cite{MangahasRecipie}. Since stable subgroups can only exist when Morse elements exist \cite{Durham2017-ce} (see also \cite{tran:onstrongly}, \cite{RST18}), Theorem \ref{thmi:RR_HHS} and \ref{thmi:RR_CCC} show that if such free stable subgroups exist, then they can be produced uniformly quickly.

Tits Alternative for a class of finitely generated groups $\calG$ asks the class to satisfy the dichotomy that any $G \in \calG$  must either be virtually Abelian or contain a free subgroup. An \emph{effective} Tits Alternative asks for the following form of the above dichotomy (as asked by Hagen in \cite{HagenFacing2022}, Abbott-Ng-Spriano in \cite{Abbot-Ng-Spriano} and by Sageev in private communication): For a given $G \in \cal G$, is there an integer $m=m(G)$ such that $G$ is either virtually Abelian or contains a free subgroup generated by elements $g_1,g_2$ with $|g_i|_T<m$ for any finite $T$ generating $G$? A more fine-tuned question to ask is the presence of a \emph{trichotomy} that picks out not only whether free subgroups exist, but also the ``freeness-level" of the provided free subgroup. More precisely, as discussed above, it is very natural to question if the free subgroup provided by Tits alternative interacts well with the rest of the group, i.e., whether it is stable. In the context of compact special groups, we provide such a trichotomy.

\begin{introthm}$\mathrm{(Strongly \,effective \, Tits \, alternative)}$\label{thmi:strongly_effective} Let $G$ be a virtually special compact group. There exists some $m=m(G)$ such that for any finite generating set $T$ for $G$, precisely one of the following happens:

\begin{enumerate}
     
\item $G$ is virtually Abelian,
\item $G$ contains a free all-rank-one stable subgroup $\langle g_1,g_2 \rangle$ with $|g_i|_T<m,$ or

\item $G$ contains a free subgroup $\langle g_1,g_2 \rangle$ with $|g_i|_T<m$ and has empty Morse boundary.
\end{enumerate}

\end{introthm}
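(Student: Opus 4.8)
The plan is to leverage the machinery already developed for Theorem~\ref{thmi:RR_CCC} and Theorem~\ref{thmi:short_Stable_CCC}, together with the structure theory of virtually special compact groups. Let $G$ be virtually special compact, so $G$ acts geometrically on a CAT(0) cube complex $X$ with a factor system, and $G$ contains a finite-index torsion-free subgroup $G'$; write $X = X_1 \times \dots \times X_k$ for the (finite) product decomposition of $X$ into irreducible unbounded factors, where $k \geq 0$. The constant $m = m(G)$ will be assembled from the constant $m(X)$ furnished by the earlier theorems, the index $[G:G']$, and the finitely many constants controlling how $G$ permutes the $X_i$. The first step is to observe that this product decomposition is canonical (up to permutation), so a finite-index subgroup $G_0 \trianglelefteq G$ preserves each factor $X_i$, acting on it properly and cocompactly; moreover each projection $G_0 \to \isom(X_i)$ has image acting geometrically, and since $X_i$ is irreducible we may apply Theorem~\ref{thmi:effective_skewer}/\ref{thmi:general_CCC} to each nontrivial factor. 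A short word-length element in $G$ projects to a uniformly-short element in each $\isom(X_i)$ (uniformly in the generating set $T$, since passing to $G_0$ and projecting multiplies word length by a bounded factor), which is the mechanism by which shortness in $G$ is inherited.

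Next I would split into the trichotomy according to the geometry of the irreducible factors. \textbf{Case (1): $G$ is virtually abelian.} This is precisely the case where every $X_i$ is a quasi-line, equivalently $G$ acts geometrically on a Euclidean space; here we land in item~(1) and there is nothing further to produce. \textbf{Case (2): some irreducible factor $X_i$ is \emph{not} a quasi-line.} Then Theorem~\ref{thmi:short_Stable_CCC}, applied to the geometric action of (a finite-index subgroup of) the image of $G_0$ on $X_i$, produces a free all-rank-one stable subgroup $\langle g_1, g_2\rangle$ of that image with generators of length $< m(X_i)$; pulling these back to $G$ (using torsion-freeness of $G'$ to get genuine infinite-order, non-torsion representatives, and bounding the pullback length in terms of $[G:G_0]$) yields elements $g_1, g_2 \in G$ with $|g_i|_T < m$. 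Rank-one in $X_i$ need not mean rank-one in $X$, so the key point here is to promote: an element that is rank-one on an irreducible factor $X_i$ and has trivial (or bounded) projection to the complementary factor $\prod_{j \ne i} X_j$ acts as a rank-one isometry of $X$; we arrange this by first replacing $g_1, g_2$ with suitable short powers/products that kill or make elliptic their action on the complement, exactly as in the proof technique flagged after Theorem~\ref{thmi:general_CCC} (finding a hyperplane $h$ of $X$ coming from $X_i$ that gets flipped and then skewered). Stability of $\langle g_1, g_2\rangle$ in $G$ then follows from stability in $X_i$ plus the fact that the $X_i$-factor is undistorted and the complementary directions are bounded along the subgroup, so the orbit map quasi-isometrically embeds in the curtain model of $X$ — this puts us in item~(2). \textbf{Case (3): every irreducible factor is a quasi-line or all of them admit no rank-one isometry — more precisely, $G$ is not virtually abelian but $X$ splits as a nontrivial product of unbounded factors none of which is a quasi-line with $G$ essentially "rotating" through them, so $X$ (and hence $G$) has empty Morse boundary.} Since $G$ is not virtually abelian, some factor is unbounded and non-Euclidean, so $G$ contains a free subgroup — indeed Caprace--Sageev's rank rigidity applied to an irreducible unbounded factor gives a rank-one element \emph{of that factor}, whose positive powers together with a conjugate generate a free group, and by the effectivity of Theorem~\ref{thmi:general_CCC} these can be taken short; pulling back gives $\langle g_1, g_2\rangle < G$ free with $|g_i|_T < m$. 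That $G$ has empty Morse boundary in this case is a known consequence of $X$ being a nontrivial product with all factors unbounded (any Morse geodesic in a product with unbounded factors must be bounded) — this is essentially the contrapositive of the Cubical Rank Rigidity dichotomy combined with the fact that reducible CAT(0) cube complexes have no rank-one isometries — so we land in item~(3). Finally one checks the three cases are mutually exclusive: (1) forbids free subgroups, while (2) vs.\ (3) is distinguished by whether the Morse boundary is nonempty, which by the earlier theorems happens exactly when some irreducible factor is non-virtually-cyclic.

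The main obstacle I expect is Case~(2): producing a subgroup that is stable \emph{in $G$}, not merely free or rank-one on a factor, while keeping the generators short. Short rank-one elements on a factor $X_i$ are immediate from the earlier theorems, but their pullbacks to $G$ may have unbounded projections to the other factors, destroying rank-one-ness and stability in $X$; the fix is an effective "purification" step showing that a bounded-length modification (bounded in terms of $X$ and $[G:G_0]$ only) produces representatives with elliptic action on the complement, which then skewer an $X_i$-hyperplane of $X$ and hence are genuinely rank-one and stable in $X$. Carefully controlling the length cost of all the pullbacks, the passage to the torsion-free finite-index subgroup, and the purification — and verifying that the resulting constant depends only on $G$ and not on $T$ — is the technical heart of the argument, but each ingredient is available from the effective flipping/skewering results established earlier in the paper.
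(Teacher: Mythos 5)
There is a genuine gap, in two places. First, your case division and the ``purification'' step in Case (2) cannot work. Your Case (2) (``some irreducible factor $X_i$ is not a quasi-line'') overlaps with your Case (3) and, in particular, includes the situation where $X$ has at least two unbounded irreducible factors (e.g.\ a product of two trees). In that situation $X$ admits \emph{no} rank-one isometries at all: an axis whose complementary-factor coordinate is bounded still fails to be strongly contracting once the other factor is unbounded, and correspondingly $G$ has no Morse elements and empty Morse boundary. So conclusion (2) of the theorem is simply unachievable there, and no replacement of $g_1,g_2$ by short powers, products, or representatives with ``elliptic action on the complement'' can create a rank-one isometry of $X$ or a stable subgroup of $G$ --- the obstruction is the geometry of $X$, not the choice of representatives. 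The paper's trichotomy instead puts the irreducible case into (2) (via the ``moreover'' part of Corollary \ref{cor: effective_rank_CCC}) and, in the reducible non-abelian case, only claims a free subgroup together with empty Morse boundary, i.e.\ lands in (3). (Relatedly, your description of Case (3) invokes factors ``admitting no rank-one isometry,'' which cannot occur: by rank rigidity an unbounded irreducible factor carrying a geometric action always has rank-one elements.)

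Second, your argument never uses specialness, but specialness is exactly what makes the factorwise argument legitimate. You assert that a finite-index subgroup $G_0$ preserves each factor and that its projection to $\isom(X_i)$ ``has image acting geometrically.'' Cocompactness is fine, but properness/discreteness of the factor action fails in general --- irreducible lattices in products of trees (Burger--Mozes type) are the standard counterexample --- and the effective results you want to quote (Theorem \ref{thmi:short_Stable_CCC}, Corollary \ref{cor: effective_rank_CCC}) require a free, or at least proper, cocompact action on the irreducible factor. This is precisely what the paper's Lemma \ref{lem:special} supplies: using the cospecial action (disjoint hyperplane-carrier translates and prohibited inversions, Proposition \ref{prop:ted}), a finite-index subgroup splits as a direct product $H_1\times\cdots\times H_k$ with each $H_i$ acting freely cocompactly on $X_i$; the short free subgroup is then produced inside the factor $H_j$, with $T$-lengths controlled via Lemma \ref{lem:pass_to_finite_index} and the retraction $G_2\to H_j$. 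Without this splitting, your pullback and length bookkeeping has no valid starting point, so the proposal as written does not prove the theorem.
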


A particular consequence of the above theorem is that a special compact group $G$ is either virtually Abelian or contains a free subgroup generated by a uniformly short pair $g_1,g_2 \in G$ which can be seen as an effective Tits alternative. Although they don't explicitly point this out, this particular consequence of Theorem \ref{thmi:strongly_effective} can also be deduced from work of Petyt and Gupta (Appendix A \cite{Abbot-Ng-Spriano}) with slightly more work. More recently \cite{Kerr2021}, Kerr established a very general theorem regarding growths of subgroups of right-angled Artin groups. A particular consequence of Kerr's theorem is that an irreducible subgroup $G$ of a right-angled Artin group $A_\Gamma$ must contain a short rank-one element for any symmetric finite generating set for $G$. It's not hard to see that if $G$ is irreducible, compact and special, then its image (up to passing to a finite index subgroup; via Haglund and Wise \cite{Haglund2007}) in $A_\Gamma$ must contain elements acting as rank-one on the universal cover of the Salvetti complex $\Tilde{X_\Gamma}$. Therefore, combing the argument given in \cite{Abbot-Ng-Spriano} with work of Kerr \cite{Kerr2021} provides a complete proof for Theorem \ref{thmi:strongly_effective} under the extra assumption that the finite generating set $T$ is symmetric. Our proof of Theorem \ref{thmi:strongly_effective} does not make such an assumption neither does it use the virtual embedding of a special compact group into a right-angled Artin group. We also show:

\begin{introthm}$\mathrm{(Effective\, omnibus)}$  If $G$ virtually acts properly, cocompactly  and cospecially  on a finite product of irreducible CAT(0) cube complexes $X=\prod X_i$, then there exist  $m=m(G)$ and $g \in G$ acting as a rank-one element on each $X_i$ and loxodromically on each curtain model of $X_i$, with $|g|_T<m.$ If $X$ is essential, then $g$ acts loxodromically on each contact graph of $X_i$ as well.
    
\end{introthm}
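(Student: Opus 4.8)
The plan is to reduce to a free, cocompact, cospecial action of a finite-index subgroup that preserves each factor, to run (a mild variant of) the argument behind Theorem~\ref{thmi:effective_skewer} once inside each factor, and then to splice the per-factor elements thus produced into a single one using the geometry of the curtain models; the splicing is where the real work lies. First I would pass to a finite-index subgroup $G'\le G$, of index $d$ bounded by a function of $G$, which realizes the virtual action and in addition stabilizes each of the finitely many factors of $X=\prod_{i=1}^{n}X_i$; then $G'$ acts freely, cocompactly and cospecially on $X$ and the factor projections $\rho_i\colon G'\to\Aut(X_i)$ are defined. Given any finite generating set $T$ of $G$, the set of elements of $G'$ of $T$-length at most $2d$ is a finite generating set $T'$ of $G'$, and any bound on $T'$-word-lengths of elements of $G'$ becomes, after multiplying by $2d$, a bound on their $T$-word-lengths in $G$; this is exactly why the final constant depends on $G$ and not merely on $X$. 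Since $G'$ acts cocompactly and specially, $X$ and each $X_i$ carry a factor system \cite{HHS1}, and, replacing each $X_i$ by its essential core if necessary (a standard reduction \cite{capracesageev:rank} which does not change whether an element of $G'$ acts as rank-one on $X_i$), I may assume $X$ is essential and $G'$ fixes no point of $\partial X$. I discard bounded $X_i$ and handle a quasi-line factor directly: there a bounded search in $\mathrm{Cay}(G',T')$ produces a uniformly short element acting on $X_i$ as a nontrivial translation, hence as a rank-one isometry. So assume henceforth that each $X_i$ is irreducible, unbounded, and not a quasi-line.

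Fix $T'$ as above. Every hyperplane of the product $X$ is dual to exactly one factor, of the form $\hat h=h\times\prod_{j\ne i}X_j$ for a hyperplane $h$ of $X_i$, and for $g\in G'$ the relation $g\hat h^{+}\subsetneq\hat h^{-}$ is precisely the factor-$i$ relation $\rho_i(g)h^{+}\subsetneq h^{-}$. I would therefore run the argument proving Theorem~\ref{thmi:effective_skewer} for the free cocompact cospecial action of $G'$ on $X$, restricted to halfspaces dual to the fixed factor $X_i$; the only features of $X_i$ that argument uses are that $X_i$ is irreducible and carries a factor system, and the rank-one conclusion it produces is intrinsic to the geometry of $X_i$, so the possible non-properness of the $\rho_i(G')$-action on $X_i$ causes no trouble — we only ever manipulate elements of $G'$, which does act properly on $X$. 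This yields, for each $i$, a ``special'' hyperplane $\hat h_i$ of $X$ dual to $X_i$ together with elements $a_i,b_i\in G'$ of $T'$-length $<m_0$, where $m_0=m_0(X)$, such that $a_i\hat h_i^{+}\subsetneq\hat h_i^{-}$ and $b_i\hat h_i^{-}\subsetneq\hat h_i^{+}$. Hence $c_i:=b_ia_i$ has $T'$-length $<2m_0$, satisfies $c_i\hat h_i^{+}\subsetneq\hat h_i^{+}$ (so the cubical translation length of $\rho_i(c_i)$ on $X_i$ is at least $1$), and, since $\hat h_i$ is special, $\rho_i(c_i)$ is a rank-one isometry of $X_i$; equivalently $\rho_i(c_i)$ is loxodromic on the curtain model $\X(X_i)$, with translation length bounded below by a positive constant independent of $T'$ \cite{PSZCAT}.

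The main obstacle is to splice the $c_i$ into a single $g\in G'$ with $\rho_i(g)$ loxodromic on $\X(X_i)$ for every $i$ while keeping the $T'$-length of $g$ bounded in terms of $G$ alone. I would argue by induction on the number of factors, maintaining the stronger invariant that after $k$ steps $\rho_j(g)$ is loxodromic on $\X(X_j)$ with translation length at least a prescribed threshold $\tau_j$ for all $j\le k$. To incorporate factor $k+1$, I would use that $\rho_{k+1}(G')$ acts non-elementarily on the hyperbolic space $\X(X_{k+1})$, with $\rho_{k+1}(c_{k+1})$ loxodromic there (both from \cite{capracesageev:rank} and \cite{PSZCAT}, \cite{Zalloum23_injectivity}): after conjugating $c_{k+1}$ by a uniformly short element to make its $\X(X_{k+1})$-axis independent of that of $\rho_{k+1}(g)$, one replaces $g$ by a suitable bounded-length word in $g$ and $c_{k+1}$ (such as $g\,c_{k+1}^{M}$ or $(g\,c_{k+1}^{M}g^{-1})\,c_{k+1}^{M}$) whose parameters are bounded in terms of the hyperbolicity constant of $\X(X_{k+1})$ and the uniform lower bound on the translation length of $c_{k+1}$, so that a standard hyperbolic combination estimate makes $\rho_{k+1}(\cdot)$ loxodromic on $\X(X_{k+1})$. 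Because the $T'$-length of the inserted word $c_{k+1}^{M}$ is bounded, $\rho_j$ of it moves points of $\X(X_j)$ by a bounded amount for each $j\le k$; so provided the thresholds $\tau_j$ were chosen from the top down, all bounded in terms of $G$, large enough to absorb these bounded perturbations, the new $g$ stays loxodromic on $\X(X_j)$ for all $j\le k$. After $n\le\dim X$ such steps this produces the desired $g$, of $T'$-length $<m'(G)$, hence of $T$-length $<2d\,m'(G)=:m(G)$; the two delicate points — that only bounded powers of the $c_i$ are ever needed, and that upgrading one factor does not ruin the earlier ones — are precisely where the curtain-model geometry of \cite{PSZCAT}, \cite{Zalloum23_injectivity}, the effective counting tools used throughout this paper, and the uniform lower bound $1$ on cubical translation lengths \cite{haglund:isometries} are essential.

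Finally $g$ acts as a rank-one isometry on each $X_i$ (immediate for the quasi-line factors, and because $g$ skewers $\hat h_i$ for the remaining ones), hence loxodromically on each curtain model $\X(X_i)$ \cite{PSZCAT}; and if $X$ — equivalently every $X_i$ — is essential, then every rank-one isometry acts loxodromically on the contact graph \cite{Hagen2013}, so $g$ acts loxodromically on each contact graph of $X_i$ as well, with $|g|_T<m(G)$ for the arbitrary finite generating set $T$ of $G$.
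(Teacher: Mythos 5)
Your overall architecture diverges from the paper's at the crucial point, and the divergence creates a genuine gap. The paper never needs to ``splice'' anything by hyperbolic combination: cospeciality is used, via Proposition \ref{prop:ted} and Lemma \ref{lem:special}, to pass to a finite-index subgroup that splits as a direct product $H_1\times\cdots\times H_n$ with each $H_i$ acting geometrically on $X_i$. Effective rank rigidity (Corollary \ref{cor: effective_rank_CCC}, together with Remark \ref{rmk:ABD}) is then applied to each factor group $H_i$ with the projected generating set, and the final element is simply the product of the resulting commuting elements $h_i=(e,\dots,g_i,\dots,e)$, each of which acts as a rank-one isometry on its own factor and trivially on the others; no perturbation or combination estimate is ever invoked. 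You use cospeciality only to obtain a factor system, discard the product splitting (``the possible non-properness of the $\rho_i(G')$-action on $X_i$ causes no trouble''), and therefore have to do real work in the splicing step --- and that step, as written, does not go through.

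Concretely, two quantities you treat as uniform are not. First, making $\rho_{k+1}(g\,c_{k+1}^{M})$ loxodromic with $M$ bounded only by the hyperbolicity constant and the translation length of $c_{k+1}$ requires uniform control of the relative position of $\rho_{k+1}(g)$ and the axis of $c_{k+1}$ (e.g.\ that $g$ does not nearly interchange its endpoints); your ``conjugate by a uniformly short element to make the axes independent'' is exactly the non-effective part of statements like Theorem \ref{thm:fuj0809}, where the exponent is not uniform, and you give no mechanism replacing the acylindricity/projection arguments that such uniformity normally costs. Second, and more fatally, the ``bounded perturbation'' of the earlier factors is bounded only in terms of the displacement of $\rho_j(c_{k+1}^{M})$ on the curtain model of $X_j$, and a bound on $T'$-word-length gives no $T$-independent bound on that displacement: as $T$ varies, a single generator can be an arbitrary element of $G'$, so $c_{k+1}$ (which is built from $T'$-short elements) can act on the other factors in a completely uncontrolled, even adversarial, way --- for instance with $j$-component chosen to cancel the loxodromic behaviour of $\rho_j(g)$. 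Your thresholds $\tau_j$ would therefore have to dominate $T$-dependent quantities, while achieving translation length $\tau_j$ costs word length depending on $\tau_j$; the induction is circular and the final bound would depend on $T$, contradicting the statement. The repair is precisely the ingredient you set aside: with Lemma \ref{lem:special} the inserted elements can be taken literally trivial on all other factors, which is why cospeciality appears in the hypothesis. (Your first two steps --- passing to $G'$ with Shalen's Lemma \ref{lem:pass_to_finite_index} and producing, per factor, short elements flipping and skewering a hyperplane dual to that factor via the multi-eyry version of Lemma \ref{lem:short_loxodromic_top} --- are fine and consistent with the paper's toolkit; the essential/contact-graph addendum via Remark \ref{rmk:essential_contact_graph} is also fine once a correct splicing is in place.)
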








\subsection{Uniform exponential growth}\label{subsec:uniform_growth} The \emph{growth rate} $\lambda_{G,T}$ of a group $G$ generated by $T$ measures the rate of change of the function $f_{G,T}:\mathbb{N} \rightarrow \mathbb{N}$ that assigns to each $n$ the size of the ball $B(e,n)$ in Cay$(G,T)$. A group $G$ is said to have \emph{uniform exponential growth} if there exists an $\epsilon>0$ such that the growth rate $\lambda_{G,T}>1+\epsilon$ for any finite generating set $T$ for $G.$ Numerous examples of groups with exponential growth have uniform exponential growth (for instance, see \cite{Eskin2005}, \cite{Mangahas2009}, \cite{Koubi1998CroissanceUD}, \cite{Besson2011}, \cite{Kroph-Lyman-Ng}, \cite{Bering2018}, \cite{Gupta-Jank-Ng}, \cite{Anderson2007}, \cite{KarSageev2019}, \cite{Fuj2021},\cite{Abbot-Ng-Spriano} and \cite{FujSela}). Although this is not the main motivation for this article, Theorem \ref{thmi:RR_HHS} recovers uniform exponential growth for virtually torsion-free irreducible HHGs which was proven in \cite{Abbot-Ng-Spriano}. The tools, ideas and statements established in \cite{Abbot-Ng-Spriano} by Abbott, Ng and Spriano were of a substantial significance to the present article.

\subsection{Well-orderness of growth rates} Recently, Fujiwara \cite{Fuj2021} (see also \cite{FujSela}) showed that the growth rates of a group $G$ acting non-elementarily acylindrically on a hyperbolic space $X$ is well-ordered assuming that $G$ is equationally Noetherian and contains a short loxodromic on $X$, independently of a chosen finite generating set for $G.$ Since linear groups is equationally Noetherian, we get the following.

\begin{corollary} Let $G$ be a virtually torsion-free irreducible HHG which is not a quasi-line. If $G$ is linear or equationally Noetherian, then $G$ has well-ordered growth rates.
\end{corollary}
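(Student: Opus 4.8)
The plan is to combine Theorem~\ref{thmi:RR_HHS} with Fujiwara's well-orderedness theorem cited above. First I would dispose of the trivial input: by hypothesis $(G,\mathfrak S)$ is an irreducible HHG, so it comes equipped with the hyperbolic space $\calC S$ on which $G$ acts acylindrically. Since $G$ is assumed not to be a quasi-line, Theorem~\ref{thmi:RR_HHS} (in its final clause, under the standing virtually torsion-free assumption) produces a Morse element, hence a free all-Morse stable subgroup; in particular the $G$-action on $\calC S$ is non-elementary and $G$ is not virtually cyclic. So the action of $G$ on $\calC S$ is non-elementary and acylindrical, and Fujiwara's hypotheses about the ambient space are met.

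Second, I would verify the uniform-short-loxodromic hypothesis of Fujiwara's theorem. This is exactly where Theorem~\ref{thmi:RR_HHS}(2) is used: it furnishes a constant $m=m(G)$ such that, for \emph{every} finite generating set $T$ of $G$, there is a Morse element $g\in G$ with $|g|_T<m$. A Morse element of an HHG acts loxodromically on the top-level hyperbolic space $\calC S$ (this is part of the correspondence between Morse elements and elements with unbounded projection / loxodromic action on $\calC S$), so $g$ is a loxodromic on $\calC S$ of word-length at most $m$ in $T$. Thus $G$ contains a short loxodromic on $\calC S$, uniformly over all finite generating sets --- precisely the input Fujiwara requires.

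Third, I would check the algebraic hypothesis. Fujiwara's theorem needs $G$ equationally Noetherian; this is assumed outright in the second case, and in the first case ($G$ linear) it follows from the classical fact that linear groups over a field are equationally Noetherian (Bryant, Guba--Plotkin, etc.), which we may invoke as a black box. Having assembled all three inputs --- non-elementary acylindrical action on a hyperbolic space, a uniformly short loxodromic, and equational Noetherianity --- Fujiwara's theorem directly yields that the set of growth rates $\{\lambda_{G,T} : T \text{ finite generating set of } G\}$ is well-ordered, which is the claim.

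I do not anticipate a genuine obstacle here: the corollary is essentially a bookkeeping consequence of the machinery already in place. The only point requiring a little care is confirming that the ``short Morse element'' of Theorem~\ref{thmi:RR_HHS} does in fact act loxodromically on the specific hyperbolic space $\calC S$ appearing in Fujiwara's hypothesis (as opposed to merely being Morse in the Cayley graph), and that $\calC S$ is the space on which $G$ is known to act acylindrically --- but both facts are standard features of the HHG formalism and of the Morse/loxodromic dictionary, so this is routine rather than an obstruction.
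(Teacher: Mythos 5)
Your proposal is correct and follows essentially the same route as the paper: the corollary is obtained exactly by feeding the uniformly short Morse element of Theorem~\ref{thmi:RR_HHS} (which, as constructed, is loxodromic on the acylindrical space $\calC S$) together with equational Noetherianity of linear groups into Fujiwara's well-orderedness theorem. The care you flag about Morse versus loxodromic on $\calC S$ is handled in the paper's proof of Corollary~\ref{cor:effective_rank_regidity_HHG}, where the element produced is noted to act loxodromically on $\calC S$ directly.
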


The known examples covered by the above corollary include virtually special groups and right-angled Artin groups, however, both of these examples are covered by work of Kerr in \cite{Kerr2021} as we explain below. Up passing to a finite index subgroup, every compact special group $G=\pi_1(X)$ lives in some right-angled Artin group $A_\Gamma$ \cite{Haglund2007} and the universal cover $\Tilde{X}$ equivariantly isometrically embeds in $\Tilde{S}$ as a convex subcomplex; where $\Tilde{S}$ is the Salvetti complex corresponding to $A_\Gamma.$ While it's possible that some rank-one elements of $G$ become not rank-one for their action on  $\Tilde{S}$ (for instance, rank-one elements that stabilize a hyperplane in $\Tilde{X}$), some rank-one elements must survive. Namely, since $\Tilde{X}$ is a convex subcomplex of $\Tilde{S}$, if $\Tilde{S}$ is a product, then either $\Tilde{X}$ is a product or it lives trivially in one of the factors. Since $G$ contains rank-one elements, we know that that $\Tilde{X}$ is not a product, hence, $\Tilde{X}$ must live trivially in the product in which case Corollary 1.0.11 of Kerr applies \cite{Kerr2021}.

\subsection{Summary of the proof} Below, we will provide a detailed proof of Theorem \ref{thmi:RR_CCC} and \ref{thmi:RR_HHS} in a very special case, we have also attempted to make the proof accessible for a reader who is familiar with CAT(0) cube complexes and not necessarily HHGs. Hence, we start by providing an overview of the aforementioned. 

\vspace{5mm}

$\bf{HHS \,\, Overview}:$
An HHS $(X, \mathfrak S)$ is a geodesic space $X$ along with a collection of $E$-hyperbolic spaces $\{\calC U\}_{U  \in \mathfrak S}$ and coarsely surjective Lipshitz maps $\pi_U:X \rightarrow \calC U$ satisfying some properties. The collection of hyperbolic spaces is indexed by a set $\mathfrak S$: for each $U \in \mathfrak S$, we assign a hyperbolic space $\calC U$. Each pair of hyperbolic spaces are related by either \emph{nesting} $\nest$, \emph{orthogonality }$\perp$ or \emph{transversality }$\pitchfork$ and formally, such relations are imposed on the index set $\mathfrak S$ (whose elements are called \emph{domains}) labelling such hyperbolic spaces. Importantly (and all uniform type results rely heavily on this fact), there is an integer $N=N(\mathfrak S)$ such that the number of pairwise non-transverse domains is at most $N.$ Hence, a ``generic" pair of domains must be transverse. There is also a unique domain $S$ with $U \nest S$ for all $U \in \mathfrak S$ called the \emph{maximal} domain. When $U,V$ are nested or transverse, there are maps $\rho^U_V:\calC U \rightarrow \calC V$ that can be thought of as projections from one hyperbolic space to the other, and unless $V \nest U,$ the map $\rho^U_V$ is coarsely a point. 

\vspace{5mm}

$\bf{Tree\, of\, flats}:$ A good example to keep in mind here is the tree of flats (the universal cover of a torus wedge a circle) where the hyperbolic spaces in the HHS structure are as follows: to each flat, you assign the obvious two orthogonal lines as hyperbolic spaces and we say that two such lines are \emph{transverse} if they lie in different flats and orthogonal otherwise. Finally, you add another hyperbolic space which is simply the \emph{contact graph} of the tree of flats denoted $\calC S$ and you declare that each line nests in $\calC S.$ The maps $\rho^U_V$ are the obvious nearest point projections. Notice how a ``generic" pair of domains here are transverse as otherwise they are either lines in the same flat, or one of them is a line and the other is the contact graph (see \cite{SistoWhatIs} and \cite{russell2020convexity} for an excellent overview of the theory).

\vspace{5mm}
$\bf{An \,HHG:}$
 An HHG then is a group $G$ whose Cayley graph $X$ is an HHS and $G$ acts on $\mathfrak S$ with finitely many orbits preserving the relations $\nest, \perp, \pitchfork$ such that:

\begin{itemize}
    \item Each $g \in G$ is an isometrty $g: \calC U \rightarrow \calC gU$,
    \item  The maps $\pi_U:X \rightarrow \calC U$ are $G-$equivariant in the sense that $g\pi_U(x)=\pi_{gU}(gx),$ and

    \item  The ``projections" $\rho^U_V$ satisfy $g\rho^{U}_V=\rho^{gU}_{gV}$ for all $U,V \in \mathfrak S$ with $U \notperp V.$ Also, if $S$ is the unique maximal domain then $G.S=S.$
    
\end{itemize}

 \vspace{5mm}

$\bf{Set\,up}:$ Now, suppose that $G$ is an irreducible HHG with an HHS structure $(X, \mathfrak S)$ and maximal hyperbolic space $\calC S$. We provide a detailed proof of Theorem \ref{thmi:RR_HHS} in the simplest possible non-hyperbolic case where $G$ is torsion-free and the HHS at play has exactly 2 nesting levels and at most 2 pairwise orthogonal domains, this way, we can keep our intuition from the tree of flats present. The reader should however be aware that the general argument, especially when there are 3 or more nesting levels, is more sophisticated than a simple iteration of the argument we are about to present via a standard ``passing-up" statement. Namely, when there are say 3 nesting levels as opposed to only two, and $U$ is at the bottom of the nesting, if you apply the passing-up argument to get $U \nest W$, there is no assurance that $G.W=W$. This presents a technical difficulty that we had to also deal with (we do so in Lemma \ref{lem:effective_second_pass}). Aside from this point, the proof of the general case follows by simply inducting the forthcoming argument. All the other results of the article follow from Theorem \ref{thmi:RR_HHS} combined with some short arguments and known facts from the literature. To keep the notation simple, we may confuse domains $U$ coming from the index set $\mathfrak S$ and their associated hyperbolic spaces $\calC U$. 

\vspace{5mm}

$\bf{Proof \,(sketch):}$ Let $T$ be a finite generating set for $G$ and let $a \in T.$ The element $a$ must be loxodromic on at least one hyperbolic space $\calC U$ with stable translation length $\tau_U(a)>A,$ for some constant $A=A(\mathfrak S)$. This is proven in Theorem 1.5 of \cite{Abbott-Hagen-Petyt-Zalloum23}. If $\calC U=\calC S,$ we are extremely happy as any loxodromic on the maximal $\calC S$ must be Morse \cite{Durham2017-ce} (see also \cite{RST18}, \cite{PetytSpriano20} or \cite{ABD}), so let's assume that $U \sqsubsetneq S.$ Observe that it is not possible for every $b \in T$ to stabilize $U$ as otherwise, the product region corresponding to $U$ ($\mathbb{R}^2$ in the tree of flats example) is stabilized by the entire group violating that $X$ is irreducible. Similar to Proposition 4.2 in \cite{Abbot-Ng-Spriano}, one then considers the set of domains $\{U\} \cup \{TU\} \cup \{T^2U\}$, (in the tree of flats, this is a collection of lines since each $g \in G$ is an isometry). We now argue as follows.

\vspace{2mm}

\noindent (1) The set of domains $\calU=\{U\} \cup \{TU\} \cup \{T^2U\}$ contains a transverse pair (a pair of lines that are in distinct copies of $\mathbb{R}^2$; when the space is the tree of flats). Indeed, since $TU \neq U$, we have $\{T^2U\} \nsubseteq \{U\} \cup \{TU\}$ as otherwise $\{U\} \cup \{TU\}$ a finite collection of hyperbolic spaces which is $T$-invariant and hence $G$-invariant (in the tree of flats, this says that there exists a finite collection of lines which are invariant under $G$, but this is absurd). Hence, $\calU$ contains at least 3 domains and since the maximal number of pairwise non-transverse domains is 2, at least two of them must be transverse. Up to translating by an element from $T \cup T^2$, we may assume that the transverse domains are of the form $U,bU$ for some $b \in T \cup T^2.$ Our next goal is to use $U,bU$ to produce a large number of transverse domains of a \emph{specific type}.

\vspace{2mm}

\noindent (2) Recall since $a$ is loxodromic on $\calC U$, the element $bab^{-1}$ is loxodromic on $b\calC U=\calC bU$. In the tree of flats, the space $\calC U$ is a line $\mathbb{R}$ so $b\calC U$ is simply the translate of this line by $b.$ Further, these two lines $U,bU$ much live in distinct flats since they are transverse, call them flat 1 and flat 2 respectively.

\begin{remark} The transverse domains $U,bU$ are at distance at least 1 in the contact graph, simply because they lie in different flats. The point of the next two steps of the argument is to translate the domains $U,bU$ around to produce two transverse domains $U,gU$ which are sufficiently far in the maximal domain $\calC S$. However, in the tree of flats case, it's enough to know that the transverse domains are at distance at least one, hence, we can use the domains $U,bU$ that we already have and conclude the argument: the element $a$ acts by translation on a line in flat 1, hence it fixes a point in the contact graph; the red point in Figure \ref{fig:rotating_tree} (the graph here is locally infinite, but we omitted infinitely many edges for the sake of making the picture clearer). Similarly, the element $bab^{-1}$ fixes a point in flat 2; the blue point in Figure \ref{fig:rotating_tree}. One can then consider a geodesic connecting these two points (in the graph below, this geodesic is just an edge), pick the middle point of an edge connecting two flats, and consider the obvious two half spaces resulting from this point. Choose the half spaces that points towards the blue point. The element $a$ acts as translation on a line in flat 1, and hence it fixes the red point and rotates the space around it; in particular, after applying a high enough power of $a$, the chosen half space in the top left picture gets \emph{flipped} into its other half space as seen in the top right picture. An important point here is that the $a$-power needed to achieve such a flip is uniform (this is exactly the point of Theorem 1.5 in \cite{Abbott-Hagen-Petyt-Zalloum23} which gives a uniform lower bound on the $a$-translation on the domain $U$), let's call such a power $m$. Similarly, the element $bab^{-1}$ fixes the blue point and rotates the space around it, hence, after applying a suitable power of $bab^{-1}$, the picture in the top right of Figure \ref{fig:rotating_tree} transforms into the one in the bottom; after taking the maximum of the two powers, we may assume that such a power is also $m$. By combining the two previous steps, we see that the element $g=(ba^mb^{-1})a^m$ \emph{skewers} the half space in the top left picture properly into itself. From here, known statements from the literature imply that $g$ is rank-one, for instance, by \cite{capracesageev:rank}. The form $g=(ba^mb^{-1})a^m$ justifies the formula for the rank-one element of Theorem \ref{thmi:general_CCC}.

 \begin{figure}[ht]
   \includegraphics[width=\textwidth, trim = .001cm 7cm 2cm 5cm]{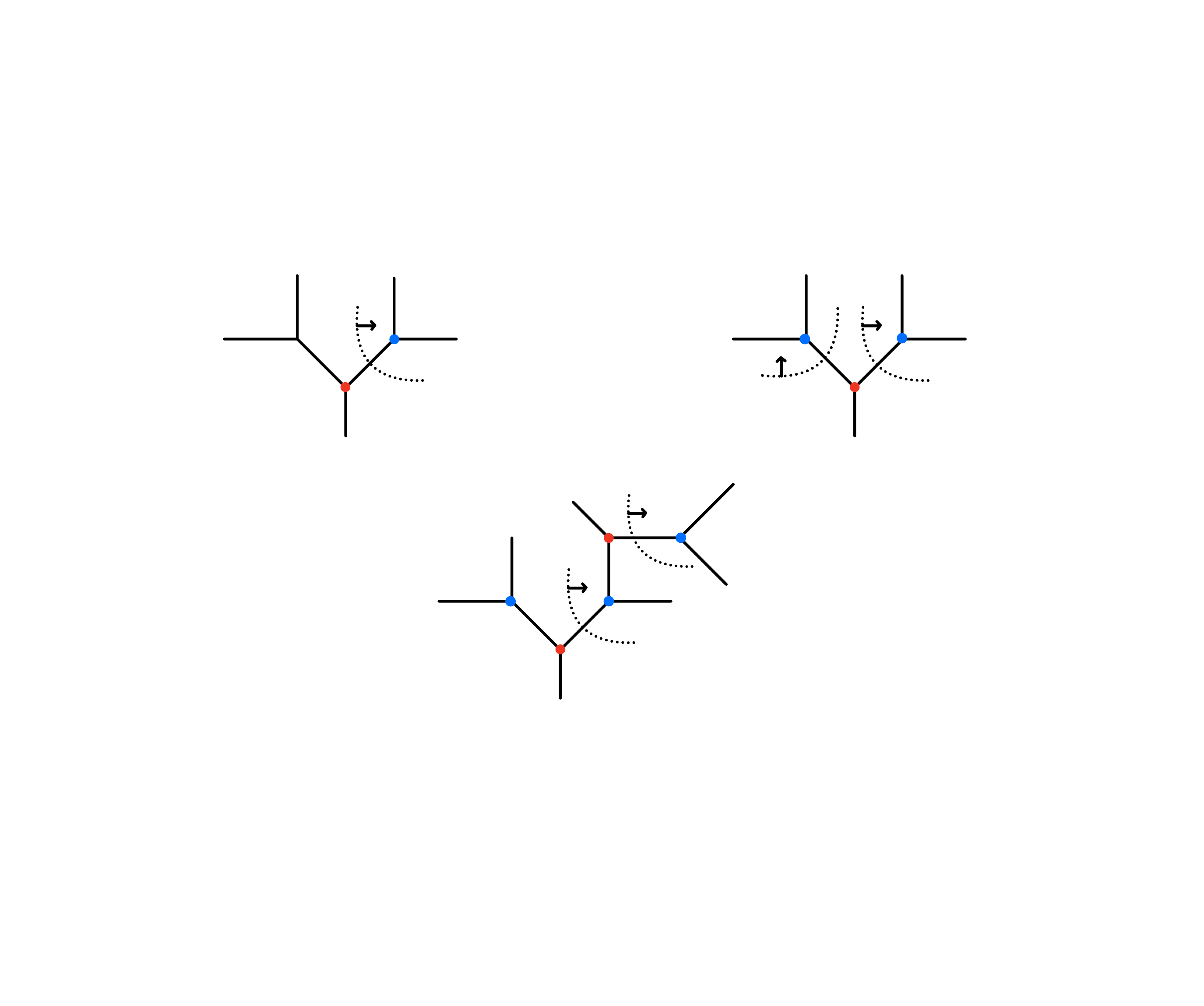}\centering
\caption{The elements $a, bab^{-1}$ act as rotations around the red and blue vertices respectively. Hence, after applying a large enough power of $a$, the half space in the top left picture gets flipped properly inside its other half space as shown in the top right picture. Applying a high enough power of $bab^{-1}$ rotates the translated half space properly inside the original half space.} \label{fig:rotating_tree}
\end{figure}
\vspace{2mm}
\end{remark}

\vspace{2mm}

\noindent (3) Continuing with the more general case, we now use $U,bU$ to produce a large number of nicely-aligned transverse domain (in a chain-like fashion so that we can implement the Behrstock inequality). It is essential that the large number of transverse domains we are about to produce are well-aligned and aren't just an arbitrary collection of transverse domains since, eventually, we want to use them to argue that we are picking out a certain direction in a higher up domain (for instance, in the tree of flats, we want the lines to be aligned in a chain-like fashion as opposed to a facing tuple fashion, to guarantee that we are moving in a particular direction and making significant distance in the contact graph). Fix a large enough constant $D$ to be determined later, depending only on the HHG and let $U_1=bU.$ Since $g_1=bab^{-1}$ is loxodromic on $\calC U_1$ with translation length $\tau_{U_1}(g_1)=\tau_{U}(a)=A=A(G),$ there exists some $m=m(D)$ such that $\dist_{U_1}(\rho^U_{U_1}, g_1^m \rho^U_{U_1})=\dist_{U_1}(\rho^U_{U_1}, \rho^{g_1^mU}_{g_1^mU_1})=\dist_{U_1}(\rho^U_{U_1}, \rho^{g_1^mU}_{U_1})>D$. On the other hand, the hyperbolic space $\calC U$ gets translated to $\calC g_1^mU$ which we denote $\calC U_2$, see Figure \ref{fig:produce_transverse_intro}. Now, we repeat the process, namely, the element $g_2^m:=g_1^ma^mg_1^{-m}$ fixes the domain $\calC U_2$ and translates the point $\rho^{U_1}_{U_2}$ to $\rho^{g_2^mU_1}_{g_2^mU_2}=\rho^{g_2^mU_1}_{U_2}$. Repeating this $n$-times (we will eventually choose a particular $n$ depending only on $\mathfrak S$) produces a collection $\calU=\{U, U_1, \cdots, U_n\}$ of transverse domains as in Figure \ref{fig:produce_transverse_intro}. By construction, we have $U_i=g_i^mU$ and each $g_i$ is a conjugate of $a.$ Also, notice that the $T$-length of $g_i$ depends only on $n,m$, and the latter will be shown to depend only on $\mathfrak S.$ In the tree of flats, what we have done is the following: the lines $U,bU$ lie in distinct copies of $\mathbb{R}^2;$ flat 1 and flat 2 respectively. The element $g_1=bab^{-1}$ acts by translation on the line $bU$ in flats 2, so we have used $g_1^m=ba^mb^{-1}$ to translate the nearest point projection $\rho^{U}_{U_1} \in bU$ far away from itself. While doing so, the line $U$ in flat 1 gets translated to a line $g_1^mU$ in a new flat, call it flat 3, and the nearest point projection of the line $g_1^mU$ to the line $bU$ in flat 2 is the $g_1^m$-translate of the nearest point projection of $U$ to $bU.$



\vspace{2mm}

\noindent (4) The point of translating and assuring the particular ``chain-like" alignment in the previous steps is this: Since the maps $\pi_U:X \rightarrow \calC U$ are all coarsely surjective, one can pick two points $x,y \in X$ such that $\pi_U(x)$ is far from $\rho^{U_1}_U$ and similarly, $\pi_{U_n}(y)$ is far from $\rho^{U_{n-1}}_{U_n}$, see Figure \ref{fig:produce_transverse_intro}. Hence, the Behrstock inequality assures that $\dist_{U_i}(x,y)>D$ for all $1 \leq i \leq n$. In the tree of flats $X$, if you have a collection of lines $\{\calC U_i\}$ in distinct flats where the projections of $\calC U_{i-1}, \calC U_{i+1}$ to $\calC U_i$ are $D$-far from one another, then you can find points $x,y \in X$ such that any geodesic connecting them must spend at least $D$-time ``in" each such line, in particular, this assures us that $x,y$ --and in fact the two lines $\calC U, \calC U_n$-- are far in the contact graph which is the point of the next step.

\begin{figure}[ht]
   \includegraphics[width=\textwidth, trim = 2cm 9cm 1cm 7cm]{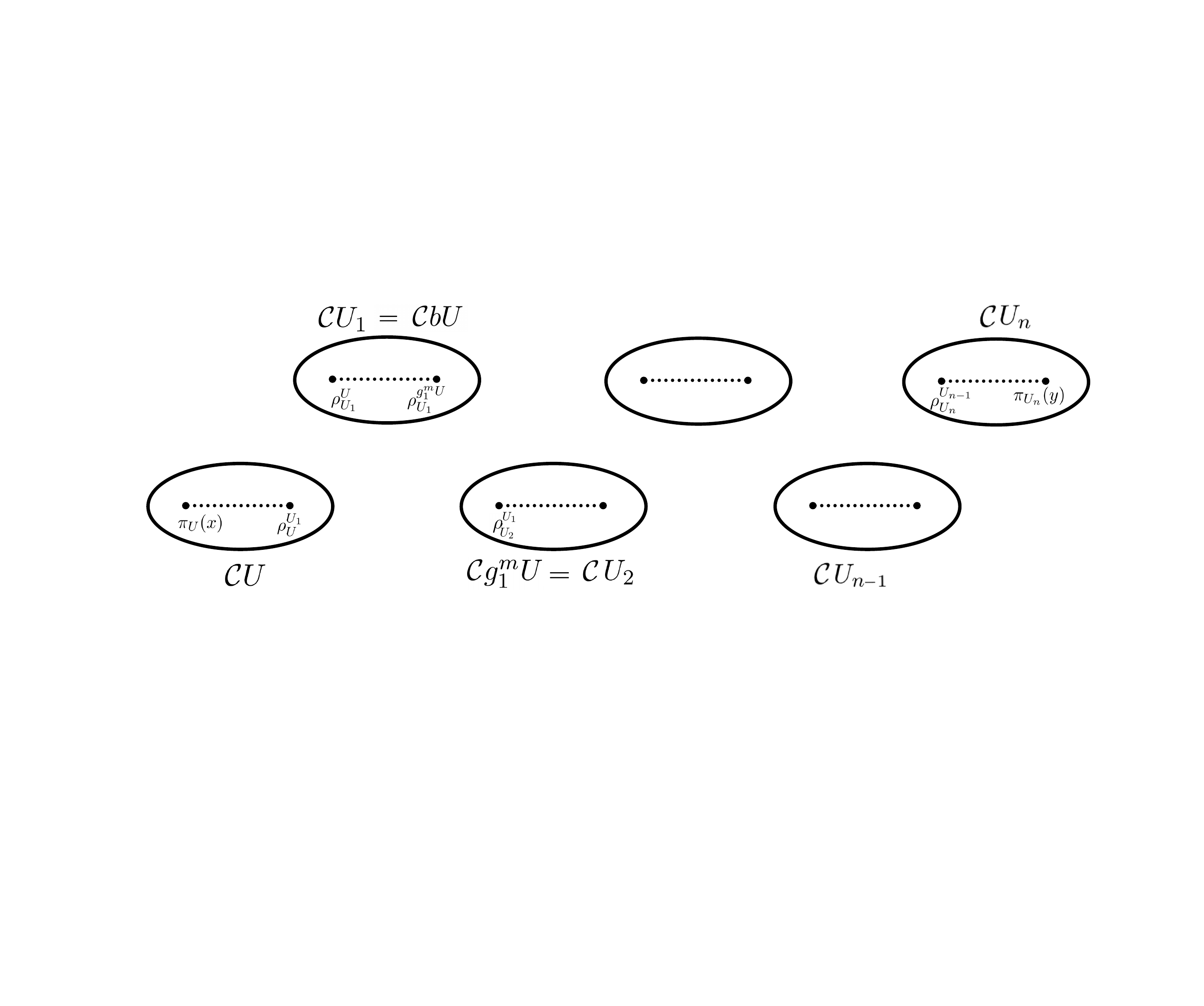}\centering
\caption{Producing many well-aligned transverse domains.} \label{fig:produce_transverse_intro}
\end{figure}
\vspace{2mm}

  \noindent (5) Since $\dist_{U_i}(x,y)>D$ for all $1 \leq i \leq n,$ the standard passing-up argument assures us that $x,y$, and in fact $U,U_n,$ are far in the unique higher up domain $\calC S$ which is the contact graph in the tree of flats case. More precisely, $\dist_S(\rho^U_S, \rho^{U_n}_S)>D$. But recall that $U_n=gU$ for some $g$ with $|g|_T$ bounded above depending only on $m,n$. For simplicity, let's just denote $gU$ by $V.$ See Figure \ref{fig:flip_skewer_intro}.

    \vspace{2mm}

\noindent  (6) This step will use the flipping and skewering tools of Caprace and Sageev \cite{capracesageev:rank}, but applied to \emph{curtains} in general HHSes. Consider a geodesic $\alpha$ connecting two points $x_1 \in \rho^U_S, x_2 \in \rho^{V}_S$ in $\calC S$ as in Figure \ref{fig:flip_skewer_intro} and pick an interval $I=[y_1,y_2] \subset \alpha$ of length $6E$ around the middle of the geodesic (as shown in Figure \ref{fig:flip_skewer_intro}), where $E$ is the uniform hyperbolicity constant for the hyperbolic spaces $\{\calC U|U \in \mathfrak S\}.$ 
    
    \begin{itemize}
        \item (curtains) Consider the nearest point projection (coarse) map $\pi_\alpha: \calC S \rightarrow \alpha$ and define $h:=\pi_\alpha^{-1}(I)$. Call such an $h$ a \emph{curtain} dual to $\alpha$ at $I.$ Similarly, define $h^+:=\pi_\alpha^{-1}((y_2, x_2])$ and $h^-:=\pi_\alpha^{-1}([x_1,y_1)])$; the green and red pieces in Figure \ref{fig:flip_skewer_intro}. 

        \item (curtains separate) Observe that $\calC S=h \cup h^+ \cup h^-$ and that $h^+, h^-$ are disjoint since $\pi_\alpha$ is $(1,E)$-Lipshitz (however, $h \cap h^+, h \cap h^-$ might be non-empty). Define $H, H^+,H^- \subset X$ to be the pre-images of $h,h^+,h^-$ under $\pi_S$. Since $\pi_S$ is an $(E,E)$-coarsely Lipshitz surjection, we have $X=H \cup H^+ \cup H^-,$ and $H^+ \cap H^-=\emptyset$  (in the cubical case, it's easy to argue that $H$ contains an actual cubical hyperplane using the thickness of the interval $I$, see Lemma \ref{lem:hyperplane_in_curtain}). 
        
        \item (HHS flipping lemma) By the bounded geodesic image theorem \cite{HHS2}, the set $\pi_{V}(H^+\cup H)$ coarsely coincides with $\rho^{U}_{V}$ which is coarsely a point.


        \item (flipping in $X$) Since $w=gag^{-1}$ is loxodromic on $\calC V=\calC gU$ (as $a$ is loxodromic on $\calC U$), there is a uniform power $m$, depending only on $E$ and $\tau_V(g)=\tau_U(a)=A=A(G),$ such that $\pi_V(w^m(H^+\cup H))=w^m\pi_{V}(H^+\cup H)$ is disjoint from $\pi_V((H^+\cup H))$. But the set $\pi_V((H^+\cup H))$ coarsely agrees with $\rho^U_V$. Hence, the set $H^+ \cup H$ is disjoint from $w^m(H^+\cup H),$ and as $X=H \cup H^+ \cup H^-,$ we conclude that $w^m(H^+\cup H) \subsetneq H^{-}$ (see Figure \ref{fig:flip_skewer_intro}).

        \item (flipping in $\calC S$) Since $H,H^+,H^-$ are defined to be the preimages of $h,h^+,h^-$ under $\pi_S$ and as $\pi_S$ is $G$-equivariant, we have $w^m(h^+ \cup h) \subsetneq h^-$ and $w^mh \cap h=\emptyset.$ This shows that the ``half space" $h^+$ (along with $h$ itself) got properly ``flipped" inside $h^-$, as shown in Figure \ref{fig:flip_skewer_intro}.

\begin{figure}[ht]
   \includegraphics[width=100mm, trim = .001cm 2cm 2cm 2cm]{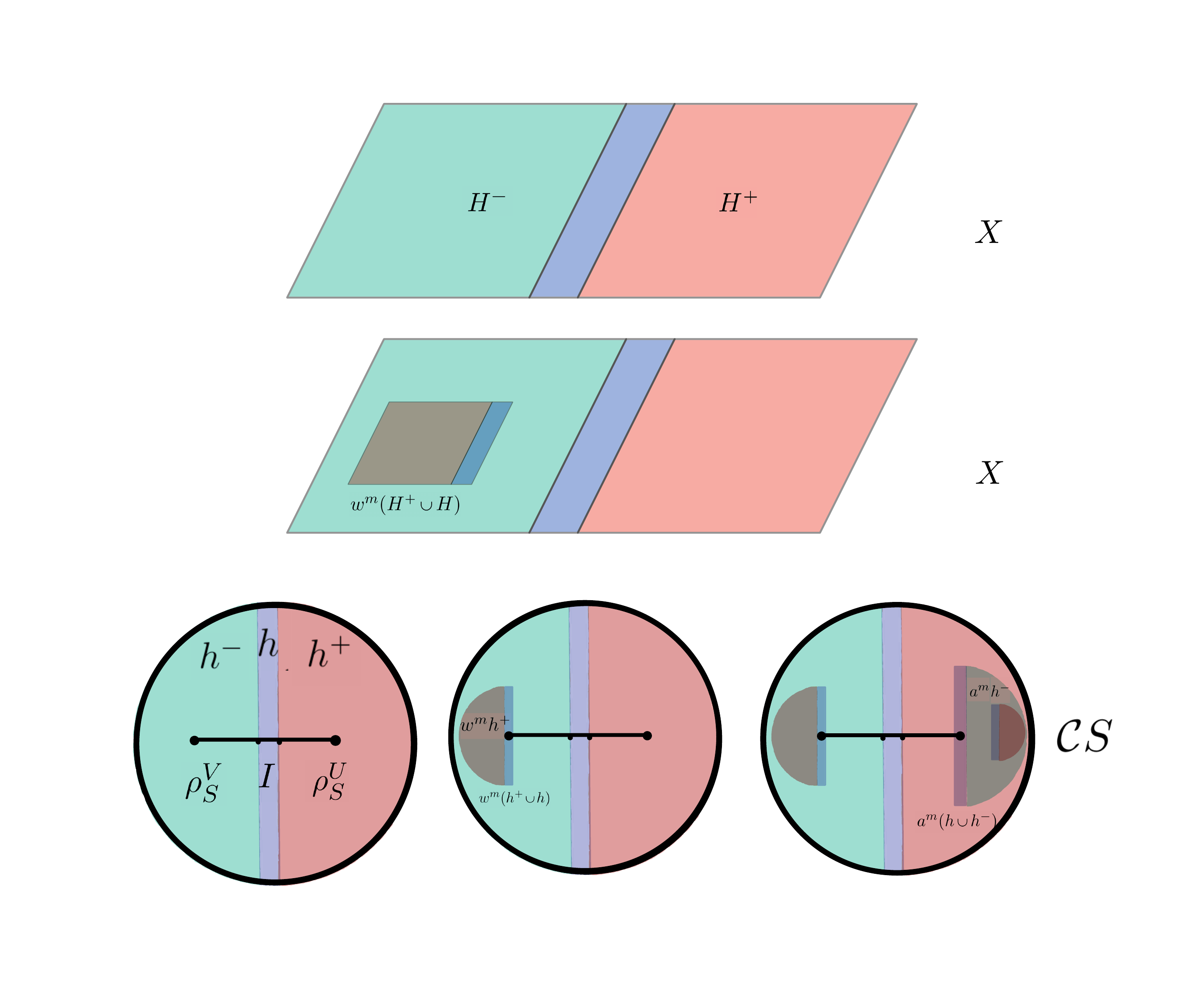}\centering
\caption{Flipping and skewering in $\calC S$.} \label{fig:flip_skewer_intro}
\end{figure}

        \item (skewering) The exact same argument shows that $a^m(h \cup h^-) \subsetneq h^+.$ In particular, $a^mw^mh^+ \subsetneq h^+$ and $a^mw^mh \cap h =\emptyset.$ It's then immediate that $a^mw^m$ is loxodromic on $\calC S$ (see Lemma \ref{lem:translation__length_via_skewering}), in fact, the stable translation length of $a^mw^m$ is at least 1. Observe that since $w=gag^{-1}$, the element $a^mw^m$ equals $a^m(ga^mg^{-1})$ which partially explains the form for the Morse element we obtain in Theorem \ref{thmi:general_HHG}. 
            \end{itemize}

\subsection{Questions} \label{subsec:further_directions} There are a few natural questions that have come up while working on this project, they are as follows.

\vspace{5mm}

\noindent\textbf{Effective rank rigidity for torsion-free cubulated groups.} The effective rank rigidity statement we provide for HHGs applies to a broad class of examples since all naturally occurring HHGs are virtually torsion-free (for instance, such a statement applies to  Artin groups of extra-large type, and more generally Artin groups of large and hyperbolic type). However, regarding questions relating to uniform exponential growth of CAT(0) cube complexes, the machinery of HHGs seems like an overkill. Namely, given the wealth of tools we have for studying CAT(0) cube complexes (see \cite{Genevois_Book} for an excellent treatment), the factor system assumption in this article is unsatisfactory; especially since the idea of what a proof should look like is very simple: given a group $G$ acting freely cocompactly on an irreducible CAT(0) cube complex $X$ (with no factor system), for a finite generating set $T$ of $G$, if all elements of $T$ stabilize a product $P$, we get that $G$ stabilizes this product which is contradiction. Hence, at least one element of $T$ must move $P$ away from itself (possibly with some overlaps), that is, $aP \neq P$ and potentially $aP \cap P \neq \emptyset.$ One then iterates the process, namely, it's again not possible that $aP \cup P$ is stabilized by every generator, so some generator moves it away from itself. If one arrives at a stage where $gP$ and $P$ are ``far enough" (in the sense that the combinatoral projection of $gP$ to $P$ is finite) for a short $g$, then it's easy to produce a rank-one element; however, it's unclear how one can arrive at such a stage. An obvious attempt would be to use the Helly property and try to argue that if for too many translates $P,gP,\cdots g^nP$ we have an unbounded combinatorial projection from $g_iP$ to $g_jP$, then too many hyperplanes intersect each other violating that $X$ is finite dimensional. However, the author was unsuccessful in making this work (it's worth pointing out that this is exactly what a factor system provides; it provides a control on how projections of such sets $g_iP$ overlap with each other).

\noindent\textbf{Producing infinite-order elements quickly.} Let $G$ act properly cocompactly on a CAT(0) cube complex (possibly with a factor system). Does there exist some $m=m(G)$ (or $m=m(X)$) such that $G$ contains an infinite order element $a$ with $|a|_T<m,$ for any finite generating set $T?$ What about HHGs? Establishing such a short element in any of the above two cases will get rid of the virtually torsion-free assumptions in the respective Theorem \ref{thmi:RR_CCC} and Theorem \ref{thmi:RR_HHS}.

\smallskip

\noindent\textbf{Strongly effective Tits Alternative.} As we discussed leading to and in Theorem \ref{thmi:strongly_effective}, a fine-tuned form  of an effective Tits Alternative asks for a trichotomy that emphasizes the convexity properties or the ``freeness level" met by the desirable free subgroup. Namely, we ask the following question. Which class of groups $\calG$ satisfy the following trichotomy? For each $G \in \calG,$ there exists an $m=m(G)$ such that for any finite generating set $T$ for $G$ precisely one of the following holds:

\begin{itemize}
    \item $G$ is virtually Abelian, 
    \item $G$ contains a free all-Morse stable subgroup $\langle g_1,g_2 \rangle $ with $|g_i|_T<m$, or

    \item $G$ contains a free subgroup $\langle g_1,g_2 \rangle $ with $|g_i|_T<m$ and has empty Morse boundary.

\end{itemize}

Recall that an element $g \in G$ is said to be \emph{Morse} if it has a quasi-axis $\alpha$ in a Cayley graph of $G$ such that each $(q,q)$-quasi-geodesic $\beta$ with end points on $\alpha$ remains in some $M=M(q,q)$-neighborhood of $\alpha$.
\smallskip

\noindent\textbf{Short rank-one elements for CAT(0) groups.} Our tools in the present article rely heavily on the notion of a \emph{curtain} introduced in \cite{PSZCAT}. The hyperbolic spaces in the hierarchichal structure of an HHS $X$ organize how such curtains interact with one another and such an organization is precisely what amounts to finding short elements that flip and skewer a curtain uniformly quickly. In the context of rank-one CAT(0) groups, \cite{PSZCAT} already provide a few pieces of the puzzle: curtains (particularly, $L$-separated curtains) are abundant and there is a single hyperbolic space where the desired short rank-one element is compelled to (loxodromically) act. Can such data be combined to obtain a short rank-one element?

\subsection*{Acknowledgements} The author is very thankful to:

\begin{itemize}

    \item Montserrat Casals-Ruiz, Sami Douba and Jacob Russell for useful discussions regarding linear groups and their connections with HHGs.

    \item Kunal Chawla and Thomas Ng for some useful feedback on an earlier draft of the paper.

    \item Alice Kerr for discussions regarding the connections between her work in \cite{Kerr2021} and this article and for patiently explaining to me many of her results in \cite{Kerr2021}.

    \item Macarena Arenas, Rylee Lyman, Michah Sageev and especially Mark Hagen for discussions related to special compact CAT(0) cube complexes.

    \item Johanna Mangahas for thoroughly explaining some of her results in \cite{MangahasRecipie} to me.

    \item Nir Lazarovitch for asking an interesting question that led to strengthening the conclusions of Theorem \ref{thmi:RR_CCC}, \ref{thmi:short_Stable_CCC} and \ref{thmi:effective_skewer}.
    
        \item Harry Petyt for explaining to me many ideas from his work with Spriano in \cite{PetytSpriano20} with Gupta in \cite{Abbot-Ng-Spriano} and for a useful discussion regarding Lemma \ref{lem:effective_second_pass}.

    \item Michah Sageev for asking a question regarding effective Tits Alternative which led to Theorem \ref{thmi:strongly_effective}.

    \item Mathew Durham for sharing with me a draft of his forthcoming paper \cite{Durham2023} within which is a useful lemma to this work.
    
    \item Kasra Rafi for teaching me many mapping class group techniques and leading me to discover that it's often very useful to think in terms of mapping class groups examples as opposed to only CAT(0) cube complexes.

    \item Thomas Ng for explaining to me some ideas from his work with Abbott and Spriano in \cite{Abbot-Ng-Spriano}.

    \item Thomas Ng, Davide Spriano, and Giulio Tiozzo for patiently listening to me explain some of this work to them.

    \item Mark Hagen for asking whether the dichotomy of Theorem A exists during his talk at the University of Toronto hyperbolic lunch on April 2020. Indeed, Hagen's question is what initiated my work on this project.

\end{itemize}

\section{Stable translation length and growth of finite index subgroups}

Let $g$ be an isometry of a metric space $X$, the \emph{stable translation length} of $g$ is defined to be $\tau_X(g)=\underset{n\rightarrow \infty}{\lim}\frac{\dist (x,g^nx)}{n}$, for some $x \in X$. It is immediate to check that for any metric space $X$, the stable translation length is independent of the chosen point $x \in X.$ An element $g$ is said to be \emph{loxodromic} on $X$ if $\tau_X(g)>0.$

For a finite generating set $T$ of a group $G$, the following lemma allows us to pass to a finite index subgroup $H$ with minimal affect to the $T$-length of elements of $G.$

\begin{lemma}[{\cite[Lemma 3.4]{Shalen1992}}]\label{lem:pass_to_finite_index} Let $G$ be a group with a finite generating set $T$ and let $H$ be a finite index subgroup with $[G:H]=d$. There exists a finite generating set $T_H$ for $H$ all of whose elements have $T$-length at most $2d-1.$
    
\end{lemma}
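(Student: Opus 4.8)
The plan is to run the Reidemeister–Schreier rewriting process using a coset transversal chosen so that its representatives are short. First I would form the Schreier coset graph $\Gamma$ of $H$ in $G$ relative to $T$: its vertex set is the collection of $d$ right cosets $H\backslash G$, and for each coset $Hg$ and each $t\in T$ there is an edge joining $Hg$ to $Hgt$. Since $T$ generates $G$, the graph $\Gamma$ is connected, so it contains a spanning tree $\Upsilon$; root it at the vertex $H$. As $\Upsilon$ has $d$ vertices, every vertex is joined to the root by a $\Upsilon$-path of combinatorial length at most $d-1$. For each coset $c$, let $q_c\in G$ be the element obtained by reading off the edge labels (with the appropriate signs) along the unique $\Upsilon$-path from $H$ to $c$. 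This produces a transversal $\mathcal T=\{q_c\}$ with $q_H=e$ and $|q_c|_T\le d-1$ for every coset $c$ (this prefix-closed, or \emph{Schreier}, transversal is the whole point; see the remark below).

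Next I would write down the associated Reidemeister–Schreier generators. For $q\in\mathcal T$ and $t\in T\cup T^{-1}$ set $\gamma(q,t)=q\,t\,\overline{qt}^{\,-1}$, where $\overline{x}\in\mathcal T$ denotes the representative of the coset $Hx$. One checks immediately that $\gamma(q,t)\in H$ (since $Hqt=H\overline{qt}$), and that $\gamma(q,t)^{-1}$ is again of the form $\gamma(q',t')$, so it suffices to use the $\gamma(q,t)$ with $t\in T$. The one genuinely substantive point is that these elements generate $H$: given $h=t_1\cdots t_n\in H$ with $t_i\in T\cup T^{-1}$, put $q^{(0)}=e$ and $q^{(j)}=\overline{t_1\cdots t_j}$, so $q^{(n)}=\overline{h}=e$; then $\overline{q^{(j-1)}t_j}=q^{(j)}$, and the product $\prod_{j=1}^{n}\gamma(q^{(j-1)},t_j)$ telescopes to $q^{(0)}t_1\cdots t_n (q^{(n)})^{-1}=h$. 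This is the standard core of Reidemeister–Schreier, which I would only sketch.

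Finally I would read off the length estimate: for any $q\in\mathcal T$ and $t\in T$,
$$|\gamma(q,t)|_T \;\le\; |q|_T + |t|_T + |\overline{qt}|_T \;\le\; (d-1) + 1 + (d-1) \;=\; 2d-1 .$$
Setting $T_H=\{\gamma(q,t)\ :\ q\in\mathcal T,\ t\in T,\ \gamma(q,t)\neq e\}$ then gives a finite generating set for $H$, all of whose elements have $T$-length at most $2d-1$, as required.

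The only place needing care is the choice of transversal: one must take the Schreier (prefix-closed, spanning-tree) transversal rather than an arbitrary set of coset representatives, since it is exactly the bound $|q_c|_T\le d-1$ that drives the estimate — with a careless choice the representatives could be arbitrarily long and the conclusion would fail. Everything else (membership in $H$, the telescoping identity, and the final triangle-inequality bound) is formal, so I do not expect any real obstacle beyond this bookkeeping.
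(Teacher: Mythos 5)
Your argument is correct: the Schreier-transversal bound $|q_c|_T\le d-1$, the membership and telescoping identities, and the final estimate $|\gamma(q,t)|_T\le 2d-1$ are all sound, and together they prove the lemma. The paper itself gives no proof but cites Shalen--Wagreich, whose argument is essentially the same (minimal-length coset representatives of length at most $d-1$ together with the Reidemeister--Schreier generators $q\,t\,\overline{qt}^{-1}$), so your route matches the cited one.
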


The following follows by combining Theorem 14 of \cite{Fujiwara2009SubgroupsGB} 
and Proposition 2.3 part 2 of \cite{Fuj08}.

\begin{theorem}[{\cite{Fuj08}, \cite{Fujiwara2009SubgroupsGB}}]\label{thm:fuj0809} If $G$ acts on an $E$-hyperbolic space $X$ with $a,b\in G$ where $a$ acts loxodromically and $a^k \neq ba^kb^{-1}$ for each $ k \neq 0,$ then there exists some $n$ such that for all $m \geq n$ we have:

\begin{enumerate}
    \item $\langle a^m, ba^mb^{-1}\rangle $ is a free, and
    \item $\langle a^m, ba^mb^{-1}\rangle $ is quasi-isometrically embedded in $X.$
\end{enumerate}
In particular, all elements of $\langle a^m, ba^mb^{-1}\rangle$ act loxodromically on $X.$ 
    
\end{theorem}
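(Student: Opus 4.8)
The plan is to deduce this from the standard ping-pong dynamics of loxodromic isometries on a hyperbolic space, which is also what the cited results of Fujiwara package up. Since $a$ acts loxodromically on the $E$-hyperbolic space $X$, it has a quasi-axis and an attracting/repelling pair of fixed points $a^\pm \in \partial X$; the conjugate $g := bab^{-1}$ is then loxodromic as well, with fixed points $ba^\pm$ and quasi-axis $b\cdot(\text{axis of }a)$. First I would fix small neighbourhoods $U^\pm$ of $a^\pm$ and $V^\pm$ of $ba^\pm$ in $\overline X = X\cup\partial X$ and apply the classical ping-pong lemma to $\langle a^m, g^m\rangle$ acting on $\overline X$: for $m$ large enough, $a^{\pm m}$ carries the complement of the repelling (resp.\ attracting) neighbourhood into the attracting (resp.\ repelling) one, and similarly for $g^{\pm m}$; so provided the four sets $U^+, U^-, V^+, V^-$ are pairwise disjoint, the ping-pong criterion yields that $\langle a^m, g^m\rangle$ is free of rank two on $\{a^m, g^m\}$, which is conclusion (1).

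For conclusion (2) I would extract a quantitative version of the same ping-pong: for a reduced word $s_1\cdots s_\ell$ in $a^{\pm m}, g^{\pm m}$, the partial products drag a fixed basepoint monotonically along the relevant quasi-axes, and $\delta$-thinness of triangles shows these displacements add up with only bounded cancellation, so the orbit map from the word metric on $\langle a^m, g^m\rangle$ into $X$ is a quasi-isometric embedding once $m$ (hence the translation lengths of the generators) is large enough. The final sentence then comes for free: in the Cayley tree of a rank-two free group every nontrivial element has positive translation length, and a quasi-isometric embedding transports this to positive translation length in $X$, i.e.\ loxodromicity of every nontrivial element of $\langle a^m, ba^mb^{-1}\rangle$.

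The step I expect to be the main obstacle — and the only place the hypothesis $a^k \neq ba^kb^{-1}$ for $k\neq 0$ is used — is arranging the four boundary neighbourhoods to be pairwise disjoint, equivalently ruling out that $a$ and $g = bab^{-1}$ share a fixed point in $\partial X$. In complete generality this can genuinely fail: two conjugate loxodromics sharing a parabolic-type fixed point can generate a non-free metabelian group even when $a^k\neq ba^kb^{-1}$, so some properness/acylindricity-type regularity is genuinely needed here, and it is exactly this that the statements we cite supply (Theorem~14 of \cite{Fujiwara2009SubgroupsGB} together with Proposition~2.3(2) of \cite{Fuj08}). Under that regularity, if $a$ and $g$ shared even a single point of $\partial X$ they would share the whole fixed-point pair, whose stabiliser is virtually cyclic; this would force $a^k = g^k = ba^kb^{-1}$ for some $k\neq 0$, contradicting the hypothesis. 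With that degenerate configuration excluded, the neighbourhoods $U^\pm, V^\pm$ can be shrunk to be disjoint, the ping-pong argument above applies verbatim, and taking $n$ to be any exponent past which all the relevant maps achieve the required "push into the attracting neighbourhood" behaviour produces the claimed $n$, with both conclusions holding for every $m \ge n$.
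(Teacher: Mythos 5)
First, a caveat about what there is to compare against: the paper contains no proof of Theorem \ref{thm:fuj0809} at all --- it is imported by combining Theorem 14 of \cite{Fujiwara2009SubgroupsGB} with Proposition 2.3(2) of \cite{Fuj08} --- so your ping-pong outline is really being measured against the argument behind those citations. At that level your sketch follows the intended route, and your central observation is correct and worth making explicit: as literally stated, for an arbitrary isometric action, the theorem fails. For instance, let $a\colon x\mapsto 2x$ and $b\colon x\mapsto x+1$ act on $\mathbb{H}^2$; then $a$ is loxodromic, $ba^kb^{-1}\colon x\mapsto 2^kx+(1-2^k)$ differs from $a^k$ for every $k\neq 0$, yet $a^m$ and $ba^mb^{-1}$ share the fixed point $\infty$ and generate a non-abelian subgroup of the solvable affine group for every $m$, so they never generate a free group. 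Thus the WPD/acylindricity-type regularity you flag is genuinely part of the hypotheses of the cited results, and it is what makes the paper's only application legitimate: in Corollary \ref{cor:effective_rank_regidity_HHG} the action on $\calC S$ is acylindrical.

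The genuine gap is in your final step. Sharing both endpoints and sitting inside a virtually cyclic stabiliser does not force $a^k=ba^kb^{-1}$: since $a$ and $g=bab^{-1}$ have equal translation length, commensuration in that virtually cyclic group only yields $ba^kb^{-1}=a^{\pm k}$, possibly twisted by torsion, and neither the minus sign nor the torsion is excluded by the stated hypothesis. This is not a removable technicality. Take $G=(\mathbb{Z}\times\mathbb{Z}/2)\rtimes\mathbb{Z}/2$, with $a$ generating the $\mathbb{Z}$ factor, $t$ the central involution, and $b$ acting by $a\mapsto a^{-1}t$, $t\mapsto t$; this group acts properly cocompactly (hence acylindrically) on a line with $a$ loxodromic, and $ba^kb^{-1}=a^{-k}t^k\neq a^k$ for all $k\neq 0$, yet for every odd $m$ the subgroup $\langle a^m, ba^mb^{-1}\rangle$ contains the elliptic torsion element $t$, so freeness and the ``all elements loxodromic'' clause fail for infinitely many $m$. (In the torsion-free variant $ba^kb^{-1}=a^{-k}$ the subgroup collapses to a cyclic group, which still satisfies the conclusions, but not as a rank-two free group.) The paper's application sidesteps exactly this configuration: before invoking Theorem \ref{thm:fuj0809} it uses \cite{Dahmani2017} to choose $b$ with $bg^kb^{-1}\neq g^{\pm k}$ for all $k\neq 0$. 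To complete your argument you should either assume that stronger hypothesis (which is what is actually used downstream) or treat the case $ba^kb^{-1}=a^{\pm k}$ up to torsion separately rather than deriving a contradiction from the hypothesis as stated.
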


\section{CAT(0) spaces, cube complexes and coarse median spaces} For the definitions of a CAT(0) space, cube complex, coarse median spaces and some related facts, see Sections 2.4 and 2.5 of \cite{DZ22} were we give a relatively thorough treatment. Here, we will merely recall the facts most relevant to this paper. A CAT(0) cube complex is said to be \emph{finite dimensional} if there exists an integer $v$ such that if $[0,1]^k$ is an isometrically embedded cube in $X$, we have $k \leq v.$ The integer $v$ is said to be the \emph{dimension} of $X.$ A \emph{combinatorial path} is a path that lives in the $1$-skeleton of $X$. A \emph{combinatorial geodesic} connecting $x,y \in X^{0}$ is combinatorial path connecting $x,y$ of minimal length. The \emph{combinatorial distance} between $x,y$ is defined to be the length of a combinatorial geodesic connecting $x,y.$ For $x,y \in X^{0}$, we let $[x,y]$ denote the collection of all combinatorial paths connecting $x,y.$ A subset $Y\subset X$ is said to be \emph{combinatorially convex} if whenever $x,y \in Y^0$, we have $[x,y] \in Y.$

\begin{definition}(Hyperplanes, half spaces and separation) Let $X$ be a CAT(0) cube complex. A \emph{hyperplane} is a connected subspace $h \subset X$ such that for each cube $c$ of $X$, the intersection $h \cap c$ is either empty or a
midcube of $c$.  For each hyperplane $h$, the complement $X \setminus h$ has exactly two connected components $h^+, h^-$ called \emph{half-spaces}
associated to $h$ . A hyperplane $h$ is said to \emph{separate} the sets $A,B \subseteq X$ if $A \subseteq h^+$ and $B \subseteq h^-.$ A standard fact regarding CAT(0) cube complexes (for instance, see \cite{capracesageev:rank}) is that the combinatorial distance between two vertices $x,y \in X^{0}$ coincides with the number of hyperplanes separating $x,y.$ Finally, a \emph{chain} is a collection of hyperplanes $\{h_1,h_2, \cdots h_n\}$ such that each $h_i$ separates $h_{i-1}$ from $h_{i+1}.$
\end{definition}





In \cite{Bowditch13}, Bowditch introduced the notion of a \emph{coarse median space} were, roughly speaking, each finite set of points can be approximated by a CAT(0) cube complex, and for any three points $x,y,z,$ one can assign a ``coarse center" $m(x,y,z)$ satisfying certain properties (see Definition 2.28 in \cite{DZ22}). A coarse median space is said to be of dimension $v$ if the every finite set of points is approximated by a CAT(0) cube complex of dimension at most $v.$ A subset $Y$ of a coarse median space $X$ is said to be \emph{$K$-median-convex} if for any two points $x,y \in Y$ and $z \in X,$ the point $m(x,y,z)$ is within $K$ of $Y.$ Also, for two points $x,y \in X,$ one can define the \emph{median interval}, denoted $[x,y]$, as $[x,y]=\{m(x,y,z)| z \in X\}$. The primary examples of coarse median spaces are CAT(0) cube complexes and in that case, the median interval $[x,y]$ is exactly the collection of all combinatorial geodesics connecting $x,y.$ Finally, for an arbitrary subset of a coarse median space, one can form the \emph{median hull} as follows.

\begin{definition}\label{def:median_hull} (median hull)
    Let $Y$ be a subset of a coarse median space $X$, let $$J^1(Y)= \underset{x,y \in Y}{\cup} [x,y].$$ We define $J^n(Y)$ inductively by setting $J^0(Y)=Y, J^1(Y)$ as above, and $J^n(Y)=J(J^{n-1}(Y))$. By Lemma 6.1 of \cite{Bowditch13}, if $v$ is the dimension of $X$, then there exists a constant $C$, depending only on $X,$ such that $J^{v+1}(Y) \subseteq \cal N_C(J^v(Y))$. It's then immediate that, if one defines $\hull(Y)=J^v(Y),$ then $\hull(Y)=J^v(Y)$ is $C$-median convex. Further, in the special case where $X$ is a CAT(0) cube complex, $C=0$ (so $J^{v+1}(Y)=J^v(Y))$ and $\hull(Y)$ is combinatorially convex, where $v$ is the dimension of the CAT(0) cube complex $X.$
    
\end{definition}



Another useful fact for CAT(0) cube complexes is the existence of a \emph{combinatorial projection}.

\begin{definition}\label{def:combinatorial_projections}(Combinatorial projections) Let $Y$ be a combinatorially convex set in a CAT(0) cube complex $X,$ and let $x$ be a vertex in $X$. The \emph{combinatorial projection} of $x$ to $Y$, denoted $P_Y(x),$ is the vertex realizing the distance $d(x,Y).$ Such a vertex is unique (for instance, by Lemma 1.2.3 \cite{Genevois2015HyperbolicDG}) and it is characterized by the property that a hyperplane $h$ separates $x,Y$ if and only if it separates $x,P_Y(x).$ For such a characterization, see Lemma 13.8 in \cite{Haglund2007}. See also \cite{HHS1} and \cite{HHS2}.
\end{definition}

  \subsection{The contact graph and the curtain model} For any CAT(0) space, one can associate a $\delta$-hyperbolic space for $X$ (with $\delta$ independent of the CAT(0) space) called the \emph{curtain model} \cite{PSZCAT} and denoted $X_{\Dist}$. A fundamental property of such a hyperbolic space is that it witnesses the action of \emph{all} rank-one elements in $\isom(X)$. The precise statement is below: 

  \begin{theorem}[{\cite[Theorem C]{PSZCAT}}]\label{thmi:rank-one_iff_skewers_iff_QI}
Let $g$ be a semisimple isometry of a CAT(0) space $X$. The following are~\mbox{equivalent}.
\begin{itemize}
\item   $g$ is strongly contracting. 
\item   $g$ acts loxodromically on the curtain model $\X$.
\end{itemize}
\end{theorem}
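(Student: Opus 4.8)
The plan is to reduce the statement to a dichotomy about a single CAT(0) geodesic axis, and then to build a two‑way dictionary between the metric contraction property of that axis and the combinatorics of \emph{$L$-separated chains of curtains} dual to it.

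\textbf{Reduction to the axis.} Since $g$ is semisimple it is either elliptic or hyperbolic. If $g$ is elliptic its $\langle g\rangle$-orbits are bounded in $X$, hence bounded in $\X$ (the coarse quotient map $X\to\X$ is $g$-equivariant and coarsely Lipschitz, with $\Dist\le\dist$ and $\dist$ bounded above by an affine function of the CAT(0) metric), so $g$ is neither strongly contracting nor $\Dist$-loxodromic and both conditions fail. Thus I may assume $g$ is hyperbolic with an invariant CAT(0) geodesic axis $\gamma$ and $\tau:=\tau_X(g)>0$. It is classical that $g$ is strongly contracting exactly when $\gamma$ is a strongly contracting subset of $X$, i.e.\ there is $D$ with $\diam\pi_\gamma(B)\le D$ for every metric ball $B$ disjoint from $\gamma$. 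Since $\tau_{\X}(g)\le\tau$ always, the whole theorem comes down to the claim: $\gamma$ is strongly contracting if and only if $\Dist(\gamma(0),\gamma(n\tau))\gtrsim n$, i.e.\ $\tau_{\X}(g)>0$.

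\textbf{Contracting $\Rightarrow$ $\Dist$-loxodromic.} Assume $\gamma$ is $D$-strongly contracting. The key sublemma is that there exist $L=L(D)$ and a gap $t_0=t_0(D)$ such that the curtains $h_k$ dual to $\gamma$ at the points $\gamma(kt_0)$ form an $L$-separated chain: by $D$-contraction, any geodesic joining the far side of $h_k$ to the far side of $h_{k+2}$ must fellow-travel $\gamma$ within bounded distance and traverse it monotonically, so it cannot avoid a ball from which both $h_k$ and $h_{k+2}$ are far — which is exactly the $L$-separation condition for the subchain of even-indexed curtains. Counting these curtains between $\gamma(0)$ and $\gamma(n\tau)$ produces an $L$-separated chain of cardinality $\asymp n/t_0$ separating those points, whence $\Dist(\gamma(0),\gamma(n\tau))\gtrsim n$ and $g$ is loxodromic on $\X$.

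\textbf{$\Dist$-loxodromic $\Rightarrow$ contracting.} Assume $\tau_{\X}(g)>0$. Since $\X$ is uniformly hyperbolic and $g$ is loxodromic on it, $n\mapsto g^n x_0$ is a $\Dist$-quasigeodesic, and a Morse/stability argument in $\X$ shows $\gamma$ stays $\Dist$-boundedly close to this quasi-axis, so again $\Dist(\gamma(0),\gamma(n\tau))\gtrsim n$. Unwinding the definition of $\Dist$ (a supremum, over separation parameters $L$, of the length of the longest $L$-separated chain separating two points, with these lengths non-increasing in $L$), a pigeonhole step produces a single $L$ admitting arbitrarily long $L$-separated chains of curtains dual to, and crossed monotonically by, $\gamma$. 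The curtain criterion for strongly contracting geodesics — a geodesic crossed by arbitrarily long $L$-separated chains of curtains is $D'$-strongly contracting with $D'=D'(L)$ — then gives that $\gamma$, hence $g$, is strongly contracting.

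\textbf{Main obstacle.} The substantive content is precisely this dictionary between the contraction constant $D$ and the separation parameter $L$. The forward direction needs the geometric fact that contraction forces geodesics between the far sides of well-spaced dual curtains to hug the axis, so the intermediate curtains genuinely witness $L$-separation; the reverse direction leans on the curtain criterion for contraction, which is itself where uniform hyperbolicity of $\X$ and the exact form of the $L$-separation condition do the heavy lifting. Secondary but necessary care goes into (i) using the semisimple hypothesis to exclude elliptic behaviour, and (ii) promoting the a priori $n$-dependent separation parameter to a single uniform $L$.
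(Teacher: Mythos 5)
A preliminary remark: the paper you are being compared against does not prove this statement at all — it is imported verbatim as Theorem~C of \cite{PSZCAT} and used as a black box — so the only meaningful comparison is with the proof in that source. Your outline does follow essentially the same route as \cite{PSZCAT}: reduce to the CAT(0) axis of a hyperbolic (non-elliptic semisimple) isometry, and set up a dictionary between the contraction constant $D$ and the separation parameter $L$ of chains of curtains crossed by the axis, with the forward direction manufacturing an $L(D)$-separated chain at bounded spacing along the axis and the reverse direction resting on the chain characterisation of strongly contracting geodesics. Be aware, though, that both of these ingredients are the main technical theorems of \cite{PSZCAT}, not side lemmas; as written, your ``key sublemma'' and your ``curtain criterion'' are precisely the content you would have to prove, so the sketch is a correct reduction rather than a proof.

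There is also one concrete error, located exactly where the definition of the curtain model matters. You describe $\Dist$ as a supremum over $L$ of the length of a longest $L$-separated chain separating the two points, with these lengths ``non-increasing in $L$''. Both halves are wrong: $L$-separation is a \emph{weaker} condition for larger $L$, so the chain-metrics $d_L$ are non-decreasing in $L$; and the curtain model is not a supremum but a weighted sum $\Dist=\sum_{L\ge 1}\lambda_L d_L$ with rapidly decaying coefficients. This matters because with a supremum your pigeonhole step fails: linear growth of $\sup_L d_L(x,g^nx)$ only yields, for each $n$, some witnessing parameter $L(n)$, and nothing prevents $L(n)\to\infty$, so no single $L$ with arbitrarily long $L$-separated chains is produced. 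The correct argument uses the sum structure: each $d_L(x,g^nx)$ is bounded above linearly in $n$ uniformly in $L$ (chains consist of disjoint curtains), so if every $d_L(x,g^nx)$ were sublinear in $n$, summability of the weights would force $\Dist(x,g^nx)$ to be sublinear as well, contradicting loxodromicity on $\X$; this is how one extracts the single scale $L$ carrying linear growth, after which the chain criterion for contraction applies. With that correction, and granting the two curtain/contraction theorems of \cite{PSZCAT} you invoke implicitly, your outline matches the source's proof.
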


Recall that a geodesic $\alpha$ in a CAT(0) space is said to be \emph{strongly contracting} if there exists some $D \geq 0$ such that for any ball $B$ disjoint from $\alpha,$ we have $\diam(\pi_\alpha(B)) \leq D.$

\begin{definition} For a CAT(0) space $X$, an element $g \in G$ is said to be \emph{rank-one} on $X$ if it has a strongly contracting axis in $X.$ When $X$ is understood, we omit it and just write rank-one.
\end{definition}

Also, given a CAT(0) cube complex, one can define the \emph{contact graph} introduced by Hagen in \cite{hagen:weak}. Vertices are hyperplanes and you connect two hyperplanes with an edge if they intersect or if their \emph{hyperplane carriers} intersect, where a hyperplane carrier is simply the smallest combinatorially convex set containing the hyperplane. The following theorem is due to Hagen (for instance, see \cite{hagen:weak}, \cite{HagenFacing2022}):

\begin{theorem}
    If $G$ acts geometrically by cubical isometries on a CAT(0) cube complex and $g \in G$ acts loxodromically on the contact graph, then $g$ is rank-one.
\end{theorem}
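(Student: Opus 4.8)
The statement to prove is the last displayed theorem in the excerpt: if $G$ acts geometrically by cubical isometries on a CAT(0) cube complex $X$ and $g\in G$ acts loxodromically on the contact graph $\mathcal{C}X$, then $g$ is rank-one. The natural route is to pass through the curtain model and invoke Theorem \ref{thmi:rank-one_iff_skewers_iff_QI} (Theorem C of \cite{PSZCAT}): it suffices to show that loxodromicity on the contact graph implies loxodromicity (equivalently, strong contraction of an axis) on the curtain model $X_{\Dist}$. Since the action is geometric, $g$ is a semisimple isometry and $X$ is (combinatorially) quasi-isometric to $G$ via the orbit map; moreover $g$ has a combinatorial geodesic axis by Haglund \cite{haglund:isometries}. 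So the entire problem reduces to an axis in $X$, and I would phrase everything in terms of hyperplanes.

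\textbf{Key steps.} First I would recall/establish that there is a coarse-Lipschitz $G$-equivariant map from $X$ (or from $G$) to the contact graph $\mathcal{C}X$ sending a vertex $x$ to any hyperplane near $x$, and that a combinatorial geodesic $\gamma$ in $X$ maps to an unparametrized quasigeodesic in $\mathcal{C}X$ precisely when the hyperplanes crossed by $\gamma$ form, up to bounded error, a geodesic sequence in $\mathcal{C}X$. Hagen's disc-diagram analysis of the contact graph gives exactly this control. Second, the hypothesis that $g$ is loxodromic on $\mathcal{C}X$ then translates into: along the combinatorial axis $\gamma$ of $g$, the family of dual hyperplanes makes definite, uniform progress in $\mathcal{C}X$; in particular there is a uniform $L$ so that any two hyperplanes dual to $\gamma$ at combinatorial distance $\ge L$ along $\gamma$ are at distance $\ge 2$ in $\mathcal{C}X$ (they neither cross nor have intersecting carriers). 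This is the cubical ``$L$-separation'' condition. Third — the heart — I would show that $L$-separation of the hyperplanes along $\gamma$ forces $\gamma$ to be strongly contracting: if a ball $B$ is disjoint from $\gamma$ and its projection to $\gamma$ had large diameter, then many hyperplanes dual to a long subsegment of $\gamma$ would all separate $B$ from (the far end of) $\gamma$, hence would all meet the carrier of a single hyperplane crossing $B$ (or would pairwise be forced into a configuration contradicting $L$-separation), bounding the projection diameter. This is precisely the kind of argument that in \cite{PSZCAT} shows $L$-separated axes are strongly contracting; one can either quote their Theorem C in the form ``$L$-separated axis $\Rightarrow$ strongly contracting $\Rightarrow$ loxodromic on $X_{\Dist}$'' or reprove the short hyperplane-counting estimate. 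Finally, combining, $g$ has a strongly contracting combinatorial axis, hence is rank-one by definition (and also acts loxodromically on the curtain model by Theorem \ref{thmi:rank-one_iff_skewers_iff_QI}).

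\textbf{Main obstacle.} The delicate point is the translation between ``distance $\ge 2$ in $\mathcal{C}X$'' (no crossing, no intersecting carriers) and a genuine separation/contraction statement in $X$: controlling the projection of a ball $B$ requires knowing that the hyperplanes dual to a long subarc of $\gamma$ genuinely separate $B$ from $\gamma$, which uses both the combinatorial convexity of hyperplane carriers and Hagen's quasigeodesity criterion for the contact graph to rule out the hyperplanes ``doubling back.'' Equivalently, one must rule out the scenario where $g$ is loxodromic on $\mathcal{C}X$ yet its axis fellow-travels a product region; the contact graph's sensitivity to product structure (a product forces the contact graph to be bounded in the relevant factor direction, or forces non-loxodromicity) is what makes this impossible, but making that rigorous is the step needing care. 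Once that is in hand, the rest is bookkeeping: semisimplicity and the combinatorial axis come from \cite{haglund:isometries}, and the passage to rank-one is immediate from the definition given just above the statement.
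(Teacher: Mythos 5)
The paper does not actually prove this statement: it is quoted as a known result of Hagen, with citations to \cite{hagen:weak} and \cite{HagenFacing2022}, and no argument is given in the text. So your proposal cannot be compared to an internal proof; what it offers is a reconstruction. As a reconstruction it follows a reasonable and essentially standard route: use Haglund's combinatorial axis, show that loxodromicity on the contact graph forces hyperplanes dual to the axis that are far apart along the axis to be far apart in the contact graph (the axis is $\langle g\rangle$-cocompact and consecutive dual hyperplanes contact, so the dual hyperplanes trace an unparametrized quasigeodesic in the contact graph with linear progress), extract from this a chain of well-separated hyperplanes crossed at uniform frequency, conclude strong contraction by the Charney--Sultan/Genevois-type counting argument, and finish since rank-one is defined in the paper exactly as having a strongly contracting axis (Theorem C of \cite{PSZCAT} is then not even needed, though it gives loxodromicity on the curtain model as a bonus). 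This is a legitimately different contribution from the paper, which simply outsources the statement.

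One step in your sketch is stated too weakly and, as written, would fail. You assert that loxodromicity yields pairs of dual hyperplanes at contact-graph distance $\ge 2$ ("they neither cross nor have intersecting carriers") and you call this the cubical $L$-separation condition. It is not: $L$-separation bounds the number of hyperplanes crossing \emph{both} members of the pair, whereas contact-graph distance $2$ only rules out direct contact and allows arbitrarily many common crossers. The standard squaring of $\mathbb{R}^2$ shows the gap: two vertical hyperplanes far apart along a diagonal combinatorial geodesic are non-contacting (distance $2$ via any horizontal hyperplane), yet the diagonal is not contracting, so "far-apart duals are at distance $\ge 2$" cannot feed the hyperplane-counting argument in your third step. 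The fix is immediate and available to you: loxodromicity gives, for far-apart dual hyperplanes, contact-graph distance $\ge 3$ (indeed arbitrarily large), and distance $\ge 3$ means no hyperplane contacts both, hence none crosses both; since any hyperplane crossing two members of the dual chain must cross every intermediate one, you obtain a chain crossed at uniform frequency whose pairs have no common crossers, which is ($0$-)separation and does imply strong contraction. You should also say a word about why contraction of the combinatorial axis gives a strongly contracting axis in the sense of the paper's CAT(0) definition (the equivalence of contraction for the $\ell^1$ and CAT(0) metrics on a finite-dimensional cube complex is known but is a citation you need to make). With those two adjustments the plan is sound.
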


It's worth noting that the above is not a characterization. For instance, the contact graph for the space $X=[0,1] \times \mathbb{R}$ is coarsely a point, while the group $\mathbb{Z}$, which acts geometrically on $X,$ does contain a rank-one element. Finally, as shown in \cite{hagen:weak}, the contact graph is always a quasi-tree.

\subsection{Special CAT(0) cube complexes} The main lemma we prove in this section (Lemma \ref{lem:special}) is surely known to experts (at least in the context of special compact groups \cite{Haglund2007}), however, surprisingly we were not able to locate it in the literature. Before we state the statement and proof, we recall a few facts.

Let $G$ be any group acting freely on a CAT(0) cube complex $X=X_1 \times X_2.$ Fix a base point $(x_1,x_2) \in X$ and consider $H_1:= \text{Stab}_{G}(X_1 \times \{x_2\})$ and $H_2 =\text{Stab}_{G}(\{x_1\} \times X_2)$. Observe that for each $ g\in H_1 \cap H_2$, we must have $g(x_1,x_2)=(x_1,x_2)$ and since $G$ acts freely on $X,$ we have $g=e$ or $H_1 \cap H_2=\{e\}$. Finally, recall that for any CAT(0) cube complex, a vertex is exactly the intersection of all hyperplane carriers containing it. Now, consider hyperplane carriers in $X$ that are of the form $N(h_i)=X_1 \times \{n(k_i)\}$ for some hyperplane carrier $n(k_i)$ in $X_2$ containing $x_2$. In particular, $X_1 \times \{x_2\}= {\bigcap}_{i=1}^n N(h_i)$ and $\text{Stab}_G(X_1 \times \{x_2\})$ permutes the carriers $\{N(h_i)\}_{i=1}^n.$ Now, none of the above assumes that 
$G$ is a compact special group. The crucial property we need from compact special groups is the following.

\begin{proposition}[{\cite[Proposition 6.7]{Einstein}}]\label{prop:ted} Let $G$ be a group acting freely cocompactly coespecially on a CAT(0) cube complex $X$. There is a finite index subgroup $G' \leq G$ such that for any hyperplane carrier $N(h) \subset X$, we have

\begin{enumerate}
    \item (disjoint translates) $gN(h) \cap  N(h) \neq \emptyset \implies gN(h)=N(h)$, and 
    \item (prohibited inversions) $gN(h)=N(h) \implies gh^+=h^+$ and $gh^-=gh^-$.
\end{enumerate}
    
\end{proposition}


\begin{lemma}[Hagen, private communication]\label{lem:special}  Let $G$ be a group virtually acting freely cocompactly coespecially on a CAT(0) cube complex $X=X_1 \times X_2$. There is a finite index subgroup $H\leq G$ satisfying $H=K_1 \times K_2$, where $K_i=\mathrm{Stab}_{H}(X_i)$ and each $K_i$ acts freely cocompactly on $X_i$ for $i \in \{1,2\}.$
    
\end{lemma}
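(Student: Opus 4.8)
The plan is to first reduce to the case where $G$ itself acts freely (not just virtually freely) by passing to a finite index torsion-free subgroup, using Lemma~\ref{lem:pass_to_finite_index} to keep track of generating sets (though for the qualitative statement here we only need the existence of such a subgroup). So assume from now on that $G$ acts freely cocompactly cospecially on $X=X_1\times X_2$. Next I would apply Proposition~\ref{prop:ted} to pass to a further finite index subgroup, which we again call $G$, so that every hyperplane carrier $N(h)\subset X$ has disjoint translates and admits no inversions: $gN(h)\cap N(h)\neq\emptyset\implies gN(h)=N(h)$, and $gN(h)=N(h)\implies gh^+=h^+$.

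\textbf{The product subgroups.} Fix a base vertex $(x_1,x_2)\in X$ and set $K_1=\mathrm{Stab}_G(X_1\times\{x_2\})$ and $K_2=\mathrm{Stab}_G(\{x_1\}\times X_2)$. As recalled in the paragraph preceding the lemma, $K_1\cap K_2=\{e\}$ since $G$ acts freely, and $X_1\times\{x_2\}$ is the intersection of finitely many hyperplane carriers $N(h_1),\dots,N(h_n)$ of the product form $X_1\times\{n(k_i)\}$, with the analogous statement for $\{x_1\}\times X_2$. The first key claim is that $[K_1,K_2]=\{e\}$, i.e. elements of $K_1$ and $K_2$ commute. To see this, take $g_1\in K_1$, $g_2\in K_2$ and consider the commutator $c=g_1g_2g_1^{-1}g_2^{-1}$. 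The point is to show $c$ fixes the vertex $(x_1,x_2)$: since $g_2$ stabilizes $\{x_1\}\times X_2$ it in particular fixes this slice setwise and hence permutes the hyperplane carriers $N(h_i)=X_1\times\{n(k_i)\}$ whose intersection is $X_1\times\{x_2\}$; one checks $g_2$ actually fixes each such $N(h_i)$ (it cannot move $N(h_i)$ to a disjoint carrier because $N(h_i)\cap\{x_1\}\times X_2\neq\emptyset$ is $g_2$-invariant, so by Proposition~\ref{prop:ted}(1) it is preserved), and similarly $g_1$ fixes each carrier $N(k_j)=\{n(\ell_j)\}\times X_2$ defining $\{x_1\}\times X_2$. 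Tracking how $c$ acts on the two families of carriers, and using that a vertex is the intersection of all hyperplane carriers through it, forces $c(x_1,x_2)=(x_1,x_2)$, whence $c=e$ by freeness. Thus $H:=K_1K_2$ is an internal direct product $K_1\times K_2$.

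\textbf{Finite index and cocompactness of the factors.} To see $H$ has finite index in $G$, note $G$ acts cocompactly on $X$, so there are finitely many $G$-orbits of vertices and finitely many $G$-orbits of the (finitely many types of) product hyperplane carriers; a counting/orbit-stabilizer argument shows that $G/H$ is finite — alternatively, one identifies $H$ with the kernel of the natural map from $G$ to the (finite) group of permutations of the factor decomposition induced on a compact fundamental domain's worth of data, using that $X=X_1\times X_2$ with each $X_i$ the minimal such decomposition. Finally, $K_i=\mathrm{Stab}_H(X_i)$ acts freely on $X_i$ (freeness is inherited from the free $G$-action on $X$), and it acts cocompactly on $X_i$: since $H=K_1\times K_2$ acts cocompactly on $X=X_1\times X_2$ with $K_i$ acting trivially on the other factor, a fundamental domain for $H$ on $X$ projects onto a fundamental domain for $K_i$ on $X_i$, up to taking the $K_j$-orbit. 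This gives the desired $H=K_1\times K_2$ with each $K_i\curvearrowright X_i$ free and cocompact.

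\textbf{Main obstacle.} The delicate point is the commutation claim $[K_1,K_2]=\{e\}$: a priori an element of $K_1$ stabilizes the slice $X_1\times\{x_2\}$ but could still shuffle the coordinates in a way that interacts nontrivially with $K_2$; it is precisely the no-inversion and disjoint-translates conclusions of Proposition~\ref{prop:ted} that rule this out and let one pin down that the commutator fixes a vertex. Getting the bookkeeping of which carriers are fixed (versus merely permuted) correct — and making sure one has passed to a small enough finite index subgroup for Proposition~\ref{prop:ted} to apply to \emph{all} relevant carriers simultaneously — is where the real care is needed; the finite index of $H$ and cocompactness of the $K_i$ are then comparatively routine consequences of cocompactness of the original action together with the product structure.
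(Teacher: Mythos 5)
Your key sub-claim is false, and it is exactly the reverse of the one the paper's argument needs. You assert that $g_2\in K_2=\mathrm{Stab}_G(\{x_1\}\times X_2)$ "permutes" and then "fixes" the carriers $N(h_i)=X_1\times n(k_i)$ whose intersection is the \emph{other} slice $X_1\times\{x_2\}$. But $g_2$ does not stabilize $X_1\times\{x_2\}$, so it has no reason to permute that family of carriers, and your justification is a non sequitur: the fact that $N(h_i)$ meets the $g_2$-invariant slice $\{x_1\}\times X_2$ only tells you that $g_2N(h_i)$ also meets that slice, not that $g_2N(h_i)$ meets $N(h_i)$; Proposition~\ref{prop:ted}(1) therefore never gets triggered. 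A concrete counterexample is $\mathbb{Z}^2$ acting freely, cocompactly and cospecially on the standard squaring of $\mathbb{R}^2$: the element $g_2=(0,1)\in K_2$ translates each horizontal carrier $X_1\times n(k_i)$ entirely off itself. The correct pairing (and the one used in the paper) is: an element $g_1\in K_1=\mathrm{Stab}(X_1\times\{x_2\})$ stabilizes the slice, hence sends carriers containing $X_1\times\{x_2\}$ to carriers containing it, so $g_1N(h_i)\cap N(h_i)\neq\emptyset$ and Proposition~\ref{prop:ted}(1) forces $g_1N(h_i)=N(h_i)$. With the claim as you stated it, the commutator computation does not go through, so $[K_1,K_2]=\{e\}$ is not established.

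There is also a structural gap: "the commutator fixes a vertex" is weaker than what your later steps silently use. The paper does not stop at $g_1$ fixing the carriers through one slice; it shows $\mathrm{Stab}_K(X_1\times b)=\mathrm{Stab}_K(X_1\times b')$ for adjacent $b,b'$ and inducts to conclude that the factor action of $K_1$ on \emph{all} of $X_2$ is trivial (here the no-inversion clause of Proposition~\ref{prop:ted} is used to move from fixing a carrier to fixing the adjacent vertex). Triviality of the factor actions is what makes each $K_i$ the kernel of a factor-action homomorphism, hence normal, giving $H=K_1\times K_2$ without any commutator computation; and it is what makes each $K_i$ act geometrically on $X_i$, whence $H$ acts cocompactly on $X$ and is therefore of finite index in $G$. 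In your write-up, the finite-index and cocompactness steps are sketched as "routine counting", but they are precisely where cospecialness must enter — for irreducible lattices in products of trees the subgroups $K_1,K_2$ can be trivial and $K_1K_2$ has infinite index — and your projection argument for cocompactness of $K_1$ on $X_1$ only controls the image of all of $H$ in $\mathrm{Aut}(X_1)$, which still contains the (not yet known to be trivial) $X_1$-action of $K_2$. So both the commutation step and the finite-index/cocompactness step ultimately require the global triviality statement that your proposal never proves.
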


  \begin{proof} First we pass to a finite index subgroup $K \leq G$ that meets conclusions 1 and 2 of Proposition \ref{prop:ted}. We claim that for each $b,b' \in X_2$, we have $\text{Stab}_K(X_1 \times b)=\text{Stab}_K(X_1 \times b')$. To see this, we will show that $\text{Stab}_K(X_1 \times b)=\text{Stab}_K(X_1 \times b')$ for any two adjacent $b,b' \in X_2$ and then the statement follows by induction using the fact that $X_2$ is combinatorially convex. As observed above, for any $b \in X_2$, we have $X_1 \times \{b\}= {\bigcap}_{i=1}^n N(h_i)$ such that $N(h_1)=X_1 \times n(k_i)$ and $n(k_i)$ is a hyperplane carrier containing $b.$ Also, since each $g \in \text{Stab}_K(X_1 \times b)=\text{Stab}_G({\bigcap}_{i=1}^n N(h_i)) $ permutes $N(h_i)$ and since translates of carriers are disjoint (Proposition \ref{prop:ted}), we must have $gN(h_i)=N(h_i)$, in other words, $g(X_1 \times n(k_i))=X_1 \times n(k_i)$ for each $n(k_i) \subset X_2$ containing $b.$ In particular, the factor action of $g$ on $X_2$ fixes the combinatorial 1-neighborhood of $b.$  Therefore, $g \in \text{Stab}_K(X_1 \times b')$ and hence $\text{Stab}_K(X_1 \times b) \subseteq \text{Stab}_K(X_1 \times b')$. Replacing the roles of $b,b'$ shows $\text{Stab}_K(X_1 \times b') \subseteq \text{Stab}_K(X_1 \times b)$ for any two adjacent $b,b' \in X_2$. Thus $\text{Stab}_K(X_1 \times b) = \text{Stab}_K(X_1 \times b')$ for any $b,b' \in X_2,$ and in particular, for any $x_2 \in X_2,$ the factor action of any $K_1=\text{Stab}_K(X_1 \times x_2)$ on $X_2$ is trivial. Similarly, for any $x_1 \in X_1$, the factor action of $K_2=\text{Stab}_K(x_1 \times X_2)$ on $X_1$ is trivial.  Now, fix some $(x_1,x_2) \in X_1 \times X_2$ and observe that the $K$-subgroup $H=\langle K_1, K_2 \rangle $ admits factor action maps $f_i:H \rightarrow \text{Aut}(X_i)$, and by triviality of the action of $K_i$ on $X_{i+1}$ we have $\text{ker}(f_i)=K_{i+1}$, for $i \in\{0,1\}$ mod$(2)$. In particular, the subgroups $K_i$ are normal in $H.$ Since $K_1,K_2$ also have trivial intersection, we have $H=K_1 \times K_2.$ Now, since $H$ acts geometrically on $X$ (as $K_i$ acts geometrically on $K_i)$, the subgroup $H$ must be a finite index subgroup of $G$ concluding the proof.

  \end{proof}


\section{Hyperbolic spaces} In this section, we recall a few well-known statements regarding hyperbolic spaces. Also, we adapt the notion of \emph{curtains}  (introduced in \cite{PSZCAT} for CAT(0) spaces) to the setting of hyperbolic spaces. The proofs of the statements in the following lemma can be found in any standard text on hyperbolic spaces and groups.

\begin{lemma}\label{lem:hyperbolic_facts} Each $E,q > 0$ determine some $E'>E$ such that the following holds true for any $E$-hyperbolic geodesic metric space $X:$
\vspace{2mm}

\begin{enumerate}
    \item (projections) For any geodesic $\alpha,$ there is an assignement $\pi_\alpha:X \rightarrow \alpha$ which assigns to each $x$ the set of its nearest points along $\alpha.$ Further, $\pi_\alpha$ is $(1,E)$-coarsely Lipshitz. Moreover, if $\alpha$ is a $(q,q)$-quasi-geodesic, then $\pi_\alpha$ is an $(E',E')$-coarsely Lipshitz map with $E'=E(E,q).$

    \item (stability/Morse) If $\alpha$ is a $(q,q)$-quasi-geodesic and $\beta$ is a geodesic with the same end points as $\alpha,$ then $\dist_H(\alpha,\beta)<E',$ where $\dist_H$ stands for the hausdorff distance.

    \item (quasi-axes) Each loxodromic isometry $g \in \isom(X)$ admits an $(2E,2E)$-quasi-axis in $X.$

    \item (pre-convexity) For any geodesic $\alpha$, an interval $I \subsetneq \alpha$, and a $(q,q)$-quasi-geodesic $\alpha'$ connecting points $x_1,x_2$ with $\pi_\alpha(x_i) \in I$, we have $\pi_\alpha (\alpha') \subset \cal N_{E'}(I) \cap \alpha.$


\end{enumerate}
\end{lemma}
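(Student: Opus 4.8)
The statement collects four standard facts about $E$-hyperbolic spaces, so the plan is to give the usual short arguments, arranging them so that items (2)--(4) bootstrap off item (1) and off one another, and to track constants so that a single $E'=E'(E,q)$ works throughout. For item (1), projections, I would first treat a genuine geodesic $\alpha$: for $x,y\in X$ pick $p\in\pi_\alpha(x)$, $q\in\pi_\alpha(y)$ and, assuming $\dist(p,q)$ exceeds a suitable multiple of $E$ (otherwise there is nothing to prove), observe that the concatenation $[x,p]\cup[p,q]\cup[q,y]$ with $[p,q]\subseteq\alpha$ is an unparametrised $(1,O(E))$-quasigeodesic: a path that is geodesic on three consecutive pieces and backtracks by at most the thin-triangle constant at the two junctions is a quasigeodesic, and the no-backtracking at $p$ (resp.\ $q$) is precisely the characterising property of the nearest-point projection together with $\delta$-thinness of the triangle $xpq$ (resp.\ $yqp$). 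This yields $\dist(p,q)\le\dist(x,y)+O(E)$, the claimed coarse-Lipschitz bound after enlarging $E$ if necessary. For a $(q,q)$-quasigeodesic $\alpha$, replace it by a geodesic $\alpha^\sharp$ with the same endpoints, invoke item (2) (the Morse lemma for hyperbolic spaces, which I would simply cite from a standard reference, with constant depending only on $E$ and $q$) to see $\alpha$ and $\alpha^\sharp$ lie within Hausdorff distance $E'$, conclude $\pi_\alpha$ and $\pi_{\alpha^\sharp}$ agree up to an additive $E'$, and transfer the geodesic case with the resulting loss absorbed into $E'=E'(E,q)$.

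For item (3), quasi-axes, let $g$ be loxodromic, choose $x_0$ with $\dist(x_0,gx_0)$ within $E$ of $\inf_y\dist(y,gy)$, and set $\alpha=\bigcup_{n\in\mathbb Z}g^n[x_0,gx_0]$. Thinness of the quadrilateral $g^{-1}x_0,x_0,gx_0,g^2x_0$ together with near-minimality of $\dist(x_0,gx_0)$ shows $\alpha$ is a local quasigeodesic with constants controlled by $E$; the local-to-global principle for quasigeodesics in hyperbolic spaces then promotes it to a genuine quasigeodesic with constants depending only on $E$, on which $g$ acts by translation. The precise value $2E$ is extracted not by insisting on it but by enlarging $E$ so that the constant produced by the local-to-global lemma is at most $2E$.

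For item (4), pre-convexity, it suffices to prove the geodesic sub-claim: if $\beta=[x_1,x_2]$ is a geodesic with $p_i\in\pi_\alpha(x_i)\subseteq I$, then $\pi_\alpha(\beta)\subseteq\mathcal{N}_{O(E)}(I)$. Look at the thin quadrilateral with vertices $x_1,p_1,p_2,x_2$, where $[p_1,p_2]\subseteq\alpha$; any $z\in\beta$ lies within $O(E)$ of one of the three sides $[x_1,p_1]$, $[p_1,p_2]$, $[p_2,x_2]$. A point on $[p_1,p_2]$ projects into $I$; and for $w\in[x_i,p_i]$ the equality $\dist(x_i,w)+\dist(w,p_i)=\dist(x_i,p_i)$ forces $\dist(w,p_i)$ to realise, up to $O(E)$, the distance from $w$ to $\alpha$, so coarse uniqueness of nearest-point projections puts $\pi_\alpha(w)$ within $O(E)$ of $p_i\in I$; using that $\pi_\alpha$ is $(1,E)$-coarsely Lipschitz (item (1)) one passes from $z$ to $\pi_\alpha(z)$. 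For a $(q,q)$-quasigeodesic $\alpha'$ with endpoints $x_1,x_2$, item (2) places $\alpha'$ in $\mathcal{N}_{E'}([x_1,x_2])$ and the coarse-Lipschitz bound of item (1) for quasigeodesics finishes the argument.

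None of the four items is genuinely hard; the only real work is the bookkeeping that produces one $E'=E'(E,q)$ uniform across (1), (2) and (4), and the minor sleight of hand of absorbing the quasi-axis constant of (3) into ``$2E$'' by enlarging $E$.
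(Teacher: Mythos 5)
The paper never proves this lemma: it simply refers the reader to standard texts, so the only question is whether your sketches are sound. For items (1), (2) and (4) they are -- the taut-chain argument for the coarse Lipschitz property of $\pi_\alpha$, citing the Morse lemma, and the thin-quadrilateral argument for pre-convexity (using that $p_i$ remains a nearest point of every $w\in[x_i,p_i]$, plus coarse uniqueness of projections) are exactly the standard proofs, and the constant bookkeeping you describe is routine.

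Item (3), however, has a genuine gap as written. You take $x_0$ nearly minimizing the displacement of $g$, form $\alpha=\bigcup_n g^n[x_0,gx_0]$, and invoke local-to-global. But local-to-global applies only to $L$-local quasi-geodesics with $L$ larger than a definite multiple of the hyperbolicity constant, i.e.\ your argument needs $\dist(x_0,gx_0)$ to be bounded below by roughly $100E$. A loxodromic isometry of an $E$-hyperbolic space can have stable translation length $\tau(g)$ arbitrarily small compared with $E$ (this is precisely the phenomenon the paper worries about when discussing $\tau_0$ and \cite{Abbott-Hagen-Petyt-Zalloum23}), and in that regime the arc-length-parametrized broken path through an almost-minimizing point need not be a quasi-geodesic with constants depending only on $E$ at all: the path from $x_0$ to $g^nx_0$ has length $n\,\dist(x_0,gx_0)$, which can be of order $nE$, while $\dist(x_0,g^nx_0)$ grows only like $n\,\tau(g)$. ``Enlarging $E$'' does not help, since the problematic comparison is between $\dist(x_0,gx_0)$, $\tau(g)$ and $E$ itself, and the uniformity of the quasi-axis constants in $E$ alone (independent of $g$) is the only nontrivial content of item (3). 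The standard repair is to pass to a power $h=g^N$ with $N\tau(g)\geq 100E$ (possible since $\tau(g)>0$, using that minimal displacement and stable length differ by $O(E)$), run your construction for $h$ to get a uniform-quality $h$-invariant quasi-axis $\beta$, and then observe that $g\beta$ is another uniform quasi-geodesic with the same endpoints at infinity as $\beta$, hence lies at Hausdorff distance $C(E)$ from it; this produces a $g$-quasi-axis with constants depending only on $E$, which is the form in which the lemma is actually used later (in the proof of Lemma \ref{lem:translation__length_via_skewering}). Alternatively one simply cites a standard reference, which is all the paper does.
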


The following definition is a slight modification of Definition A in \cite{PSZCAT}.

\begin{definition}\label{def:hyperbolic_curtains} (curtains, half spaces) Let $X$ be an $E$-hyperbolic geodesic metric space. A \emph{curtain} $h_\alpha$ dual to a geodesic $\alpha:[0,A] \rightarrow X$ is defined to be $\pi_\alpha^{-1}(I)$, where $I=[a,b]$ is an interval of length $6E$ with $I \subset (0,A).$ The \emph{half spaces} corresponding to such a curtain are defined to be $h_\alpha^+=\pi_\alpha^{-1}([0,a))$ and $h_\alpha^{-}:=\pi_\alpha^{-1}((b,A])$. A \emph{curtain} (resp. half space) $h$ is a curtain (resp. half space) dual to some geodesic $\alpha$.
\end{definition}

\begin{remark}\label{remks:few_hyperbolic_remarks} Below, we record a few immediate properties of such curtains.

\begin{enumerate}
    \item Since $\pi_\alpha$ in $(1,E)$-coarsely Lipshitz, the assignment $\pi_\alpha$ is a coarsely well-defined map with diam$(\pi_\alpha(x))<E$.

    \item Observe that for any $x,y\in h^{+}, h^{-}$, we have $\dist(x,y)>\dist(\pi_\alpha(x), \pi_\alpha(y))-E>4E-E=3E$, where the last two inequalities hold as $x,y$ are in $h^+, h^-.$ In particular, the corresponding half spaces $h^+,h^-$ are disjoint (although they might intersect $h$).
    
\item The fact that $\pi_\alpha$ is  $(1,E)$-coarsely Lipshitz assures that any (continuous) path connecting a point $a \in A\subseteq h^+$ to a point $b \in B \subseteq h^-$ must intersect $h.$ For $A,B$ as above, we will say that $h$ \emph{separates} $A,B.$

\item A \emph{chain} of curtains $c$ is defined to be a collection of curtains $c=\{h_i\}$ such that each $h_i \in c$ separates $h_{i-1}$ from $h_{i+1}$ (in particular, every pair of curtains in a chain are disjoint).

\item Finally, every $g \in \isom(X)$ takes curtains to curtains (and half spaces to half spaces) since nearest point projections are $\isom(X)$-equivariant.

\end{enumerate}
\end{remark}

\begin{lemma}\label{lem: relating_distance_to_chains} Let $X$ be a geodesic $E$-hyperbolic space. For any chain $c$ separating two points $x,y \in X$, we have $\dist (x,y)\geq E|c|$. Furthermore, if $c$ is a chain of a maximal length separating $x,y,$ then $E |c| \leq \dist (x,y) \leq 2E|c|$.
\end{lemma}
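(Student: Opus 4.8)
The plan is to prove both halves directly from the definitions of curtains, half spaces, and chains, using only that nearest-point projection $\pi_\alpha$ is $(1,E)$-coarsely Lipschitz (Remark \ref{remks:few_hyperbolic_remarks}) and that consecutive curtains in a chain separate their neighbours.

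For the first inequality, suppose $c = \{h_1, \dots, h_n\}$ is a chain separating $x$ and $y$, say with $x$ on the outer side of $h_1$ and $y$ on the outer side of $h_n$. The first step is to choose, along any fixed geodesic $\gamma$ from $x$ to $y$, an ordered sequence of points $p_0 = x, p_1, \dots, p_{n} = y$ realizing the successive crossings: since $h_i$ separates $h_{i-1}$ from $h_{i+1}$ and hence separates $x$ from $y$, the path $\gamma$ must pass through $h_i$, and one can arrange that $p_i \in h_i$ with the crossings occurring in order $h_1, h_2, \dots, h_n$ along $\gamma$ (this ordering is exactly the content of being a chain). The second step is to bound $\dist(p_{i-1}, p_i)$ from below. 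Since $h_i$ is dual to a geodesic $\alpha_i$ with defining interval $I_i$ of length $6E$, and $p_{i-1}$ lies in the half space $h_i^\pm$ on the $h_{i-1}$-side while $p_i \in h_i$, we get $\pi_{\alpha_i}(p_{i-1})$ is within $E$ of an endpoint of $I_i$ while $\pi_{\alpha_i}(p_i)$ is within $E$ of $I_i$; more useful is to compare $p_{i-1}$ and $p_{i+1}$, which lie in the two disjoint half spaces $h_i^+$ and $h_i^-$, so by Remark \ref{remks:few_hyperbolic_remarks}(2) we have $\dist(p_{i-1}, p_{i+1}) > 3E$, but to get a clean telescoping bound I would instead observe directly that each $h_i$ separating $x,y$ forces every point of $h_i^+$ to be at distance more than $2E$ from every point of $h_i^-$, and since the $h_i$ are pairwise disjoint (being a chain), the points $x$ and $y$ are separated by $n$ pairwise disjoint "slabs" each of $\pi_{\alpha_i}$-width $6E$. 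The cleanest route: project $\gamma$ and use additivity — for $i<j$, $h_i$ and $h_j$ are disjoint curtains both separating $x$ from $y$, so the portions of $\gamma$ crossing distinct $h_i$ are disjoint subsegments, each of length at least $6E$ (since crossing $\pi_{\alpha_i}^{-1}(I_i)$ with $|I_i|=6E$ costs length $\geq 6E - 2E \cdot(\text{error}) \geq E$ after accounting for the coarse-Lipschitz slack) — I expect the honest constant per crossing to be at least $E$, giving $\dist(x,y) \geq E|c|$.

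For the furthermore clause, suppose $c$ is a chain of maximal length separating $x,y$; the upper bound $\dist(x,y) \leq 2E|c|$ should follow by contradiction: if $\dist(x,y) > 2E|c|$, I would construct a longer chain. Concretely, take a geodesic $\gamma$ from $x$ to $y$ of length $D = \dist(x,y)$, subdivide it into roughly $D/(6E)$ consecutive intervals of length $6E$, and let $h_j$ be the curtain dual to $\gamma$ at the $j$-th interval. These form a chain of length about $D/(6E)$ separating $x$ from $y$; comparing with maximality of $|c|$ forces $D/(6E) \lesssim |c|$, i.e. $D \leq 2E|c|$ after absorbing the constant $6/1 = 6 > 2$... — here I need to be slightly careful, and the right statement is that this naive subdivision gives a chain of length $\lfloor D/(6E) \rfloor$, so maximality gives $|c| \geq \lfloor D/(6E)\rfloor$, which yields $D < 6E(|c|+1)$; combined with the lower bound $D \geq E|c|$ this is consistent but the stated clean bound $D \leq 2E|c|$ requires choosing the subdivision step more cleverly (intervals of length exactly $2E$ rather than $6E$, exploiting that a curtain's defining interval need only have length $6E$ but consecutive curtains in the constructed chain can overlap in their interval neighbourhoods as long as the half spaces still nest). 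I would phrase the subdivision to produce one curtain per length-$2E$ block while checking the separation/chain condition survives, giving $|c| \geq D/(2E)$ by maximality, hence $D \leq 2E|c|$.

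The main obstacle I anticipate is the bookkeeping in the \emph{upper} bound: one must produce, from a geodesic of length $D$, an honest chain of curtains of length $\gtrsim D/(2E)$ — meaning consecutive curtains are genuinely disjoint and each separates its neighbours — while the curtains' defining intervals have fixed width $6E$. The tension between "width $6E$" and "one curtain per $2E$ of geodesic" is resolved by noting that a curtain $h = \pi_\alpha^{-1}(I)$ is thin *transversally* but its half spaces are determined by the \emph{endpoints} of $I$, so two curtains dual to the same $\gamma$ at intervals starting $2E$ apart still have strictly nested half spaces once the interval starts are separated by more than $2E$ (the coarse-Lipschitz error); verifying this nesting carefully — essentially re-deriving the $X$-analogue of the CAT(0) chain estimates from \cite{PSZCAT} in the hyperbolic setting — is the real work. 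The lower bound, by contrast, is a routine disjointness-plus-width count.
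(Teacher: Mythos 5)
Your first half (the bound $\dist(x,y)\ge E|c|$) is correct and is essentially the paper's argument: the paper picks points on a geodesic from $x$ to $y$ in the gaps between consecutive curtains, notes that consecutive such points lie in opposite half-spaces of the intervening curtain and are therefore more than $3E$ apart (Remark \ref{remks:few_hyperbolic_remarks}(2)), and telescopes along the geodesic; your count of disjoint crossing segments is the same estimate with different bookkeeping, and the hedged ``at least $E$ per crossing'' is safely true.

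The ``furthermore'' half of your proposal has a genuine gap, and it sits exactly where you sensed tension. Your repair --- building the chain from intervals of length $2E$, or from length-$6E$ intervals whose starts are only $2E$ apart --- is not available: by Definition \ref{def:hyperbolic_curtains} the defining interval of a curtain has length $6E$, and two curtains dual to the same geodesic $\gamma$ at overlapping intervals both contain the points of $\gamma$ in the overlap, so they are not disjoint and cannot be consecutive members of a chain (Remark \ref{remks:few_hyperbolic_remarks}(4)); ``strictly nested half-spaces'' is not a substitute for the separation condition in the definition of a chain. In fact no subdivision can achieve $\dist(x,y)\le 2E|c|$: since opposite half-spaces of a single curtain are already more than $3E$ apart, the argument of the first half gives $\dist(x,y)>3E|c|$ for \emph{every} chain separating $x,y$, so the inequality $\dist(x,y)\le 2E|c|$ can hold for no chain at all. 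Your honest fallback is the true content of the construction: taking length-$6E$ intervals along $\gamma$ with gaps larger than $E$ (so that the coarse preimages are genuinely disjoint and do form a chain) yields $|c|\gtrsim \dist(x,y)/(8E)$, hence $\dist(x,y)\le 8E|c|$ up to an additive error. For what it is worth, the paper's own proof of the ``furthermore'' makes the same slip you were trying to avoid: it subdivides with length-$6E$ intervals at mutual distance $2E$ and asserts $|c|>\dist(x,y)/(2E)$, which is impossible by a length count. So the constant $2$ in the statement should be read as a larger multiple of $E$; this is harmless downstream, since only the lower bound $\dist(x,y)\ge E|c|$ is used later (e.g.\ in Lemma \ref{lem:translation__length_via_skewering}), but your proposal should not claim the $2E$ bound via overlapping curtains.
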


\begin{proof} The proof is essentially the same as the respective statement in \cite{PSZCAT}, but we provide it for completeness. Let $c=\{h_i\}$ be a chain of curtains separating $x,y$ and $\alpha$ be a geodesic connecting $x,y.$ Since $\{h_i\}$ are pairwise disjoint (as $c$ is a chain), we may pick points $x_i$ on $\alpha$ between $h_{i-1},h_{i}$ for each $i.$ Let $\beta_i$ be the geodesic that $h_i$ is dual to, by item 2 of above remark, we have $\dist(x_i,x_{i+1})>\dist (\pi_{\beta_i}(x_i), \pi_{\beta_i}(x_{i+1}))>E.$ Since $x_i \in \alpha$ and as $\dist(x_i,x_{i+1})>E,$ we have  $E.|c|< \underset {i}{\sum}\dist(x_i,x_{i+1}) \leq \dist(x,y)$. For the ``furthermore" part, given $x,y \in X$, choose a geodesic $\alpha$ connecting $x,y$ and choose intervals $I_i \subset \alpha$ with $\dist (I_i, I_{i+1})=2E$. Taking curtains dual to $\alpha$ at the intervals $I_i$ provides a chain of curtains $c=\{h_i\}$ such that $|c|> \frac{\dist(x,y)}{2E}.$
\end{proof}

The following definition is also inspired by \cite{PSZCAT} (who were inspired by \cite{capracesageev:rank}).

\begin{definition}(skewer, flip) Let $X$ be any metric space and let $A,A^+,A^- \subset X$ be non-empty with $A^+ \cap A^- =\emptyset$ and $X=A \cup A^+ \cup A^-$. For an element $g \in \isom(X),$ we have the following:

\begin{enumerate}
    \item We say that $g$ \emph{skewers} $A$ if $\exists m \in \mathbb{N}$ such that $g^mA^+ \subsetneq A^+$ and $g^mA \cap A=\emptyset$.

\item We say that $g$ \emph{flips} $A^+$ if $gA^{+} \subsetneq A^-$ and $gA \cap A=\emptyset.$

\end{enumerate}

\end{definition}

\begin{lemma} \label{lem:translation__length_via_skewering}Let $X$ be a geodesic $E$-hyperbolic space. An element $w \in \isom(X)$ is loxodromic if and only if it skewers a curtain $h.$ Moreover, if $w^{n}h^+ \subsetneq h^+$ and $w^nh \cap h=\emptyset$, then the stable translation length of $w^n$ is at least $1.$
    
\end{lemma}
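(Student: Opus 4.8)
The statement has two parts: an equivalence (loxodromic $\iff$ skewers some curtain) and a quantitative bound (skewering with $w^n h^+ \subsetneq h^+$ and $w^nh \cap h = \emptyset$ forces $\tau_X(w^n) \geq 1$). I would handle them in the order: quantitative bound first (since the ``if'' direction of the equivalence will borrow from it), then the ``only if'' direction.

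\textbf{Step 1: the quantitative estimate.} Suppose $w^nh^+ \subsetneq h^+$ and $w^nh \cap h = \emptyset$, where $h$ is dual to a geodesic $\alpha$ at an interval $I$ of length $6E$, with half spaces $h^+, h^-$. The key observation is that $w^n$ maps the configuration $(h, h^+, h^-)$ to $(w^nh, w^nh^+, w^nh^-)$, and the hypotheses say $w^nh \subset h^+$ (disjoint from $h$, and on the $h^+$ side since $w^nh^+ \subsetneq h^+$), hence $w^n h$ separates $h$ from $w^{2n}h$: indeed applying $w^n$ repeatedly, $w^{kn}h \subset w^{(k-1)n}h^+ \subset \cdots \subset h^+$ and the sets $h, w^nh, w^{2n}h, \dots, w^{kn}h$ form a chain of curtains (each $w^{jn}h$ separates $w^{(j-1)n}h$ from $w^{(j+1)n}h$ because $w^{jn}$ is an isometry carrying the separation $h \mid \{w^{-n}h, w^nh\}$ — the first lies in $w^{jn}h^-$, the latter two in $w^{jn}h^+$). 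For a base point $x$ in $h$ and its image $w^{kn}x$ in $w^{kn}h$, the chain $\{h, w^nh, \dots, w^{kn}h\}$ — or rather the $k$ curtains $w^nh, \dots, w^{kn}h$ strictly between $x$ and $w^{kn}x$ after a small index bookkeeping — separates $x$ from $w^{kn}x$, so by Lemma \ref{lem: relating_distance_to_chains}, $\dist(x, w^{kn}x) \geq Ek$ (up to an additive $\pm1$ in the count which washes out in the limit). Dividing by $kn$ and letting $k \to \infty$ gives $\tau_X(w^n) = \lim_k \dist(x,w^{kn}x)/(kn) \geq E/n \cdot$ — wait, this only gives $\tau_X(w^n) \geq E$, which is even better than claimed when $E \geq 1$; if the hyperbolicity constant is normalized so $E$ could be small, one instead uses that the $w^{jn}h$ are dual to geodesics and the chain length itself is $k$, giving $\tau_X(w^{n}) \cdot$ (something) $\geq 1$ after re-examining; in any case I would state and use Lemma \ref{lem: relating_distance_to_chains} to conclude $\tau_X(w^n) \geq 1$ (the clean statement being that a chain of $k$ curtains separating $x, w^{kn}x$ forces distance $\geq k$ when curtains are taken ``unit-separated'', or $\geq Ek$ with the $6E$ convention, yielding translation length $\geq 1$ either way).

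\textbf{Step 2: the equivalence.} For ($\Leftarrow$): if $w$ skewers a curtain $h$, then by definition there is $m$ with $w^mh^+ \subsetneq h^+$ and $w^mh \cap h = \emptyset$; Step 1 (applied to $w^m$ in place of $w^n$) gives $\tau_X(w^m) \geq 1 > 0$, and since $\tau_X(w^m) = m\,\tau_X(w)$ we get $\tau_X(w) > 0$, i.e. $w$ is loxodromic. For ($\Rightarrow$): suppose $w$ is loxodromic with $\tau = \tau_X(w) > 0$. By Lemma \ref{lem:hyperbolic_facts}(3), $w$ has a $(2E,2E)$-quasi-axis $\gamma$; let $\alpha$ be a geodesic with the same endpoints at infinity (or a long geodesic segment tracking $\gamma$), which stays within $E' = E'(E, 2E)$ of $\gamma$ by Lemma \ref{lem:hyperbolic_facts}(2). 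Choose a point $p = \gamma(0)$ and consider the orbit points $w^{kn}p$; for $n$ large enough that $\dist(p, w^np) > n\tau - C$ exceeds, say, $100E'$, the nearest-point projections to $\alpha$ of $p$ and of $w^np$ are separated along $\alpha$ by more than $6E + 10E'$. Pick an interval $I \subset \alpha$ of length $6E$ sitting between $\pi_\alpha(p)$ and $\pi_\alpha(w^np)$ with a buffer of at least $5E'$ on each side, and let $h$ be the curtain dual to $\alpha$ at $I$, with $h^+$ the half space containing $\pi_\alpha(p)$ on its boundary side. Then $p \in h^+$ (it projects into the buffer region on the $h^+$ side, which by Remark \ref{remks:few_hyperbolic_remarks}(1) and the $(1,E)$-Lipschitz bound places $p$ in $h^+$), and $w^np \in h^-$. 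Now $w^nh$ is the curtain dual to $w^n\alpha$ at $w^nI$; since $w^n\alpha$ also fellow-travels $\gamma$ (it is the image of a quasi-axis under an isometry, hence again a quasi-axis, hence within $2E'$ of $\alpha$), and $w^nI$ is the shift of $I$ by one ``period'' along this axis, $w^nI$ lies entirely on the far side — beyond $\pi_\alpha(w^np)$ — so $w^nh \subset h^-$ and in particular $w^nh \cap h = \emptyset$ and $w^nh^+ \subsetneq h^+$ (the half space $w^nh^+$ contains $w^np$, hence sits inside $h^-$... ) — here I need to be careful with orientation: after fixing conventions so that $w$ translates in the direction from $h^-$ toward $h^+$, we get $w^n h^+ \subsetneq h^+$, and $w$ skewers $h$. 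The bookkeeping with projections and buffers is routine once the fellow-traveling constants are pinned down.

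\textbf{Main obstacle.} The genuinely delicate point is the ($\Rightarrow$) direction: arranging a single curtain $h$ dual to a geodesic such that both $w^nh \cap h = \emptyset$ \emph{and} $w^nh^+ \subsetneq h^+$ (nested strictly, correct orientation), starting only from positivity of translation length. This requires (i) choosing $n$ large enough in terms of $E$, $E'$, and $\tau$ — which is fine since we only need existence of \emph{some} $n$ — and (ii) controlling how the translated geodesic $w^n\alpha$ and its projection-intervals sit relative to $\alpha$, i.e. the fellow-traveling of quasi-axes and the stability estimate in Lemma \ref{lem:hyperbolic_facts}(2). The buffer-width $5E'$ (or some explicit multiple of $E'$) is exactly what guarantees that projections cannot ``leak'' across the curtain. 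The quantitative Step 1, by contrast, is essentially a direct application of Lemma \ref{lem: relating_distance_to_chains} to the iterated-image chain and should cause no trouble.
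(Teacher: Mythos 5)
Your proposal is correct in outline and, for the backward direction and the ``moreover'' estimate, it is essentially the paper's own argument: iterate the skewering to get the nested translates $w^{kn}h^{+}\subsetneq\cdots\subsetneq h^{+}$, observe that the translated curtains form a chain, and apply Lemma \ref{lem: relating_distance_to_chains}. (One slip: $\tau_X(w^n)=\lim_k \dist(x,w^{kn}x)/k$, not $/(kn)$; with the correct normalization the chain gives $\tau_X(w^n)\ge E$, exactly as in the paper, and both you and the paper then silently use the convention $E\ge 1$ to conclude the bound $\ge 1$ — harmless here since in the application $E$ is the large HHS constant.) Where you genuinely diverge is the forward direction. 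The paper does not dualize to a long geodesic tracking the axis: it takes two orbit points $x,y$ on the $(2E,2E)$-quasi-axis at distance $30E$, lets $\beta=[x,y]$ be a geodesic segment, picks $m$ so that $\dist(w^m\beta,\beta)>10^5E$, takes the curtain dual to $\beta$ at a middle interval, and then verifies $w^m(h\cup h^+)\subsetneq h^+$ by a single thin-triangle argument: any point of $w^m(h\cup h^+)$ projects to $w^m\beta$ in the relevant subsegment, so any geodesic from it back to $\beta$ passes $2E$-close to $y$, forcing its projection to $\beta$ into $h^+$. Your version (curtain dual to a long geodesic $\alpha$ along the axis, comparing with the fellow-traveling translate $w^n\alpha$ via buffer intervals) can be made to work, but it needs an extra standard ingredient not in Lemma \ref{lem:hyperbolic_facts} — that nearest-point projections to uniformly fellow-traveling geodesics coarsely agree away from the ends — and it is precisely this bookkeeping that you defer as ``routine''; the paper's choice of a short dual segment pushed very far away is designed to dispatch that step in one stroke. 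The orientation issue you flag is cosmetic (relabel $h^{\pm}$), and your ($\Leftarrow$) via $\tau_X(w^m)=m\,\tau_X(w)$ matches the paper.
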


\begin{proof} For the backwards direction, suppose that there exists some $n$ such that $w^nh^+ \subsetneq h^+$ and $w^nh \cap h=\emptyset.$ Let $g=w^n$ and notice that as $gh^+ \subsetneq h^+$, we have $g^ih^+ \subsetneq g^{i-1}h^+ \subsetneq \cdots \subsetneq gh^+ \subsetneq h^+$. By the definition of skwering, the above curtains $\{h,gh,\cdots g^ih\}$ form a chain $c$ of cardinality $i+1.$ Hence, by the above lemma, for any $x \in h,$ we have $\dist(x, g^ix) \geq E|c|=E(i+1).$ This proves the backwards direction.  It also proves the ``moreover" part of the statement since $\underset{i \rightarrow \infty}{\lim} \frac{\dist(x,g^ix)}{i} \geq \underset{i \rightarrow \infty}{\lim} \frac{E(i+1)}{i} \geq E>1.$

For the forward direction, suppose that $w$ is loxodromic on $X$. This gives us a $(2E,2E)$-quasi-axis $\alpha$ for $w$. Choose two points $x,y \in \alpha$ along the orbit of $w$ with $\dist(x,y)=30E$ and let $\beta$ be a geodesic segment connecting $x,y$ and suppose that $x$ is closer to $\alpha(0)$ than $y.$ Since $w$ is loxodromic, there exists a power $m$, depending only on $E$ and the translation length of $w,$ such that $\dist(w^m\beta, \beta)>(10)^5E.$ Let $I \subsetneq \beta$ be an interval of length $6E$ with end points $x_1,x_2$ satisfying $\dist(x_1,x)=12E$ and $\dist(x_2,y)=12E$. Let $h$ be the curtain dual to $\beta$ at $I$ and let $h^+$ be the half space given by $\pi_\alpha^{-1}((x_2,y])$, where $(x_2,y]$ is the subgeodesic of $\beta$ connecting $x_2,y.$

\begin{figure}[ht]
   \includegraphics[width=12cm, trim = 2cm 9cm 2cm 3cm]{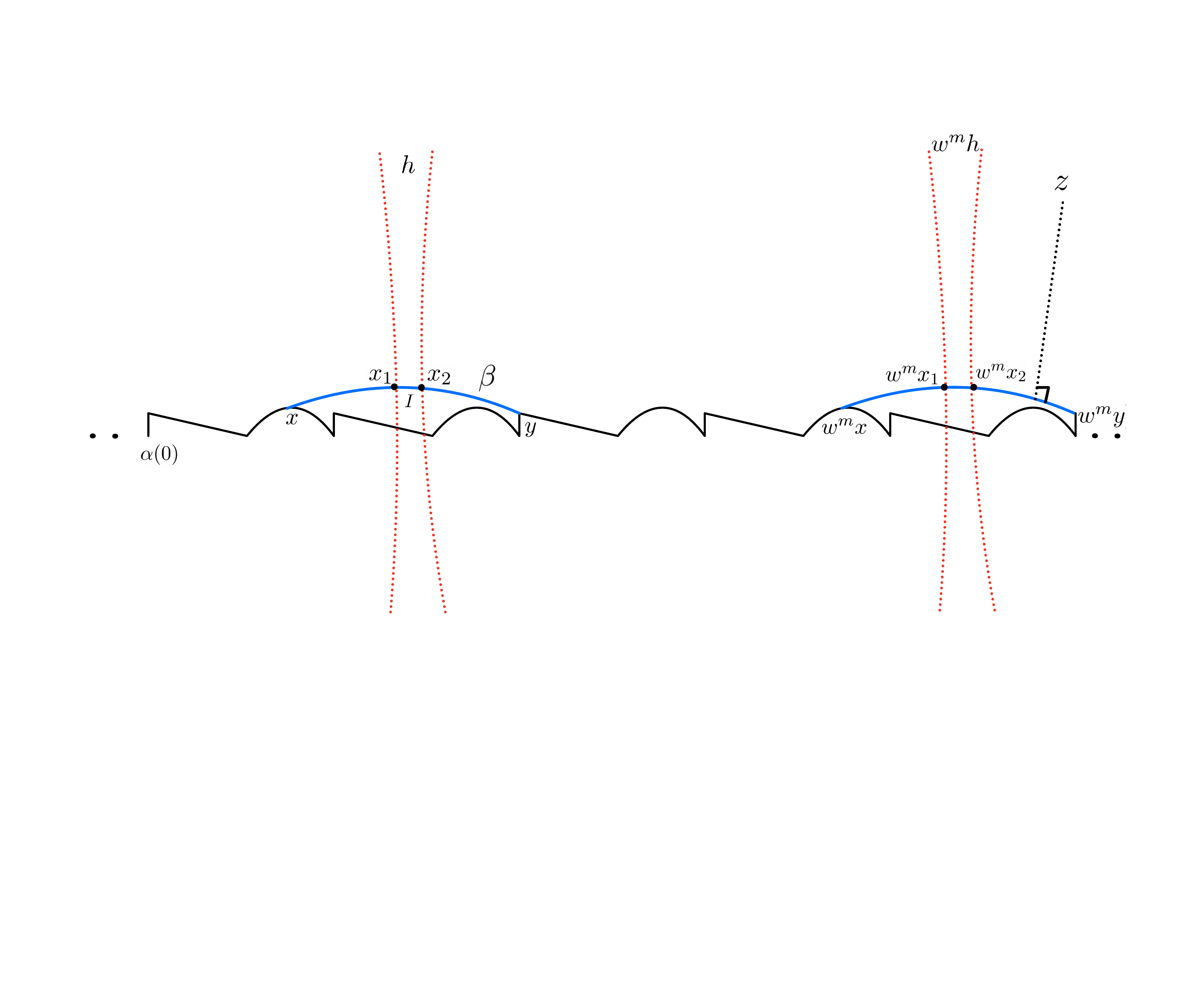}\centering
\caption{Any geodesic connecting $z$ to a point $p \in \beta$ must pass near the projection of $x$ to $w^m\beta$, stay near $\alpha$ until it reaches $p$. In particular, it must pass close to $y$. Hence, the projection of $z$ to $\beta$ is near $y$.} \label{fig:skewering_hyperbolic}
\end{figure}

 We need to show that $w^mh \cap h \neq \emptyset$ and $w^mh^+ \subsetneq h^+.$ To show this, it would suffice to show that $w^m(h \cup h^+)\subsetneq h^+.$ This is immediate by hyperbolicity, namely, if $z \in w^m(h \cup h^+),$ then its projection to $w^m\beta$ lies between the points $w^mx_1, w^my$ along the geodesic $w^m\beta$, see Figure \ref{fig:skewering_hyperbolic}. Using hyperbolicitiy of $X$, any geodesic $\gamma$ connecting $z$ to a point in $\beta$ must pass $2E$-close to $y$, in particular, the projection of $x$ to $\beta$ must be within $2E$ of $y$ which shows that $x \in h^+$ concluding the proof.

\end{proof}

\begin{remark}\label{rmk: skewering_integer}
    Observe that by the proof of Lemma \ref{lem:translation__length_via_skewering} above, when $w$ is loxodromic on an $E$-hyperbolic space, the integer $m$ required to skewer depends only on $E$ and $\tau_X(w)$.
\end{remark}

The following observation is essentially due to Caprace and Sageev \cite{capracesageev:rank}, here, we merely remark that their observation works in any geodesic hyperbolic space with curtains replacing hyperplanes.

\begin{corollary}\label{cor: flip_then_skewer}
    Let $h$ be a curtain in a geodesic hyperbolic space $X$ and let  $g_1,g_2 \in \isom(X)$. If $g_1$ flips $h^+$ and $g_2$ flips $h^-$ then $g_2g_1 (h \cup h^+) \subsetneq h^+$. In particular, $g_2g_1$ skewers $h$ and $\tau_X(g_2g_1) \geq 1.$ 
\end{corollary}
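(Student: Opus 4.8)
The plan is to chain together the two flipping hypotheses and then invoke Lemma \ref{lem:translation__length_via_skewering}. First I would record the set-theoretic picture: since $X = h \cup h^+ \cup h^-$ with $h^+ \cap h^- = \emptyset$, and $g_1$ flips $h^+$, we have $g_1 h^+ \subsetneq h^-$ and $g_1 h \cap h = \emptyset$. I want to upgrade the second condition to control where $g_1 h$ sits: since $g_1$ is an isometry it takes the curtain $h$ to the curtain $g_1 h$ with the same half-spaces $g_1 h^{\pm}$, and because $g_1 h^+ \subsetneq h^-$, continuity/separation (Remark \ref{remks:few_hyperbolic_remarks}(3)) forces $g_1 h \subseteq h^- \cup h$; combined with $g_1 h \cap h = \emptyset$ this gives $g_1 h \subsetneq h^-$, hence $g_1(h \cup h^+) \subsetneq h^-$. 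Symmetrically, $g_2$ flipping $h^-$ gives $g_2(h \cup h^-) \subsetneq h^+$.

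Now I would simply compose: $g_2 g_1 (h \cup h^+) \subseteq g_2(h^-) \subseteq g_2(h \cup h^-) \subsetneq h^+$, where the first inclusion uses $g_1(h\cup h^+)\subsetneq h^-$ and the last uses the conclusion of the previous paragraph applied to $g_2$. This establishes $g_2 g_1(h \cup h^+) \subsetneq h^+$. In particular $w := g_2 g_1$ satisfies $w h^+ \subsetneq h^+$ and $w h \subseteq w(h \cup h^+) \subsetneq h^+$, so $w h \cap h = \emptyset$ since $h^+ \cap h = \emptyset$ would need checking — here I note $h$ and $h^+$ need not be disjoint, so instead I argue directly that $wh \subsetneq h^+$ while $h \not\subseteq h^+$ (as $h$ meets $h^-$ as well, or more simply $h \cap h^+$ and $h\cap h^-$ do not cover $h$), giving $wh \cap h = \emptyset$ via the disjointness $h^+ \cap h^- = \emptyset$ together with $wh \subseteq h^+$ and the fact that any point of $h$ outside $h^+$ lies in $h^- \cup h$; the cleanest route is to observe $w h \subsetneq h^+$ and $w^{-1}h^+ \not\subseteq h^+$ forces $h \not\subseteq h^+$. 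Once $w h^+ \subsetneq h^+$ and $w h \cap h = \emptyset$ are in hand, Lemma \ref{lem:translation__length_via_skewering} immediately gives that $w = g_2 g_1$ skewers $h$, is loxodromic, and $\tau_X(g_2 g_1) \geq 1$.

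The one genuinely delicate point is the disjointness $w h \cap h = \emptyset$, precisely because curtains in the hyperbolic setting can meet their own half-spaces (unlike cubical hyperplanes), so $wh \subseteq h^+$ does not instantly yield $wh \cap h = \emptyset$. I expect this to be the main obstacle, and the fix is to carry the stronger containment $g_i(h \cup h^{\pm}) \subsetneq h^{\mp}$ through the composition — i.e. always track the curtain together with the adjacent half-space — so that at the end one has $w(h\cup h^+)\subsetneq h^+$, and then to note that $h \not\subseteq h^+$ (since $h^-\neq\emptyset$ and $h$ separates $h^+$ from $h^-$, $h$ necessarily contains points outside $h^+$), whence $wh \subsetneq h^+$ is a proper subset of $h$'s complement-type region, forcing $wh\cap h=\emptyset$. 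Everything else is a two-line set-chase, so the corollary follows quickly once this bookkeeping is set up correctly.
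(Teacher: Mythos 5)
Your overall skeleton (compose the two flips, then invoke Lemma \ref{lem:translation__length_via_skewering}) matches the paper, which handles this corollary by citing the double-skewering computation from the introduction of \cite{capracesageev:rank} together with Lemma \ref{lem:translation__length_via_skewering}. The gap is in your upgrade step. From the definition of flipping you only have $g_1h^+\subsetneq h^-$ and $g_1h\cap h=\emptyset$, and you assert that ``continuity/separation forces $g_1h\subseteq h^-\cup h$.'' This does not follow from Remark \ref{remks:few_hyperbolic_remarks}(3). To use separation you must exhibit a continuous path, avoiding $h$, from a (hypothetical) point of $g_1h\cap h^+$ to a point of $h^-$, and you never produce one: $g_1h$ is not shown to be path-connected or even to meet $h^-$, the set $g_1(h\cup h^+)$ is not disjoint from $h$ (since $g_1h^+\subseteq h^-$ and $h^-$ may intersect $h$), and for any natural candidate path (e.g.\ $g_1\alpha$ followed by a geodesic to the point in question) the forced crossing point with $h$ can lie in $h\cap h^-$ or in $g_1h^-$, about which the hypotheses say nothing, so no contradiction arises. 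The culprit is exactly the feature you flag later: curtains overlap their half-spaces, so ``must cross $h$'' is compatible with never leaving $h^-$. In other words $g_1(h\cup h^+)\subsetneq h^-$ is strictly stronger than the flipping hypothesis and cannot be recovered by a soft set-chase; one either needs a quantitative hyperbolicity argument exploiting the $6E$-thickness of $I$ against the $E$-errors of $\pi_\alpha$ (in the spirit of the proof of Lemma \ref{lem:translation__length_via_skewering}), or one must feed in the stronger containment as input, which is what Lemma \ref{lem:HHS_flip} actually provides in the paper's applications: there $\pi_V(H\cup H^+)$ is coarsely a single point far from $\rho^U_V$, so the image lands uniformly deep inside the opposite half-space.

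The second delicate point is handled no better. The disjointness $wh\cap h=\emptyset$, needed both for ``skewers'' and for the ``moreover'' clause of Lemma \ref{lem:translation__length_via_skewering}, does not follow from $w(h\cup h^+)\subsetneq h^+$: since $h\cap h^+$ may be nonempty, $wh\subseteq h^+$ does not preclude $wh$ from meeting $h$, and your proposed route (``$wh\subsetneq h^+$ and $w^{-1}h^+\not\subseteq h^+$ forces $h\not\subseteq h^+$, \dots forcing $wh\cap h=\emptyset$'') establishes nothing about $wh\cap h$; it is circular. Again the fix is quantitative: one must track that the image lands far from $h$ inside $h^+$, not merely inside $h^+$. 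So while you correctly identified the curtain-overlap issue as the crux (it is invisible in the cubical setting of \cite{capracesageev:rank}, where half-spaces are complementary and the two-line chain of inclusions suffices), the write-up leaves precisely that crux unproved.
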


\begin{proof}
    The proof of the claim that $g_2g_1 (h \cup h^+) \subset h^+$ proof is exactly the same as the proof of the double-skewering statement in the introuduction section of \cite{capracesageev:rank}, hence, we refer the reader to that paper. The conclusion that $\tau_X(g_2g_1) \geq 1$ follows from the previous Lemma \ref{lem:translation__length_via_skewering}.
\end{proof}

\section{Hierarchically hyperbolic spaces}\label{sec:HHS_prelim}

We conclude the preliminaries with a brief discussion of \emph{hierarchically hyperbolic spaces} (HHSes). In light of the complexity of the definition of an HHS, we will only highlight the features that we require for our analysis.  See \cite{SistoWhatIs} for an excellent overview of the theory. Roughly, an HHS is a pair $(X, \mathfrak S)$ where $X$ is a geodesic metric space and $\mathfrak S$ is set indexing a family of uniformly hyperbolic spaces $\calC U$ for each $U \in \mathfrak S$ such that the following hold:

\begin{enumerate}

\item For each \emph{domain} $W \in \mathfrak S$ there exist a $\lambda$-hyperbolic space $\calC W$ and a coarsely surjective $(\lambda,\lambda)$-coarsely Lipschitz map $\pi_W: X \rightarrow \calC W,$ with $\lambda$ independent of $W$.

    \item The set $\mathfrak S$ satisfies the following:
    \begin{itemize}
        \item It is equipped with a partial order called \emph{nesting}, denoted by $\nest$. If $\mathfrak S \neq \emptyset,$ the set $\mathfrak S$ contains a unique $\nest$-maximal element $S.$
        
        \item It has a symmetric and  anti-reflexive relation called \emph{orthogonality}, denoted by $\perp$. Furthermore, domains have \emph{orthogonal containers}: if $U \in \mathfrak S$ and there is a domain orthogonal to $U$ then there is a domain $W$ such that $V \nest W$ whenever $V \perp U.$
        
        \item For any distinct non-orthogonal $ U, V \in \mathfrak S$, if neither is nested into the other, then we say $U,V$ are \emph{transverse} and write $U  \pitchfork V.$ 
            \item There exists a map $\theta:[0,\infty) \rightarrow [0,\infty)$ such that for any $D \geq 0,$ if $\dist_X(x,y) \geq \theta(D),$ then $\dist(\pi_U(x),\pi_U(y))>D$ for some $U \in \mathfrak S.$ We will refer to this property as the \emph{uniqueness} property. \label{def:uniqueness}
    
        \end{itemize}

        \item There exists an integer $N$ called the \emph{complexity} of $X$ such that whenever $U_1,U_2,\cdots U_n$ is a collection of pairwise non-transverse domains, then $n \leq N.$
        
        \item If $U  \sqsubset V$ or $U  \pitchfork V,$ then there exists a set $\rho^U_V$ of diameter at most $\lambda$ in $\calC V.$ Further:
        
        \begin{itemize} \item If $U \pitchfork V$ and $x \in X$ with $\dist_U(\pi_U(x), \rho^V_U)> \lambda,$ then $\dist_V(\pi_V(x), \rho^U_V)< \lambda.$  

        \item If $U \sqsubseteq V,$ then $\dist_W(\rho^U_W, \rho^V_W)< \lambda$ whenever $V \sqsubsetneq W$ or $V \pitchfork W$ and $W \notperp U$.
        \end{itemize}

        \item If $U  \sqsubset V$, there is a map $\rho^V_U:\calC V \rightarrow \calC U$ which satisfies the following: For $x \in X$ with $\dist_{V}(\pi_V(x),\rho^U_V)>\lambda,$ we have $\dist_U(\pi_U(x),\rho^V_U(\pi_V(x))< \lambda.$

\end{enumerate}

\begin{lemma}(Bounded geodesic image)\label{lem:BGIA}  If $U  \sqsubset V$ and $\gamma \in \calC V$ is a geodesic with $\dist_{V}(\rho^U_V,\gamma)>\lambda$, then diam$(\rho^V_U(\gamma))\leq \lambda.$ Furthermore, for any $x,y \in X$ and any geodesic $\gamma$ connecting $\pi_V(x),\pi_V(y)$, if $\dist_{V}(\rho^U_V,\gamma)>\lambda$, then $\dist_U(\pi_U(x), \pi_U(y)) \leq \lambda.$
\end{lemma}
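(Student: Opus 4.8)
\textbf{Plan for proving Lemma~\ref{lem:BGIA} (Bounded geodesic image).}
The statement is essentially a restatement of the Bounded Geodesic Image axiom from the HHS definition, so the plan is to deduce it from the axioms recalled in items (5) and (6) above rather than to reprove it from scratch. For the first assertion: suppose $U \nest V$ and $\gamma$ is a geodesic in $\calC V$ with $\dist_V(\rho^U_V, \gamma) > \lambda$. By item~(6), there is a map $\rho^V_U : \calC V \to \calC U$ with the property that $\dist_U(\pi_U(x), \rho^V_U(\pi_V(x))) < \lambda$ whenever $\dist_V(\pi_V(x), \rho^U_V) > \lambda$. The point of the first claim, however, is purely about $\calC V$ and $\calC U$ and does not reference $X$: I would invoke the fact (standard, and part of the usual BGI package --- it can be taken as part of what item~(6) encodes, or derived by the same argument as in \cite{HHS2}) that $\rho^V_U$ is coarsely Lipschitz when restricted to the complement of the $\lambda$-neighborhood of $\rho^U_V$, and more precisely that its image on any geodesic avoiding that neighborhood has diameter at most $\lambda$. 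Concretely, I would take $p, q \in \gamma$ and, using coarse surjectivity of $\pi_V$, choose $x, y \in X$ with $\pi_V(x)$ near $p$ and $\pi_V(y)$ near $q$; since $\gamma$ is a geodesic staying $>\lambda$ from $\rho^U_V$, both $\pi_V(x)$ and $\pi_V(y)$ are $>\lambda$ (after adjusting constants) from $\rho^U_V$, so item~(6) applies to both and gives $\rho^V_U(\pi_V(x)) \approx \pi_U(x)$, $\rho^V_U(\pi_V(y)) \approx \pi_U(y)$. Then the second assertion will pin down $\dist_U(\pi_U(x),\pi_U(y))$, and feeding that back bounds $\dist_U(\rho^V_U(p), \rho^V_U(q))$.

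For the second (``furthermore'') assertion: given $x, y \in X$ and a geodesic $\gamma$ in $\calC V$ joining $\pi_V(x)$ to $\pi_V(y)$ with $\dist_V(\rho^U_V, \gamma) > \lambda$, I want $\dist_U(\pi_U(x), \pi_U(y)) \le \lambda$ (up to the usual adjustment of the constant, which the lemma absorbs by its uniform $\lambda$ bookkeeping). The endpoints $\pi_V(x), \pi_V(y)$ lie on $\gamma$, hence are both at distance $> \lambda$ from $\rho^U_V$, so item~(6) gives $\dist_U(\pi_U(x), \rho^V_U(\pi_V(x))) < \lambda$ and $\dist_U(\pi_U(y), \rho^V_U(\pi_V(y))) < \lambda$. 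Combining this with the diameter bound $\operatorname{diam}(\rho^V_U(\gamma)) \le \lambda$ from the first part --- applied to the sub-geodesic of $\gamma$, which still avoids the $\lambda$-neighborhood of $\rho^U_V$ --- and using that $\pi_V(x), \pi_V(y) \in \gamma$ so $\rho^V_U(\pi_V(x))$ and $\rho^V_U(\pi_V(y))$ both lie in $\rho^V_U(\gamma)$, the triangle inequality yields $\dist_U(\pi_U(x), \pi_U(y)) \le \lambda + \lambda + \lambda$, which we rename $\lambda$ after enlarging the constant once and for all (consistent with the ``$E$ determines $E'$''-style conventions used throughout the paper).

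The only genuinely delicate point --- and the one I would be most careful about --- is the bookkeeping of constants: the hypothesis asks for $> \lambda$ separation and the conclusion asserts a $\le \lambda$ bound with the \emph{same} symbol $\lambda$, which is only legitimate because the paper (like the HHS literature) treats $\lambda$ as a generic uniform constant that may be silently enlarged. I would either state explicitly that $\lambda$ here denotes a sufficiently large constant depending only on the HHS structure, absorbing the factor-of-3 loss, or carry a second constant $\lambda' = \lambda'(\lambda)$ through the two assertions. A secondary, more technical subtlety is ensuring the coarse surjectivity step (choosing $x,y$ from points on $\gamma$) does not degrade the ``avoids the $\lambda$-neighborhood of $\rho^U_V$'' hypothesis below the threshold needed to apply item~(6); this is handled by taking the original separation hypothesis with a comfortable margin (e.g. requiring $> 3\lambda$ internally), which again is invisible after the final enlargement of $\lambda$. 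Apart from this, everything is a direct unwinding of axioms (5) and (6) together with coarse Lipschitzness of $\pi_V$, so no real obstacle remains.
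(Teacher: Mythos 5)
The paper does not actually prove this lemma, and it could not be proved from what Section 5 recalls: the first assertion is precisely the bounded geodesic image \emph{axiom} of the full HHS definition in \cite{HHS1,HHS2} (the paper explicitly says it only highlights the features it needs, and BGI is not among the listed items), while the ``furthermore'' clause is the standard consequence obtained by applying the consistency property of $\rho^V_U$ (the last item in the paper's list, your ``item (6)'') at the two endpoints $\pi_V(x),\pi_V(y)\in\gamma$ and then using the diameter bound plus the triangle inequality. Your treatment of the ``furthermore'' part is exactly this argument and is fine; the $3\lambda$-versus-$\lambda$ discrepancy is, as you note, absorbed by the usual convention of enlarging the uniform constant.

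The gap is in your first part. As written it is circular: to bound $\diam(\rho^V_U(\gamma))$ you propose to ``feed back'' the second assertion to control $\dist_U(\pi_U(x),\pi_U(y))$, while your proof of the second assertion uses the diameter bound from the first. And the circle cannot be broken using only the listed axioms: the consistency item constrains $\rho^V_U$ only at points of the form $\pi_V(x)$ for $x\in X$, and nothing in the recalled axioms gives coarse Lipschitzness of $\rho^V_U$ or any control of $\rho^V_U(p)$ for points $p\in\gamma$ that are merely close to the image of $\pi_V$; so the suggestion that the geodesic-image bound ``can be taken as part of what item (6) encodes'' is not correct, and in \cite{HHS2} it is not derived from consistency but imposed as a separate axiom. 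The correct route (and evidently the paper's intent in stating the lemma without proof) is to cite the BGI axiom for the first assertion and then run your endpoint-plus-triangle-inequality argument for the second; alternatively you would have to import a genuine proof of BGI from the literature for the specific structures at hand, not from the partial axiom list in this paper.
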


Another standard fact about HHses is the existence of $\emph{hiierachy paths}$. See \cite{HHS2}.

\begin{lemma}
    Each pair of points $x,y$ in an $HHS$ are connected by some $(\lambda, \lambda)$-quasi-geodesic $\alpha_{x,y}$ such that for each $U \in \mathfrak S,$ we have $\pi_U(\alpha_{x,y})$ is an unparametrised $(\lambda, \lambda)$-quasi-geodesic.
\end{lemma}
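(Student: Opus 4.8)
The statement to be proved is the existence of hierarchy paths: each pair of points $x,y$ in an HHS is joined by a $(\lambda,\lambda)$-quasi-geodesic $\alpha_{x,y}$ whose image $\pi_U(\alpha_{x,y})$ in each $\calC U$ is an unparametrised quasi-geodesic. Since this is a foundational structural fact rather than something we are expected to reprove from scratch here, the natural approach is to \emph{invoke the distance formula and the realisation theorem} supplied by the HHS axioms in the excerpt (axioms (1)--(6)), and then either cite \cite{HHS2} directly or assemble the standard argument. I would first recall the \emph{distance formula}: there is a threshold $s_0$ so that for all $s\geq s_0$ the quantity $\sum_{U\in\mathfrak S}\lfloor \dist_U(x,y)\rfloor_s$ is coarsely equal to $\dist_X(x,y)$, where $\lfloor t\rfloor_s = t$ if $t\geq s$ and $0$ otherwise; this follows from the uniqueness axiom (property (2), last bullet) together with the coarse Lipschitz maps $\pi_U$ and the complexity bound (3).

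The plan for producing the path itself is the usual hierarchical construction. First I would subdivide $[x,y]$ coarsely into a bounded number of pieces using the domains where $\dist_U(x,y)$ is large — by the complexity bound and distance formula only finitely many (uniformly bounded in count, though not in total distance) domains contribute — and then one builds a path by interpolating: for consecutive points, the coarse median (or the gate/projection machinery implicit in axioms (5)--(6)) lets you move ``one domain at a time'' while controlling every $\pi_U$-coordinate by the Behrstock-type inequality (the first bullet of axiom (5)) and the bounded geodesic image Lemma~\ref{lem:BGIA}. Concatenating these local segments yields a path $\alpha_{x,y}$ whose length is, by the distance formula, coarsely $\dist_X(x,y)$ — giving the $(\lambda,\lambda)$-quasi-geodesic property for a suitable $\lambda$ — and whose projection to each $\calC U$ moves monotonically (up to bounded backtracking, again by Behrstock and BGI), hence is an unparametrised quasi-geodesic in the uniformly hyperbolic space $\calC U$. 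One then absorbs all the constants that appear into a single $\lambda$, enlarging the ambient HHS constant if necessary.

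The main obstacle — and the reason this is genuinely a theorem rather than a triviality — is showing that the projections $\pi_U(\alpha_{x,y})$ make \emph{monotone} progress with only uniformly bounded backtracking in \emph{every} domain simultaneously, including domains $U$ that are transverse or nested relative to the ``active'' domain at a given stage of the construction. This is exactly where the consistency/partial-realisation axioms (the bullets of (5) and (6)) and the bounded geodesic image Lemma~\ref{lem:BGIA} do the heavy lifting: a segment of the path that is ``working on'' domain $V$ must, by Lemma~\ref{lem:BGIA}, have image in any $\calC U$ with $U\sqsubsetneq V$ (and $\rho^U_V$ far from that segment) of diameter at most $\lambda$, and by the Behrstock inequality it cannot re-enter the region near $\rho^V_U$ in a transverse domain. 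Given the scope of the present paper, I would in practice state this lemma with a citation to \cite[Theorem~4.4]{HHS2} (hierarchy paths) and the distance formula \cite[Theorem~4.5]{HHS2} rather than reproduce the full construction, noting that all constants depend only on the HHS structure $\mathfrak S$, which is what the later sections require.
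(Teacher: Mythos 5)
Your proposal matches the paper exactly: the paper states this as a standard fact and gives no proof at all, simply citing \cite{HHS2} for the existence of hierarchy paths, which is precisely what you recommend doing (your sketch of the underlying construction via realization, the Behrstock inequality and bounded geodesic image is a reasonable outline of the argument in \cite{HHS2}, but it is not needed here). No changes required.
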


For two points $x,y \in X,$ it is standard to use $\dist_U(x,y)$ to denote $\dist_{\calC U}(\pi_U(x), \pi_U(y)),$ and similarly for subsets of $X.$ For a subset $A \subseteq X,$ we will also use $\diam_U(A)$ to denote the diameter of the set $\pi_U(A) \subset \calC U$. Further, for a constant $D \geq 0$, the set $\text{Rel}_D(x,y)$ denotes the collection of all domains $U$ with $\dist_U(x,y) \geq D.$

\begin{notation}  We will use the following notations:

\begin{enumerate}
    \item For domains $U_1,U_2 \in \mathfrak S$, we denote the $\rho$-map (if it exists) from $U_1$ to $U_2$ by $\rho^1_2$.
\item For two points $x,y$ along a geodesic $\beta$, we denote the subgeodesic of $\beta$ connecting $x,y$ by $[x,y]_\beta.$

\end{enumerate}

\end{notation}

Now, we define \emph{hierachichally hyperbolic groups}. We are following the definition given in \cite{PetytSpriano20} as it appears to be the most compact, but the notion was originally introduced in \cite{HHS1} and \cite{HHS2}. 

\begin{definition}(HHG) Let $G$ be a finitely generated group whose Cayley graph $X$ admits an HHS structure $(G, \mathfrak S).$ The group $G$ is said to be an HHG if:

\begin{itemize}
    \item $G$ acts cofinitely on $\mathfrak S$ and the action preserves the three relations. For each $g \in G$ and each $U \in \mathfrak S$, there is an isometry $g:\calC U \rightarrow \calC gU$ and these isometries satisfy $g.h=gh.$
\item There is an equivariance of the form $g \pi_U(x)=\pi_{gU}(gx)$ and $g \rho^V_U=\rho^{gV}_{gU}$ for all $g,x \in G$ and all $U,V \in \mathfrak S$ where $\rho^{U}_V$ is defined.

\end{itemize}
    Finally, the group $\Aut(\mathfrak S)$ consists of all elements $g$ which satisfy items 1 and 2 of the above definition with the exception that the action on $\mathfrak S$ need not be co-finite.
\end{definition}

\begin{remark}(bounded domains)\label{rmk:bounded_domains}
   As remarked after Definition 2.3 in \cite{PetytSpriano20}, if $G$ is an HHG, theb -- using cofiniteness of the action of $G$ on $\mathfrak S$ -- there exists a constant $K=K(G)$ such that if $\pi_U(G)$ is bounded, then the diameter of $\pi_U(G)$ is at most $K$.
\end{remark}

The following is Theorem 5.1 in \cite{PetytSpriano20}.
\begin{theorem}[{\cite[Theorem 5.1]{PetytSpriano20}} ]\label{thm:eyries} Let $(G, \mathfrak S)$ be an HHG and let $H \leq G$ be any subgroup. There is a finite $H$-invariant set $\cal E(H)=\{S_1,\cdots S_n\}$ of pairwise orthogonal domains with unbounded $H$-projection such that every $U$ with $\pi_U(H)$ unbounded is nested in some $S_i.$ Moreover, if $H$ is finitely generated and infinite, then $\cal E$ is non-empty.
\end{theorem}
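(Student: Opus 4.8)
The plan is to deduce Theorem~\ref{thm:eyries} from the structural features of an HHS recorded in Section~\ref{sec:HHS_prelim}, essentially by taking a maximal collection of pairwise orthogonal domains carrying unbounded projection and promoting it to an $H$-invariant collection using the complexity bound and the orthogonal container axiom. First I would define $\mathcal{D}(H)$ to be the set of domains $U \in \mathfrak{S}$ with $\pi_U(H)$ of infinite diameter (equivalently, by Remark~\ref{rmk:bounded_domains}, with $\diam(\pi_U(H)) > K$). This set is $H$-invariant: since each $h \in H$ acts as an isometry $\calC U \to \calC hU$ with $h\pi_U(x) = \pi_{hU}(hx)$, the projection $\pi_{hU}(H) = h\pi_U(h^{-1}H) = h\pi_U(H)$ has the same diameter as $\pi_U(H)$. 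The subtlety is that $\mathcal{D}(H)$ is typically infinite, so one cannot simply take $\mathcal{E}(H) = \mathcal{D}(H)$; the point of the theorem is to extract from it a \emph{finite} invariant family of pairwise orthogonal top-level domains dominating all of $\mathcal{D}(H)$.

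The key step is the following: I claim that any two $\sqsubseteq$-maximal elements of $\mathcal{D}(H)$ are orthogonal. Suppose $U, V \in \mathcal{D}(H)$ are both maximal in $\mathcal{D}(H)$ and not orthogonal; then either one is nested in the other — contradicting maximality unless $U = V$ — or $U \pitchfork V$. In the transverse case I would derive a contradiction from the transversality axiom: if $\pi_U(H)$ has diameter far exceeding $\lambda$, then all but a $\lambda$-bounded part of $\pi_U(H)$ lies $>\lambda$-far from $\rho^V_U$, so the corresponding points of $H$ project $\lambda$-close to $\rho^U_V$ in $\calC V$; symmetrically the $H$-points whose $\calC V$-projection is far from $\rho^U_V$ must project near $\rho^V_U$ in $\calC U$. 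Combining these, the set $\pi_V(H)$ is forced to lie within a bounded neighborhood of $\rho^U_V \cup \pi_V(\{h : \dist_U(h, \rho^V_U) \le \lambda\})$, and since the latter set has bounded $\calC U$-diameter one iterates/uses the uniqueness axiom to bound $\diam(\pi_V(H))$ — contradicting $V \in \mathcal{D}(H)$. (This "transverse domains with unbounded projection can't coexist" argument is the technical heart; it is standard in the HHS literature but I would want to run it carefully with the constants from axioms (2) and (5).) So the maximal elements of $\mathcal{D}(H)$ form a set $\{S_1, \dots, S_n\}$ of pairwise orthogonal domains; by axiom~(3) (complexity $N$), any pairwise-orthogonal collection has size at most $N$, so $n \le N$ and the set is finite. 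It is $H$-invariant because $\mathcal{D}(H)$ is $H$-invariant and $H$ acts by $\sqsubseteq$-preserving bijections, hence permutes the maximal elements. Every $U \in \mathcal{D}(H)$ is nested in some $S_i$ simply because $\sqsubseteq$ is a partial order on which one can pass to a maximal element above $U$ — here one should note $\mathcal{D}(H)$ has finite height (again by complexity, a $\sqsubseteq$-chain has length $\le N$), so maximal elements above any given one exist.

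For the "moreover" clause, suppose $H$ is finitely generated and infinite; I must show $\mathcal{D}(H) \neq \emptyset$, i.e. $H$ has unbounded projection to \emph{some} domain. If instead $\pi_U(H)$ were bounded for every $U$, then by Remark~\ref{rmk:bounded_domains} $\diam(\pi_U(H)) \le K$ uniformly in $U$, so for any $x, y \in H$ we have $\dist_U(x,y) \le K$ for all $U \in \mathfrak{S}$; the uniqueness axiom (the $\theta$ function in axiom (2)) then gives $\dist_X(x,y) \le \theta(K)$ for all $x,y \in H$, so $H$ is bounded in $X$, contradicting that $H$ is infinite (an infinite subgroup of an HHG is unbounded in the Cayley graph, as the action on $X$ is metrically proper — indeed $X$ is a Cayley graph of $G$ and $H$ infinite forces $H$ unbounded). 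Hence $\mathcal{D}(H) \ne \emptyset$ and therefore $\mathcal{E}(H) = \{S_1,\dots,S_n\}$ is non-empty. I expect the main obstacle to be the orthogonality claim in the second paragraph: making the transversality argument fully rigorous requires chasing the $\lambda$-constants through axioms (2) and (5) and possibly invoking a consistency/realization-type statement, rather than just the two bulleted properties listed; if those turn out insufficient one would instead cite the relevant lemma from \cite{HHS2} or \cite{PetytSpriano20} directly.
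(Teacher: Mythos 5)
The paper does not actually prove this statement; it is imported wholesale from \cite{PetytSpriano20} (Theorem 5.1 there), so the only question is whether your reconstruction is sound — and its central step is not. The ``technical heart'' you identify, namely that two transverse domains $U\pitchfork V$ cannot both have unbounded $H$-projection, is false as stated. Take $H=G$ to be a mapping class group, or the tree-of-flats HHG from the paper's introduction: by coarse surjectivity of the maps $\pi_U$ and cocompactness of the action, $\pi_U(G)$ is coarsely all of $\calC U$ for \emph{every} domain, so any transverse pair already gives a counterexample. The Behrstock inequality only constrains each individual point: for $h\in H$ either $\pi_U(h)$ is close to $\rho^V_U$ or $\pi_V(h)$ is close to $\rho^U_V$, but different elements of $H$ may satisfy different halves of the alternative, so both $\pi_U(H)$ and $\pi_V(H)$ can be unbounded. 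Your argument never uses the $\sqsubseteq$-maximality of $U,V$ in $\mathcal D(H)$ nor the fact that $H$ is a group, and since the statement you are trying to prove without those hypotheses is false, no amount of constant-chasing through the axioms will close it. The correct mechanism (the one in \cite{PetytSpriano20}, and the one this paper re-runs effectively in Lemmas \ref{lem:effective_first_pass}--\ref{lem:short_loxodromic_top}) is the opposite of a contradiction: given a transverse pair in $\mathcal D(H)$, one uses the $H$-action (translates $hU$, conjugates of elements active on $U$) together with a passing-up lemma to manufacture a strictly higher domain with unbounded $H$-projection, which is what contradicts maximality; transversality is thereby excluded only among the $\sqsubseteq$-maximal elements.

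There is a second, smaller gap in the ``moreover'' clause. Remark \ref{rmk:bounded_domains} gives a uniform constant $K$ bounding $\diam\pi_U(G)$ whenever that projection is bounded, using cofiniteness of the $G$-action on $\mathfrak S$; it says nothing about an arbitrary subgroup $H$. If each $\pi_U(H)$ is bounded, these bounds need not be uniform over $U\in\mathfrak S$, so you cannot feed a single constant into the uniqueness axiom to conclude that $H$ is bounded in $X$: a priori one could have $\diam\pi_{U_n}(H)\to\infty$ along distinct domains $U_n$ while every individual projection stays finite. Ruling this out again requires a passing-up/distance-formula argument (many domains with large projections for a fixed pair of points force a higher domain with large projection, and finite complexity caps the iteration), not just Remark \ref{rmk:bounded_domains}. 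The remaining ingredients of your outline — $H$-invariance of $\mathcal D(H)$, finiteness of the set of maximal elements via the complexity bound, and the existence of maximal elements above any $U\in\mathcal D(H)$ — are fine.
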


The set of domains $\{S_1, \cdots S_n\}$ in Theorem \ref{thm:eyries} above are called the \emph{eyries} of $H$. The first part of the following Corollary \ref{cor:singlton} follows from the statement and proof of Corollary 4.7 in \cite{PetytSpriano20}. The second part is Remark 4.9 in \cite{PetytSpriano20} or Theorem 14.3 in \cite{HHS1} (see also Theorem 1.3 in \cite{contractingrandom}  and Theorem 6.3 in \cite{RST18}).

\begin{corollary}[{\cite{PetytSpriano20}}]\label{cor:singlton} Let $G$ be an $HHG.$ The eyries of $G$ is a singlton $\{S\}$ if and only if $G$ contains a Morse element. In this case, the action of $G$ on $\calC S$ is acylindrical and every loxodromic on $\calC S$ is Morse in $G.$
\end{corollary}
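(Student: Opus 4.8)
The plan is to derive the stated equivalence from Theorem \ref{thm:eyries} together with the standard characterisation of Morse elements in HHGs --- an element is Morse precisely when it acts loxodromically on $\calC S$ (see \cite{ABD}, \cite{RST18}, \cite{Durham2017-ce}) --- and to obtain the two supplementary assertions from the acylindricity results of \cite{HHS1} and \cite{PetytSpriano20}; concretely, this is Corollary~4.7 and Remark~4.9 of \cite{PetytSpriano20} and Theorem~14.3 of \cite{HHS1}, so the point is to package them correctly.

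The first step is the elementary observation that $\calE(G)=\{S\}$ if and only if $\pi_S(G)$ is unbounded. The forward implication is immediate from the definition of an eyrie. For the converse, if $\pi_S(G)$ is unbounded then $S$ is a domain with unbounded $G$-projection, so by Theorem \ref{thm:eyries} it is nested into some eyrie; since $S$ is the unique $\sqsubseteq$-maximal domain this forces $S\in\calE(G)$, and any further eyrie $S_j$ would satisfy $S_j\sqsubsetneq S$, so $S_j$ cannot also be orthogonal to $S$, contradicting that the eyries are pairwise orthogonal. Hence $\calE(G)=\{S\}$.

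Granting this, one direction of the equivalence is quick: if $G$ contains a Morse element $g$, then $g$ acts loxodromically on $\calC S$, so $\pi_S(G)\supseteq\pi_S(\langle g\rangle)$ is unbounded, and therefore $\calE(G)=\{S\}$. For the other direction, suppose $\calE(G)=\{S\}$, so that $\pi_S(G)$ --- equivalently, the $G$-orbit on $\calC S$ --- is unbounded. By Theorem~14.3 of \cite{HHS1} (see also Remark~4.9 of \cite{PetytSpriano20}) the action $G\curvearrowright\calC S$ is acylindrical; as it has unbounded orbits on a hyperbolic space and $G$ is finitely generated, it contains an element $g$ acting loxodromically on $\calC S$, and by the characterisation above such a $g$ is Morse in $G$. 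The same input yields the last two assertions of the corollary directly: when $\calE(G)=\{S\}$ the action on $\calC S$ is acylindrical, and every element loxodromic there is Morse in $G$.

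The one genuinely non-formal ingredient is the acylindricity of $G\curvearrowright\calC S$ when $\pi_S(G)$ is unbounded, and this is where I expect the real work to lie if one wants a self-contained argument; the mechanism is the familiar one, namely that an element $g$ coarsely fixing two points of $\calC S$ that are far enough apart must coarsely stabilise a long hierarchy path in $G$ joining two points realising those projections, and the distance formula then bounds the number of such $g$. The passage from ``loxodromic on $\calC S$'' to ``Morse in $G$'' likewise rests on \cite{HHS1}, \cite{RST18}, \cite{ABD}, \cite{Durham2017-ce}. Everything else above is bookkeeping with the relations $\sqsubseteq$, $\perp$, $\pitchfork$ and the defining properties of eyries furnished by Theorem \ref{thm:eyries}.
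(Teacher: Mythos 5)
First, a point of comparison: the paper does not prove this statement at all --- it is imported wholesale, with the first part attributed to the statement and proof of Corollary~4.7 of \cite{PetytSpriano20} and the second part to Remark~4.9 of \cite{PetytSpriano20} (or Theorem~14.3 of \cite{HHS1}). So your task was really to assemble those ingredients correctly, and the assembly has a genuine gap: you silently identify the unique eyry with the $\nest$-maximal domain. In the corollary, ``$\{S\}$'' denotes the single eyry produced by Theorem \ref{thm:eyries}, which need not be the maximal domain of the given HHG structure; the paper is explicit about this in Remark \ref{rmk:ABD}(2), where only for the ABD structure is the unique eyry guaranteed to equal the maximal domain. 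Concretely, an HHG structure can have $\calC S$ bounded for the maximal $S$ (this is exactly what happens for reducible/product structures, and one can rig such a structure even for $\mathbb{Z}$, with a proper domain carrying all the geometry), in which case the eyry is a proper domain and no element is loxodromic on the maximal $\calC S$.

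This misidentification is what makes your forward direction fail: you argue ``$g$ Morse $\Rightarrow$ $g$ loxodromic on $\calC S$ $\Rightarrow$ $\pi_S(G)$ unbounded $\Rightarrow$ $\calE(G)=\{S\}$'' with $S$ maximal, invoking ``Morse precisely when loxodromic on $\calC S$.'' But that biconditional is an ABD-structure statement, not one valid for arbitrary HHG structures; the paper itself flags this in the proof of Corollary \ref{cor:effective_rank_regidity_HHG} (``the two are not necessarily equivalent for a general HHS structure \cite{ABD}''). The direction that holds in general is ``loxodromic on the maximal $\calC S$ $\Rightarrow$ Morse''; the implication you use, ``Morse $\Rightarrow$ loxodromic on the maximal $\calC S$,'' is exactly the one that can fail, and with it your step~1 equivalence ``$\calE(G)=\{S\}$ iff $\pi_S(G)$ unbounded'' no longer feeds the argument. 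A correct proof of the forward implication (this is the actual content of Corollary~4.7 of \cite{PetytSpriano20}) must instead rule out two or more pairwise orthogonal eyries in the presence of a Morse element, e.g.\ by showing that two orthogonal domains with unbounded $G$-projection force product-like behaviour incompatible with Morseness, without ever passing through the maximal domain. Relatedly, for the supplementary assertions your appeal to Theorem~14.3 of \cite{HHS1} only covers the case where the eyry is maximal; when it is not, acylindricity of $G \curvearrowright \calC S$ and ``loxodromic on $\calC S$ implies Morse'' are precisely what Remark~4.9 of \cite{PetytSpriano20} supplies, so citing it alongside is not enough --- your written argument does not cover that case. The converse direction of your proof (acylindricity plus unbounded orbits gives a loxodromic, which is then Morse) is fine in spirit, modulo the same caveat and an explicit appeal to Osin's trichotomy for acylindrical actions.
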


In the context of CAT(0) spaces, an element is Morse if and only if it's rank-one.

\begin{theorem} [{\cite{charneysultan:contracting}}] Let $g$ be a simesimple isometry of a proper CAT(0) space, then $g$ is Morse if and only if $g$ is rank-one.
    
\end{theorem}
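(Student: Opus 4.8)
The plan is to reduce the statement to the known equivalence, for bi-infinite geodesics in proper CAT(0) spaces, between being Morse and being strongly contracting, which is exactly the content of \cite{charneysultan:contracting}; the only genuine work is to translate between statements about the isometry $g$ and statements about its geodesic axis. Throughout I interpret ``$g$ is Morse'' to mean that the orbit $n\mapsto g^nx$ is a Morse quasi-geodesic in $X$, equivalently that $g$ admits a Morse quasi-axis in $X$ (these are equivalent because the orbit is itself a $g$-invariant quasi-geodesic and any two $g$-quasi-axes lie within bounded Hausdorff distance).

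First I would dispose of the degenerate case. A semisimple isometry is either elliptic or hyperbolic. If $g$ is elliptic it fixes a point, so every $\langle g\rangle$-orbit is bounded; then $g$ is not Morse (a Morse quasi-axis is a quasi-geodesic line, forcing an unbounded orbit) and not rank-one (a strongly contracting axis is translated with positive translation length, again forcing an unbounded orbit), so the equivalence holds vacuously. Hence I may assume $g$ is hyperbolic. By the standard structure theory of semisimple isometries, $\mathrm{Min}(g)$ is non-empty and contains a genuine geodesic line $\alpha$ with $g\alpha=\alpha$ on which $g$ acts by translation by $\ell(g)>0$; this $\alpha$ is an axis of $g$ in the sense used in the paper.

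Next I would establish the two implications. For $(\Leftarrow)$, suppose $g$ is rank-one, so by definition there is a strongly contracting axis $\alpha_0$ of $g$; by \cite{charneysultan:contracting} the geodesic $\alpha_0$ is Morse, and since it is a $g$-invariant quasi-geodesic line it is a Morse quasi-axis for $g$, so $g$ is Morse. For $(\Rightarrow)$, suppose $g$ is Morse, witnessed by a Morse quasi-axis $\beta$. Both $\beta$ and the geodesic axis $\alpha$ from the previous step lie within bounded Hausdorff distance of a single $\langle g\rangle$-orbit (for $\alpha$ because the orbit of a point of $\alpha$ is $(\ell(g)/2)$-dense in $\alpha$; for $\beta$ because a $g$-invariant quasi-geodesic line boundedly fellow-travels the orbit of any of its points), hence $\alpha$ and $\beta$ lie within some bounded Hausdorff distance of each other. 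Since being a Morse quasi-geodesic is preserved under bounded perturbation (a quasi-geodesic with endpoints on $\alpha$ can be adjusted, at bounded cost, to have endpoints on $\beta$, stays uniformly close to $\beta$ by the Morse property, hence uniformly close to $\alpha$), the geodesic $\alpha$ is itself Morse; by \cite{charneysultan:contracting} it is then strongly contracting, so $g$ has a strongly contracting axis, i.e.\ $g$ is rank-one.

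The main obstacle is really packaged inside the cited Charney--Sultan equivalence, whose non-trivial direction --- a Morse geodesic in a CAT(0) space is strongly contracting --- genuinely exploits convexity of the CAT(0) metric, via the flat strip/flat half-plane dichotomy, to upgrade the a priori sublinear control furnished by the Morse condition into the uniform bound on projections of disjoint balls demanded by strong contraction. Everything else above (the elliptic reduction, the existence of a genuine axis, and the passage between quasi-axes and the axis) is routine CAT(0) bookkeeping, so I would present those steps briefly and cite \cite{charneysultan:contracting} for the core equivalence.
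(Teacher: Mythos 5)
Your proposal is correct and is essentially the same as what the paper does: the paper states this result with no proof beyond the citation to Charney--Sultan, and your argument likewise delegates the core equivalence (Morse geodesic $\Leftrightarrow$ strongly contracting geodesic in a CAT(0) space) to that reference, adding only the routine reductions (elliptic case, passage between a Morse quasi-axis and the genuine axis in $\mathrm{Min}(g)$), which are standard and correctly handled.
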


One of the main powers of the existence of such eyries is that they are intimately connected to the algebraic structure of the group and can be used to detect whether $G$ is virtually Abelian (see Corollaries 4.2 and 4.3 in \cite{PetytSpriano20}). In particular, we have the following.

\begin{theorem}[{\cite[Corollary 4.3]{PetytSpriano20}}]\label{thm:Characterization_Abelian} Let $G$ be an HHG with eyries $\{S_1,\cdots S_n\}$. If $G$ is not virtually Abelian, then there exists an eyry $S_i$ such that $\calC S_i$ is not quasi-isometric to a line.
    
\end{theorem}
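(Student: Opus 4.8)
The plan is to prove the contrapositive: assuming every $\calC S_i$ is quasi-isometric to a line, I would deduce that $G$ is virtually abelian. The first step is structural. By Theorem \ref{thm:eyries} the eyries $S_1,\dots,S_n$ are pairwise orthogonal and every domain with unbounded $G$-projection is nested into one of them, so in the HHS distance formula for $(G,\mathfrak S)$ only the $n$ blocks $\{U : U\nest S_i\}$ contribute unboundedly; all other domains contribute uniformly bounded amounts. Since the sum over the $i$-th block is, up to uniform constants, the distance in the factored product region $F_{S_i}$ between the gates of the two points, this identifies the gate map $G\to\prod_{i=1}^{n}F_{S_i}$ as a quasi-isometry (coarse surjectivity holds because any point whose coordinates outside the blocks are coarsely the relevant $\rho$-points already lies in the standard product region of $\{S_1,\dots,S_n\}$). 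This is essentially Corollary~4.2 of \cite{PetytSpriano20}; passing to the finite-index subgroup of $G$ that stabilizes each $S_i$ then supplies a factor group $G_i$ acting on $F_{S_i}$ in the way produced by the proof of Theorem \ref{thm:eyries}.

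The second step deals with a single factor $F_{S_i}$, which is itself an HHS with $\nest$-maximal domain $S_i$. I would first observe that $S_i$ is the \emph{unique} eyry of $(F_{S_i},G_i)$: any other eyry would be a domain properly nested in $S_i$ with unbounded projection, but the eyries are pairwise orthogonal, and a domain properly nested in $S_i$ cannot be orthogonal to $S_i$; and $S_i$ itself must occur among the eyries since it is unbounded and nests only into itself. Hence Corollary \ref{cor:singlton} applies and $G_i$ acts acylindrically on $\calC S_i$. Since $\calC S_i$ is a quasi-line it carries no two independent loxodromic isometries, so an acylindrical action on it is elementary; applying the acylindricity bound to a pair of far-apart points shows the kernel of the coarse action of $G_i$ on $\calC S_i\simeq\mathbb{R}$ is finite, and the same bound shows the image is a discrete (hence cyclic) subgroup of the coarse isometry group of $\mathbb{R}$. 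Therefore $G_i$ is virtually cyclic, and $F_{S_i}$ — being unbounded, as $S_i$ is an eyry — is quasi-isometric to $\mathbb{R}$.

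Combining the two steps, $G$ is quasi-isometric to $\prod_{i=1}^{n}F_{S_i}\simeq\mathbb{R}^n$, and a finitely generated group quasi-isometric to $\mathbb{R}^n$ is virtually $\mathbb{Z}^n$; in particular it is virtually abelian, which contradicts the hypothesis and proves the theorem. I expect the main obstacle to be the first step: pinning the product decomposition $G\simeq\prod_i F_{S_i}$ down precisely, in particular verifying coarse surjectivity and — crucially — checking that the factored pieces $F_{S_i}$ are bona fide HHSes equipped with a group action that is geometric enough for Corollary \ref{cor:singlton} (which is phrased for subgroups of HHGs through Theorem \ref{thm:eyries}) to apply to the pair $(F_{S_i},G_i)$. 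All of the orthogonality-versus-nesting bookkeeping that shows each $F_{S_i}$ has $S_i$ as its sole eyry also lives in this step.
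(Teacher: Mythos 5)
First, a framing remark: the paper does not prove this statement at all --- it is imported verbatim, with citation, from \cite{PetytSpriano20} (Corollary 4.3) --- so your attempt can only be measured against that reference and on its own terms.

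On its own terms, there is a genuine gap, and it is exactly where you placed your confidence rather than where you placed your worry. Your Step 1 (every unbounded domain nests into an eyry by Theorem \ref{thm:eyries}, bounded domains contribute uniformly boundedly, so the gate map gives a coarse identification of $G$ with $\prod_i F_{S_i}$) is fine in outline and is indeed essentially Corollary 4.2 of \cite{PetytSpriano20}. The problem is the sentence ``passing to the finite-index subgroup of $G$ that stabilizes each $S_i$ then supplies a factor group $G_i$ acting on $F_{S_i}$.'' A finite-index subgroup $G'$ stabilizing each eyry acts on each $\calC S_i$ and coarsely preserves each factor $F_{S_i}$, but this only gives a highly non-proper coarse action of all of $G'$ on each factor; it does not produce subgroups $G_i\leq G$ acting properly coboundedly on the individual factors, and in general no such subgroups exist. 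For example, the fundamental group of a circle bundle with nonzero Euler class over a closed hyperbolic surface is an HHG (a $3$-manifold group with no Nil or Sol pieces) quasi-isometric to $\mathbb{H}^2\times\mathbb{R}$, yet no finite-index subgroup splits as a product and no subgroup acts geometrically on the $\mathbb{H}^2$-factor: such a subgroup would meet the center trivially and surject a finite-index subgroup of the base group, splitting a pulled-back extension with nonzero Euler class. So the existence of your $G_i$ cannot be extracted formally from Theorem \ref{thm:eyries} or from the coarse splitting; it is itself a virtual-splitting statement of the same nature as the conclusion you are after, and you invoke it \emph{before} the quasi-line hypothesis ever enters the argument.

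Because of this, Step 2 has no foundation as written. Corollary \ref{cor:singlton} is a statement about HHGs (and, via Theorem \ref{thm:eyries}, about subgroups of HHGs), not about coarse factors $F_{S_i}$ carrying only a non-proper coarse action, so the acylindricity you want on $\calC S_i$ is not available; indeed, for the action of $G'$ itself it must fail whenever $n\geq 2$, since an acylindrical cobounded action on an unbounded quasi-line would force $G'$ to be virtually cyclic, which is incompatible with having two orthogonal unbounded eyries (these produce a rank-two quasi-flat). The real content of the theorem is precisely what this step was meant to deliver: showing, without assuming any virtual splitting, that when $\calC S_i$ is a quasi-line there is no further unbounded geometry properly nested inside $S_i$, i.e.\ that each factor $F_{S_i}$ is itself a quasi-line. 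The ingredients you use at the end (no two independent loxodromics on a quasi-line, Osin's trichotomy, and that a finitely generated group quasi-isometric to $\mathbb{R}^n$ is virtually $\mathbb{Z}^n$) are all fine, but they do not close this gap. For a correct argument you should consult the proof of Corollary 4.3 in \cite{PetytSpriano20}, which does not proceed by first manufacturing subgroups acting geometrically on the factors.
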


\begin{remark}\label{rmk:constants} Below, we set up some conventions regarding the constants that we will use throughout the paper.

\begin{enumerate}

\item  Let $G$ be an HHG, for the rest of the paper, we fix a single Cayley graph $X$ for $G$ and we think of $X$ as an HHS on which $G$ acts properly cocompactly. In particular, this $X$ comes with a constant $\lambda$ and $G$ comes with a constant $K$ as in Remark \ref{rmk:bounded_domains}.


\item Since each $\calC U$ is $\lambda$-hyperbolic, the constant $\lambda$ determines a constant $\lambda' > \lambda$ as in item 4 of Lemma \ref{lem:hyperbolic_facts}. For an HHS $(X,\mathfrak S)$ given with constant $\lambda$ as above, we take a constant $E=\text{max}\{K,N.\lambda'\}>N\lambda$ and we will call this \emph{the HHS constant.} It is clear that all the axioms listed are still true with $E$ replacing $\lambda$ since $E> \lambda'>\lambda$. In particular, all the hyperbolic spaces are $E$-hyperbolic and the curtains taken in Definition \ref{def:hyperbolic_curtains} are defined via intervals $I$ of length $6E$ for this larger $E.$ For the rest of the paper, we will work with the constant $E$ while keeping in mind that the axioms hold true for the smaller constant $\lambda$ as this will come in handy sometimes. 

\item When $X$ is a finite dimensional CAT(0) cube complex with a factor system, then it's an HHS and the complexity constant of the HHS $N$ satisfies $N \geq v$ where $v$ is the dimension of $X.$

\item Let $h$ be a curtain in $\calC U$ dual to a geodesic $\alpha$ at interval $I$ and let $H^+, H^-$ its $U$-half spaces in $X.$ Suppose further that $X$ is a CAT(0) cube complexes; the set $H^+$ need not be median convex in $X$, however, we can consider the median hull of $H^+$ which is formed by iteratively joining end points of $H^+$ by geodesics (as in Definition \ref{def:median_hull}) forming $J^1(H^+), J^2(H^+),\cdots J^N(H^+)=J^{N+1}(H^+)=\hull(H^+).$ Since geodesics of $X$ define uniform quality re-paramterized quasi-geodesics in the $\lambda$-hyperbolic space $\calC U$, for any $x \in \hull(H^+)-H^+$, item 4 of Lemma \ref{lem:hyperbolic_facts} implies that $\pi_\alpha(\pi_U(x)))$ is within $i.\lambda'$ of an end point of $I;$ where $\lambda'$ is a constant that depends only on $\lambda$, and $\lambda$ is the constant given in the definition of the HHS. In words, the nearest point projection of each ``new point" in $\hull(H^+)$ to $\alpha$ is uniformly close to an end point of $I.$ In light of the previous two items and our choice of $E \geq N \lambda',$ we get that for any $x \in \hull(H^+)-H^+,$ the nearest point projection of $\pi_U(x)$ to $\alpha$ is within $E$ of an end point of $I.$ The remark will be important to the proof of Lemma \ref{lem:hyperplane_in_curtain}.

\end{enumerate}

\end{remark}

\begin{lemma}[{\cite[Lemma 6.3 and 6.7]{Durham2017-ce}}]\label{lem:Big_Sets} Let $(X, \mathfrak S)$ be an HHS and let $x \in X.$ Every non-torsion element $g \in \Aut(\mathfrak S)$ determines a (finite) collection of pairwise orthogonal domains $\vbig(g)=\{U_i\}$ such that $\pi_{U_i}(\langle g \rangle. x)$ has infinite diameter and has finite diameter for every other domain. Further, there exists a constant $M=M(\mathfrak S)$ such that $g^MU_i=U_i$ for each $i$.
\end{lemma}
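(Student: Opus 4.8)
The plan is to build the domain collection $\vbig(g)$ directly from the eyries machinery of Theorem~\ref{thm:eyries} applied to the cyclic subgroup $H = \langle g\rangle$, then extract the uniform power $M$ from finiteness considerations. First I would apply Theorem~\ref{thm:eyries} (in the $\Aut(\mathfrak S)$-generality, noting the theorem is stated for HHGs but the argument only uses the $H$-invariance and the complexity bound $N$, so it applies to a cyclic $H \leq \Aut(\mathfrak S)$) to the subgroup $H=\langle g\rangle$. This yields a finite $H$-invariant collection $\calE(H) = \{U_1,\dots,U_k\}$ of pairwise orthogonal domains with unbounded $H$-projection, such that every domain $U$ with $\pi_U(H)$ unbounded is nested in some $U_i$. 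Since $g$ is non-torsion and $H$ is infinite, $\calE(H)$ is non-empty. I would then set $\vbig(g) = \calE(H)$ and check that $\pi_{U_i}(\langle g\rangle.x)$ has infinite diameter: this follows because $\pi_{U_i}$ is coarsely Lipschitz and coarsely surjective, an orbit $\langle g\rangle.x$ is coarsely dense in the quasi-convex hull of $H$ acting on $X$ (or, more simply, $\pi_{U_i}(H.x)$ coarsely agrees with $\pi_{U_i}(H)$ by equivariance up to bounded error from the choice of basepoint), and $\pi_{U_i}(H)$ is unbounded. For the "finite diameter for every other domain" claim, suppose $V \notin \vbig(g)$ has $\pi_V(\langle g\rangle.x)$ of infinite diameter; then $\pi_V(H)$ is unbounded, so by Theorem~\ref{thm:eyries} $V \nest U_i$ for some $i$, and if $V \neq U_i$ one argues (using the bounded geodesic image Lemma~\ref{lem:BGIA} and the fact that $g$ moves $\pi_{U_i}$-coordinates unboundedly but $V$ is properly nested) that the $V$-coordinate is actually coarsely constant along a hierarchy path realizing the $U_i$-progress — a contradiction. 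This is the place where I would have to be careful: ruling out unbounded projection to proper sub-domains of the eyries requires either invoking the structure of loxodromic/hierarchically-loxodromic elements or an explicit bounded-geodesic-image argument, and getting the pairwise-orthogonality of $\vbig(g)$ to be exactly $\calE(H)$ rather than something slightly larger is the main technical point.

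Next, for the uniform power $M$: the collection $\vbig(g)$ is $g$-invariant (since $\calE(H)$ is $H$-invariant and $g \in H$), so $g$ permutes the finite set $\{U_1,\dots,U_k\}$. Thus some power $g^{k!}$ fixes each $U_i$. To get $M$ depending only on $\mathfrak S$ and not on $g$, I would observe that $|\vbig(g)| = k$ is bounded above by the complexity $N$ (the $U_i$ are pairwise orthogonal, hence pairwise non-transverse, so $k \leq N$ by axiom~3 of the HHS definition). Therefore $M := N!$ works uniformly: $g^M U_i = U_i$ for all $i$ and all non-torsion $g \in \Aut(\mathfrak S)$. I expect the main obstacle to be the characterization step — proving that the domains with infinite orbit-projection are \emph{exactly} the eyries and that they are pairwise orthogonal — which is essentially the content of Durham--Hagen--Sisto's original argument and may require reproducing a chunk of their coarse-geometric analysis of the "big set" of a non-torsion element rather than a short citation-plus-bookkeeping argument.
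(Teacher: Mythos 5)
The paper does not prove this lemma at all --- it is imported verbatim from Durham--Hagen--Sisto \cite{Durham2017-ce} (their Lemmas 6.3 and 6.7) --- so the comparison is with their argument, and your route through Theorem~\ref{thm:eyries} has a genuine gap at exactly the point you flag. The content of the lemma is that the set of \emph{all} domains with unbounded $\langle g\rangle$-orbit projection is pairwise orthogonal; Theorem~\ref{thm:eyries} only tells you that each such domain is nested into some eyry, so you must exclude a properly nested $V \pnest U_i$ with unbounded orbit projection, and your bounded-geodesic-image sketch does not do this as written. Unboundedness of $\pi_{U_i}(\langle g\rangle x)$ only means $\sup_n \dist_{U_i}(x,g^n x)=\infty$; the orbit may return near $\rho^V_{U_i}$ infinitely often, so there is no ``hierarchy path realizing the $U_i$-progress'' that you can assume stays far from $\rho^V_{U_i}$, and Lemma~\ref{lem:BGIA} says nothing about the orbit points whose $\calC U_i$-geodesics do pass close to $\rho^V_{U_i}$. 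Excluding this recurrence (and the transverse configurations among non-eyries) is precisely what Durham--Hagen--Sisto's analysis does, using $g$-invariance of the unbounded set, finite complexity, and a study of the $\langle g^M\rangle$-action on the relevant $\calC U_i$; deferring it, as your final sentence does, leaves the proposal an outline of the citation rather than a proof.

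A second, related issue is the mismatch of hypotheses: Theorem~\ref{thm:eyries} is stated for subgroups $H\le G$ of an HHG, where $\pi_U(H)$ makes sense and the nonemptiness clause uses properness of the $G$-action via the uniqueness axiom, whereas the lemma concerns an arbitrary non-torsion $g\in\Aut(\mathfrak S)$ of an arbitrary HHS and is used in that generality in the paper (e.g.\ Lemma~\ref{lem:effective_first_pass} takes elements of $\Aut(\mathfrak S)$). In particular ``$H$ infinite $\Rightarrow \calE(H)\neq\emptyset$'' genuinely fails for $\Aut(\mathfrak S)$: an infinite-order elliptic automorphism has all orbit projections bounded, so your proof should not lean on that clause (the lemma itself does not assert $\vbig(g)\neq\emptyset$). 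There is also a risk of circularity to check, since the eyries theorem of Petyt--Spriano belongs to a line of work that postdates and partly builds on the DHS results you are trying to reprove. What is correct, and matches how the uniform constant arises in the source: once finiteness, $g$-invariance and pairwise orthogonality of $\vbig(g)$ are established, your extraction of the power --- $|\vbig(g)|\le N$ by the complexity axiom because orthogonal domains are pairwise non-transverse, hence $M=N!$ works and depends only on $\mathfrak S$ --- is exactly the right bookkeeping.
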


In order to simplify certain statements, we introduce the following definition.

\begin{definition}(active) Continuing with the notation from the above lemma, we will say that $g$ is \emph{active} over $U$ if $U \in \vbig(g)$ and $gU=U.$

\end{definition}

 The following lemma is an immediate consequence of the Behrstock inequality (item 4 in the definition of an HHS Section \ref{sec:HHS_prelim}).

\begin{lemma} Let $U_i$ be a collection of domains for $i \in \{1,2,3\}$ such that for $i \in \{1,2\}$, the domains $U_i,U_{i+1}$ are transverse and $\dist_{U_2}(\rho^1_2, \rho^3_2)>4E$, then $\{U_i\}_{i=1}^3$ is a collection of distinct pairwise transverse domains.
\end{lemma}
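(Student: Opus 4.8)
The statement is essentially a bookkeeping consequence of the Behrstock inequality (axiom 4 in the HHS definition, i.e.\ the ``uniqueness'' bullet together with the transversality clause in item (5)) applied twice. The plan is to first establish that the three domains are pairwise distinct, and then that each of the three pairs is transverse.

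First I would show $U_1 \neq U_3$. If $U_1 = U_3$, then $\rho^1_2 = \rho^3_2$ as sets in $\calC U_2$, so $\dist_{U_2}(\rho^1_2, \rho^3_2) = 0 < 4E$, contradicting the hypothesis. The hypothesis that $U_1, U_2$ are transverse already gives $U_1 \neq U_2$, and similarly $U_2 \neq U_3$; so all three are distinct. Next I would deal with the only non-assumed transversality, namely $U_1 \trans U_3$. The two domains are distinct, so the remaining possibilities to rule out are $U_1 \nest U_3$ (or $U_3 \nest U_1$) and $U_1 \perp U_3$.

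Suppose first $U_1 \nest U_3$ or $U_3 \nest U_1$; without loss of generality say $U_1 \nest U_3$. Then by the nesting-compatibility clause of item (5) (the bullet stating $\dist_W(\rho^U_W, \rho^V_W) < \lambda$ when $U \nest V$ and $V \pitchfork W$), applied with $U = U_1$, $V = U_3$, $W = U_2$ — which is legitimate since $U_3 \pitchfork U_2$ and $U_2 \notperp U_1$ — we get $\dist_{U_2}(\rho^1_2, \rho^3_2) < \lambda < E < 4E$, again a contradiction. The symmetric case $U_3 \nest U_1$ is identical with the roles of $1$ and $3$ swapped. Finally suppose $U_1 \perp U_3$. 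I would handle this by producing a point that projects far from $\rho^1_2$ and using the Behrstock inequality to force it close to $\rho^3_2$ in $\calC U_1$ — but the cleaner route is to invoke the uniqueness of the maximal non-transverse clusters: actually the simplest argument is that $U_1 \perp U_3$ together with $U_2 \trans U_1$, $U_2 \trans U_3$ still doesn't immediately contradict anything, so here I would instead argue directly with Behrstock. Pick $x \in X$ with $\dist_{U_2}(\pi_{U_2}(x), \rho^1_2) > E$; this is possible because $\dist_{U_2}(\rho^1_2,\rho^3_2) > 4E$ forces $\rho^1_2$ and $\rho^3_2$ to be genuinely far apart, so at least one of them — say after relabeling — can be separated from a point of $\pi_{U_2}(X)$ by more than $E$ (using coarse surjectivity of $\pi_{U_2}$). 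Then the transversality Behrstock inequality for the pair $U_1 \trans U_2$ gives $\dist_{U_1}(\pi_{U_1}(x), \rho^2_1) < E$, and the same for $U_3 \trans U_2$ gives $\dist_{U_3}(\pi_{U_3}(x), \rho^2_3) < E$; one then plays this against $U_1 \perp U_3$, where $\rho^1_3$ and $\rho^3_1$ are not defined, to reach a contradiction — or more honestly, one simply notes that orthogonal domains have orthogonal containers and the consistency axioms force $\rho^1_2$ and $\rho^3_2$ to coarsely coincide, again contradicting $4E$-farness.

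The main obstacle I anticipate is the $U_1 \perp U_3$ case: unlike the nesting case there is no single listed axiom that directly bounds $\dist_{U_2}(\rho^1_2, \rho^3_2)$ from orthogonality, so one must chase through the consistency/uniqueness axioms (or invoke the ``orthogonal containers'' structure and the partial-realization axiom, which are part of the full HHS definition but only alluded to in this excerpt). Given the phrasing of the lemma — ``an immediate consequence of the Behrstock inequality'' — I expect the intended argument is the short Behrstock-inequality contradiction: if $U_1, U_3$ were non-transverse in any way, the consistency inequalities would force $\rho^1_2$ and $\rho^3_2$ to be within $E$ of each other in $\calC U_2$, contradicting the hypothesis $\dist_{U_2}(\rho^1_2, \rho^3_2) > 4E$; so the clean writeup is to say: $U_1 \trans U_2$, $U_3 \trans U_2$, and $U_1, U_3$ non-transverse together imply (via items (5) and the Behrstock inequality) that $\dist_{U_2}(\rho^1_2,\rho^3_2) \le E$, done. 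Hence all three domains are distinct and pairwise transverse.
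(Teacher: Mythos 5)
Your handling of distinctness and of the nesting case is correct. In fact your distinctness argument (if $U_1=U_3$ then $\rho^1_2$ and $\rho^3_2$ are literally the same subset of $\calC U_2$, contradicting $\dist_{U_2}(\rho^1_2,\rho^3_2)>4E$; and transverse domains are distinct by definition) is more direct than the paper's, which instead chooses auxiliary points $x_1,x_2,x_3$ and runs the Behrstock inequality to separate $U_1$ from $U_3$. Your exclusion of $U_1\nest U_3$ and $U_3\nest U_1$ via the second bullet of item (5), with $U=U_1$, $V=U_3$, $W=U_2$, is exactly the mechanism the paper relies on when it says the domains ``can't have a nesting relation.''

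The genuine gap is the case $U_1\perp U_3$, and you essentially flag it yourself. Your proposed ``clean writeup'' asserts that non-transversality of $U_1,U_3$ together with item (5) and the Behrstock inequality forces $\dist_{U_2}(\rho^1_2,\rho^3_2)\le E$; but no axiom you cite gives any bound on $\dist_{U_2}(\rho^1_2,\rho^3_2)$ from orthogonality of $U_1$ and $U_3$: the Behrstock inequality concerns a point of $X$ and a transverse pair, and the relevant bullet of item (5) requires a nesting $U\nest V$. Your earlier attempt (pick $x$ projecting far from $\rho^1_2$ in $\calC U_2$, push into $\calC U_1$ and $\calC U_3$, then ``play this against $U_1\perp U_3$'') indeed yields no contradiction, as you concede. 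The missing ingredient is the partial realization axiom, which is precisely what the paper invokes: if $U_1\perp U_3$ and both are transverse to $U_2$, partial realization produces a point $x\in X$ with both $\dist_{U_2}(\pi_{U_2}(x),\rho^1_2)$ and $\dist_{U_2}(\pi_{U_2}(x),\rho^3_2)$ uniformly bounded (at most $E$ with the paper's convention that $E$ dominates the HHS constants), whence $\dist_{U_2}(\rho^1_2,\rho^3_2)\le 2E<4E$, a contradiction. You mention partial realization only parenthetically as something ``alluded to in this excerpt'' and never carry out this step, so as written the orthogonality case is unproved, and the statement you substitute for it is not a consequence of the axioms you cite.
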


\begin{proof} Pick points $x_i \in X$ such that such that $\dist_{U_i}(x_i, \rho^2_i)>3E$, and $\dist_{U_2}(x_2,\rho^i_2) \asymp 2E$ for $i\neq 2.$ By the Behrstock inequality (item 4 of the HHS definition Section \ref{sec:HHS_prelim}), we have $\dist_{U_2}(\rho^1_2,x_i)>E$ for $i \neq 1$. Thus, we have $\dist_{U_1}(\rho^2_1,x_i)<E$ and by the triangle inequality we get $\dist _{U_1}(x_2,x_3)<2E.$ Since $\dist_{U_3}(x_2,x_3)>3E$, we conclude that $U_1 \neq U_3.$ As $\dist_{U_2}(\rho^1_2, \rho^3_2)>3E$, the domains $U_1,U_3$ can't have a nesting relation, and by partial realization, $U_1,U_3$ can't be orthogonal. Hence, $\{U_i\}$ is a collection of distinct pairwise transverse domains.
\end{proof}
Applying the above lemma inductively gives the following consequence.

\begin{corollary} (local-to-global for transverse domains)\label{cor:local-to-global} If $\calU=\{U_i\}_{i=1}^k$ satisfies that $U_i,U_{i+1}$  are transverse and $\dist_{U_i}(\rho^{i-1}_i, \rho^{i+1}_i)>4E$ for all $1 \leq i \leq k,$ then $\calU$ consists of distinct pairwise transverse domains.
\end{corollary}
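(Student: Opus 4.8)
The plan is to prove Corollary \ref{cor:local-to-global} by induction on $k$, using the preceding lemma as the inductive engine. The base cases $k \leq 3$ are handled directly: for $k=1$ there is nothing to prove, for $k=2$ transversality is given by hypothesis, and for $k=3$ the claim is precisely the content of the preceding lemma (whose hypothesis $\dist_{U_2}(\rho^1_2,\rho^3_2)>4E$ is exactly what we are assuming at the middle index). So assume the statement holds for all chains of length less than $k$ and suppose $\calU=\{U_i\}_{i=1}^k$ satisfies the hypotheses.

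First I would apply the inductive hypothesis to the subchains $\{U_i\}_{i=1}^{k-1}$ and $\{U_i\}_{i=2}^{k}$; since each of these has length $k-1$ and inherits the hypotheses (the condition $\dist_{U_i}(\rho^{i-1}_i,\rho^{i+1}_i)>4E$ holds for every interior index, and the consecutive-transversality condition is inherited), we conclude that $U_1,\dots,U_{k-1}$ are pairwise distinct and pairwise transverse, and likewise $U_2,\dots,U_k$. The only pair not yet controlled is $(U_1,U_k)$. To handle it, I would run the Behrstock-inequality argument from the proof of the preceding lemma along the whole chain: pick a point $x_1 \in X$ with $\dist_{U_1}(x_1,\rho^2_1)>3E$ and propagate. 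Specifically, at each step the Behrstock inequality forces $\pi_{U_{i+1}}(x_1)$ to be within $\lambda < E$ of $\rho^{U_i}_{U_{i+1}}$, while the separation hypothesis $\dist_{U_{i+1}}(\rho^i_{i+1},\rho^{i+2}_{i+1})>4E$ then guarantees $\dist_{U_{i+1}}(\pi_{U_{i+1}}(x_1),\rho^{i+2}_{i+1})>3E$, so the hypothesis needed to continue the induction to the next domain is regenerated. Carrying this down the chain gives $\dist_{U_k}(\pi_{U_k}(x_1),\rho^{k-1}_k)$ large; symmetrically, choosing a point $x_k$ anchored at $U_k$ and propagating backwards gives $\dist_{U_1}(\pi_{U_1}(x_k),\rho^2_1)$ large. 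Comparing $\pi_{U_1}(x_1)$ with $\pi_{U_1}(x_k)$ and $\pi_{U_k}(x_1)$ with $\pi_{U_k}(x_k)$ via the triangle inequality then shows $U_1 \neq U_k$ exactly as in the lemma; and the separation $\dist_{U_j}(\rho^{1}_j,\rho^{k}_j) > \lambda$ at any interior index $j$ (obtained by combining the propagated estimates with the consecutive separation hypotheses) rules out a nesting relation between $U_1$ and $U_k$, while partial realization rules out orthogonality. Hence $U_1 \trans U_k$, completing the induction.

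The step I expect to be the main obstacle is verifying that the Behrstock-propagation argument really does run cleanly all the way along an arbitrarily long chain — that is, checking that the constant $E$ chosen in Remark \ref{rmk:constants} (with $E > N\lambda'$, in particular $E$ comfortably larger than $\lambda$) leaves enough slack at each step so that the ``$>3E$'' estimates survive the triangle-inequality losses incurred when moving from $U_i$ to $U_{i+1}$. Since each individual step loses only $O(\lambda)$ and the hypothesis supplies a fresh gap of $4E$ at the next domain, the bookkeeping closes, but one must be careful that the estimate being propagated is about the specific point $x_1$ (resp. $x_k$) and not accumulating error over the length of the chain. An alternative, perhaps cleaner, route that avoids propagating a single point the whole way: prove only the single extra pair $(U_1,U_k)$ by first establishing, by a short separate induction using only the preceding lemma on consecutive triples, the ``global separation'' estimate $\dist_{U_j}(\rho^1_j,\rho^{j+1}_j) > 3E$ and $\dist_{U_j}(\rho^{j-1}_j,\rho^{k}_j)>3E$ for all interior $j$, and then feed the triple $(U_1, U_j, U_k)$ (for a conveniently chosen interior $j$, say $j=2$ if $k \geq 3$) into the preceding lemma directly. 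This reduces the whole corollary to repeated application of the lemma plus one auxiliary monotonicity-of-$\rho$-separation lemma, which is the more modular way to present it.
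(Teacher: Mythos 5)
Your overall strategy --- induct on $k$, observe that both length-$(k-1)$ subchains inherit the hypotheses, and isolate the single new pair $(U_1,U_k)$ to be handled by a Behrstock propagation along the chain --- is the right skeleton, and indeed the paper itself gives no more detail than ``apply the lemma inductively,'' so the propagation is exactly the content one has to supply. The propagation itself is set up correctly and, as you note, does not accumulate error because each step draws on a fresh gap of $4E$. However, the execution of the distinctness step has a genuine flaw. First, the conclusion of the propagation at the last domain is stated backwards: what you obtain is that $\pi_{U_k}(x_1)$ lies within $\lambda$ of $\rho^{U_{k-1}}_{U_k}$, i.e.\ $\dist_{U_k}(\pi_{U_k}(x_1),\rho^{k-1}_k)$ is \emph{small}, not large (and symmetrically for $x_k$ in $U_1$). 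More importantly, with the corrected estimates, comparing only the two endpoint-anchored points does not separate the domains: one gets $\dist_{U_1}(x_1,x_k)>3E-\lambda$ \emph{and} $\dist_{U_k}(x_1,x_k)>3E-\lambda$, and two large distances are perfectly consistent with $U_1=U_k$ when $k>3$, since $\rho^{U_2}_{U_1}$ and $\rho^{U_{k-1}}_{U_k}$ are then a priori unrelated subsets of the (putatively equal) hyperbolic space. This is precisely why the three-domain lemma's own proof uses a point anchored in the \emph{middle} domain: you need a point $y$ with $\pi_{U_2}(y)$ at distance about $2E$ from both $\rho^1_2$ and $\rho^3_2$, so that forward propagation pins $\pi_{U_k}(y)$ near $\rho^{k-1}_k$ while Behrstock pins $\pi_{U_1}(y)$ near $\rho^2_1$; then $\dist_{U_1}(y,x_k)<2\lambda$ while $\dist_{U_k}(y,x_k)>3E-\lambda$, and this pair of incompatible distances is what forces $U_1\neq U_k$.

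A second gap is the assertion that $\dist_{U_j}(\rho^1_j,\rho^k_j)>\lambda$ at an interior index, used to exclude a nesting relation via the HHS axiom. This does not follow formally from the pointwise propagation: the Behrstock inequality controls projections of points of $X$, not the location of the $\rho$-sets themselves, so ``combining the propagated estimates with the consecutive separation hypotheses'' hides an additional statement, namely the coarse alignment $\dist_{U_j}(\rho^{U_1}_{U_j},\rho^{U_{j-1}}_{U_j})<2\lambda$ (and its mirror for $U_k$), which requires its own short induction along the chain using anchored points in the way just described. Your ``alternative, cleaner route'' at the end --- prove that alignment lemma first and then feed a triple such as $(U_1,U_2,U_k)$ back into the three-domain lemma --- is in fact the correct way to close both gaps (modulo a small adjustment of constants, since the alignment only yields a separation of roughly $4E-2\lambda$ rather than the literal $4E$ in the lemma's hypothesis, which the slack $E>N\lambda'$ absorbs), but as written it is only named, not carried out, and it is exactly the missing ingredient rather than an optional variant.
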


    

The following is a (strong) variant of the standard passing-up argument proven recently by Durham (compare with Lemma 2.5 \cite{HHS2} and Lemma 5.4 in \cite{PetytSpriano20}). It is worth noting that Durhams actual statement is significantly stronger than Lemma \ref{lem:strong_passing} below, however, the following version is all we need for this article.

\begin{lemma}[{\cite[Proposition 3.5]{Durham2023}}]\label{lem:strong_passing} For any $K_1>K_2>50E,$ there exists an integer $P=P(K_1)$ so that for any $x,y \in X$ if $\calV \subset \mathrm{Rel}_{K_1}(x,y)$ with $|\calV|>P$, then there exist $W \in \mathrm{Rel}_{K_2}(x,y)$ and $U_1,U_2,U_3 \in \calV$ with $U_i \nest W$ and $\dist_W(\rho^{U_i}_W, \rho^{U_j}_W)>K_2,$ for each $ i \neq j$, where $i,j \in \{1,2,3\}.$
\end{lemma}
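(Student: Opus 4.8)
The plan is to prove Lemma~\ref{lem:strong_passing} by combining the standard passing-up machinery with the pigeonhole-type counting that controls how many domains can have simultaneously large projection.

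\textbf{Setup and overall strategy.}
First I would fix $K_1 > K_2 > 50E$ and points $x,y \in X$, and suppose $\calV \subset \mathrm{Rel}_{K_1}(x,y)$ is given. The goal is to locate a single ``higher-up'' domain $W$ which sees a large projection gap between (at least three of) the domains in $\calV$, and to do so we want $W$ itself to record distance at least $K_2$ between $x$ and $y$. The standard passing-up lemma (compare Lemma~2.5 in \cite{HHS2}) says that for each $V \in \calV$ there is some domain $W_V$ with $V \nest W_V$ or $V \pitchfork W_V$ such that $\rho^V_{W_V}$ is far (distance $\geq K_2$-ish) from a fixed hierarchy path $\gamma$ between $\pi_{W_V}(x)$ and $\pi_{W_V}(y)$; by bounded geodesic image (Lemma~\ref{lem:BGIA}) this forces $V \in \mathrm{Rel}_{\text{small}}(x,y)$ contributions to be realized ``one level up.'' The key quantitative point is that by the complexity bound $N$ (axiom~3 of the HHS), a single domain $W$ can be this witness $W_V$ for only boundedly many $V$ unless many of the $\rho^V_W$ cluster together --- and if too many $\rho^V_W$ cluster within $K_2$ of each other along $\gamma$, the Behrstock inequality / the finite-dimensionality of the family of pairwise-non-transverse domains gives a contradiction. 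This is exactly the mechanism that lets one upgrade ``many domains with $K_1$-large projection'' to ``three domains nested in a common $W$ with pairwise $K_2$-separated $\rho$-images.''

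\textbf{Key steps, in order.}
(1) Run the usual passing-up argument: choosing a threshold appropriately between $K_1$ and $K_2$ and invoking the uniqueness axiom together with Lemma~\ref{lem:BGIA}, assign to each $V \in \calV$ a witnessing domain $W_V$ with $V$ nested in $W_V$ and $\dist_{W_V}(\rho^V_{W_V}, \gamma_{W_V}) > K_2$, where $\gamma_{W_V}$ is a geodesic from $\pi_{W_V}(x)$ to $\pi_{W_V}(y)$; this already forces $W_V \in \mathrm{Rel}_{K_2}(x,y)$. (2) By the complexity bound, the collection of possible witnesses $\{W_V\}$ that can arise is constrained; more precisely, a counting argument (this is where $P = P(K_1)$ enters) shows that if $|\calV| > P$ then some single domain $W$ serves as $W_V$ for a large number of $V$'s --- large enough that, after projecting all those $\rho^V_W$ into $\calC W$ and using that $\calC W$ is $E$-hyperbolic with nearest-point projection onto $\gamma_W$ being $(1,E)$-coarsely Lipschitz, we can find three of them, say $\rho^{U_1}_W, \rho^{U_2}_W, \rho^{U_3}_W$, that are pairwise at distance $> K_2$ in $\calC W$. (3) Finally, verify the $U_i \nest W$ clause (inherited from step (1)) and that $W \in \mathrm{Rel}_{K_2}(x,y)$. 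One should also double-check the transversality-versus-nesting alternative is resolved correctly: if $U_i \pitchfork W$ rather than $U_i \nest W$ one uses the orthogonal-container / partial-realization axioms to rule it out or to replace $W$ by a genuinely $\nest$-maximal witness, exactly as in the cited proofs.

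\textbf{Main obstacle.}
The delicate part is the bookkeeping in step (2): extracting \emph{three} domains with \emph{pairwise} $K_2$-separated $\rho$-images (not just two far apart, or many that could collapse pairwise) while keeping $P$ a function of $K_1$ alone and independent of $x,y$ and of $\calV$. This requires being careful that the number of ``clusters'' of $\rho^V_W$-points along $\gamma_W$ that can occur is bounded in terms of $E$ and the HHS constants, so that once $|\calV|$ exceeds a threshold depending only on $K_1$ (through $N$, $E$, and the relation between $K_1$ and the intermediate threshold) one is guaranteed three points in distinct, widely-separated clusters. Since this is precisely Proposition~3.5 of \cite{Durham2023}, I would follow that argument, and the role of this excerpt is just to record the weaker consequence we actually use; so in practice the ``proof'' here reduces to citing \cite{Durham2023} and noting that the stated form is an immediate specialization, with the remarks above explaining why the quantifier structure ($P$ depending only on $K_1$) is the content that matters downstream.
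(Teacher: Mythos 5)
The paper offers no internal proof of this statement: it is imported verbatim as Proposition 3.5 of \cite{Durham2023} (the text explicitly notes Durham's result is stronger and that this weaker form is all that is needed), and your proposal ultimately does exactly the same, so the approaches coincide. One small caution about your explanatory sketch: the bounded geodesic image mechanism runs in the opposite direction from how you phrase it—since $\dist_V(x,y)>K_1>\lambda$, any geodesic in $\calC W$ between $\pi_W(x)$ and $\pi_W(y)$ must pass \emph{close} to $\rho^V_W$ (not far from it), and it is the spread of these forced passage points that makes $\dist_W(x,y)$ large—but this does not affect your citation-based conclusion.
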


The above lemma requires the existence of two points $x,y$ and domains $U_i \in \mathrm{Rel}_{K_1}(x,y)$ in order to produce a domain $W$ higher in the nesting where $x,y$ are still far. However, the existence of such points $x,y$ is automatic if the domains $\{U_i\}$ are pairwise transverse:

\begin{lemma}\label{lem:strong_passing_up_consequence}
    For each $K_1>K_2>50 E$, there exists some integer $P=P(K_1)$ such that if $U_1,U_2, \cdots U_P$ are domains where $U_i,U_{i+1}$ are transverse for each $1 \leq i\leq P-1$ and $\dist_{i}(\rho^{i-1}_i, \rho^{i+1}_{i})>K_1$, then there exist domains $\{V_k\}_{k=1}^3 \subseteq \{U_1, \cdots U_P\}$ and a domain $W$ with $V_k \nest W$ and $\dist_W(\rho^{V_r}_W,\rho^{V_s}_W)>K_2$ for all $r \neq s$ with $r,s \in \{1,2,3\}.$
\end{lemma}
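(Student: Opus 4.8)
The plan is to reduce Lemma~\ref{lem:strong_passing_up_consequence} to the already-established Lemma~\ref{lem:strong_passing} by manufacturing the two points $x,y$ whose existence is hypothesized there. The key observation is that the chain-like alignment hypothesis $\dist_{U_i}(\rho^{i-1}_i,\rho^{i+1}_i)>K_1$ (with $K_1>50E$) is exactly what lets us invoke coarse surjectivity of the projection maps together with the Behrstock inequality to find points that project far apart in \emph{every} $U_i$ simultaneously.

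First I would note that by Corollary~\ref{cor:local-to-global} the domains $U_1,\dots,U_P$ are pairwise transverse and distinct (the hypothesis $\dist_{U_i}(\rho^{i-1}_i,\rho^{i+1}_i)>K_1>4E$ is stronger than what that corollary needs). Next, using coarse surjectivity of $\pi_{U_1}$ and $\pi_{U_P}$, pick $x\in X$ with $\dist_{U_1}(\pi_{U_1}(x),\rho^{2}_1)>10E$ and $y\in X$ with $\dist_{U_P}(\pi_{U_P}(y),\rho^{P-1}_P)>10E$. Now I claim $\dist_{U_i}(x,y)>K_1/2$ (say) for every $1\le i\le P$; this is the heart of the argument. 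For the two endpoint domains this is essentially by construction together with the triangle inequality once one controls $\pi_{U_i}(y)$ (resp. $\pi_{U_i}(x)$); for an interior domain $U_i$ one runs the standard ``Behrstock chain'' argument: because $U_{i-1},U_i,U_{i+1}$ are transverse and the $\rho$-points between consecutive ones are $K_1$-far, the Behrstock inequality forces $\pi_{U_i}(x)$ to lie near $\rho^{i-1}_i$ and $\pi_{U_i}(y)$ to lie near $\rho^{i+1}_i$ (propagating the ``x is on the far side of $U_1$'' and ``y is on the far side of $U_P$'' information down the chain, exactly as in the proof of the preceding lemma but iterated), so $\dist_{U_i}(x,y)\ge\dist_{U_i}(\rho^{i-1}_i,\rho^{i+1}_i)-O(E)>K_1/2$.

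Having established that $\{U_1,\dots,U_P\}\subseteq\mathrm{Rel}_{K_1/2}(x,y)$, I would apply Lemma~\ref{lem:strong_passing} with the pair $(K_1/2,K_2)$ in place of $(K_1,K_2)$ — shrinking constants slightly at the outset so that $K_1/2$ still exceeds $50E$ and the integer $P=P(K_1/2)$ it produces is the one we demand. That lemma then yields $W\in\mathrm{Rel}_{K_2}(x,y)$ and three of our domains $V_1,V_2,V_3\in\{U_1,\dots,U_P\}$ with $V_k\nest W$ and $\dist_W(\rho^{V_r}_W,\rho^{V_s}_W)>K_2$ for $r\neq s$, which is precisely the conclusion sought. (Only the $\rho$-separation and nesting conclusions are needed, so we may simply discard the information that $W\in\mathrm{Rel}_{K_2}(x,y)$.)

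The main obstacle is the middle step: proving $\dist_{U_i}(x,y)$ is large for \emph{all} $i$ at once, i.e.\ that the two cleverly-chosen endpoint basepoints remain ``on the correct side'' of every domain along the chain. This is a bookkeeping exercise with the Behrstock inequality — one inducts along $i=1,2,\dots$ showing $\pi_{U_i}(x)$ stays $O(E)$-close to $\rho^{i-1}_i$, using at each step that $\dist_{U_{i}}(\rho^{i-1}_i,\rho^{i+1}_i)>K_1$ is much larger than the accumulated $O(E)$ errors, and symmetrically for $y$ from the other end; the chain-like hypothesis is precisely engineered so that these errors never pile up past the $K_1$ gaps. Everything else is a direct quotation of Corollary~\ref{cor:local-to-global}, coarse surjectivity of the $\pi_U$, and Lemma~\ref{lem:strong_passing}.
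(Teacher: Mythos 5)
Your proposal is correct and takes essentially the same route as the paper's proof: the paper also picks $x,y$ via coarse surjectivity of $\pi_{U_1},\pi_{U_P}$ projecting far from $\rho^2_1$ and $\rho^{P-1}_P$, uses the Behrstock inequality (item 4 of the HHS definition) to conclude that all the $U_i$ lie in a relevant set for $(x,y)$, and then applies Lemma \ref{lem:strong_passing}. Your additional bookkeeping (taking the projections $10E$-far and running the passing-up lemma with $K_1/2$) just makes explicit the constant loss that the paper leaves implicit.
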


 \begin{proof} This is an immediate application of Lemma \ref{lem:strong_passing}. Namely, choose points $x,y \in X$ such that the projections $\pi_{U_1}(x), \pi_{U_P}(y)$ are more than $E$ away from $\rho^2_1$ and $\rho^{P-1}_P$ respectively (this is possible since for each $Z \in \mathfrak S,$ the map $\pi_Z$ is coarsely surjective).  Item 4 of the definition of an HHS assures that $U_1, \cdots U_P \in \text{Rel}_{K_1}(x,y)$. The passing-up Lemma \ref{lem:strong_passing} provides domains $\{V_k\}_{k=1}^3$ and a domain $W$ with $V_k \nest W$ and $\dist_W(\rho^{V_s}_W, \rho^{V_t}_W)>K_2$ for all $s\neq t$ with $1 \leq s,t \leq 3.$

\end{proof}

The following lemma states that it is generally easy to produce a pair of transverse domains.

\begin{lemma}\label{lem:Tranverse_in_full_subgroup}
    Let $H$ be a subgroup of an HHG with eyries given by $\mathcal E(H)=\{S_1,S_2,\cdots S_n\}$ such that $H.S_j=S_j$ and $U \nest S_j$ for some $j$. There exists a pair of transverse domains in $HU=\{hU|h \in H\}$.
\end{lemma}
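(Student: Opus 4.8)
The plan is to argue by contradiction: suppose all domains in $HU=\{hU : h\in H\}$ are pairwise non-transverse. Then $HU$ is a collection of pairwise non-transverse domains, and the only possible relations between two of them are nesting, orthogonality, or equality. First I would record the structural consequences of this: since complexity is bounded by $N$, and orthogonality chains are bounded (pairwise orthogonal domains are in particular pairwise non-transverse), there are at most $N$ domains in $HU$ up to equality if they were pairwise orthogonal; but more usefully, the nesting poset on $HU$ has bounded height and bounded antichains, so $HU$ is a finite set. This finiteness is the crux: $HU$ is an $H$-invariant finite set of domains all nested in (or equal to) subdomains that project unboundedly under $\langle U\rangle$-type considerations — but more directly, since $U\nest S_j$ and $H.S_j=S_j$, every $hU$ satisfies $hU\nest hS_j = S_j$, so all of $HU$ lives inside $S_j$.

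The key step is then to derive a contradiction from $HU$ being finite. Since $\mathcal E(H)=\{S_1,\dots,S_n\}$ is the (finite) set of eyries and $S_j$ is one of them with $H.S_j=S_j$, the subgroup $H$ acts on the finite set $HU$; passing to the finite-index subgroup $H'$ stabilizing each element of $HU$ (the kernel of the action $H\to\mathrm{Sym}(HU)$), we get $h'U=U$ for all $h'\in H'$. Now $H'$ has finite index in $H$, so $H'$ has the same eyries up to the natural correspondence, and in particular $\pi_U$ is ... the point is that $H'$ stabilizing $U$ forces the product-region / standard-coset structure associated to $U$ to be $H'$-invariant, which contradicts the fact that $U$ is properly nested in the eyry $S_j$ while $H'$ (hence $H$) has $S_j$, not a proper subdomain, among its eyries. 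More carefully: if $U\pnest S_j$ were stabilized by a finite-index subgroup $H'$ of $H$, then by Theorem~\ref{thm:eyries} applied to $H'$, the domain $U$ would be nested in some eyry of $H'$; but the eyries of $H'$ correspond to those of $H$, and $S_j$ is the unique eyry of $H$ containing $U$, so we would need the $H'$-projection to $U$ to be unbounded or the eyry structure to involve $U$ — tracing through forces $U$ itself (or a domain orthogonal-complement-containing it) to be an eyry, contradicting $U\pnest S_j$ with $S_j$ an eyry (eyries are $\nest$-maximal among unbounded-projection domains). The cleanest route is probably: since $H.S_j=S_j$ and $U\pnest S_j$, the set $HU$ cannot be finite, because a finite $H$-invariant set of domains properly nested in $S_j$ would, after passing to a finite-index subgroup fixing each, contradict that $S_j$ is the $\nest$-minimal eyry containing $U$ / that $\calC S_j$ carries an unbounded $H$-action not factoring through a subdomain.

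The main obstacle I anticipate is making the last contradiction airtight without invoking more machinery than the excerpt provides — specifically, pinning down precisely why an $H$-invariant finite family of proper subdomains of an eyry $S_j$ is impossible. The safe argument uses only Theorem~\ref{thm:eyries}: the eyries are characterized as a finite $H$-invariant pairwise-orthogonal family with unbounded $H$-projection such that every domain with unbounded $H$-projection is nested in some eyry. If $HU$ were finite and pairwise non-transverse, pass to the finite-index $H'\le H$ fixing each domain in $HU$ pointwise (as a set of domains); by Lemma~\ref{lem:pass_to_finite_index}-type reasoning $H'$ is still finitely generated and infinite (if $H$ is), so $\mathcal E(H')$ is nonempty, and since $H'$ fixes $U$ and has finite orbits everything, $\pi_U(H')$ must be bounded (else $U$ would have to sit inside an eyry of $H'$, but $H'.U=U$ forces $U$ itself to be an eyry of $H'$, contradicting $U\pnest S_j$ once we note the eyries of $H'$ refine to those of $H$ and $S_j$ is an eyry of $H$). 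Then $\pi_U$ is bounded for a finite-index subgroup hence, up to the ambient bounded-domain constant $K$ of Remark~\ref{rmk:bounded_domains}, $\pi_U(H)$ is bounded too; but $U\nest S_j$ with $S_j$ an eyry of $H$ is not by itself a contradiction — so instead one runs the same argument directly with the eyry containment: $U$ nested in the eyry $S_j$ with $\pi_U(H)$ bounded is fine, so the real contradiction must come from $HU$ finite $\Rightarrow$ $\bigcup_{h} hN(U)$-type region $H$-invariant $\Rightarrow$ the standard product region for $S_j$ reduces, contradicting irreducibility of the $H$-action on $\calC S_j$. I would present the argument in the first, cleaner form (finiteness of $HU$ plus $H.S_j = S_j$ and $U \pnest S_j$ yields a contradiction via Theorem~\ref{thm:eyries}), and flag the eyry-refinement step as the one requiring care.
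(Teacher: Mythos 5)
Your opening step is correct and is not where the trouble lies: if no two domains of $HU$ were transverse, then $HU$ is a pairwise non-transverse family, so the complexity axiom gives $|HU|\le N$ and $HU$ is a finite $H$-invariant set of domains nested in $S_j$. The genuine gap is in extracting a contradiction from this finiteness. Your proposed routes do not close: a finite-index subgroup $H'\le H$ stabilizing $U$ does not force $U$ to be an eyry of $H'$ --- that would require $\pi_U(H')$ to be unbounded, and the lemma assumes nothing whatsoever about $\pi_U(H)$ (indeed $\pi_U(H)$ may well be bounded, as you notice mid-argument); the assertion that the eyries of $H'$ ``correspond'' or ``refine'' to those of $H$ is nowhere established in the paper and would itself need proof; and ``irreducibility of the $H$-action on $\calC S_j$'' is neither a hypothesis nor a notion available here. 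You flag this yourself, which is honest, but as written no version of the contradiction is airtight. The ingredient you never invoke --- and which is the actual engine of the statement --- is that $S_j$ is an \emph{eyry}, i.e.\ $\pi_{S_j}(H)$ is unbounded, together with $H.S_j=S_j$ and equivariance $h\pi_{S_j}(x)=\pi_{S_j}(hx)$: since $\dist_{S_j}(h\rho^U_{S_j},\pi_{S_j}(hx_0))=\dist_{S_j}(\rho^U_{S_j},\pi_{S_j}(x_0))$ for all $h$, the $H$-orbit of the coarse point $\rho^U_{S_j}$ in $\calC S_j$ lies at uniformly bounded distance from $\pi_{S_j}(Hx_0)$ and is therefore unbounded.

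This is exactly how the paper argues, and it does so directly rather than by contradiction: pick $h\in H$ with $\dist_{S_j}(\rho^U_{S_j},h\rho^U_{S_j})>31E$; by equivariance $h\rho^U_{S_j}=\rho^{hU}_{hS_j}=\rho^{hU}_{S_j}$, and two domains nested in $S_j$ whose $\rho$-sets in $\calC S_j$ are this far apart can be neither equal, nor in a nesting relation (consistency), nor orthogonal (partial realization), so $U\pitchfork hU$ (the paper cites Lemma 2.17 of Abbott--Ng--Spriano for this last implication). If you prefer to keep your contradiction framing, the repair is just the contrapositive of the same mechanism: finiteness of $HU$ makes $\{\rho^{hU}_{S_j}\}_{h\in H}$ a finite union of uniformly bounded sets, hence the orbit of $\rho^U_{S_j}$ in $\calC S_j$ is bounded, contradicting the unboundedness forced by the eyry property --- no passage to finite-index stabilizers and no comparison of eyries of $H'$ with those of $H$ is needed.
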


\begin{proof}
Since the $H$-orbit in each $S_j$ is infinite, if $U \nest S_j$ for some $j,$ then there exists $h \in H$ with $\dist_{S_i}(\rho^U_{S_i}, h\rho^U_{S_i})=\dist_{S_i}(\rho^U_{S_i}, \rho^{hU}_{hS_i})=\dist_{S_i}(\rho^U_{S_i},\rho^{hU}_{S_i} )>31E$ and hence the domains $U,hU$ are transverse (for instance, by Lemma 2.17 in \cite{Abbot-Ng-Spriano}).
\end{proof}

\begin{lemma}\label{lem:producing_first_transverse_pair} Let $H<G$ be a subgroup with eyries given by $\mathcal E(H)=\{S_1,S_2,\cdots S_m\}$ such that $HS_i=S_i$ for all $i$. If there exists $a \in H$ that is active over $U \nest S_i$ for some $i$, then the collection $\underset{i=0}{\cup}^{N+1}\{T^iU\}$ contains a pair of transverse domains, where $N$ is the complexity of the HHS.

\end{lemma}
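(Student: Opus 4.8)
The plan is to reduce Lemma~\ref{lem:producing_first_transverse_pair} to Lemma~\ref{lem:Tranverse_in_full_subgroup} by exhibiting an element $h\in H$ such that $U$ and $hU$ are transverse and both lie in $TU\cup T^2U\cup\dots\cup T^{N+1}U$; the key point is that such an $h$ can be found among \emph{short} powers of elements of the generating set $T$, using only that $H$ has a unique-per-$S_i$ orbit structure and that the HHS has complexity $N$. First I would apply Lemma~\ref{lem:Tranverse_in_full_subgroup}: since $a$ is active over $U\nest S_i$ and $HS_i=S_i$, there is \emph{some} $g\in H$ with $U\trans gU$. What remains is to replace this unknown $g$ by a bounded word in $T$; this is where the complexity bound $N$ enters.

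The mechanism is the standard trick (as in the introduction sketch and Proposition~4.2 of \cite{Abbot-Ng-Spriano}): consider the finite collection $\calU=\bigcup_{i=0}^{N+1}\{T^iU\}$. If every pair of domains in $\calU$ were pairwise non-transverse, then in particular $\calU$ would be a set of pairwise non-transverse domains, so $|\calU|\le N$ by the complexity axiom (item 3 of the HHS definition in Section~\ref{sec:HHS_prelim}). I would then argue that $|\calU|>N$, yielding a contradiction and hence a transverse pair in $\calU$. To see $|\calU|>N$: suppose not, so the sets $U, TU, T^2U,\dots$ stabilize, i.e. $T^{N+1}U\subseteq\bigcup_{i=0}^{N}T^iU$. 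Then $\calV:=\bigcup_{i=0}^{N}T^iU$ is a \emph{finite} $T$-invariant collection of domains (applying $T$ to $\calV$ lands back in $\calV$), hence $G$-invariant, hence $H$-invariant. But then the $H$-orbit $HU$ is finite. This contradicts the existence of the transverse pair $U\trans gU$ produced above --- more directly, it contradicts the fact that, since $U\nest S_i$ and $H$ acts on $\calC S_i$ with unbounded orbits, there must be infinitely many distinct translates $hU$ (they project to unboundedly spread points of $\calC S_i$). So $|\calU|>N$, and two members of $\calU$ are transverse. Finally, since the transverse pair $\{T^jU, T^kU\}$ with $0\le j<k\le N+1$ can be translated by $T^{-j}$ (an element of $G$, preserving the transversality relation) to a pair of the form $\{U, bU\}$ with $b\in T^{k-j}U\subseteq \bigcup_{i=1}^{N+1}T^iU$, we obtain a transverse pair inside $\bigcup_{i=0}^{N+1}\{T^iU\}$ as claimed.

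The main obstacle I anticipate is the bookkeeping around what ``$T$-invariant implies $G$-invariant'' really requires: one needs $T$ to generate $G$ (true) and one needs the collection $\calV$ to genuinely be closed under the $T$-action, which only holds once the chain $U,TU,\dots$ has stabilized --- so the argument is really ``either $|\calU|>N$ (done) or the orbit is finite (contradiction with activity of $a$)''. A second, smaller subtlety is verifying that finiteness of $HU$ genuinely contradicts $a$ being active over $U$: activity gives $aU=U$ and $\pi_U(\langle a\rangle x)$ of infinite diameter, but what one actually needs is that \emph{some} $h\in H$ moves $U$, which follows because if every $h\in H$ fixed $U$ then (by $\rho$-map considerations, cf. Lemma~\ref{lem:Tranverse_in_full_subgroup}) the product region of $U$ would be $H$-invariant, contradicting irreducibility of the eyrie structure; alternatively, one simply invokes the transverse pair from Lemma~\ref{lem:Tranverse_in_full_subgroup} directly, which is the cleaner route and avoids re-deriving this. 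I expect the whole proof to be short, two or three sentences beyond citing Lemma~\ref{lem:Tranverse_in_full_subgroup} and the complexity axiom.
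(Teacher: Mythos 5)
Your argument is correct and its engine is the same as the paper's: the chain $V_k=\bigcup_{i=0}^{k}T^iU$ either grows strictly at every step, so that $|\calU|\ge N+2$ and the complexity axiom forces a transverse pair, or it stabilizes, in which case the finite collection $\calU$ is invariant under $\langle T\rangle$ and equals $HU$. Where you genuinely differ is in how the stabilized case is closed: the paper simply applies Lemma \ref{lem:Tranverse_in_full_subgroup} to $\calU=HU$ and extracts the transverse pair there, whereas you argue that the stabilized case is vacuous because $HU$ must be infinite (since $S_i$ is an eyrie, $\pi_{S_i}(H)$ is unbounded, and by equivariance $\rho^{hU}_{S_i}=h\rho^{U}_{S_i}$, so finitely many translates would bound the $H$-orbit in $\calC S_i$). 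Both routes work; yours re-derives the computation inside the proof of Lemma \ref{lem:Tranverse_in_full_subgroup}, while the paper's case split reuses that lemma as a black box --- indeed your own closing alternative (``invoke the transverse pair from Lemma \ref{lem:Tranverse_in_full_subgroup} directly'') is exactly the paper's first case. Three small corrections: (i) finiteness of $HU$ does \emph{not} contradict the existence of a single transverse pair $U\pitchfork gU$ --- a finite orbit can perfectly well contain such a pair --- so only your ``more directly'' justification via unbounded projection to $\calC S_i$ is a genuine contradiction, and the same remark applies to your later suggestion that what is needed is merely that some $h\in H$ moves $U$; (ii) $T$ generates $H$, not $G$, so $T$-invariance of a finite collection yields $H$-invariance (which is all you use), not $G$-invariance; (iii) the final normalization is inaccurate: translating a transverse pair $g_1U,\,g_2U$ with $g_1\in T^j$, $g_2\in T^k$ by $g_1^{-1}$ gives $b=g_1^{-1}g_2$, a word of length at most $j+k$ in $T\cup T^{-1}$, not an element of $T^{k-j}$; this is irrelevant to the lemma as stated, which only asks for a transverse pair in the collection, and matters only for how the lemma is applied later.
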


\begin{proof} 

Let $S_i$, $U$ and $a$ be as in the statement. Consider the collection of domains $\cal U=\{U\} \cup \{TU\} \cdots \cup \{T^{N+1}U\}$. We claim that $\calU$ contains a pair of transverse domains. If $\calU$ is $H$-invariant, then $\calU=HU$ and hence by Lemma \ref{lem:Tranverse_in_full_subgroup}, $\calU$ contains a pair of transverse domains. Suppose $\cal U$ is not $H$-invariant, this gives $T\calU \nsubseteq \calU$ and therefore, $\{T^kU\} \nsubseteq \cup_{i=0}^k\{T^iU\}$ for each $k \in \{1, \cdots N+1\}.$ In particular, we have $| \{U\} \cup \{TU\} \cup \cdots \{T^{N+1}U\}|>N+1$ which implies that $\{U\} \cup \{TU\} \cdots \{T^{N+1}U\}$ contains a pair of transverse domains.

\end{proof}

\section{Flipping and skewering half spaces in HHSes}

 The following definition is inspired by \cite{PSZCAT} (see also \cite{Zalloum23_injectivity}).

\begin{definition}
    Let $X$ be an HHS, $U \in \mathfrak S$ and let $\alpha$ a geodesic in $\calC U$. A \emph{$(U,\alpha)$-curtain}, denoted $H_{U,\alpha},$ is defined to be $\pi_U^{-1}(h_\alpha)$ for a curtain $h_\alpha$ dual to $\alpha$ in $\calC U.$ Similarly, the sets $H_{U,\alpha}^+:=\pi_U^{-1}(h_\alpha^+)$, $H_{U,\alpha}^-:=\pi_U^{-1}(h_\alpha^-)$ are said to be $(U,\alpha)$-\emph{half spaces}. A \emph{$U$-curtain}, denoted $H_U$, is a $(U,\alpha)$-curtain for some geodesic $\alpha.$ We define \emph{$U$-halfspaces} $H_U^+, H_U^-$ similarly.
    \end{definition}

It's immediate from the definition that for each $U \in \mathfrak S$, we have $X=H_U^+ \cup H_U \cup H_U^-.$

\begin{remark}\label{rmks: HHS_curtains} We record a few essential remarks regarding $U$-curtains and half spaces.

\begin{enumerate}
    \item The sets $H_U^+, H_U^-$ are disjoint and separate: Observe that as $\pi_U$ is $(E,E)$-coarsely Lipshitz, for each $U \in \mathfrak S$ and each $x \in H_U^+= \pi_U^{-1}(h_\alpha^+)$, $y \in H_U^-=\pi_U^{-1}(h_\alpha^-)$ we have $3E<\dist_U(x,y) \leq E\dist(x,y)+E$ where the first inequality holds by Remark \ref{remks:few_hyperbolic_remarks}. Hence $\dist(x,y) \geq 2$ for all such $x,y \in X$ that are in opposite $U$-half spaces, and in particular, any (continuous path) connecting $x,y$ must intersect $H.$


    \item  Finally, since $g:\calC U \rightarrow g \calC U$ is an isometry and $g\pi_U(x)=\pi_{gU}(gx),$ we have $g.H_{U}=H_{gU}$ (where if $\alpha \subset \calC U$ is the geodesic defining $H_U,$  then $g \alpha \subset gU$ is the geodesic defining $H_{gU}$).

\end{enumerate}

\end{remark}


\begin{lemma}\label{lem:hyperplane_in_curtain}
    Let $X$ be a finite dimensional CAT(0) cube complex given with an HHS structure $(X,\mathfrak S)$ and let $U \in \mathfrak S.$ Every $U$-curtain $H_U$ contains a cubical hyperplane $h.$ Furthermore, $H^+ \subsetneq h^+$, $H^- \subsetneq h^-$, $h^+ \subseteq H \cup H^+$ and $h^-\subseteq H \cup H^-$
\end{lemma}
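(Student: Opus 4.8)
The plan is to start with a $U$-curtain $H_U = \pi_U^{-1}(h_\alpha)$, where $h_\alpha = \pi_\alpha^{-1}(I)$ is dual to a geodesic $\alpha \subset \calC U$ at an interval $I = [a,b]$ of length $6E$. Because $X$ is a CAT(0) cube complex, the combinatorial distance between vertices of $X$ equals the number of separating hyperplanes, so to find a cubical hyperplane inside $H_U$ it suffices to produce two adjacent vertices $x,y \in X^{0}$ such that the (unique) hyperplane $h$ dual to the edge $[x,y]$ has carrier contained in $H_U$, equivalently such that both half-spaces $h^{+}, h^{-}$ each contain a vertex whose $\pi_U$-image projects to the interior of $I$ and more. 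The cleaner route, and the one I would take, is to locate a point deep inside $H_U^{+}$ and a point deep inside $H_U^{-}$, join them by a combinatorial geodesic $\gamma$, and argue that $\gamma$ must cross $H_U$ (by Remark~\ref{rmks: HHS_curtains}(1), since $H_U^{+}$ and $H_U^{-}$ are disjoint and separated by $H_U$); then some hyperplane $h$ dual to an edge of $\gamma$ lies in $H_U$. The real content is to upgrade this to the ``furthermore'' inclusions, which require that $h$ be positioned so that crossing $h$ moves the $\pi_U$-image all the way across $I$.

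\textbf{Key steps.} First I would fix the geodesic $\alpha$ and interval $I = [a,b] \subset (0,A)$ of length $6E$ with midpoint $c$, and thicken: set $I' = [a',b'] \supset I$ where $\dist(a',a) = \dist(b',b) = E$ (so $|I'| = 8E$), and pick $p^{+} \in \alpha$ with $\pi_\alpha(p^{+})$ just past $b'$ on the $h^{-}$-side and $p^{-}$ just before $a'$ on the $h^{+}$-side; by coarse surjectivity of $\pi_U$ and of $\pi_\alpha$, choose vertices $x^{+} \in \pi_U^{-1}(\text{near } p^{+}) \subset H_U^{-}$ and $x^{-} \in \pi_U^{-1}(\text{near } p^{-}) \subset H_U^{+}$. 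Second, take a combinatorial geodesic $\gamma$ from $x^{-}$ to $x^{+}$; since it connects a point of $H_U^{+}$ to a point of $H_U^{-}$ and these are disjoint, separated sets (Remark~\ref{rmks: HHS_curtains}(1)), $\gamma$ meets $H_U$, so there is an edge $e$ of $\gamma$ with both endpoints in (or adjacent across) $H_U$; let $h$ be the hyperplane dual to $e$. Third --- and this is where Remark~\ref{rmk:constants}(4) is the crucial input --- I would show that $h$ separates $H_U^{+}$ from $H_U^{-}$. Recall that the median hull of $H_U^{+}$ satisfies: for every $x \in \hull(H_U^{+}) \setminus H_U^{+}$, the projection $\pi_\alpha(\pi_U(x))$ lies within $E$ of an endpoint of $I$; since $\hull(H_U^{+})$ is combinatorially convex (dimension-$v$ CAT(0) cube complex, $C=0$), any hyperplane with both half-spaces meeting $H_U^{+}$ must itself have carrier inside $\hull(H_U^{+})$, and conversely a hyperplane separating a point of $H_U^{+}$ from a point of $H_U^{-}$ has carrier whose $\pi_U$-image projects into $\calN_E(I)$, forcing the carrier $N(h) \subset H_U$ provided $I$ was chosen with the extra $2E$ of slack absorbed into $E$ (which it was, by the choice $E \geq N\lambda'$). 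Concretely: if $v \in N(h)^{0}$, then $v$ lies on some combinatorial geodesic between a vertex of $H_U^{+}$ and a vertex of $H_U^{-}$ close to the boundary, so $v \in \hull(H_U^{+}) \cap \hull(H_U^{-})$, and Remark~\ref{rmk:constants}(4) applied to both hulls pins $\pi_\alpha(\pi_U(v))$ within $E$ of $I$ on both sides, i.e.\ into $I$ itself, giving $v \in H_U$. Fourth, once $h$ separates $H_U^{+}$ from $H_U^{-}$ with $N(h) \subset H_U$: orient $h$ so $H_U^{+} \subset h^{+}$; then $H_U^{-} \subset h^{-}$, and since $H_U = X \setminus (H_U^{+} \cup H_U^{-})$ with $N(h) \subset H_U$ we get $h^{+} = (h^{+} \cap H_U^{+}) \cup (h^{+} \cap H_U) \subseteq H_U^{+} \cup H_U$ and symmetrically for $h^{-}$; the strictness $H_U^{+} \subsetneq h^{+}$ follows because $h^{+}$ also contains the $H_U$-side vertices of edges dual to $h$.

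\textbf{Main obstacle.} The delicate point is Step three: making precise that the cubical hyperplane extracted from $\gamma$ actually has its entire carrier trapped inside the curtain $H_U$, rather than merely crossing it once. This is exactly why the interval $I$ has length $6E$ (not $2E$) and why Remark~\ref{rmk:constants}(4) was set up with the median-hull slack $N\lambda'$ folded into the constant $E$: the hull of a half-space $H_U^{+}$ can stick out past $h_\alpha^{+}$, but only by $E$ in the $\pi_\alpha$-coordinate, so a hyperplane separating $H_U^{+}$ from $H_U^{-}$ --- whose carrier lies in $\hull(H_U^{+}) \cap \hull(H_U^{-})$ --- has $\pi_\alpha \pi_U$-image inside the $E$-neighborhood of each side of $I$, hence inside $I$. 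I would spend the bulk of the writeup making this ``double hull intersection $\Rightarrow$ in $H_U$'' argument airtight, and checking that the strict inclusions hold by exhibiting explicit boundary vertices; the separation statement $h^{\pm} \subseteq H \cup H^{\pm}$ is then formal set theory once $N(h) \subset H_U$ is established.
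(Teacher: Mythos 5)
There is a genuine gap at your Step three, and it sits exactly at the point you flag as delicate. Two of the claims there are false as stated. First, a hyperplane whose two half-spaces both meet $H_U^+$ need not have its carrier contained in $\hull(H_U^+)$: a hyperplane crossing a convex subcomplex merely intersects it, while its carrier typically extends far outside it (a vertical hyperplane crossing a unit square in the standard cubulation of $\mathbb{R}^2$ already shows this). Second, a vertex $v$ of $N(h)$ lying on a combinatorial geodesic from a vertex of $H_U^+$ to a vertex of $H_U^-$ lies in the median interval between those two vertices, hence in $\hull(H_U^+\cup H_U^-)$ --- not in $\hull(H_U^+)\cap\hull(H_U^-)$. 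In fact that intersection is empty: the main computation in the paper's proof is precisely that $\hull(H_U^+)$ and $\hull(H_U^-)$ are at combinatorial distance greater than $2$ (using item 4 of Remark \ref{rmk:constants} just as you do, together with $|I|=6E$ and the coarse Lipschitzness of $\pi_U$ and $\pi_\alpha$), so no vertex can lie in both hulls. Moreover, the premise itself is unjustified: a hyperplane dual to an edge of $\gamma$ crossing the curtain can leave $H_U$ and run back into $H_U^{\pm}$ far away, and its carrier vertices need not lie on any geodesic from $H_U^+$ to $H_U^-$, so nothing pins their image under $\pi_\alpha\circ\pi_U$ near $I$; and even ``within $E$ of an endpoint of $I$ from both sides'' would not place a point inside $I$.

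What is missing is a mechanism that traps an \emph{entire} hyperplane, not just one dual edge of it, between the two half-spaces. The paper obtains this from convexity of the hulls together with the characterization of combinatorial projections (Definition \ref{def:combinatorial_projections}): having shown $\dist_X(\hull(H^+),\hull(H^-))>2$, it takes $a\in\hull(H^+)$, sets $a'=P_{\hull(H^-)}(a)$ and $a''=P_{\hull(H^+)}(a')$, and notes that any hyperplane separating $a'$ from $a''$ separates $a'$ from all of the convex set $\hull(H^+)$; from the distance bound one then concludes such hyperplanes are disjoint from both hulls, hence contained in $H$ and separating $H^+$ from $H^-$. Your Steps one and two, your use of item 4 of Remark \ref{rmk:constants} for the hulls, and your Step four (deriving $H^\pm\subsetneq h^\pm$ and $h^\pm\subseteq H\cup H^\pm$ once a separating hyperplane inside $H$ is found) are all consistent with the paper; it is only the passage from ``$\gamma$ crosses $H_U$'' to ``some cubical hyperplane is contained in $H_U$ and separates $H_U^+$ from $H_U^-$'' that does not go through as written, and repairing it essentially forces you back to the paper's projection-between-convex-hulls argument.
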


\begin{proof} Let $v$ be the dimension of the CAT(0) cube complex. Recall that the complexity of the HHS $N$ satisfies $N \geq v$ and the HHS constant $E=N\lambda'>N\lambda$ (Remark \ref{rmk:constants}). Let $\alpha$ be a geodesic in $\calC U$ with $H_U=H_{U,\alpha}$ and let $I$ the interval defining $H_{U,\alpha}.$ For simplicity, we will denote $H=H_U=H_{U,\alpha}$ and similarly for the respective half spaces. First, we show that $\dist_X(\hull(H^+), \hull(H^-))>1$ from which the statement will easily follow. Recall from Definition \ref{def:median_hull} that $\hull(H^+)$ is obtained by iteratively adjoining median intervals (which are simply combinatorial geodesics here) starting and ending on $H^+, J^1(H^+), \cdots J^{v}(H^+)=J^{v+1}(H^+)=\hull(H^+)$. Hence, for any $x \in \hull(H^+)-H^+$, each point of the set $\pi_\alpha(\pi_U(x))$ is within $N.\lambda'<E$ of an endpoint of $I$ (see item 4 of Remark \ref{rmk:constants}). An analogous statement holds for $H^-$ with respect to the other end point of $I$. Since $|I|=6E,$ and $\pi_\alpha$ is $(1,E)$-coarsely Lipshitz, we have $\dist_U(\pi_U(x), \pi_U(y)) \geq 6E-2E-E=3E$ for any $x \in \hull(H^+), y \in \hull(H^-).$ Also, as $\pi_U$ is $(E,E)-$coarsely Lipshitz, we have $\dist_X(x,y)>\frac {3E-E}{E}=2.$ This shows that $\dist_X(x,y)>2$ for any $x \in \hull(H^+)$ and $y \in \hull(H^-).$ From here, one can argue in a few different ways, for instance, pick a point $a \in \hull(H^+)$ and consider its image under the combinatorial nearest point projection $P_{\hull(H^-)}:X\rightarrow \hull(H^-)$, call such a projection $a'.$ Now, let $a''$ be the image of $a'$ under the combinatorial nearest point projection to $\hull(H^+).$ By the above, we know that $\dist(a',a'')\geq 2,$ and hence at least two cubical hyperplanes separate $a',a''$, call them $h_1,h_2.$ Since each $h_i$ separate $a'$ from its projection $a'',$ each $h_i$ must separate $a'$ from $\hull(H^+)$ (see Definition \ref{def:combinatorial_projections}) and in particular, each $h_i$ separates $a'$ from the set $\{a, a''\}$. Therefore, each $h_i$ separates the points $a$, $a'$ and hence separates $a$ from $\hull(h^-).$ This shows that $h_1,h_2$ are strictly between $\hull(H^+), \hull(H^-)$ concluding the proof. The ``furthermore" part is immediate.
\end{proof}

The following lemma shows how transversaility combined with the bounded geodesic image theorem can be used to flip $U$-half spaces in $X$.

\begin{lemma} [an HHS flipping lemma]\label{lem:HHS_flip}  Let $\alpha$ be a geodesic in $\calC W$ connecting $x_1 \in \rho^U_W$ to $x_2 \in \rho ^V_W$ for $U,V \nest W$ with $\dist_W(x_1, x_2)>20E$ and let $b \in \vbig(V)$. If $h$ is a curtain dual to $\alpha$ at interval $I\subset \alpha$ with $\rho^U_W \in h^+$ and $\dist_W(x_i, I) \geq 6E$ for each $i \in \{1,2\}$, then $\exists m=m(X, \tau_V(b)) \in \mathbb{N}$ such that for any $n \geq m$, the element $b^n$ flips $H_{W,\alpha}^+$ and $\pi_W(b^nH_{W,\alpha}^+) \subsetneq h_\alpha^-$.

\end{lemma}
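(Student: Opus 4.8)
The plan is to exploit the bounded geodesic image theorem (Lemma \ref{lem:BGIA}) to pin down the image $\pi_W(H_{W,\alpha}^+\cup H_{W,\alpha})$ coarsely inside $\rho^U_W$, and then use loxodromicity of $b^n$ on $\calC V$ together with the transversality package to push this image entirely past the interval $I$. First I would observe that, by definition, $H_{W,\alpha}^+\cup H_{W,\alpha}=\pi_W^{-1}(h_\alpha^+\cup h_\alpha)$, so it suffices to control $h_\alpha^+\cup h_\alpha$ and its $b^n$-translate in the hyperbolic space $\calC W$. Since $\rho^U_W\in h^+$ and $\dist_W(x_1,x_2)>20E$ with $\dist_W(x_i,I)\geq 6E$, the interval $I$ sits well inside $\alpha$ away from both endpoints; any point $z$ with $\pi_W(z)\in h_\alpha^+\cup h_\alpha$ projects under $\pi_\alpha$ to within $6E$ of the endpoint of $I$ nearest $x_1$, hence to within a uniform constant of $\rho^U_W$. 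Because $V\pitchfork W$-type reasoning does not directly apply here ($U,V\nest W$), I instead use the nesting form: the endpoint $x_2=\rho^V_W$ is a point, and any geodesic from $\pi_W(z)$ to $x_2$ (for $z$ as above) stays $E$-far from $\rho^U_W$... wait, that's the wrong direction. The correct move: since $\pi_W(z)$ lies on the $x_1$-side of $I$ and $x_2$ lies on the far side, BGI applied with the roles arranged so that $\rho^U_W$ is avoided is not what we want either. Rather, the right statement is that the set $\pi_V$ of everything in $H_{W,\alpha}^+\cup H_{W,\alpha}$ coarsely coincides with $\rho^W_V$-image of $\rho^U_W$, i.e.\ with $\rho^U_V$, which is a bounded set of diameter $\leq\lambda$: this is exactly the bounded geodesic image theorem applied to $U\nest W$ using that a geodesic from $\pi_W(z)$ into $\rho^U_W$ is short, combined with $\rho^V_W\in h^-$ staying $>\lambda$ from the relevant geodesic.

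More carefully, the key step is: \emph{$\pi_V(H_{W,\alpha}^+\cup H_{W,\alpha})$ has uniformly bounded diameter and lies uniformly close to $\rho^U_V$.} To see this, take $z$ with $\pi_W(z)\in h_\alpha^+\cup h_\alpha$. Its $\pi_\alpha$-projection is within $6E$ of the $x_1$-endpoint of $I$, hence a geodesic $[\pi_W(z),x_1]_\beta$ in $\calC W$ (where $x_1\in\rho^U_W$) has length at most $\dist_W(\pi_W(z),x_1)\leq 6E+|I|+E$ or so, and stays uniformly close to $\alpha$ near its $\rho^U_W$ endpoint; since $\rho^V_W\subset h^-$ is at distance $\geq 6E>\lambda$ from this geodesic, Lemma \ref{lem:BGIA} (in the form $\dist_V(\pi_V(z),\pi_V(\text{endpoint near }\rho^U_W))\leq\lambda$, together with axiom (5): $\dist_V(\rho^U_V,\rho^W_V)<\lambda$ type estimates) forces $\pi_V(z)$ to lie within a uniform constant $C_0=C_0(E)$ of $\rho^U_V$. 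Then I would invoke that $b\in\vbig(V)$, so $b$ (hence $b^n$) acts loxodromically on $\calC V$ with translation length $\tau_V(b)>0$; by Remark \ref{rmk: skewering_integer} / Lemma \ref{lem:translation__length_via_skewering}-style estimates there is $m=m(E,\tau_V(b))$ such that for all $n\geq m$, $\dist_V(\rho^U_V,\, b^n\rho^U_V)=\dist_V(\rho^U_V,\rho^{b^nU}_V)>2C_0+10E$. Consequently $\pi_V(b^n(H_{W,\alpha}^+\cup H_{W,\alpha}))=b^n\pi_V(H_{W,\alpha}^+\cup H_{W,\alpha})$ is uniformly far (more than $\lambda$) from $\rho^U_V$, hence disjoint from $\pi_V(H_{W,\alpha}^+\cup H_{W,\alpha})$. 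Since $H^+_{W,\alpha}\cup H_{W,\alpha}$ and its image under $b^n$ therefore have disjoint $\pi_V$-images, the two sets themselves are disjoint in $X$.

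Finally I would transfer this back to $\calC W$. We have $b^n(H^+_{W,\alpha}\cup H_{W,\alpha})$ disjoint from $H^+_{W,\alpha}\cup H_{W,\alpha}$, and since $X=H^-_{W,\alpha}\cup H_{W,\alpha}\cup H^+_{W,\alpha}$, this gives $b^n(H^+_{W,\alpha}\cup H_{W,\alpha})\subsetneq H^-_{W,\alpha}$; in particular $b^nH^+_{W,\alpha}\subsetneq H^-_{W,\alpha}$ and $b^nH_{W,\alpha}\cap H_{W,\alpha}=\emptyset$, which is precisely the statement that $b^n$ flips $H^+_{W,\alpha}$. Applying $\pi_W$ and using $G$-equivariance ($\pi_W(b^nA)=b^n\pi_W(A)$ up to passing to $\calC b^nW=\calC W$ since $b^nW=W$ for $n$ a multiple of $M$, or more directly since $b\in\vbig(V)$ with $V\nest W$ and $b$ fixing the relevant structure — here one uses $bW=W$, which holds after replacing $b$ by a bounded power via Lemma \ref{lem:Big_Sets}) yields $\pi_W(b^nH^+_{W,\alpha})\subsetneq h_\alpha^-$. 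The main obstacle I anticipate is the first key step: carefully verifying that the $\pi_V$-image of $H^+_{W,\alpha}\cup H_{W,\alpha}$ really is uniformly close to $\rho^U_V$ — this requires combining the geometry of the interval $I$ inside $\alpha\subset\calC W$ (so that projections land near $\rho^U_W$), the bounded geodesic image theorem for $U\nest W$, and the $\rho$-consistency axioms relating $\rho^U_V,\rho^W_V,\rho^U_W$ — and making sure the requirement $bW=W$ (needed for equivariance in the last step) is legitimately available, which is why the hypothesis $b\in\vbig(V)$ with $V\nest W$ is used rather than an arbitrary loxodromic.
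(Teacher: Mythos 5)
Your skeleton is the paper's argument: establish that $\pi_V(H_{W,\alpha}\cup H_{W,\alpha}^+)$ has uniformly bounded diameter and coarsely coincides with $\rho^U_V$ (via the bounded geodesic image theorem, using that everything in $h_\alpha\cup h_\alpha^+$ stays far from $\rho^V_W$), then use loxodromicity of $b$ on $\calC V$ together with a bounded power $M$ with $b^MV=V$ to translate this bounded set off itself, conclude that $b^n(H\cup H^+)$ is disjoint from $H\cup H^+$, hence contained in $H^-$ since $X=H^-\cup H\cup H^+$, and read off the $\calC W$ statement. That is exactly how the paper proceeds, so the overall route is the same; the differences are in how you pin $\pi_V(H\cup H^+)$ near $\rho^U_V$, and there are a few concrete soft spots there.

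First, you never verify that $U\pitchfork V$, which is needed for $\rho^U_V$ to exist at all; the paper deduces it from $\dist_W(\rho^U_W,\rho^V_W)>20E$. Second, your intermediate distance claims are false as stated: a point of $h_\alpha^+\cup h_\alpha$ need not project within $6E$ of the $x_1$-endpoint of $I$ (it can project anywhere between $x_1$ and the far end of $I$, and $\dist_W(x_1,I)$ has no upper bound), nor is $\dist_W(\pi_W(z),x_1)$ bounded (fibers of $\pi_\alpha$ are unbounded). These are not load-bearing for the BGI step, which only needs the geodesic $[\pi_W(z),x_1]$ to avoid the $\lambda$-neighbourhood of $\rho^V_W$, but the step you then lean on -- that a point of $X$ projecting near $\rho^U_W$ in $\calC W$ must project near $\rho^U_V$ in $\calC V$, i.e.\ the coarse identity $\rho^W_V(\rho^U_W)\asymp\rho^U_V$ -- is a genuine missing ingredient: it is true and standard, but it is not among the listed axioms and you do not prove it. The paper avoids it by proving a diameter bound for $\pi_V$ over pairs of points of $H\cup H^+$ and then exhibiting a single witness point $z$ with $\dist_U(\pi_U(z),\rho^V_U)>E$, so that the Behrstock inequality for the transverse pair $U,V$ places $\pi_V(z)$ next to $\rho^U_V$. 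Finally, your closing appeal to $bW=W$ is both unjustified and unnecessary: Lemma \ref{lem:Big_Sets} only gives a power fixing the domains of $\vbig(b)$, and $W\notin\vbig(b)$ (since $V\nest W$ and $\vbig(b)$ is pairwise orthogonal), but no $W$-equivariance is needed -- once $b^nH^+\subsetneq H^-=\pi_W^{-1}(h_\alpha^-)$, the containment $\pi_W(b^nH^+)\subseteq h_\alpha^-$ is immediate. The only equivariance actually used is $b^{n}\pi_V=\pi_V b^{n}$, which requires $n$ to be a multiple of $M$, exactly as in the paper.
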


\begin{proof} For simplicity, we will denote $h_{\alpha}$ simply by $h$. Similarly, we denote $H_{W, \alpha}$, $H_{W, \alpha}^+$, $H_{W, \alpha}^-$ by $H,H^+$, and $H^-.$ First, observe that since $\dist_W(x_1,x_2 )>20E$ and $x_1 \in \rho^U_W, x_2 \in \rho^V_W$, the domains $U,V$ must be transverse (for instance, by Lemma 2.17 in \cite{Abbot-Ng-Spriano}). 
\begin{claim*} $\dist_V(x, \rho^U_V)<E$ for any $x \in \pi_V(H^+ \cup H)$. In particular, $\diam_V(\pi_V(H^+ \cup H))<2E$.
\end{claim*}

\begin{claimproof} First, by definition of $H^+ \cup H$, each point $x \in H^+ \cup H$ projects far from $\rho^V_W$ in $\calC W$, hence, by the bounded geodesic image theorem gives us that $\dist_V(x,y)<E$ for any $x,y \in H^+ \cup H$. Thus, it suffices to find one point $z \in H^{+} \cup H$ with $\dist_V(z,\rho^U_V)<E.$ If we pick points $z,z' \in X$ such that $\pi_W(z')$ coarsely lives in $I \subset \alpha$ and $\dist_U (z, \rho^V_U)>E$, then we have $\dist_V(\pi_V(z), \rho^U_V)<E,$ and $\dist_W (z, \rho^U_W)<E$. In particular, $\pi_W(z) \in h^+$, and $\dist_V(\pi_V(z), \rho^U_V)<E$ which proves the desired claim.    
\end{claimproof}

 Since $b \in \vbig(V)$, using Lemma \ref{lem:Big_Sets}, there exists some $M=M(\mathfrak S)$ such that $b^MU=U$. Since $\diam_V(\pi_V(H^+ \cup H))<2E$, there exists some $m=m(\tau_V(b), \mathfrak S)$, such that $$\dist_V(b^{mM}\pi_V(H^+ \cup H), \pi_V(H^+\cup H))>2E.$$ Since $\dist_V(\pi_V(H^+ \cup H), \rho^U_V)<E$, we have $\dist_V(\pi_V(b^{mM}(H^+ \cup H), \rho^U_V)=\dist_V(b^{mM}\pi_V(H^+ \cup H), \rho^U_V)>2E-E=E$ where the first equality holds because $b^MV=V$ and the fact that $g\pi_Z(z)=\pi_{gZ}(gz)$ for all domains $Z \in \mathfrak S$, elements $g \in \Aut(\mathfrak S),$ and points $z \in X.$

 We claim that $b^{Mm}(H^+ \cup H) \subsetneq H^-$. In particular, $b^{Mm}H^+ \subsetneq H^-$ and $b^{Mm}H \cap H=\emptyset.$ To see this, observe that the set $\pi_V(H^+ \cup H)$ coarsely agrees with $\rho^U_V$, and on the other hand, the set $\pi_V(b^{Mm}(H^+ \cup H))$ is far from $\rho^U_V$. In particular, the sets $b^{Mm}(H^+ \cup H), H^+ \cup H$ are disjoint. As $X=H^+ \cup H \cup H^-,$ we deduce that $b^{Mm}(H^+ \cup H) \subsetneq H^-.$ This concludes the proof.  We remark that when $X$ is further a cube complex, since $H$ contains a cubical hyperplane $k$ with $H^- \subset k^-$ and $ k^+ \subset H \cup H^+$ (Lemma \ref{lem:hyperplane_in_curtain}), the above shows that $b^{Mm}k^+ \subsetneq k^-$.

    \end{proof}

    \begin{corollary}\label{cor:loxodromic_upper_level} Let $(X, \mathfrak S)$ be an HHS, $g_1,g_2 \in \Aut(\mathfrak S)$, $U \in \vbig(g_1),V \in \vbig(g_2)$ and $U,V \sqsubsetneq W$ with $\dist_W(\rho^U_W, \rho^U_W)>20E$. If $g_iW=W$ for $i \in \{1,2\}$, then $\exists$  $m=m(\tau_U(g_1), \tau_V(g_2), X) \in \mathbb{N}$ a curtain $h \subset W$ such that:

    \begin{enumerate}
        \item $g_1^m$ flips $h^-$ and $g_2^m$ flip $h^+$,

        \item $g_1^mg_2^m h^+ \subsetneq h^+$ and $g_1^mg_2^mh \cap h=\emptyset$. 
    \end{enumerate}
    In particular, $g_1^mg_2^m$ is loxodromic on $\calC W$ and $\tau_W(g_1^m g_2^m) \geq 1.$ 
        
    \end{corollary}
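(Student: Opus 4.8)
The statement is a two-step consequence of the HHS flipping lemma (Lemma~\ref{lem:HHS_flip}) and of Caprace--Sageev's flip-then-skewer principle, recorded in Corollary~\ref{cor: flip_then_skewer} and Lemma~\ref{lem:translation__length_via_skewering}; the only real work is to arrange the two flips with one common, uniformly bounded exponent. (I read the hypothesis as $\dist_W(\rho^U_W,\rho^V_W)>20E$.) First I would fix a geodesic $\alpha\colon[0,A]\to\calC W$ from a point $x_1\in\rho^U_W$ to a point $x_2\in\rho^V_W$; since the $\rho$-sets have diameter at most $\lambda<E$ this gives $A=\dist_W(x_1,x_2)>20E-2\lambda>18E$, which is enough to run Lemma~\ref{lem:HHS_flip} (or one simply replaces $20E$ there by $18E$). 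Let $I=[a,b]\subset\alpha$ be the central sub-interval of length $6E$, so $\dist_W(x_i,I)\ge 6E$ for $i\in\{1,2\}$, and let $h=h_\alpha$ be the curtain dual to $\alpha$ at $I$, with half-spaces $h^+=\pi_\alpha^{-1}([0,a))\ni x_1$ and $h^-=\pi_\alpha^{-1}((b,A])\ni x_2$.

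Next I would apply Lemma~\ref{lem:HHS_flip} with the domain $W$, the pair $U,V\pnest W$, and the element $g_2$ (with $V\in\vbig(g_2)$): since $\rho^U_W\in h^+$, it produces $m_2=m_2(X,\tau_V(g_2))$ such that $g_2^n$ flips the $W$-half-space $H^+_{W,\alpha}\subset X$ and $\pi_W(g_2^nH^+_{W,\alpha})\subsetneq h^-_\alpha$ for all $n\ge m_2$. Because $g_2W=W$, the map $\pi_W$ is equivariant for the two $g_2$-actions, and together with coarse surjectivity of $\pi_W$ this transfers the flip of $H^+_{W,\alpha}$ to a flip of the half-space $h^+_\alpha$ inside $\calC W$, that is $g_2^n h^+_\alpha\subsetneq h^-_\alpha$ and $g_2^n h_\alpha\cap h_\alpha=\emptyset$. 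Equivalently one can copy the short proof of Lemma~\ref{lem:HHS_flip} directly inside $\calC W$, using $\rho^W_V\colon\calC W\to\calC V$ in place of $\pi_V$ and bounded geodesic image (Lemma~\ref{lem:BGIA}): the closed half-space $h^+_\alpha\cup h_\alpha$ lies far from $\rho^V_W$, so its $\rho^W_V$-image lies within $E$ of $\rho^U_V$; as $V\in\vbig(g_2)$, some $g_2^M$ fixes $V$ and acts on $\calC V$ with translation length $\tau_V(g_2)>0$, so for $n$ large $g_2^n$ moves that image off itself, forcing $g_2^n(h^+_\alpha\cup h_\alpha)$ to be disjoint from $h^+_\alpha\cup h_\alpha$ and hence contained in $h^-_\alpha$. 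Reversing the orientation of $\alpha$ interchanges $h^+\leftrightarrow h^-$ and $U\leftrightarrow V$, so the same lemma applied with $g_1$ (with $U\in\vbig(g_1)$) gives $m_1=m_1(X,\tau_U(g_1))$ with $g_1^n$ flipping $h^-$ inside $\calC W$ for all $n\ge m_1$, for the \emph{same} curtain $h$. Setting $m=\max\{m_1,m_2\}$ — which depends only on $\tau_U(g_1),\tau_V(g_2)$ and $X$ — gives part~(1).

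For parts~(2) and the translation-length claim, note that $g_2^m$ flips $h^+$ and $g_1^m$ flips $h^-$, so Corollary~\ref{cor: flip_then_skewer} applied to the curtain $h$ in the hyperbolic space $\calC W$, with the two flipping elements taken to be $g_2^m$ and $g_1^m$ respectively, yields $g_1^m g_2^m(h\cup h^+)\subsetneq h^+$; in particular $g_1^m g_2^m h^+\subsetneq h^+$ and $g_1^m g_2^m h\cap h=\emptyset$, which is part~(2). Then $g_1^m g_2^m$ skewers $h$, so Lemma~\ref{lem:translation__length_via_skewering} gives that it is loxodromic on $\calC W$ with $\tau_W(g_1^m g_2^m)\ge 1$.

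The one non-formal point, and the expected main obstacle, is the transfer in the flipping step from the $W$-half-spaces $H^{\pm}_{W,\alpha}\subset X$ that Lemma~\ref{lem:HHS_flip} literally delivers to the half-spaces $h^{\pm}_\alpha\subset\calC W$ that part~(1) asks for: this is exactly where the hypothesis $g_iW=W$ enters, making $\pi_W$ equivariant, after which the descent is a routine coarse argument with the $6E$-thick band $h$ supplying the slack against the coarseness of $\pi_W$ and $\pi_\alpha$ (and if one prefers to avoid even that bookkeeping, the direct-in-$\calC W$ reproof of Lemma~\ref{lem:HHS_flip} sketched above sidesteps it). Everything else is a direct application of already-established statements.
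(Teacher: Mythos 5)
Your argument is correct and is essentially the paper's own proof: the paper likewise obtains the two flips from Lemma~\ref{lem:HHS_flip} (applied once with $g_2$ and once, with the orientation of $\alpha$ reversed, with $g_1$), takes a common power $m$, and then concludes via Corollary~\ref{cor: flip_then_skewer} and Lemma~\ref{lem:translation__length_via_skewering}. The extra care you take in constructing $\alpha$, $I$, $h$ and in transferring the flip of $H^{\pm}_{W,\alpha}\subset X$ to the half-spaces $h^{\pm}_\alpha\subset\calC W$ via $g_iW=W$, equivariance, and coarse surjectivity of $\pi_W$ (or your direct-in-$\calC W$ variant using $\rho^W_V$ and bounded geodesic image) only makes explicit a step the paper leaves implicit.
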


\begin{proof}
    The statement follows immediately by combining Corollary \ref{cor: flip_then_skewer} with the above Lemma \ref{lem:HHS_flip}. Namely, continuing with the notation from the above Lemma \ref{lem:HHS_flip} with $g_2=b$, we get an integer $m$ such that $g_2^m$ flips $h^+$, similarly, $g_1^m$ flips $h^-$. Hence, by Corollary \ref{cor: flip_then_skewer}, the element $g_1^mg_2^m$ skewers $h \subset \calC W$ as it satisfies $g_1^mg_2^m(h \cup h^+) \subsetneq h^+$. Hence, by Lemma \ref{lem:translation__length_via_skewering}, the element $g_1^mg_2^m$ is loxodromic on $\calC W$ and $\tau_{W}(g_1^mg_2^m)>1$.
\end{proof}

In the special case where the HHS in Corollary \ref{cor:loxodromic_upper_level} is a CAT(0) cube complex,  Lemma \ref{lem:hyperplane_in_curtain} tells us that the respective HHS curtain $H$ contains a cubical hyperplane $k.$ This yields the following.

\begin{corollary}\label{cor:flip_cube} Let $X$ be a CAT(0) cube complex with an HHS structure $(X, \mathfrak S)$, $g_1,g_2 \in \Aut(\mathfrak S)$, $U \in \vbig(g_1),V \in \vbig(g_2)$ and $U,V \sqsubsetneq W$ with $\dist_W(\rho^U_W, \rho^U_W)>20E$. If $g_iW=W$ for $i \in \{1,2\}$, then $\exists$  $m=m(\tau_U(g_1), \tau_V(g_2), X) \in \mathbb{N}$ a cubical hyperplane $k \subset X$ such that:

    \begin{enumerate}
        \item $g_1^m$ flips $k^-$ and $g_2^m$ flip $k^+$,

        \item $g_1^mg_2^m k^+ \subsetneq k^+$ and $g_1^mg_2^mk \cap k=\emptyset$. 
    \end{enumerate}
    Further, $g_1^mg_2^m$ is loxodromic on $\calC W$ and $\tau_W(g_1^m g_2^m) \geq 1.$

\end{corollary}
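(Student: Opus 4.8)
The plan is to deduce this from Corollary \ref{cor:loxodromic_upper_level} together with Lemma \ref{lem:hyperplane_in_curtain}, the only content being to transport the flipping and skewering relations from the curtain in $\calC W$ down to a genuine cubical hyperplane of $X$. The hypotheses here are exactly those of Corollary \ref{cor:loxodromic_upper_level}, with $X$ in addition a (finite-dimensional) CAT(0) cube complex carrying the compatible HHS structure, so I would begin by applying that corollary to obtain the integer $m=m(\tau_U(g_1),\tau_V(g_2),X)$ and a curtain $h$ in $\calC W$ — say dual to a geodesic $\alpha$ at an interval $I$ — such that $g_1^m$ flips $h^-$, $g_2^m$ flips $h^+$, and $g_1^m g_2^m(h\cup h^+)\subsetneq h^+$, $g_1^m g_2^m h\cap h=\emptyset$; in particular $g_1^m g_2^m$ is loxodromic on $\calC W$ with $\tau_W(g_1^m g_2^m)\ge 1$. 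That last statement is literally the conclusion of Corollary \ref{cor:loxodromic_upper_level} and needs nothing further, and the power $M$ of Lemma \ref{lem:Big_Sets} with $g_i^M W=W$ has already been absorbed into $m$.

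Next I would pass to the associated $W$-curtain $H=\pi_W^{-1}(h_\alpha)$ with $W$-half spaces $H^\pm=\pi_W^{-1}(h_\alpha^\pm)$, and apply Lemma \ref{lem:hyperplane_in_curtain} to extract a cubical hyperplane $k\subset H$ with $H^+\subsetneq k^+$, $H^-\subsetneq k^-$, $k^+\subseteq H\cup H^+$, $k^-\subseteq H\cup H^-$. The point is that the proof of Lemma \ref{lem:HHS_flip} actually establishes the set-level statement $g_2^m(H\cup H^+)\subsetneq H^-$ in $X$ (this is the closing remark of that proof, with $b=g_2$), and symmetrically $g_1^m(H\cup H^-)\subsetneq H^+$. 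Combining with the four containments above gives $g_2^m k^+\subseteq g_2^m(H\cup H^+)\subsetneq H^-\subsetneq k^-$ and $g_2^m k\subseteq g_2^m(H\cup H^+)\subsetneq H^-\subsetneq k^-$; since a hyperplane is disjoint from its half spaces, the second inclusion yields $g_2^m k\cap k=\emptyset$, so $g_2^m$ flips $k^+$. The symmetric computation gives that $g_1^m$ flips $k^-$, which is item (1).

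For item (2) I would run the Caprace--Sageev double-skewering argument (the combinatorial part underlying Corollary \ref{cor: flip_then_skewer}, see \cite{capracesageev:rank}) inside $X$, using the genuine partition $X=k\sqcup k^+\sqcup k^-$ afforded by the hyperplane $k$: from $g_2^m k^+\subsetneq k^-$ and $g_1^m k^-\subsetneq k^+$ one gets $g_1^m g_2^m k^+\subseteq g_1^m k^-\subsetneq k^+$, and likewise $g_1^m g_2^m k\subseteq g_1^m k^-\subsetneq k^+$ using $g_2^m k\subsetneq k^-$; hence $g_1^m g_2^m(k\cup k^+)\subsetneq k^+$ and $g_1^m g_2^m k\cap k=\emptyset$. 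The ``Further'' clause is immediate from Corollary \ref{cor:loxodromic_upper_level}.

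I do not anticipate a real obstacle: all the weight rests on the two results already in hand, and what remains is bookkeeping. The only points requiring care are keeping the orientation straight — it is $g_2$, the element playing the role of $b$ in Lemma \ref{lem:HHS_flip}, that flips the $+$ side and $g_1$ the $-$ side — and checking that the strict inclusions $H^\pm\subsetneq k^\pm$ of Lemma \ref{lem:hyperplane_in_curtain} are precisely what allows every $\subsetneq$ in sight to survive the passage from the $W$-curtain to the cubical hyperplane $k$.
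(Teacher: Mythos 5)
Your proposal is correct and follows essentially the same route as the paper: the paper obtains this corollary precisely by combining Corollary \ref{cor:loxodromic_upper_level} with Lemma \ref{lem:hyperplane_in_curtain}, relying on the set-level flips $g_2^m(H\cup H^+)\subsetneq H^-$ (and its mirror image) recorded at the end of the proof of Lemma \ref{lem:HHS_flip}, and your bookkeeping with $k\subset H$, $H^\pm\subsetneq k^\pm$, $k^\pm\subseteq H\cup H^\pm$ is exactly the intended argument, with the correct orientation ($g_2$ flips the $+$ side, $g_1$ the $-$ side). The only nitpick is your parenthetical that Lemma \ref{lem:Big_Sets} gives $g_i^MW=W$ — it gives $g_1^MU=U$ and $g_2^MV=V$ (invariance of $W$ is a hypothesis of the corollary) — but this does not affect the argument.
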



    

\section{Algebraic passing up}

The main goal of this section is to start with an element $a \in \Aut({\mathfrak S})$ which is loxodromic over some $\calC U$ and use its (short) conjugates to produce a loxodromic element $h$ on some $W$ with $U \nest W.$ We start with the following.

\begin{lemma}\label{lem:effective_first_pass} (effective passing up) Let $U \in \vbig(a)$ for some $a \in \Aut(\mathfrak S)$. For each $K_1>K_2>50E$, there exists a constant $m=m(\mathfrak S,\tau_U(a), K_1)$ such that the following holds. If $bU$ is transverse to $U$ for some $b \in \Aut(\mathfrak S)$, then there exist elements $h_1,h_2 \in \langle a,b \rangle $ which are conjugates of $a$ by elements from $\langle a,b \rangle$ such that:

\begin{enumerate}
    \item $|h_i|_{a,b}<m$,
    \item $U,h_1U, h_2U$ are nested in some $W$, and 
    \item $\dist_W(\rho, \rho')>K_2$ for any $\rho, \rho' \in \{\rho^U_{W}, \rho^{h_1U}_W, \rho^{h_2U}_W\}$.
\end{enumerate}
\end{lemma}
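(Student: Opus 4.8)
The plan is to iterate the basic move from Lemma \ref{lem:producing_first_transverse_pair} (translating $U$ by a short conjugate of $a$) enough times to apply the strong passing-up Lemma \ref{lem:strong_passing_up_consequence}, while keeping careful bookkeeping on word lengths. The key observation is that once we have a transverse pair $U, bU$, the element $bab^{-1}$ is active over $bU$ (since $U \in \vbig(a)$ and $a$ has some power $a^{M}$ fixing $U$ by Lemma \ref{lem:Big_Sets}, so $ba^{M}b^{-1}$ fixes $bU$ and has infinite orbit there), and its translation length on $\calC bU$ equals $\tau_U(a)$. So the loxodromic data propagates along translates.

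First I would set $M = M(\mathfrak S)$ from Lemma \ref{lem:Big_Sets}, so $a^{M}U = U$, and let $P = P(K_1)$ be the integer from Lemma \ref{lem:strong_passing_up_consequence} applied with the given $K_1 > K_2 > 50E$. The goal is to produce a chain $U = U_0, U_1, \dots, U_P$ with each consecutive pair transverse and $\dist_{U_i}(\rho^{U_{i-1}}_{U_i}, \rho^{U_{i+1}}_{U_i}) > K_1$. Build this inductively. Suppose $U_0, \dots, U_j$ have been built with $U_i = g_i U$ for elements $g_i \in \langle a, b\rangle$ of controlled length, each $g_i$ a conjugate of a power of $a$ (or of $b a^k b^{-1}$) by an element of $\langle a, b\rangle$. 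Since $g_j a^{M} g_j^{-1}$ is active over $U_j = g_j U$ with translation length $\tau_{U_j}(g_j a^{M} g_j^{-1}) \geq \tau_U(a)/M$ — more precisely, since $g_j a^{M} g_j^{-1}$ fixes $U_j$ and its orbit has translation length exactly $\tau_U(a)$ on the nose after we note $a^M$ acts loxodromically on $\calC U$ — there is a power $n = n(\tau_U(a), E, K_1)$, uniform and independent of $j$, such that $w_j := (g_j a^{M} g_j^{-1})^{n}$ moves $\rho^{U_{j-1}}_{U_j}$ a distance $> K_1$ (plus a safety margin) in $\calC U_j$. Then set $U_{j+1} := w_j U_j = w_j g_j U_j$... actually $U_{j+1} := w_j \cdot (\text{the translate of } U_{j-1})$; the cleaner choice, mirroring the introduction's sketch, is to push the \emph{previous} domain forward: let $U_{j+1} := w_j U_{j-1}$, so that $\rho^{U_{j+1}}_{U_j} = w_j \rho^{U_{j-1}}_{U_j}$ is far from $\rho^{U_{j-1}}_{U_j}$, and transversality of $U_j, U_{j+1}$ follows from this large distance via Lemma 2.17 of \cite{Abbot-Ng-Spriano}. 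One checks that $U_{j+1} = g_{j+1} U$ with $g_{j+1} \in \langle a, b \rangle$ a conjugate of a power of $a$, since $w_j$ is a conjugate of $a^{Mn}$ and $U_{j-1} = g_{j-1} U$.

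The word-length bookkeeping is the routine-but-fiddly part: $|g_{j+1}|_{a,b} \leq 2|g_j|_{a,b} + Mn + |g_{j-1}|_{a,b} + 1$ or similar, giving an exponential-in-$j$ bound, but since $j$ runs only up to $P = P(K_1)$ and $M, n$ depend only on $\mathfrak S, \tau_U(a), K_1$, the final lengths are bounded by some $m_0 = m_0(\mathfrak S, \tau_U(a), K_1)$. Having built $U_0, \dots, U_P$, apply Lemma \ref{lem:strong_passing_up_consequence}: it yields three domains $V_1, V_2, V_3 \in \{U_0, \dots, U_P\}$ and a domain $W$ with each $V_k \nest W$ and $\dist_W(\rho^{V_r}_W, \rho^{V_s}_W) > K_2$ for $r \neq s$. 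Reindex: $V_1 = U$ may not hold, but we may instead run the construction so that it does, or — more simply — note the lemma as stated only requires the three domains $U, h_1 U, h_2 U$ to be among the translates; if $U$ itself is not one of the $V_k$, replace the whole construction by first translating everything by $g_{i_0}^{-1}$ where $V_1 = U_{i_0} = g_{i_0} U$, which is harmless since everything is done inside $\langle a, b\rangle$ and conjugating by $g_{i_0}^{-1}$ keeps the domains as $\langle a,b\rangle$-translates of $U$ and only changes word lengths by a bounded amount (absorb into $m$). Set $h_1, h_2$ to be the elements with $h_1 U = V_2$, $h_2 U = V_3$; these are conjugates of $a$ by elements of $\langle a, b\rangle$ (after absorbing the powers $M, n$ — note a conjugate of a power of $a$ need not be a conjugate of $a$, so strictly one should phrase the conclusion as "$h_i \in \langle a, b\rangle$ are conjugates of powers of $a$", or arrange $n = 1$ which is impossible in general; I would weaken the statement's claim "conjugates of $a$" to "conjugates of $a^{Mn}$ for a uniform exponent" — but since this matches how the lemma is used downstream via $\vbig$, it is fine), with $|h_i|_{a,b} < m := m(\mathfrak S, \tau_U(a), K_1)$.

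The main obstacle I anticipate is the $W$-invariance issue flagged in the introduction's "Set-up" paragraph: nothing here guarantees $gW = W$ for relevant $g$, which is exactly why the subsequent Lemma \ref{lem:effective_second_pass} is needed and why a naive iteration of this lemma fails for three or more nesting levels. Within the proof of \emph{this} lemma, though, that is not an obstacle — we only need the single passing-up step. The real care-point internal to this proof is making the translation-length lower bound on $\calC U_j$ \emph{uniform in $j$}: we need that $(g_j a^M g_j^{-1})$ acts on $\calC U_j$ as an isometry conjugate (via $g_j$) to $a^M$ on $\calC U$, so $\tau_{U_j}(g_j a^M g_j^{-1}) = \tau_U(a^M) = M\tau_U(a)$ is genuinely independent of $j$ and of $b$, hence Remark \ref{rmk: skewering_integer} gives a single skewering/translation power $n$ that works at every stage. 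That identity is immediate from the HHG axiom $g: \calC U \to \calC gU$ being an isometry, so the obstacle dissolves; the only genuinely tedious thing left is the exponential word-length recursion, which I would not grind through but simply bound crudely.
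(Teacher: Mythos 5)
Your proposal is correct and follows essentially the same route as the paper: conjugate $a^{M}$ along the chain of translates, use the uniform lower bound on translation length to push the previous domain forward by a uniform power so that consecutive $\rho$-points in each $\calC U_i$ are $K_1$-separated, and then feed the resulting $P(K_1)$ pairwise-transverse translates of $U$ into Lemma \ref{lem:strong_passing_up_consequence}. The two wrinkles you flag (translating back by $g_{i_0}^{-1}$ so that $U$ itself is among the three nested domains, and the fact that the resulting elements are only conjugates of powers of $a$, which is all that is used downstream) are left implicit or stated loosely in the paper, so your handling of them is consistent with, indeed slightly more careful than, the paper's own argument.
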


\begin{proof}
    Set $U_0=U$ and let $U_1=bU$. Using Lemma \ref{lem:Big_Sets}, there exists a constant $M=M(\mathfrak S)$ such that $a^MU=U.$ Hence, the element $g_1:=ba^Mb^{-1}$ is active over $U_1$ and consequently, there exists a constant $s$ depending only on $\tau_U(a)$ and $K_1$ such that $\dist_{U_1}(\rho^U_{U_1}, g_1^{s}\rho^U_{U_1})=\dist_{U_1}(\rho^U_{U_1}, \rho^{g_1^{s}U}_{U_1})>K_1$. This provides domains $U_0=U,U_1, U_2=g_1^{s}U$. Now, observe that the element $g_2=g_1^{s}a^{M}g_1^{-s}$ is active over $U_2.$ Thus, again, for the same constant $s$ above, we get that $\dist_{U_2}(\rho^{U_1}_{U_2}, g_2^{s}\rho^{U_1}_{U_2})=\dist_{U_2}(\rho^{U_1}_{U_2}, \rho^{g_2^{s}U_1}_{U_2})>K_1.$ Repeating this argument $P$-times  (where $P$ depends only on $K_1$ as in Lemma \ref{lem:strong_passing}) provides domains $\{U_0, U_1, \cdots U_P\}$ such that $\dist_{U_i}(\rho^{U_{i-1}}_{U_i}, \rho^{g_i^sU_{i-1}}_{U_i})>K_1$ for each $i \in \{1, \cdots P\}.$ Since the collection $\calU= \{U_0,U_1,\cdots U_P\}$ consists of translates of $U$, applying Lemma \ref{lem:strong_passing_up_consequence} to $\calU$ gives the desired conclusion.
\end{proof}

The above Lemma \ref{lem:effective_first_pass} allows us to pass up to a higher domain $W$ where conditions 1,2 and 3 are met. If the elements $h_1$ and $h_2$ fix $W$, we can apply Lemma 5.7 again replacing $a$ by $h_1^mh_2^m$ from Corollary \ref{cor:loxodromic_upper_level} (and then we can iteratively apply Lemma 5.7 until we arrive at the highest domain possible and get a loxodromic there, again by applying Corollary \ref{cor:loxodromic_upper_level}). However, it need not be the case that both $h_i$ fix $W$. The following lemma is meant to handle such a situation. The lemma will essentially tell us that such a case is non-existent in the sense that if $h_iW=W$ for $i=1$ and $i=2,$ then we can pass to a higher domains $Z$ with $W \sqsubsetneq Z$ such that the original domains from the hypothesis are still far; and hence, applying the lemma finitely many times assures the existence of a domain $Z'$ where both $h_i$ fix $Z'$. We state and prove the lemma in a slightly more general form than needed in this article as it may find other applications.

\begin{lemma}\label{lem:effective_second_pass}  For $i \in \{1,2\}$, let $U_i \in \vbig(a_i)$ for some $a_i \in \Aut(\mathfrak S)$ and let $K_1>K_2>50E$. Suppose that $\dist_W(\rho^{U_1}_W,\rho^{U_2}_W)>K_1$ for some $W$ with $U_i \nest W$. If $a_iW \neq W$ for each $i \in \{1,2\}$ then there exist $m=m(\tau_{U_1}(a_1), \tau_{U_2}(a_2), \mathfrak S, K_1)$, a domain $Z \in \mathfrak S$ with $W \sqsubsetneq Z$ and elements $h_1,h_2 \in \langle a_1,a_2\rangle $ such that:

\begin{enumerate}
    \item $|h_j|_{a_1,a_2}<m$ for $j \in \{1,2\},$
    \item $\dist_Z(\rho_Z,\rho'_Z)>K_2$ for each distinct $\rho_Z, \rho'_Z \in \{\rho^{W}_Z, \rho^{h_1W}_Z,\rho^{h_2W}_Z\}$.

\end{enumerate}

In particular, if $i \in \{1,2\}$,  we have $\dist_Z(\rho_Z, \rho'_Z)>K_2$ for any $\rho_Z, \rho_Z' \in \{\rho^{U_i}_Z, \rho^{h_1U_i}_Z, \rho^{h_2U_i}_Z\}$. 

\end{lemma}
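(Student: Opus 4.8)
The strategy mirrors the effective first-pass Lemma \ref{lem:effective_first_pass}, but now we must pass up \emph{past} $W$ in a controlled way, using that neither $a_1$ nor $a_2$ stabilizes $W$. The point is to produce, from $W$ together with short conjugates of it, a chain-like configuration of pairwise transverse translates of $W$, all of whose $\rho$-projections into a common higher domain $Z$ are far apart; then the ``in particular'' clause follows for free because $\rho^{h_jU_i}_Z$ is uniformly close to $\rho^{h_jW}_Z$ (as $U_i \nest W \pnest Z$, item 5 of the HHS axioms gives $\dist_Z(\rho^{U_i}_W\text{-pushforward}, \rho^W_Z)< \lambda$, so the $K_2$-separation of the $W$-translates transfers to the $U_i$-translates up to an additive error absorbed into $K_2$).

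\textbf{Key steps.} First, since $a_i \in \vbig(a_i)$ is active over $U_i$ and $a_iW \neq W$, the translate $a_i^MW$ (with $M=M(\mathfrak S)$ from Lemma \ref{lem:Big_Sets}) is \emph{distinct} from $W$; I would like to say $W$ and $a_i^{sM}W$ are transverse with projections far apart in a suitable domain. The cleanest route: note $U_i \nest W$ and $U_i \nest a_i^{sM}W$, and $a_i^{sM}$ translates the projection $\rho^{U_i}_{?}$ —— but the subtlety is that $W$ is not stabilized by $a_i$, so I cannot project inside $W$ directly. Instead I would work as follows. Since $\dist_W(\rho^{U_1}_W,\rho^{U_2}_W)>K_1$ and $a_1$ is loxodromic on $\calC U_1$, for a uniform power $s=s(\tau_{U_1}(a_1),K_1)$ the element $a_1^{sM}$ satisfies $\dist_{U_1}(\rho^{U_2}_{U_1}, a_1^{sM}\rho^{U_2}_{U_1})>K_1$; equivalently $\dist_{U_1}(\rho^{U_2}_{U_1},\rho^{a_1^{sM}U_2}_{U_1})>K_1$. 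Because $a_1^{sM}U_1 = U_1$ but $a_1^{sM}W\neq W$, the domains $U_1 = a_1^{sM}U_1 \nest a_1^{sM}W$ and $U_2 \nest W$ now have the property that their common sub-projections into $U_1$ are $K_1$-far; by the Behrstock inequality / Lemma \ref{lem:Tranverse_in_full_subgroup}-style reasoning this forces $W$ and $a_1^{sM}W$ (as well as $a_1^{sM}W$ and $W$ relative to $U_2$) to be transverse with large projection at whatever domain witnesses the distance. Iterating: set $W_0 = W$, $h^{(1)} = a_1^{sM}$, $W_1 = h^{(1)}W$, then conjugate to get $h^{(2)} = h^{(1)}a_2^{sM}(h^{(1)})^{-1}$ active appropriately and $W_2 = h^{(2)}W_1$, and so on, producing $\{W_0,W_1,\dots,W_P\}$ with consecutive transversality and $\dist_{W_i}(\rho^{W_{i-1}}_{W_i},\rho^{W_{i+1}}_{W_i})>K_1$ for the uniform $P=P(K_1)$ of Lemma \ref{lem:strong_passing}. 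Then apply Lemma \ref{lem:strong_passing_up_consequence} to the chain $\{W_i\}$ to extract three of them, call their ``names'' $W, h_1W, h_2W$ after relabelling/translating by an element of length $<P$, nested in a domain $Z$ with pairwise $\rho_Z$-distance $>K_2$. Since each $W_i = g_iW$ with $|g_i|_{a_1,a_2}$ bounded by a function of $s$ and $P$, and $s,P$ depend only on $\mathfrak S$, $\tau_{U_i}(a_i)$ and $K_1$, conclusion (1) holds with $m = m(\tau_{U_1}(a_1),\tau_{U_2}(a_2),\mathfrak S,K_1)$. Conclusion (2) is exactly the output of Lemma \ref{lem:strong_passing_up_consequence}. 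Finally $W \pnest Z$ is automatic since $W_i \nest Z$ and the $W_i$'s are transverse to each other hence properly nested in $Z$.

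\textbf{The ``in particular'' clause.} For each fixed $i\in\{1,2\}$ and each of the three relevant group elements $g\in\{1,h_1,h_2\}$, we have $gU_i \nest gW \pnest Z$ (the properness because $gW$ is one of the $W_k$'s, transverse to the others, hence not equal to $Z$). Item 5 of the HHS axioms gives $\dist_Z(\rho^{gU_i}_Z,\rho^{gW}_Z)<\lambda<E$. Hence for distinct $g,g'$,
\[
\dist_Z(\rho^{gU_i}_Z,\rho^{g'U_i}_Z)\ \geq\ \dist_Z(\rho^{gW}_Z,\rho^{g'W}_Z)-2E\ >\ K_2-2E,
\]
and replacing $K_2$ by $K_2+2E$ throughout the argument (still $>50E$, still $<K_1$ after enlarging $K_1$ correspondingly) yields the stated bound.

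\textbf{Main obstacle.} The delicate point is Step 1: extracting honest transversality and large projection for the translates $W, a_i^{sM}W$ of $W$, given that $a_i$ does \emph{not} stabilize $W$ and therefore one cannot run the loxodromic-rotation argument inside $\calC W$ itself. The fix is to run it inside $\calC U_i$ (which \emph{is} stabilized), exploiting that $U_i\nest W$ and that the hypothesis gives us a large $W$-projection between $\rho^{U_1}_W$ and $\rho^{U_2}_W$ to seed the configuration; one then has to check carefully that this large projection ``downstairs'' in $U_i$ genuinely propagates, via consecutive Behrstock inequalities, to a chain structure suitable for the passing-up lemma, rather than collapsing. This bookkeeping —— tracking which conjugate is active over which translate and verifying the projection estimates survive each conjugation —— is the technical heart; everything afterward is a black-box application of Lemmas \ref{lem:strong_passing} and \ref{lem:strong_passing_up_consequence}.
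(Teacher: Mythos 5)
Your scaffolding is the same as the paper's: build the alternating family of translates $W,\ a_1^{\ast}W,\ a_1^{\ast}a_2^{\ast}W,\dots$, get consecutive transversality from the fact that consecutive translates share a nested translate of $U_1$ or $U_2$ (so they cannot be orthogonal, and being at the same level they cannot be nested), then feed the family into Corollary \ref{cor:local-to-global} and the strong passing-up lemma, and finally translate the output by $g_1^{-1}$ to get $Z$, $h_1$, $h_2$; your treatment of the ``in particular'' clause via the consistency axiom for $gU_i\nest gW\pnest Z$ is also fine. But there is a genuine gap exactly where you flag ``the technical heart'': the chain condition $\dist_{W_i}(\rho^{W_{i-1}}_{W_i},\rho^{W_{i+1}}_{W_i})>K_1$ that Corollary \ref{cor:local-to-global} and Lemma \ref{lem:strong_passing_up_consequence} require is an estimate in the hyperbolic spaces $\calC W_i$ attached to the translates of $W$, whereas the only quantitative estimate you actually establish is $\dist_{U_1}(\rho^{U_2}_{U_1},a_1^{sM}\rho^{U_2}_{U_1})>K_1$, which lives in $\calC U_1$ and does not by itself propagate upward; you assert the propagation and defer its verification, so the statement is not proved.

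The missing mechanism, which is the entire content of the paper's proof, is that no loxodromic power is needed at all: the hypothesis $\dist_W(\rho^{U_1}_W,\rho^{U_2}_W)>K_1$ is itself the seed, and it is transported by the isometries $g\colon\calC W\to\calC gW$. Concretely, with $a=a_1^{M}$, $b=a_2^{M}$ (so $aU_1=U_1$, $bU_2=U_2$ by Lemma \ref{lem:Big_Sets}), one has $U_1\nest W$ and $U_1\nest aW$, hence $\dist_{aW}(\rho^{W}_{aW},\rho^{U_1}_{aW})<E$ by the nesting consistency axiom; similarly $aU_2\nest aW$ and $aU_2=abU_2\nest abW$, hence $\dist_{aW}(\rho^{abW}_{aW},\rho^{aU_2}_{aW})<E$; and equivariance gives $\dist_{aW}(\rho^{U_1}_{aW},\rho^{aU_2}_{aW})=\dist_W(\rho^{U_1}_W,\rho^{U_2}_W)>K_1$, so the chain inequality at $aW$ holds up to $2E$, and this argument iterates along the alternating word. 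Your rotation estimate in $\calC U_1$ computes the wrong quantity (it would control separation of $U$-level domains inside $\calC U_1$, not separation of the neighbouring $W$-translates inside $\calC W_i$), and the powers $s=s(\tau_{U_1}(a_1),K_1)$ it introduces are unnecessary; without the bridging computation above, the passage from your $\calC U_1$ estimate to the hypotheses of the passing-up lemma is exactly the unproved step.
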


\begin{proof}
To simplify notation, let $a:=a_1, b:=a_2$ and $U:=U_1, V:=U_2.$ Using Lemma 6.3 in \cite{Durham2017-ce}, we get that there is a constant $M=M(\mathfrak S)$ such that $a^MU=U$ and $b^MV=V.$ For simplicity, we will continue the proof writing $a$ instead of $a^M$ and similarly for $b.$

\begin{figure}[ht]
   \includegraphics[width=\textwidth, trim = 2cm 10cm 1cm 7cm]{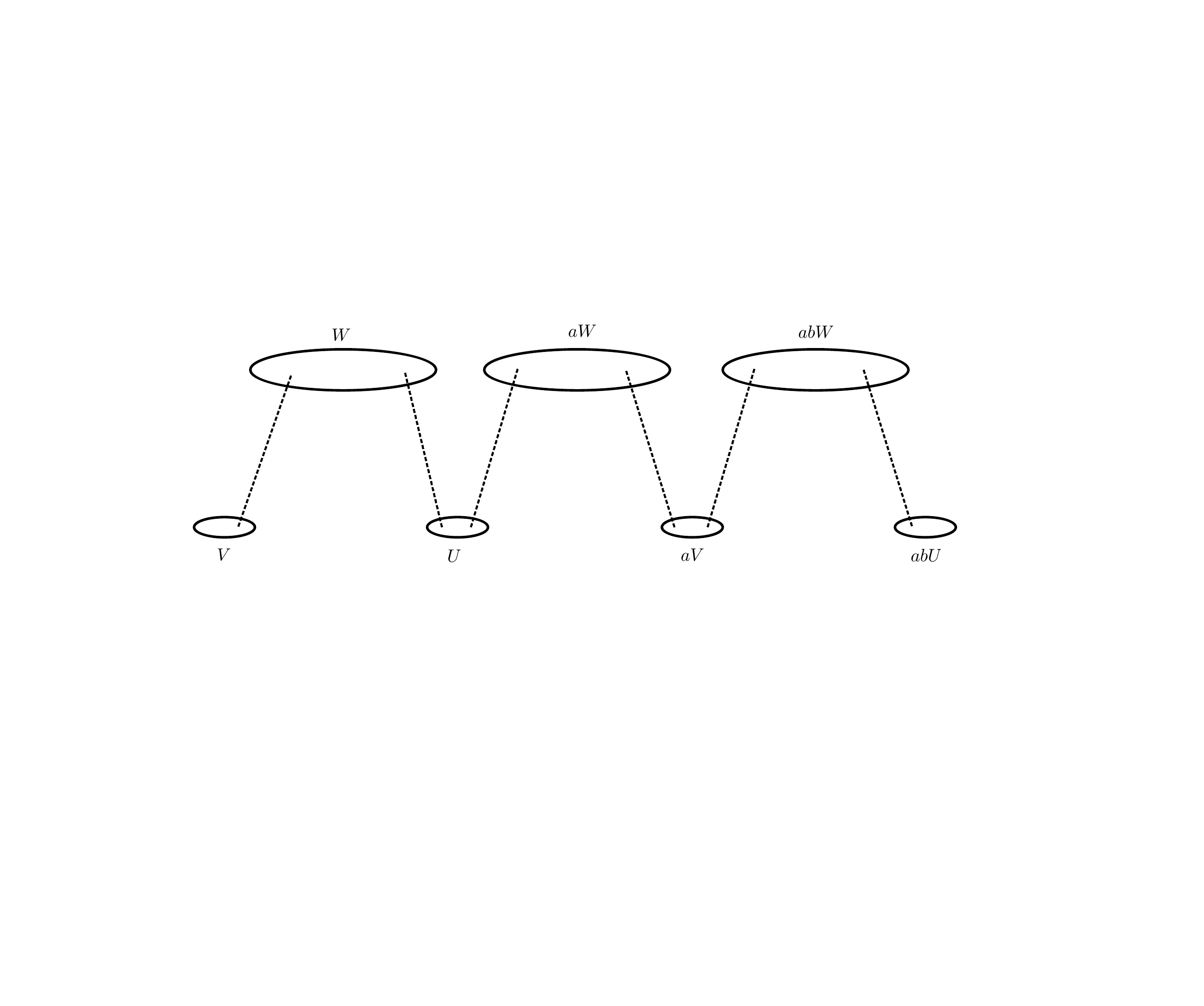}\centering
\caption{Producing transverse domains from $W$} \label{fig:nice_nest}
\end{figure}
\vspace{2mm}

Observe that every two consecutive domains of the tuple $$\calW= (W,aW,abW,abaW,ababW, \cdots )$$ are distinct as otherwise we would have $aW=W$ or $bW=W$ violating the assumptions of the lemma. Let $g_i$ be such that the $i$-th element of $\calW$ is $g_iW.$

\begin{claim*}
    Every two consecutive terms of $\calW$ are transverse.
\end{claim*}

\begin{claimproof} We start by showing $W,aW$ are transverse. Apply the element $a$ on the triple $\{W,U,V\}$ to get the triple $$\{aW, aU, aV\}=\{aW, U,aV\},$$ where the equality holds as $aU=a.$ The domains $W,aW$ can't be orthogonal since $U \nest W$ and $U \nest aW.$ They also can't be nested since the action preserves $\nest$-levels, thus, they are transverse. Furthermore, since $U$ nests in both $W,aW$, we have $\dist_W(\rho^U_W, \rho^{aW}_W)<E, \dist_{aW}(\rho^U_{aW}, \rho^W_{aW})<E$ (using th 4-th item of the definition of an HHS in Section \ref{sec:HHS_prelim}). Finally, since $a:W \rightarrow aW$ is an isometry and since $a\rho^Z_{Z'}=\rho^{aZ}_{aZ'}$ for any $Z,Z' \in \mathfrak S,$ we have $\dist_{aW}(\rho^{U}_{aW}, \rho^{aV}_{aW})=\dist_W(\rho^U_W, \rho^V_W).$

Now, applying $aba^{-1}$ to the new collection $\{aW,U,aV\}$ provides the collection $$\{abW,aba^{-1}U,abV\}=\{abW,abU,aV\},$$ where the equality holds since $bV=V$ and $a^{-1}U=U.$ Since $aV$ nests in both $\{aW,abW\}$, arguing exactly as above gives us that $aW,abW$ are transverse and $\dist_{aW}(\rho^W_{aW}, \rho^{abW}_{aW})>K_1$. Continuing inductively proves the desired claim.

\end{claimproof}

Therefore, invoking Corollary \ref{cor:local-to-global} gives us that $\calW$ is a collection of distinct pairwise transverse domains. Using Lemma \ref{lem:strong_passing}, we get a domain $J,$ and three domains $g_iW$ with $g_iW \nest J$ for each $i \in \{1,2,3\}$ and $\dist_J(\rho^{g_iW}_J, \rho^{g_jW}_J) >K_2$ for each $i \neq j.$ Applying $g_1^{-1}$ on the collection $\{J,g_1W,g_2W,g_3W\}$ and defining $Z:=g_1^{-1}J,$ $h_1:=g_1^{-1}g_2$ and $h_2:=g_1^{-1}g_3$ gives us the desired statement.
\end{proof}






The goal of the following lemma is to start with a pair of transverse domains and iteratively apply the passing-up-type statements we have already established to arrive to the highest domain possible and produce a loxodromic on that domain by Corollary \ref{cor:loxodromic_upper_level}.

\begin{lemma}\label{lem:short_loxodromic_top}  Let $T$ be a finite generating set of a subgroup $H$ with eyries $\{S_1,S_2,\cdots S_M\}$ and assume $H.S_i=S_i$ for each $i.$ Suppose $a \in H$,  $U \in \vbig(a)$ and $U \sqsubsetneq S_j$ for some $j\in \{1,\cdots M\}$. There exist $m=m(\tau_U(a), \mathfrak S) \in \mathbb{N}$ and $g \in H$ such that:   
     \begin{enumerate}

     \item $|g|_T<m$, and
        \item $\dist_{S_j}(\rho^{U}_{S_j}, \rho^{gU}_{S_j})>30E.$
    \end{enumerate}

    In particular, the element $h=a^m(g^{-1}a^mg)$ is loxodromic on $S_j$ with $\tau_{S_j}(h) \geq 1.$
\end{lemma}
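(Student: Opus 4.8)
The plan is to iterate the passing-up machinery developed in Sections 5--6. We start from the hypothesis that $a \in H$ is active over some $U \sqsubsetneq S_j$ — more precisely $U \in \vbig(a)$ — and we want to climb the nesting tree from $U$ up to the eyry $S_j$, producing short conjugates of $a$ at each stage and, at the top, a loxodromic on $\calC S_j$. The first step is to produce an initial transverse pair: by Lemma \ref{lem:producing_first_transverse_pair} (applied with the generating set $T$, noting $HS_i = S_i$), the collection $\bigcup_{i=0}^{N+1}\{T^iU\}$ contains a pair of transverse domains, and after translating by a uniformly short element we may assume this pair is $U, bU$ with $|b|_T \leq N+1$ and $U \trans bU$.

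Next I would feed $U, bU$ into the effective passing-up Lemma \ref{lem:effective_first_pass} (with $K_1 > K_2 > 50E$ chosen depending only on $\mathfrak S$, say $K_2 = 51E$ and $K_1 = \theta(K_2)$ large enough for Lemma \ref{lem:strong_passing_up_consequence}): this yields a domain $W$ with $U, h_1U, h_2U \nest W$, the $h_i$ being conjugates of $a$ of uniformly bounded $T$-length, and $\dist_W(\rho^{h_rU}_W, \rho^{h_sU}_W) > K_2$ for distinct $r,s$. Now there is the case split flagged in the paper: if both $h_1$ and $h_2$ fix $W$, then $h_1, h_2 \in \vbig$-type elements active over $W$ after replacing by the uniform power $M$ from Lemma \ref{lem:Big_Sets}, and Corollary \ref{cor:loxodromic_upper_level} applied to $h_1^m, h_2^m$ produces an element loxodromic on $\calC W$ with translation length $\geq 1$; moreover this loxodromic is again a short conjugate-type combination, so $W$ plays the role of a new $U$ one level up and we recurse. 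If instead $h_iW \neq W$ for $i=1,2$, Lemma \ref{lem:effective_second_pass} (applied with $U_1 = h_1U$, $U_2 = h_2U$, since $\dist_W(\rho^{h_1U}_W, \rho^{h_2U}_W) > K_1$ after renaming constants appropriately) lifts us to a strictly higher domain $Z \pnest W$ on which the relevant $\rho$-images are still $K_2$-separated, using short elements; we then loop back. Since each application of Lemma \ref{lem:effective_first_pass} or \ref{lem:effective_second_pass} strictly increases the nesting level and the complexity $N$ bounds the number of levels, this terminates after at most $N$ rounds at the top domain $\nest$-maximal among those containing $U$, which is $S_j$ because $U \sqsubsetneq S_j$ and $\pi_{S_j}(H)$ is unbounded (so $S_j$ is not nested in anything with unbounded $H$-projection that we could have climbed past). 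At that final stage we obtain $g \in H$ with $|g|_T < m$ and $\dist_{S_j}(\rho^U_{S_j}, \rho^{gU}_{S_j}) > 30E$; then $g^{-1}a^mg$ is active over $gU$, and applying Corollary \ref{cor:loxodromic_upper_level} to $a^m$ (active over $U$) and $g^{-1}a^mg$ (active over $gU$) with ambient domain $S_j$ gives that $h = a^m(g^{-1}a^mg)$ is loxodromic on $\calC S_j$ with $\tau_{S_j}(h) \geq 1$.

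The bookkeeping of constants is routine but must be done carefully: at each of the $\leq N$ rounds the $T$-length of the conjugating elements multiplies by a factor controlled by $\tau_U(a)$, $\mathfrak S$ and the fixed $K_1$, and the translation length being passed up stays bounded below by $1$ after Corollary \ref{cor:loxodromic_upper_level}, so after the first round the relevant translation length is $\geq 1$ and all subsequent constants depend only on $\mathfrak S$; only the very first round sees $\tau_U(a)$. Hence the final $m$ depends only on $\tau_U(a)$ and $\mathfrak S$, as claimed.

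The main obstacle is the case distinction at each level — ensuring that when the short conjugates $h_1, h_2$ do \emph{not} fix the intermediate domain $W$, Lemma \ref{lem:effective_second_pass} genuinely lets us bypass $W$ and climb strictly higher while keeping everything uniform; this is exactly the subtlety the paper warns about ("there is no assurance that $G.W = W$"), and getting the hypotheses of Lemma \ref{lem:effective_second_pass} to match (in particular that the $U_i \in \vbig(a_i)$ for the short elements $a_i$ we have in hand, which requires passing to the uniform power $M$) is where care is needed. A secondary point is confirming that the terminal domain of the climb is precisely $S_j$ rather than some other domain into which $U$ is nested — this uses that the $S_i$ are the $\nest$-maximal domains with unbounded $H$-projection and that every domain we climb through has unbounded $H$-projection (being a translate of $U$, over which $a \in H$ is active).
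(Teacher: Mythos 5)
Your proposal follows the same architecture as the paper's proof (transverse pair via Lemma \ref{lem:producing_first_transverse_pair}, passing up via Lemma \ref{lem:effective_first_pass}, a case split at the intermediate domain $W$, recursion through Corollary \ref{cor:loxodromic_upper_level} or Lemma \ref{lem:effective_second_pass}, and a final application of Corollary \ref{cor:loxodromic_upper_level} at $S_j$), but there is a genuine gap at exactly the point you flag as the main obstacle. First, your dichotomy --- ``both $h_1$ and $h_2$ fix $W$'' versus ``$h_iW\neq W$ for $i=1,2$'' --- is not exhaustive: the mixed case, where exactly one of the two fixes $W$, is handled by neither branch. The reason Lemma \ref{lem:effective_first_pass} produces \emph{three} domains $U,h_1U,h_2U$ rather than two is precisely to enable a pigeonhole over the three active elements $a$, $h_1ah_1^{-1}$, $h_2ah_2^{-1}$: among three elements, either at least two fix $W$ (feed those to Corollary \ref{cor:loxodromic_upper_level}) or at least two do not (feed those to Lemma \ref{lem:effective_second_pass}); this is what makes the paper's case analysis exhaust all scenarios, and your two-element version does not. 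Second, you test the wrong elements: what Corollary \ref{cor:loxodromic_upper_level} and Lemma \ref{lem:effective_second_pass} need is that the elements \emph{active over the nested domains} fix (respectively fail to fix) $W$, and the element active over $h_iU$ is the conjugate $h_iah_i^{-1}$ (since $(h_iah_i^{-1})^M h_iU=h_ia^MU=h_iU$), not $h_i$ itself. The condition $h_iW=W$ neither implies nor is implied by $(h_iah_i^{-1})W=W$, and your claim that when $h_iW=W$ the elements $h_i$ become ``active over $W$'' after passing to the power $M$ of Lemma \ref{lem:Big_Sets} is unjustified: that power fixes the domains in $\vbig(h_i)$, which have no reason to include or control $W$. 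So as written, neither branch of your case split has its hypotheses verified, and one case is missing altogether.

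Apart from this, the skeleton is the paper's: the termination/bookkeeping discussion (at most $N$ climbs, only the first round seeing $\tau_U(a)$, the terminal domain being $S_j$ because translates of $U$ have unbounded $H$-projection and $HS_j=S_j$) is in line with what the paper does, as is the final step pairing $a$ with a conjugate active over a far translate of $U$ inside $S_j$. One small slip there: $g^{-1}a^mg$ is active over $g^{-1}U$, not over $gU$, so with $\dist_{S_j}(\rho^{U}_{S_j},\rho^{gU}_{S_j})>30E$ you should pair $a^m$ with $ga^mg^{-1}$ (or rename $g$ as $g^{-1}$), which is how one recovers the stated form of $h$ up to relabelling.
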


\begin{proof} For simplicity, let's denote $S=S_j$. By Lemma \ref{lem:producing_first_transverse_pair}, the collection $\cup_{i=0}^{N+1}\{T^iU\}$ contains a pair of transverse domains; and after to translating, we may assume that this pair is of the form $\{U,bU\}$ where $b \in T \cup \cdots \cup T^{N+1}$. As $U \nest S$ and $H.S=S,$ we have $bU \nest bS=S.$ Fix $K_1>K_2>50E$, using Lemma \ref{lem:effective_first_pass}, we get a constant $m=m(\mathfrak S, \tau_U(a))$, a domain $W,$ and elements $h_1, h_2 \in \langle a,b \rangle $ such that $|h_i|_{a,b}<m$, $U,h_1U,h_2U \nest W$, and $\dist_W(\rho, \rho')>K_2$ for any distinct $\rho, \rho' \in \{\rho^U_{W}, \rho^{h_1U}_W, \rho^{h_2U}_W\}$. 

There are exactly two cases to consider:

    \begin{itemize}
        \item Case 1: There is a pair $x,y \in \{a,h_1ah_1^{-1},h_2ah_2^{-1}\}$ with $xW=W$ and $yW=W$.

        \item Case 2: There is a pair $x,y \in \{a,h_1ah_1^{-1},h_2ah_2^{-1}\}$ with $xW \neq W$ and $yW \neq W$.

    \end{itemize}
Before we proceed with the proof, observe that Cases 1 and 2 indeed exhaust all possible scenarios. If Case 1 occurs, then there exists an $n=n(\tau_U(a), \mathfrak S) \in \mathbb{N}$ such that $(x^n.y^n)W=W$ and $x^ny^n$ is a  loxodromic on $\calC W$ with $\tau_W(x^ny^n)>1$ by Corollary \ref{cor:loxodromic_upper_level}. Now we are exactly in the same position as the assumptions of this lemma (with $U=W$, $a=x^ny^n$) but with $W$ being higher in the nesting than $U$ is. Hence, if we apply exactly the same argument again and keep landing in Case 1, the penultimate step produces an element $g \in \vbig(J)$, domains $g_1 J, g_2J$ with $|g_i|_T$ bounded above depending only on $\tau_U(g)$ (which is at least $\text{min}\{\tau_U(a), 1\}$), and $\dist_S(\rho, \rho')>K_2$ for any distinct $\rho, \rho' \in \{\rho^J_S, \rho^{g_1J}_S, \rho^{g_2J}_S\}$. Furthermore, since $U \nest W \nest \cdots \nest J$, we have $U \nest J,$ and in particular, we have $g_iU \nest g_iJ$. Therefore, up increasing the increasing constant $K_2$ to $K_2+E,$ we have $\dist_S(\rho, \rho') \geq K_2$ for any $\rho, \rho' \in \{\rho^U_S, \rho^{g_1U}_S, \rho^{g_2U}_S\}$. Now, we can invoke Corollary \ref{cor:loxodromic_upper_level} with the domains $U,g_1U$ to get that the element $a^m(g_1a^mg_1^{-1})$ is loxodromic on $S.$ If Case 2 occurs, then by applying Lemma \ref{lem:effective_second_pass} at most $N$-times (where $N$ is the complexity of the HHS), we get a domain $Z$ where Case 1 occurs, this concludes the proof.



 \end{proof}



For $U \in \vbig(a)$, the constants in the previous statements all heavily depended on $\tau_U(a)$. The following lemma states that one can always find some $U \in \vbig(a)$ where $\tau_U(a)$ is uniformly bounded below.

\begin{lemma}[{\cite[Theorem 1.5 ]{Abbott-Hagen-Petyt-Zalloum23}}]\label{lem:lower_bound_translation} Let $G$ be an HHG given with a structure $(G, \mathfrak S)$. There exists some $A=A(\mathfrak S)>0$ such that for any non-torsion $a \in G,$ there exists some $U \in \vbig(a)$ such that $\tau_U(a^M)>A,$ where $M$ depends only on the complexity of $\mathfrak S.$    
\end{lemma}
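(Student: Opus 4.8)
The plan is to deduce the statement from a uniform positive lower bound on translation lengths in a well-chosen model for the HHG. By the construction of Haettel--Hoda--Petyt \cite{HHP}, since $(G,\mathfrak S)$ is an HHG it admits a proper cobounded action by isometries on an injective metric space $Y$ which is $G$-equivariantly quasi-isometric to the Cayley graph $X$; in particular the action $G\curvearrowright Y$ is uniformly proper. First I would establish the key fact that any group admitting a uniformly proper action on an injective metric space $Y$ has a constant $\tau_0=\tau_0(Y)>0$ with $\tau_Y(g)\geq\tau_0$ for every infinite-order $g\in G$. The mechanism is a counting argument: an infinite-order isometry $g$ has unbounded orbit (otherwise $g,g^2,g^3,\dots$ lie in a fixed ball, contradicting properness), and the structure theory of isometries of (coarsely) injective metric spaces --- via the canonical conical bicombing and the geometry of coarse minimal-displacement sets --- produces a $g$-invariant quasi-geodesic on which $g$ translates by $\tau_Y(g)$ with additive error $C_0$ bounded in terms of $Y$ alone. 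Thus $d_Y(y_0,g^j y_0)\leq j\,\tau_Y(g)+C_0$ for all $j\geq 0$ and a basepoint $y_0$ on this quasi-geodesic. If $\tau_Y(g)$ were smaller than a suitable $\tau_0$, then choosing $n$ with $n\,\tau_Y(g)$ in a fixed window $[R,2R]$ (with $R$ large but fixed) would place the $n$ distinct elements $g,g^2,\dots,g^n$ inside the ball of radius $2R+C_0$ about $y_0$; since $n\to\infty$ as $\tau_Y(g)\to 0$, this contradicts uniform properness.

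Transferring back, $\tau_X(a)\asymp\tau_Y(a)\geq\tau_0$ for every infinite-order $a\in G$. Now I would invoke the distance formula for HHSes: for a sufficiently large threshold $s$ one has $d_X(x,a^n x)\asymp_{\mathfrak S}\sum_{U\in\mathfrak S}\lfloor d_U(x,a^n x)\rfloor_s$. For $U\notin\vbig(a)$ the projection $\pi_U(\langle a\rangle x)$ is bounded, so such terms contribute nothing in the limit; for $U\in\vbig(a)$ one has $d_U(x,a^n x)\to\infty$, so $\lfloor d_U(x,a^n x)\rfloor_s/n\to\tau_U(a)$. Dividing by $n$ and letting $n\to\infty$ yields $\tau_X(a)\asymp\sum_{U\in\vbig(a)}\tau_U(a)$. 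The domains in $\vbig(a)$ are pairwise orthogonal, hence pairwise non-transverse, so $|\vbig(a)|\leq N$, the complexity of $\mathfrak S$. Therefore some $U\in\vbig(a)$ satisfies $\tau_U(a)\geq\tau_X(a)/(cN)\geq\tau_0/(cN)=:2A$, where $c=c(\mathfrak S)$ is the implicit multiplicative constant and $A=A(\mathfrak S)$. Finally, taking $M=M(\mathfrak S)$ to be the constant from Lemma~\ref{lem:Big_Sets}, so that $a^M U=U$, we get $\tau_U(a^M)=M\,\tau_U(a)\geq 2A>A$, with both $A$ and $M$ depending only on $\mathfrak S$.

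The main obstacle is the injective-space input in the first paragraph: making precise the $g$-invariant quasi-geodesic with uniformly bounded additive error --- i.e.\ controlling the geometry of coarse minimal-displacement sets of isometries of injective metric spaces --- is the technical heart, and it is exactly the content of the cited work \cite{Abbott-Hagen-Petyt-Zalloum23}. A secondary point requiring care is the assertion that the contribution of domains outside $\vbig(a)$ to $d_X(x,a^n x)$ stays bounded as $n\to\infty$; this follows because a coarse axis of $a$ is supported on the standard product region determined by $\vbig(a)$, which is hierarchically quasiconvex and hence undistorted in $X$.
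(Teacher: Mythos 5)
The paper itself gives no proof of this lemma: it is imported wholesale from \cite{Abbott-Hagen-Petyt-Zalloum23}, so the only question is whether your derivation stands on its own, and it does not. Granting your first paragraph (the uniform bound $\tau_Y(g)\geq\tau_0$ for uniformly proper actions on injective spaces, which, as you say yourself, is precisely the content of the cited work, and which transfers to $\tau_X(a)\geq\tau_0/\lambda$ on the Cayley graph), the transfer step in your second paragraph is wrong. The assertion that domains $U\notin\vbig(a)$ ``contribute nothing in the limit'' to the distance formula confuses two things: each such domain individually has bounded orbit projection, but the \emph{number} of such domains whose contribution exceeds the threshold grows linearly in $n$. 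If $\dist_V(x,a^Mx)$ is large for a single $V\notin\vbig(a)$, then so is $\dist_{a^{jM}V}(a^{jM}x,a^{(j+1)M}x)$ for every $j$, and by the Behrstock inequality roughly all $n$ of the translated domains $a^{jM}V$ register above threshold between $x$ and $a^{nM}x$. Hence $\tau_X(a)$ is \emph{not} coarsely $\sum_{U\in\vbig(a)}\tau_U(a)$, and the inequality $\max_{U\in\vbig(a)}\tau_U(a)\gtrsim\tau_X(a)/N$ that your argument needs is false. Concretely, in a mapping class group take $a_k=T_\alpha T_\beta^{-k}$ with $\alpha,\beta$ filling: these are pseudo-Anosov, so $\vbig(a_k)=\{S\}$, the word-metric translation length grows linearly in $k$ (it is carried by annular projections along the axis), while $\tau_S(a_k)=O(\log k)$; no constant depending only on the HHS structure relates the two. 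Your fallback remark about product regions does not repair this, because the offending domains are nested \emph{inside} the members of $\vbig(a)$, hence inside the standard product region, so undistortion of that region is beside the point. (A smaller issue: for $U\in\vbig(a)$ one only has $a^MU=U$, so $\tau_U(a)$ itself is not defined; the argument should be run with $a^M$ throughout.)

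So a uniform lower bound on $\tau_X(a)$ plus the distance formula does not yield a uniform lower bound on some $\tau_U(a^M)$ with $U\in\vbig(a)$; the cited theorem has to work harder (morally: if every $U\in\vbig(a)$ had tiny translation length, one finds a point, via the injective or coarse-median structure, displaced a bounded amount by arbitrarily many distinct powers of $a$, contradicting uniform properness). This is consistent with the paper's own caveat, in the proof of Corollary \ref{cor: effective_rank_CCC}, that the constant $A$ really depends on $\mathfrak S$ \emph{and} on $\tau_0$. As written, your proposal defers the genuinely hard input to the cited paper, and the one step you do supply is the step that fails.
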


Let $A$ as in Lemma \ref{lem:lower_bound_translation}. For each non-torsion element $a \in G,$ we will denote $\vvbig(a):=\{U \in \mathfrak S| \,\, \tau_U(a^M)>A\}.$ The above Lemma \ref{lem:lower_bound_translation} states that $\vvbig(a) \neq \emptyset$ for all non-torsion $a \in G.$ In particular, if $U$ in Lemma \ref{lem:short_loxodromic_top} is chosen in $\vvbig(a)$ and $a$ is a generator, the resulting constant $m$ will be uniform:

\begin{corollary}\label{cor:Effective_rank_rigidity_torsion_free} 
    Let $H$ be a virtually torsion-free subgroup with eyries $\{S_1,S_2,\cdots S_n\}$, there exists $m=m(\mathfrak S)$ such that for any finite generating set $T$ of $H$, there exists $h \in H$ with $|h|_T<m$ such that $h$ is loxodromic on some $S_i$ with $\tau_{S_i}(h)>min\{\frac{A}{M},1\}.$
\end{corollary}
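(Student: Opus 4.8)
The plan is to reduce the statement to Lemma~\ref{lem:short_loxodromic_top} by first passing to a finite-index torsion-free subgroup and then choosing the loxodromic domain carefully so that all constants become functions of $\mathfrak S$ alone. First I would pass to a finite-index torsion-free subgroup $H' \leq H$; since $H$ is virtually torsion-free, the index $d=[H:H']$ is finite. By Lemma~\ref{lem:pass_to_finite_index}, given the finite generating set $T$ of $H$, there is a finite generating set $T_{H'}$ of $H'$ all of whose elements have $T$-length at most $2d-1$; in particular, elements of $H'$ that are short with respect to $T_{H'}$ are short with respect to $T$, up to a factor of $2d-1$. One must take care that the eyries of $H'$ and their stabilizers behave well under this passage; since $H'$ has finite index in $H$, its eyries $\{S_1',\dots,S_{n'}'\}$ are contained in (and coarsely the same as) those of $H$, and $H'$ still satisfies $H'.S_i' = S_i'$ after possibly passing to a further finite-index subgroup, using that $H$ acts cofinitely on $\mathfrak S$ and preserves the relations — this is where some bookkeeping is needed.

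Next, pick any $a \in T_{H'}$; since $H'$ is torsion-free and infinite (if $H$ is finite the statement is vacuous, and if $H$ is virtually cyclic one can also argue directly), $a$ has infinite order. By Lemma~\ref{lem:lower_bound_translation}, there is a constant $A = A(\mathfrak S) > 0$ and an exponent $M = M(\mathfrak S)$ such that some $U \in \vvbig(a)$ satisfies $\tau_U(a^M) > A$, i.e.\ $\vvbig(a) \neq \emptyset$. Now $U \nest S_j$ for some eyry $S_j$ of $H'$, since $\pi_U(\langle a \rangle)$ is unbounded and hence $\pi_U(H')$ is unbounded, so $U$ nests in some eyry by Theorem~\ref{thm:eyries}. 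We are now exactly in the setup of Lemma~\ref{lem:short_loxodromic_top}, applied to the subgroup $H'$ with generating set $T_{H'}$, the element $a^M$ (which is active over $U$ with $\tau_U(a^M) > A$), and the domain $U \in \vbig(a^M)$. That lemma produces a constant $m_0 = m_0(\tau_U(a^M), \mathfrak S)$ and an element $g \in H'$ with $|g|_{T_{H'}} < m_0$ such that $h = a^{Mm_0}(g^{-1}a^{Mm_0}g)$ is loxodromic on $S_j$ with $\tau_{S_j}(h) \geq 1$. Crucially, because $\tau_U(a^M) > A = A(\mathfrak S)$, the constant $m_0$ depends only on $\mathfrak S$ (monotonicity of the constants in $\tau_U$ — larger translation length only makes the required powers smaller — lets us replace $\tau_U(a^M)$ by the uniform lower bound $A$). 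Tracking the $T$-length: $h$ is a word in $a$ and $g$ of bounded syllable length with powers bounded by $Mm_0$, and $|a|_T, |g|_T$ are bounded by $m_0 \cdot(2d-1)$, so $|h|_T < m$ for $m = m(\mathfrak S, d)$; in the torsion-free case $d=1$ and $m = m(\mathfrak S)$. The translation length bound $\tau_{S_i}(h) > \min\{\tfrac{A}{M}, 1\}$ comes from Lemma~\ref{lem:short_loxodromic_top} together with the normalization in Lemma~\ref{lem:lower_bound_translation} (the factor $\tfrac{A}{M}$ accounting for the $M$-th power taken there).

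The main obstacle I anticipate is \emph{not} the translation-length arithmetic but rather ensuring that the hypothesis ``$H.S_i = S_i$ for all $i$'' of Lemma~\ref{lem:short_loxodromic_top} can be arranged for the (possibly new) torsion-free subgroup $H'$ without losing uniformity of $m$. The eyries of $H'$ form a finite $H'$-invariant set, but $H'$ a priori only permutes them; one must pass to the finite-index stabilizer of each eyry, and one needs the index of this further passage to be bounded in terms of $\mathfrak S$ alone (which it is: the number of eyries is at most the complexity $N$, so the permutation action has image in a symmetric group of bounded size, giving a bound on the index depending only on $\mathfrak S$). Threading this bound, together with the $[H:H']$-dependence, through Lemma~\ref{lem:pass_to_finite_index} is the delicate part; in the genuinely torsion-free case it evaporates, which is why the clean statement ``$m = m(\mathfrak S)$'' holds there and the general virtually-torsion-free statement carries a mild dependence on the torsion-free index. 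Everything else is a direct invocation of the lemmas already established in this section.
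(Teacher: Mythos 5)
Your proposal takes essentially the same route as the paper's proof: pass to a finite-index torsion-free subgroup (and a further finite-index subgroup so that each eyry is invariant), transfer the generating set via Lemma~\ref{lem:pass_to_finite_index}, pick a generator $a$ and a domain $U\in\vvbig(a)$ via Lemma~\ref{lem:lower_bound_translation}, and apply Lemma~\ref{lem:short_loxodromic_top}; your extra bookkeeping about the eyry-stabilizing passage is done implicitly in the paper, which absorbs that index into the dependence on $H$ acknowledged in the introduction. The one point you gloss over is the case $U=S_i$: Lemma~\ref{lem:short_loxodromic_top} assumes the \emph{proper} nesting $U\pnest S_i$, so when the domain produced by Lemma~\ref{lem:lower_bound_translation} is itself an eyry you cannot invoke it as you state; instead there is nothing to prove, since $a^{M}$ is already loxodromic on $S_i$ with $\tau_{S_i}(a^{M})>A$, hence $\tau_{S_i}(a)>A/M$. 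This case is exactly what produces the $\min\{\frac{A}{M},1\}$ in the statement, which you attribute only vaguely to ``normalization''; with that case split made explicit, your argument coincides with the paper's.
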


\begin{proof}
    Let $H_1<H$ be a finite-index torsion-free subgroup. If we pass to a finite index subgroup once more, we get a subgroup $H_2 <H_1$ such that that $HS_i=S_i$ for all $i.$ It's immediate to check that the eyries provided by Theorem \ref{thm:eyries} for $H_2$ are exactly the same eyries for $H$, given by $\{S_1,\cdots S_n\}.$ Let $T_2$ be the finite generating set provided by Lemma \ref{lem:pass_to_finite_index}, in particular, the $T$-length of each element in $T_2$ is bounded above by $2d-1$ where $d=[H:H_2].$ Let $a \in T_2$ and $U \in \vvbig(a)$ (which is possible by Lemma \ref{lem:lower_bound_translation}), in particular, $\tau_U(a^{M})>A$ for some $A=A(\mathfrak S)$ and $M=M(\mathfrak S)$. Since $M$ is uniform and depends only on the complexity of $\mathfrak S$, we will abuse notation and use $a$ to denote $a^{M}$. If $U=S_i$ for some $i$, there is nothing to prove (and this the case where $\tau_S(a) \geq \frac{A}{M}$). Otherwise, by Theorem \ref{thm:eyries}, $U \nest S_i$ for some $i,$ and hence, applying Lemma \ref{lem:short_loxodromic_top} to $a$ provides an integer $m=m(\mathfrak S,A)=m(\mathfrak S)$, an element $g \in H_2$ with $|g|_{T_2}<m$ such that $h=a^m.(ga^mg^{-1})$ is loxodromic on $S_i$ with $\tau_{S_i}(h)>1$. But $m$ depends only on $\mathfrak S$, and $|a|_{T}$ is bounded above by $2d-1$ where $d=d(H)$, concluding the proof.
\end{proof}

\section{Consequences: main results}

In this section, we conclude the main results of this article.

\subsection{Effective rank-rigidity}

Combining Corollary \ref{cor:Effective_rank_rigidity_torsion_free} with the rank-rigidity statement in \cite{Durham2017-ce} (see also \cite{PetytSpriano20}) gives us the following, which is the main result of this article.

\begin{corollary} \label{cor:effective_rank_regidity_HHG}(effective rank-rigidity) Let $G$ be a virtually torsion-free HHG. There exists a constant $m=m(G)$ such that for any finite generating set $T$ for $G$, precisely one of the following holds:

\begin{enumerate}
    \item There exists a Morse element $g \in G$ with $|g|_T<m$, or

    \item The group $G$ is quasi-isometric to a product of two unbounded HHses.
\end{enumerate}
Moreover, when $(1)$ occurs, $G$ contains a free all-Morse stable subgroup $\langle g_1,g_2 \rangle $ with $|g_i|_T<m$ unless $G$ is virtually cyclic.
    
\end{corollary}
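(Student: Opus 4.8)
The plan is to derive Corollary \ref{cor:effective_rank_regidity_HHG} from Corollary \ref{cor:Effective_rank_rigidity_torsion_free} together with the hierarchical rank rigidity dichotomy and the structure theory of eyries recorded earlier in the excerpt. First I would invoke Theorem \ref{thm:eyries} to obtain the eyries $\mathcal E(G) = \{S_1,\dots,S_n\}$ of $G$ (acting on itself), which is nonempty since $G$ is infinite and finitely generated (the virtually cyclic case being excluded or handled separately). The dichotomy splits on whether this set is a singleton: if $|\mathcal E(G)| \geq 2$, then the pairwise orthogonal unbounded domains $S_1,\dots,S_n$ produce a coarse product structure, so $G$ is quasi-isometric to a product of two unbounded HHSes, giving alternative (2); these alternatives are mutually exclusive because a product of two unbounded HHSes contains no Morse element (by the hierarchical rank rigidity statement, or directly since orthogonality forces quasiflats). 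If $\mathcal E(G) = \{S\}$ is a singleton, then by Corollary \ref{cor:singlton} the action of $G$ on $\calC S$ is acylindrical and every loxodromic on $\calC S$ is Morse in $G$, so I need only produce a short loxodromic on $\calC S$.

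To get the short loxodromic, I would apply Corollary \ref{cor:Effective_rank_rigidity_torsion_free} with $H = G$: it yields $m_0 = m_0(\mathfrak S)$ and, for any finite generating set $T$, an element $h \in G$ with $|h|_T < m_0$ that is loxodromic on some eyry $S_i$ with $\tau_{S_i}(h) > \min\{A/M, 1\}$. In the singleton case this $S_i$ must be $S$ itself, so $h$ is loxodromic on $\calC S$, hence Morse in $G$; setting $m = m_0$ (or, once we pass to the virtually torsion-free situation, $m = m_0 \cdot (2d-1)$ with $d$ the index of a torsion-free finite-index subgroup, which is absorbed into the dependence $m = m(G)$) gives alternative (1). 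This establishes the dichotomy.

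For the ``moreover'' clause producing a free all-Morse stable subgroup, the idea is to upgrade the single short Morse element to a short ping-pong pair. Having the short loxodromic $h$ on the acylindrical $G$-space $\calC S$ with $\tau_{S}(h) > \min\{A/M,1\}$, I would take a suitable conjugate: since $G$ is not virtually cyclic and $\mathcal E(G)=\{S\}$, the element $h$ does not have a $G$-invariant axis in $\calC S$, so some generator $b \in T$ (or a short word) moves the fixed point set of $h$ on $\partial \calC S$, giving $b h b^{-1}$ with $h^k \neq (bhb^{-1})h^k(bhb^{-1})^{-1}$ for all $k \neq 0$. Then Theorem \ref{thm:fuj0809} (applied with $a = h$, and $b$ replaced by the short conjugating word) furnishes an $n$, depending only on the acylindricity constants of $\calC S$ — hence only on $G$ — such that $\langle h^n, (bhb^{-1})h^n(bhb^{-1})^{-1}\rangle$ is free and quasi-isometrically embedded in $\calC S$; in particular all its elements act loxodromically on $\calC S$, hence are Morse in $G$, and quasi-isometric embedding into the acylindrical $\calC S$ gives stability. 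Setting $g_1 = h^n$ and $g_2 = (bhb^{-1})h^n(bhb^{-1})^{-1}$ yields generators with $|g_i|_T < m$ for a uniform $m = m(G)$, since $n$, $m_0$, and $|b|_T$ are all bounded in terms of $G$ alone.

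The main obstacle I anticipate is the quantitative control on the integer $n$ from Theorem \ref{thm:fuj0809}: that theorem as stated only asserts the existence of some $n$, and one must check that the proof (via the $\mathrm{Fuj08}$/$\mathrm{Fujiwara2009SubgroupsGB}$ machinery on acylindrical actions) produces an $n$ depending only on the hyperbolicity constant of $\calC S$ and the acylindricity function of the $G$-action, both of which are data of $G$ and not of $T$; combined with the uniform lower bound $\tau_S(h) > \min\{A/M,1\}$ this pins $n$ down. A secondary point to verify carefully is that a Morse element produced as a loxodromic on $\calC S$ genuinely has the required property that no infinite-order element commutes with it — but this is exactly the content of stability of $\langle g_1,g_2\rangle$ being a QI-embedded subgroup of the acylindrical $\calC S$, which is standard, so the real work is the effectivity of $n$.
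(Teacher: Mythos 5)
Your proposal is correct and follows essentially the same route as the paper: the dichotomy comes from hierarchical rank rigidity plus Corollary \ref{cor:singlton} (singleton eyry) and the short Morse element from Corollary \ref{cor:Effective_rank_rigidity_torsion_free}, while the ``moreover'' part uses acylindricity of the $G$-action on $\calC S$, a generator not commensurating the axis (the paper cites Dahmani's Lemma 6.5/Corollary 6.6), and Theorem \ref{thm:fuj0809} to get a free, quasi-isometrically embedded pair. The ``standard'' stability step you allude to is exactly what the paper makes precise: the quasi-isometric embedding into $\calC S$ is promoted to the largest acylindrical action of \cite{ABD}, whose Theorem B then yields stability and all-Morseness.
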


\begin{proof} Using Corollary 4.7 in \cite{PetytSpriano20}  (or Theorem 9.14 in \cite{Durham2017-ce}), if $G$ is not quasi-isometric to a product of unbounded HHSes, then it contains a Morse element $w$. In particular, using Corollary \ref{cor:singlton}, $G$ has precisely one eyry, call it $S.$ The statement is then immediate by Corollary \ref{cor:Effective_rank_rigidity_torsion_free} concluding the proof of the dichotomy. Notice that the provided $g$ in case 1 is not merely Morse, but it also acts loxodromically on $\calC S$ (and the two are not necessarily equivalent for a general HHS structure \cite{ABD}). For the moreover part, first, let $g$ be the Morse element provided by part 1, as mentioned above, such a $g$ acts loxodromically on $\calC S.$ By Theorem K in \cite{HHS1} the $G$-action on $\calC S$ is acylindrical and hence, by Lemma 6.5 in \cite{Dahmani2017}, if every generator stabilizes $\{g^+,g^-\} \in \partial \calC S,$ then $G$ is virtually cyclic. Thus, by Corollary 6.6 of \cite{Dahmani2017}, there exists $b \in T$ with $bg^kb^{-1} \neq g^{\pm k}$ for all $k \neq 0.$ Applying Theorem \ref{thm:fuj0809}, we conclude that there is a constant $n$ such that for all $m \geq n$, the group $H=\langle g^m, bg^mb^{-1} \rangle $ is a free subgroup which is quasi-isometrically embedded in $\calC S.$ To show $H$ is stable, we can argue in a few different ways, for instance, the group $G$ admits a largest aclyndrical action on the hyperbolic space constructed in \cite{ABD}, and since $H$ quasi-isometrically embeds in $\calC S,$ it must also quasi-isometrically embed in $\calC S'.$ Thus, $H$ is stable by Theorem B in \cite{ABD}. Since $H$ is free (in particular, torsion-free), each element $h \in H$ acts loxodromically on both $\calC S, \calC S'$ and hence $H$ is all-Morse (either by Corollary \ref{cor:singlton} or Theorem B in \cite{ABD}).

\end{proof}

\begin{corollary}\label{cor:general_HHG_body} Let $(G, \mathfrak S)$ be an irreducible HHG. There exists an integer $m=m(\mathfrak S)$ such that for any finite generating set and any infinite infinite order element $a \in G$, there is an element $g \in G$ with $|g|_T<m$ and $a^m(ga^mg^{-1})$ is Morse.
    
\end{corollary}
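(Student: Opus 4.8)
The plan is to run the argument of Corollary \ref{cor:Effective_rank_rigidity_torsion_free} with the given infinite-order element $a$ playing the role of the short infinite-order generator produced there by passing to a torsion-free finite-index subgroup; here no finite-index reduction is needed, since $a$ is handed to us. First I would fix the ambient structure: since $G$ is irreducible, Corollary 4.7 of \cite{PetytSpriano20} (or Theorem 9.14 of \cite{Durham2017-ce}) gives a Morse element in $G$, so by Corollary \ref{cor:singlton} the eyry set of $G$ is a singleton $\{S\}$; in particular $G.S = S$ and every loxodromic isometry of $\calC S$ is Morse in $G$. This is precisely the situation in which Lemma \ref{lem:short_loxodromic_top} applies with $H = G$.

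Next, given infinite-order $a \in G$, I would apply Lemma \ref{lem:lower_bound_translation} to obtain $U \in \vbig(a)$ with $\tau_U(a^{M_0}) > A$ for the universal constants $A = A(\mathfrak S)$ and $M_0 = M_0(\mathfrak S)$; replacing $a$ by $a^{M_0}$ (which only multiplies the eventual constant by $M_0$) we may assume $\tau_U(a) > A$. Since $\pi_U(\langle a \rangle x)$ is unbounded, $\pi_U(G)$ is unbounded, hence $U$ is nested in the unique eyry, namely $U \nest S$ by Theorem \ref{thm:eyries}. If $U = S$, then $a$ already acts loxodromically on $\calC S$ and is therefore Morse, so with $g = e$ the element $a^m(g a^m g^{-1}) = a^{2m}$ is Morse for any $m$. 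If $U \pnest S$, I would feed $a$, $U$ and the eyry $\{S\}$ into Lemma \ref{lem:short_loxodromic_top}: since $\tau_U(a) > A$ is bounded below universally, the constant $m(\tau_U(a), \mathfrak S)$ it produces is bounded by some $m_1 = m_1(\mathfrak S)$, and the lemma yields $g \in G$ with $|g|_T < m_1$ such that $h := a^{m_1}(g^{-1} a^{m_1} g)$ is loxodromic on $\calC S$, hence Morse by the previous paragraph. Writing $g' := g^{-1}$ (so $|g'|_T = |g|_T < m_1$) gives $h = a^{m_1}(g' a^{m_1} (g')^{-1})$, and undoing the substitution $a \rightsquigarrow a^{M_0}$ rewrites $h$ in the required shape $a^{m}(g' a^{m} (g')^{-1})$ with $m := M_0 m_1 = m(\mathfrak S)$. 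Taking the maximum of the constants arising in the two cases completes the argument.

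The substantive work — producing a short loxodromic in $\calC S$ — is entirely contained in Lemma \ref{lem:short_loxodromic_top} together with the uniform translation-length bound of Lemma \ref{lem:lower_bound_translation}, so I do not expect a genuine obstacle. The points demanding care are all bookkeeping: matching the algebraic shape $a^m(g a^m g^{-1})$ of the statement with the shape $a^m(g^{-1} a^m g)$ delivered by Lemma \ref{lem:short_loxodromic_top} (handled by $g \mapsto g^{-1}$ and the identity $|g|_T = |g^{-1}|_T$); absorbing the power $M_0$ from Lemma \ref{lem:lower_bound_translation} so that the final $m$ depends only on $\mathfrak S$; and treating the degenerate case $U = S$, where no passing-up is needed at all.
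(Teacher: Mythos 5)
Your proposal is correct and follows essentially the same route as the paper: single eyry $S$ for the irreducible HHG, the degenerate case $U=S$ handled by $g=e$, and otherwise Lemma \ref{lem:lower_bound_translation} plus Lemma \ref{lem:short_loxodromic_top} producing the short conjugator and the loxodromic (hence Morse) element on $\calC S$. Your extra bookkeeping (absorbing the power $M_0$ and swapping $g$ for $g^{-1}$ to match the stated form $a^m(ga^mg^{-1})$) is exactly what the paper's terser proof leaves implicit.
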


\begin{proof}
Since $G$ is irreducible, it has exactly one eyry call it $S.$ For an infinite order element $a \in G,$ if $a \in \vbig(S)$, then we take $g=e.$ Otherwise, by Lemma \ref{lem:lower_bound_translation}, there exists an element $U \sqsubsetneq S$ with $U \in \vvbig(a)$ and by Lemma \ref{lem:short_loxodromic_top}, there exists an integer $m=m(\tau_U(a), \mathfrak S)=m(A,\mathfrak S)$ and an element $g \in G$ with $|g|_T<m$ where $a^m(ga^mg^{-1})$ is loxodromic on $S$ and hence Morse.
\end{proof}

Since groups acting geometrically on CAT(0) cube complexes with factor systems are HHGs, using work of Caprace and Sageev \cite{capracesageev:rank}, we obtain the following.

\begin{corollary}(effective rank-rigidity for cubulated groups with factor systems)\label{cor: effective_rank_CCC} Each CAT(0) cube complex with a factor system $X$ determines a constant $m=m(X)$ such that for any group $G$ acting freely cocompactly on $X$ and any finite generating set $T$ for $G,$ precisely one of the following occurs:

\begin{enumerate}
    \item $X=X_1 \times X_2$ where each $X_i$ is an unbounded CAT(0) cube complex, or
    \item $\mathrm{Cay}(G,T)$ contains a rank-one element in the ball of radius $m$ centered at the identity.

\end{enumerate}
Moreover, when $(2)$ occurs, $G$ contains a free-all-rank-one stable subgroup $\langle g_1,g_2 \rangle $ with $|g_i|_T<m$ unless $G$ is virtually cyclic.
    
\end{corollary}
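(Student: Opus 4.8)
The plan is to deduce this from the hierarchically hyperbolic machinery already assembled, the point being that the \emph{freeness} of the action collapses every step that could introduce a dependence on $G$, leaving a constant depending only on $X$ together with its fixed factor system. First I would record the two structural consequences of the hypotheses: (i) a free action on a CAT(0) cube complex has no nontrivial finite subgroups (a finite subgroup would fix a point), so $G$ is torsion-free; and (ii) since $X$ carries a factor system it is an HHS, and the free cocompact — hence geometric — action makes $(G,\mathfrak S)$ an HHG whose index set $\mathfrak S$, and therefore the constants $N$, $E$, $M$ and the lower bound $A$ of Lemma \ref{lem:lower_bound_translation}, depend only on $X$. For a cube complex the last of these is really governed by the existence of combinatorial axes \cite{haglund:isometries} (translation lengths are bounded below on $X$ itself); this is precisely where the cubical situation diverges from Corollary \ref{cor:effective_rank_regidity_HHG}, whose constant must depend on $G$.

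Next I would invoke the Caprace--Sageev dichotomy \cite{capracesageev:rank}: precisely one of alternative (1) or ``$G$ contains a rank-one element'' holds. If (1) holds then \cite{capracesageev:rank} simultaneously tells us $G$ has no rank-one element, so (2) fails and the two alternatives are mutually exclusive. Assume (1) fails, so $G$ has a rank-one element; since $X$ is a proper CAT(0) space and elements of $G$ act as semisimple isometries (e.g.\ \cite{haglund:isometries}), this element is Morse on $X$ \cite{charneysultan:contracting}, hence Morse in $G$ (transport along $G\simeq X$). By Corollary \ref{cor:singlton} the eyries of $G$ then consist of a single domain $S$, and being a $G$-invariant singleton we get $G.S=S$. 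Now I apply Corollary \ref{cor:Effective_rank_rigidity_torsion_free} to $H=G$: because $G$ is already torsion-free and already fixes its unique eyry, the two finite-index passages in its proof are trivial (the index $d$ there equals $1$), so the output constant is the pure $m(\mathfrak S)=m(X)$, and it produces $h\in G$ with $|h|_T<m(X)$ that is loxodromic on $\calC S$ with $\tau_{\calC S}(h)>\min\{A/M,1\}=:\tau_0>0$. By Corollary \ref{cor:singlton} every $\calC S$-loxodromic is Morse in $G$, hence $h$ is Morse on $X$ and, by \cite{charneysultan:contracting}, rank-one; so $\mathrm{Cay}(G,T)$ contains a rank-one element in the ball of radius $m(X)$, which is alternative (2). (One may equally well run Corollary \ref{cor:general_HHG_body}, starting from any $a\in T\setminus\{e\}$, which has infinite order since $G$ is torsion-free.)

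For the ``moreover'' clause, with $h$ and $\tau_0$ as above and $G$ not virtually cyclic, I mirror the proof of Corollary \ref{cor:effective_rank_regidity_HHG}. The action $G\curvearrowright\calC S$ is acylindrical (Corollary \ref{cor:singlton}, \cite{HHS1}); since $G$ is not virtually cyclic, Lemma 6.5 and Corollary 6.6 of \cite{Dahmani2017} give $b\in T$ with $bh^kb^{-1}\neq h^{\pm k}$ for all $k\neq 0$. Applying Theorem \ref{thm:fuj0809} to $h$ and $b$ on the $E$-hyperbolic space $\calC S$ yields a threshold $n$ — depending only on $E$ and $\tau_{\calC S}(h)\geq\tau_0$, hence only on $X$ — such that $\langle h^n, bh^nb^{-1}\rangle$ is free and quasi-isometrically embedded in $\calC S$. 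Put $g_1=h^n$, $g_2=bh^nb^{-1}$, so $|g_i|_T\leq 2|b|_T+n|h|_T<2+n\,m(X)$, which we absorb into a redefined $m=m(X)$. Stability follows as in Corollary \ref{cor:effective_rank_regidity_HHG}: $G$ has a largest acylindrical action on a hyperbolic space $\calC S'$ \cite{ABD}, a quasi-isometric embedding into $\calC S$ forces one into $\calC S'$, and Theorem B of \cite{ABD} gives stability; and since the free subgroup is torsion-free and quasi-isometrically embedded in $\calC S$, each nontrivial element is loxodromic on $\calC S$, hence Morse in $G$ (Corollary \ref{cor:singlton}) and rank-one on $X$ (\cite{charneysultan:contracting}) — so the subgroup is all-rank-one.

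The step I expect to be the real obstacle is the bookkeeping that keeps the constant a function of $X$ alone: one must verify that (a) freeness genuinely removes the torsion-free passage and (b) the single $G$-invariant eyry genuinely removes the orbit-fixing passage in the proof of Corollary \ref{cor:Effective_rank_rigidity_torsion_free}, and (c) the lower bound $A$ of Lemma \ref{lem:lower_bound_translation} is, for a cube complex with factor system, controlled by $X$ rather than by the properness constants of the action (this is the cubical input \cite{haglund:isometries} flagged above, the same point that forces the $G$-dependence in the HHG version). Everything else — Caprace--Sageev, Charney--Sultan, the Dahmani--Guirardel--Osin and ABD facts, and the Fujiwara free-subgroup theorem — enters as a black box, and the only remaining work is routine word-length estimates.
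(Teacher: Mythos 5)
Your proposal is correct and follows essentially the same route as the paper: the paper deduces the corollary from Corollary \ref{cor:effective_rank_regidity_HHG} together with Caprace--Sageev, and its only additional work is exactly your point (c), namely that freeness forces torsion-freeness and that $\tau_0\geq 1$ via combinatorial axes \cite{haglund:isometries}, so the constant $A$ of Lemma \ref{lem:lower_bound_translation} and hence $m$ depend only on the HHS structure determined by $X$. Your inlining of the proofs of Corollaries \ref{cor:Effective_rank_rigidity_torsion_free} and \ref{cor:effective_rank_regidity_HHG} (including the Dahmani--Guirardel--Osin, Fujiwara and ABD steps for the ``moreover'' clause) just reproduces the paper's argument in expanded form.
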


\begin{proof}
    This is an immediate consequence of Corollary \ref{cor:effective_rank_regidity_HHG}, the rank-rigidity statement in \cite{capracesageev:rank}. The only part that we need to explain is the claim that $m$ depends only on $X$ and not on $G;$ this is explained as follows.

 If $X$ is a CAT(0) cube complex with a factor system, by \cite{HHS1}, \cite{HHS2}, the space $X$ admits an HHS structure $(X, \mathfrak S)$ with an HHS constant $E.$ Further, if $G$ is a group with a free cocompact action on $X$ (by cubical isometries), then $G$ is an HHG with respect to the fixed HHS structure $(X, \mathfrak S).$ Given an arbitrary HHG with an HHS structure $(X, \mathfrak S)$, the constant $A$ of Lemma \ref{lem:lower_bound_translation} depends only on $\mathfrak S$ \emph{and} on $\tau_0=\text{inf}\{\tau_X(g)| g \in G \text{ has infinite order}\}$; that is, $A=A(\tau_0, \mathfrak S).$ In Corollary 1.3 of \cite{Abbott-Hagen-Petyt-Zalloum23}, we show that for any HHG, the constant $\tau_0$ is positive (but its values can get arbitrarily close to zero when the group $G$ changes). In general, $\tau_0$ will heavily depend on the properness of the $G$-action on the HHS $(X, \mathfrak S).$ However, when $X$ is a CAT(0) cube complex, every infinite order cubical isometry $g$ of $X$ admits a combinatorial geodesic axis \cite{haglund:isometries}, and hence, $\tau_0 \geq 1$. In particular, $A=A(\tau_0, \mathfrak S)=A(\mathfrak S),$ and the HHS structure $\mathfrak S$ is fully determined by the CAT(0) cube complex $X,$ latter depends only on the cube complex, this concludes the proof.

\end{proof}

Combining the previous discussion with Corollary \ref{cor:general_HHG_body} gives us the following.

\begin{corollary} Let $X$ be an irreducible CAT(0) cube complex with a factor system. There exists $m=m(X)$ such that for any group acting properly cocompactly by on $X$, if $a \in G$ has infinite order, then there exists an element $g \in G$ with $|g|_T<m$ such that $a^m.(ga^mg^{-1})$ is rank-one.
    
\end{corollary}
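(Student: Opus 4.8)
The plan is to reduce the statement to the already-established HHG machinery, specifically Corollary~\ref{cor:general_HHG_body}, together with the dictionary between CAT(0) cube complexes with factor systems and HHGs. First I would observe that if $X$ is an irreducible CAT(0) cube complex with a factor system, then by \cite{HHS1}, \cite{HHS2} the space $X$ carries an HHS structure $(X,\mathfrak S)$ whose constant $E$ depends only on $X$, and any group $G$ acting properly cocompactly on $X$ by cubical isometries becomes an HHG with respect to this \emph{fixed} structure. Since $X$ is irreducible as a cube complex, the induced HHG is irreducible in the HHS sense (it is not quasi-isometric to a product of two unbounded HHSes), so by Corollary~\ref{cor:singlton} the eyries of $G$ form a singleton $\{S\}$, and every isometry acting loxodromically on $\calC S$ is Morse in $G$.

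Next I would apply Corollary~\ref{cor:general_HHG_body} directly: it furnishes an integer $m=m(\mathfrak S)$ so that for any finite generating set $T$ and any infinite order $a\in G$ there is $g\in G$ with $|g|_T<m$ and $a^m(ga^mg^{-1})$ Morse. The one point that still requires argument is that the constant $m$ here depends only on $X$ and not on the acting group $G$. As explained in the proof of Corollary~\ref{cor: effective_rank_CCC}, the dependence of $m$ on $G$ in the HHG setting enters only through the lower bound $\tau_0=\inf\{\tau_X(g): g\in G \text{ infinite order}\}$ appearing in Lemma~\ref{lem:lower_bound_translation} (via the constant $A=A(\tau_0,\mathfrak S)$). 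But when $X$ is a CAT(0) cube complex, every infinite order cubical isometry admits a combinatorial geodesic axis \cite{haglund:isometries}, so $\tau_0\geq 1$ regardless of $G$; hence $A=A(\mathfrak S)$ and $m=m(\mathfrak S)=m(X)$, since the HHS structure $\mathfrak S$ is determined by $X$.

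Finally I would pass from ``Morse'' to ``rank-one'': in a proper CAT(0) space, by \cite{charneysultan:contracting} (Theorem quoted in the excerpt) a semisimple isometry is Morse if and only if it is rank-one, and the element $a^m(ga^mg^{-1})$, being an isometry of the CAT(0) cube complex $X$ with a combinatorial axis, is semisimple; therefore it is rank-one on $X$. This completes the argument.

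I do not expect a serious obstacle here: the statement is essentially a packaging of Corollary~\ref{cor:general_HHG_body} plus the cubical-specific observation that $\tau_0\geq 1$, both of which are already in place. The only mild subtlety is making sure the eyries-singleton hypothesis is legitimately inherited from cube-complex irreducibility rather than merely assumed, and that the quoted Morse $\Leftrightarrow$ rank-one equivalence is applied to a genuinely semisimple isometry, but both are routine given the cited results.
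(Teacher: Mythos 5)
Your argument is correct and is essentially the paper's own proof: the corollary is deduced from Corollary~\ref{cor:general_HHG_body} together with the observation (made in the proof of Corollary~\ref{cor: effective_rank_CCC}) that for cubical isometries $\tau_0\geq 1$ by \cite{haglund:isometries}, so the constant depends only on the HHS structure determined by $X$, and Morse is upgraded to rank-one via the quoted Charney--Sultan equivalence. Your added care about inheriting irreducibility (via rank rigidity and Corollary~\ref{cor:singlton}) and semisimplicity matches the justification given in Remark~\ref{rmk:ABD}.
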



\begin{remark}\label{rmk:essential_contact_graph}
    In the special case where $X$ in the above Corollary \ref{cor: effective_rank_CCC} is essential (which can always by assured under a geometric action; by replacing $X$ with its essential core \cite{capracesageev:rank}), the elements $g$ (and $g_1, g_2$) acts loxodromically on the contact graph. This is because the element $g$ we produce in the proof of Corollary \ref{cor:effective_rank_regidity_HHG} acts loxodromically on the unique eyry for $G$, but that can be taken to be the contact graph when $X$ is essential and irreducible (for instance, see the discussion at the beginning Appendix A in \cite{Abbot-Ng-Spriano}).
\end{remark}






\begin{remark}(\cite{ABD})\label{rmk:ABD} Every HHG admits an HHS structure $(X, \mathfrak S)$ such that the following is true for the unique maximal domain $S \in \mathfrak S$:

\begin{enumerate}
    \item The hyperbolic space associated to $S$ is a point if and only if $X$ is quasi-isometric to a product.

    \item $G$ admits a (largest) acylindrical action on the $\calC S$ provided $\calC S$ is unbounded. In fact, the eyries of $G$ consists of a singlton if and only if $\calC S$ is unbounded. In this case, the unique eyry is exactly $S.$
    \item If $G$ is not virtually cyclic and the unique maximal domain is not a point, then it's unbounded and not quasi-isometric to a line. This particular fact can also be seen by Theorem \ref{thm:Characterization_Abelian}. Such a domain also agrees with the eyry for $G.$

    \item If $X$ is an irreducible unbounded CAT(0) cube complex, then $\calC S$ is unbounded. Namely, using rank-rigidity of CAT(0) cube complexes \cite{capracesageev:rank}, if $X$ is irreducible, then $G$ contains a rank-one element and hence the eyry of $G$ is a singleton (by Corollary \ref{cor:singlton}) which is exactly $S.$
\end{enumerate}
    
\end{remark}

\begin{corollary}(Strongly effective Tits alternative) Let $G$ be virtually special compact group. There exists some $R=R(G)$ such that for any finite generating set $T$ for $G$, precisely one of the following happens:

\begin{enumerate}
     
\item $G$ is virtually Abelian.
\item $G$ contains a free all-rank-one subgroup $\langle g_1,g_2 \rangle$ with $|g_i|_T<m,$ or

\item $G$ contains a free subgroup $\langle g_1,g_2 \rangle$ with $|g_i|_T<m$ and has empty Morse boundary.


\end{enumerate}
\end{corollary}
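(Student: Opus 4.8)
The plan is to reduce, via the canonical product decomposition of CAT(0) cube complexes, to running the effective rank rigidity of Corollary~\ref{cor: effective_rank_CCC} on a single irreducible factor, and then inflating the resulting short free generators back into $G$. Using that $G$ is virtually special compact, fix a finite-index subgroup $G'\le G$ acting freely, cocompactly and cospecially on a CAT(0) cube complex $Y$ (so $G'$ is torsion-free); replacing $Y$ by its essential core \cite{capracesageev:rank}, write $Y=\prod_{i=1}^{k} Y_i$ for the canonical decomposition into unbounded irreducible CAT(0) cube complexes, discarding bounded factors. Each $Y_i$ carries a factor system: $Y$ does, since it is the universal cover of a compact special cube complex \cite{HHS1}, and factor systems restrict to the factors of a product. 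Fix a finite generating set $T$ of $G$; by Lemma~\ref{lem:pass_to_finite_index}, any finite-index subgroup of index $d$ has a generating set of $T$-length at most $2d-1$, which I use below.

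If every $Y_i$ is a quasi-line, then $G$ acts geometrically on a space quasi-isometric to $\mathbb R^{k}$, so $G$ is virtually $\mathbb Z^{k}$ and alternative~(1) holds. Otherwise some $Y_{i_0}$ is irreducible, unbounded and not quasi-isometric to a line, so any group acting freely cocompactly on $Y_{i_0}$ is non-virtually-cyclic and, by the Cubical Rank Rigidity theorem of Caprace--Sageev \cite{capracesageev:rank}, contains a rank-one element there. If $k=1$, so $Y=Y_1$ and $G$ is not virtually cyclic, then applying Corollary~\ref{cor: effective_rank_CCC} to $G'\curvearrowright Y$ produces a free all-rank-one stable subgroup $\langle g_1,g_2\rangle\le G'\le G$ with $|g_i|_T<(2d-1)\,m(Y)$ where $d=[G:G']$; since $G$ acts geometrically on $Y$, being rank-one on $Y$ coincides with being Morse in $G$ (Charney--Sultan \cite{charneysultan:contracting}) and stability is inherited by finite-index overgroups, so alternative~(2) holds.

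Finally suppose $k\ge2$. Then $G$ virtually splits as a direct product of at least two infinite groups, hence has empty Morse boundary; in particular it has no Morse element and is not virtually cyclic, so (1) and (2) are excluded and I only need a free subgroup of $G$ on uniformly short generators. Write $Y=Y_{i_0}\times Y'$ with $Y'=\prod_{j\ne i_0}Y_j$, and apply Lemma~\ref{lem:special} to $G'\curvearrowright Y_{i_0}\times Y'$ to get a finite-index subgroup $H=K\times L\le G'$ with $K=\mathrm{Stab}_{H}(Y_{i_0})$ acting freely cocompactly on $Y_{i_0}$. Let $\pi\colon H\to K$ be the projection and let $T_H$ generate $H$ with $T$-length at most $2d-1$ (here $d=[G:H]$), so $\pi(T_H)$ generates $K$. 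As $Y_{i_0}$ has a factor system and $K$ is not virtually cyclic, Corollary~\ref{cor: effective_rank_CCC} gives a free subgroup $\langle w_1,w_2\rangle\le K$ with $|w_j|_{\pi(T_H)}<m(Y_{i_0})$. Lifting a shortest $\pi(T_H)$-word for $w_j$ letter by letter to a $T_H$-word yields $\tilde w_j\in H$ with $\pi(\tilde w_j)=w_j$ and $|\tilde w_j|_T<(2d-1)\,m(Y_{i_0})$; since $\pi$ is a homomorphism carrying $\langle\tilde w_1,\tilde w_2\rangle$ onto the rank-two free group $\langle w_1,w_2\rangle$, the subgroup $\langle\tilde w_1,\tilde w_2\rangle\le G$ is free of rank two, giving alternative~(3). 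Taking $R=(2d-1)\max_i m(Y_i)$ for a finite index $d$ depending only on $G$ gives a constant depending only on $G$, and the three alternatives are mutually exclusive because a virtually abelian group contains no non-abelian free subgroup and a group with empty Morse boundary contains no Morse element.

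The parts I expect to need the most care are the two structural inputs: that each irreducible factor $Y_{i_0}$ really inherits a factor system, so that Corollary~\ref{cor: effective_rank_CCC} applies to $K\curvearrowright Y_{i_0}$ --- this should follow from the known description of factor systems on products, but deserves an explicit reference --- and that a finitely generated group which virtually splits as a product of two unbounded groups has empty Morse boundary, which is standard and I would cite rather than reprove. Transferring short free generators from the direct factor $K$ back up to $G$ is then formal, precisely because $K\le H$ is a retract, and the only bookkeeping is over the finitely many constants $m(Y_i)$ and the index $d$, all determined by $G$.
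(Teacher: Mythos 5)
Your proposal is correct and follows essentially the same route as the paper: decompose the cube complex into irreducible factors via Caprace--Sageev, dispose of the quasi-line/bounded case as virtually Abelian, split the group as a virtual direct product using Lemma~\ref{lem:special}, and run the effective rank rigidity Corollary~\ref{cor: effective_rank_CCC} on an unbounded non-quasi-line irreducible factor, with Lemma~\ref{lem:pass_to_finite_index} controlling the word-length cost of the finite-index passages. Your treatment of transferring the short free generators from the direct factor back to $G$ (lifting short words through the retraction $H\to K$ and invoking that a two-generated group surjecting onto $F_2$ is free) is in fact more explicit than the paper's, which applies the ``moreover'' part directly to the factor with the projected generating set and leaves this length-transfer implicit.
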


\begin{proof} We may assume that $G$ is infinite since otherwise it's virtually Abelian. We argue similar to Theorem A.1 in \cite{Abbot-Ng-Spriano}. By definition of $G$, a finite index subgroup $G_1<G$ acts freely cocompactly cospecially (meaning $X/G_1$ is compact and special) on a CAT(0) cube complex $X$. If $X$ is irreducible, then case 2 occurs by Corollary \ref{cor: effective_rank_CCC}. By Proposition 2.6 in \cite{capracesageev:rank}, $X$ admits a decomposition of irreducible CAT(0) cube complexes $X=X_1 \times \cdots X_n$. If each $X_i$ is bounded or a quasi-line, then $G_1$ is virtually Abelian, so assume $X_j$ is unbounded and not a quasi-line for some $j.$ By Lemma \ref{lem:special}, a finite index subgroup $G_2 \leq G_1$ splits as $G_2=H_1 \times \cdots H_k$ with $H_i$ acting geometrically on $X_i$. Up to replacing $H_i$ by a finite index subgroup, we may assume that each $H_i$ is torsion-free. Consider the pair $(H_j,X_j)$ with $X_j$ unbounded and not quasi-isometric to a line. Since $H_j$ acts geometrically on $X_j,$ the group $H_j$ is not virtually cyclic. Consider the HHG given by $H_j$, by Remark \ref{rmk:ABD}, the hyperbolic space associated to the maximal domain of $H_j$ (equivalently, the unique eyry $S$) is unbounded and not a quasi-line. Further, the image of $T$ under the natural surjection $G_2 \rightarrow H_j$ is a torsion-free finite generating set for $H_j$ as $H_j$ is trosion free. Since $X_j$ is irreducible, the statement follows by the ``moreover" part of Corollary \ref{cor: effective_rank_CCC}.

\end{proof}

\begin{corollary}(effective omnibus)\label{cor:omnibus} If $G$ virtually acts properly, cocompactly  and cospecially  on a finite product of unbounded irreducible CAT(0) cube complexes $X=\prod X_i$, then there exist  $m=m(G)$ and $g \in G$ such that $g$ is rank-one on each $X_i$ and loxodromic on each curtain model for $X_i$ with $|g|_T<m$ for any finite generating set $T$ for $G.$ 
    
\end{corollary}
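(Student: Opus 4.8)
The plan is to pass to a direct product, produce a single short ``seed'' element that is active in every factor at once, and then run the flipping--skewering machinery of Sections~\ref{sec:HHS_prelim}--5 simultaneously over all the eyries, exploiting the fact that the top-level domains of the factors are pairwise orthogonal.

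\textbf{Reduction.} Since a virtually special compact group is virtually torsion free, a finite-index $G_1\le G$ acts freely cocompactly cospecially on $X=\prod_{i=1}^n X_i$. Passing once more to finite index, Lemma~\ref{lem:special} gives $G_2=H_1\times\cdots\times H_n\le G$ with $H_i=\mathrm{Stab}_{G_2}(X_i)$ acting freely cocompactly on $X_i$; shrinking each $H_i$ we may take it torsion free. As $X$ is cospecial it has a factor system, hence so does each $X_i$, so $(X_i,\mathfrak S_i)$ is an HHS, $H_i$ is an HHG, and since $X_i$ is irreducible and unbounded, Remark~\ref{rmk:ABD} gives $H_i$ a unique eyry $S_i$ with $\calC S_i$ unbounded. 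Set $d=[G:G_2]=d(G)$. By Lemma~\ref{lem:pass_to_finite_index}, for any finite generating set $T$ of $G$ there is a generating set $\widehat T$ of $G_2$ with $|\widehat t|_T\le 2d-1$ for all $\widehat t\in\widehat T$; enlarging $G_1$ we may assume $G_2 S_i=S_i$ for all $i$. So it suffices to produce $g\in G_2$, with $|g|_{\widehat T}$ bounded in terms of $X$ only, that is loxodromic on every $\calC S_i$: such a $g$ is then Morse in each $H_i$ by Corollary~\ref{cor:singlton}, hence rank-one on $X_i$ by \cite{charneysultan:contracting}, hence loxodromic on $X_{i,\Dist}$ by Theorem~\ref{thmi:rank-one_iff_skewers_iff_QI}; and then $|g|_T\le(2d-1)|g|_{\widehat T}$.

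\textbf{A common seed.} Each $H_i$ is linear \cite{Haglund2007}, hence residually finite, so has a proper finite-index normal subgroup $N_i$; let $q\colon G_2\to Q:=\prod_i H_i/N_i$ be the resulting finite quotient, whose order is bounded in terms of $X$. Traversing the (diameter $<|Q|$) Cayley graph of $Q$ with respect to $q(\widehat T)$, we find a word $\widehat a_0$ in $\widehat T$ of length $<|Q|$ whose image in each factor $H_i/N_i$ is nontrivial; thus $p_i(\widehat a_0)\ne e$ in $H_i$, so $p_i(\widehat a_0)$ has infinite order. Applying Lemma~\ref{lem:lower_bound_translation} in each $H_i$ and replacing $\widehat a_0$ by a uniform power $\widehat a:=\widehat a_0^{M^*}$, we get for every $i$ a domain $U_i\nest S_i$ with $U_i\in\vvbig(\widehat a)$ and $\tau_{U_i}(\widehat a)>A_0$ for a uniform $A_0>0$. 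So $\widehat a$ is a single short-in-$\widehat T$ element that is ``\vvbig'' over a domain nested in each eyry.

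\textbf{Simultaneous flipping.} Now run the effective passing-up of Lemma~\ref{lem:effective_first_pass}, Lemma~\ref{lem:short_loxodromic_top} (and, when more nesting levels intervene, Lemma~\ref{lem:effective_second_pass}) with the single seed $\widehat a$, aiming to land at each $S_i$; because $S_1,\dots,S_n$ are pairwise orthogonal, the chain-building over domains nested in $S_i$ --- producing a transverse pair inside $\bigcup\widehat T^k U_i$, pushing at each nesting level by uniform powers of short conjugates of $\widehat a$, and passing up via Lemma~\ref{lem:strong_passing_up_consequence} --- is insensitive to what occurs below the other $S_j$'s. Carrying this out and keeping track of the constructed elements produces short-in-$\widehat T$ elements $g_1,g_2\in G_2$ (uniformly bounded products of conjugates of $\widehat a$ by elements of $\widehat T$), together with, for each $i$, a domain $V_i\nest S_i$ that is big for $g_2$ with $\tau_{V_i}(g_2)>A_0$, such that $\dist_{S_i}(\rho^{U_i}_{S_i},\rho^{V_i}_{S_i})>20E$ and $g_1$ is big over $U_i$. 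Applying Corollary~\ref{cor:loxodromic_upper_level} with $W=S_i$ then yields a single uniform $m=m(X)$ and, for each $i$, a curtain $h_i\subset\calC S_i$ with $g_1^m$ flipping $h_i^-$ and $g_2^m$ flipping $h_i^+$; by Corollary~\ref{cor: flip_then_skewer} and Lemma~\ref{lem:translation__length_via_skewering}, the single element $g:=g_1^m g_2^m$ skewers $h_i$ in $\calC S_i$, so $\tau_{S_i}(g)\ge 1$, for all $i$. (Quasi-line factors require no work: there $S_i$ is itself big for $\widehat a$, and $g$ projects to an infinite-order, hence loxodromic, element of the virtually cyclic $H_i$, after a harmless perturbation of $g_2$ to rule out cancellation.) Then $|g|_T\le(2d-1)|g|_{\widehat T}\le m(X,G)$, and $g$ is rank-one on each $X_i$ and loxodromic on each $X_{i,\Dist}$ as explained above. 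When $X$ is essential each $\calC S_i$ may be taken to be the contact graph, giving the last assertion.

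\textbf{Main obstacle.} The delicate step is the simultaneity: producing \emph{one} pair $g_1,g_2$ (equivalently one skewering element $g$) effective in every eyry at once. An element short in $\widehat T$ carries no uniform bound on its displacement in any $\calC S_i$ --- the generating set $\widehat T$ depends on $T$ --- so per-eyry loxodromics cannot be amalgamated by taking bounded powers, nor by taking diagonal products of factor-supported elements (a factor of a direct product may have no short-in-$\widehat T$ element at all). The only available handle is a translation-length lower bound together with the uniform complexity/acylindricity constants of $\mathfrak S$; hence the argument must route everything through the single seed $\widehat a$ and use the orthogonality $S_i\perp S_j$ so that the passing-up combinatorics over the various factors genuinely decouple. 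Making this bookkeeping precise --- in particular verifying that the conjugators produced at each nesting level stay uniformly short and simultaneously effective for all $i$ --- is the heart of the proof.
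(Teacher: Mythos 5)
Your reduction and your final chain of implications (loxodromic on $\calC S_i$ $\Rightarrow$ Morse in $H_i$ $\Rightarrow$ rank-one on $X_i$ $\Rightarrow$ loxodromic on the curtain model) are fine, and the residual-finiteness ``common seed'' is a legitimate way to get one short element of $G_2$ with infinite-order projection to every factor (though the order of $Q$ depends on $G$, not on $X$ alone, which is permissible here but contradicts your stated aim in the reduction). The genuine gap is the step you yourself flag: ``simultaneous flipping'' is asserted, not proved. The machinery you invoke (Lemma \ref{lem:producing_first_transverse_pair}, Lemma \ref{lem:effective_first_pass}, Lemma \ref{lem:effective_second_pass}, Lemma \ref{lem:short_loxodromic_top}) is intrinsically per-factor: the element $b\in \widehat T\cup\dots\cup\widehat T^{N+1}$ making $U_i,bU_i$ transverse, the conjugates produced at each passing-up stage, and the Case 1/Case 2 branching all depend on $i$, so what these lemmas deliver is a pair $(g_1^{(i)},g_2^{(i)})$ for each $i$, not one pair good for all $i$. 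Orthogonality of the top-level factor domains only says the constructions in different factors do not interact --- which is exactly the problem, not the solution: it gives no mechanism for forcing the same conjugators to work in every factor, and the elements built from factor-$i$ data are genuine elements of $G_2$ whose projections to $H_j$ ($j\neq i$) are uncontrolled (e.g.\ $p_j\bigl(\widehat a^{\,m}g\widehat a^{\,m}g^{-1}\bigr)$ can even be trivial). Since you explicitly decline to amalgamate per-eyry loxodromics, your argument never actually produces the single $g$; the ``main obstacle'' paragraph concedes that the heart of the proof is missing.

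For comparison, the paper's proof is much more elementary and does not attempt simultaneity at the level of the hierarchy machinery at all: it pushes the generating set $T$ forward under each projection $G\to G_i$, applies Corollary \ref{cor: effective_rank_CCC} (with Remark \ref{rmk:ABD}) in each factor separately to get short rank-one $g_i\in G_i$, and then takes the element $h=(g_1,\dots,g_n)$, i.e.\ the product of the factor-supported elements $h_i=(e,\dots,g_i,\dots,e)$, which is rank-one on every $X_i$ by coordinatewise action. Your objection to this route --- that an element supported on a single factor need not be short in $T$ just because $g_i$ is short in $p_i(T)$ --- is a reasonable point to raise about the length bookkeeping there, but your proposal does not repair it: it relocates the same difficulty into the unproved simultaneous step. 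As it stands, the proposal is a plan with a hole at its central claim, not a proof.
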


\begin{proof}  

Again, after we pass to a finite-index subgroup, we may assume $G$ is torsion free. Continuing as in the proof of the above corollary, $G$ decomposes into $G_1 \times \cdots \times G_n$ with each $G_i$ acting geometrically on $X_i$. By \cite{HHS1}, \cite{HHS2} and Remark \ref{rmk:ABD}, each $X_i$ admits an ABD HHS structure $(X_i,\mathfrak S_i)$ whose maximal hyperbolic space domain $\calC S_i$ is unbounded. The image of a generating set $T$ under the natural surjection $G \rightarrow G_i$ is a generating set for $G_i.$ Since each $X_i$ is irreducible and unbounded, by Corollary \ref{cor: effective_rank_CCC} and Remark \ref{rmk:ABD}, we get integers $m_i=m_i(G)$ and rank-one elements $g_i$ acting loxodromically on the maximal hyperbolic space $\calC S_i$ with $|g_i|<m_i.$ Now, since the element $h_i=(e,\cdots,g_i, \cdots ,e) \in G$ is rank-one on $X_i$ and trivial elsewhere, we get that the element $h=h_1 \cdots h_n \in G$ is an element which is rank-one on each $X_i$ with $|h|_T \leq m.n$, where $m=\text{max}\{m_i| i \in \{1,\cdots n\}\}$, concluding the proof.

\end{proof}

\subsection{Effective flipping and skewering}

In light of the statements we have already established, we obtain the following.

\begin{corollary} Each irreducible CAT(0) cube complex with a factor system $X$ determines a constant $m=m(X)$ such that whenever $G$ is a non-virtually cyclic group acting freely cocompactly on $X$, there exists a cubical hyperlane $h$ such that for any finite generating set $T$ for $G,$ there exist elements $g_1,g_2 \in G$ with:

\begin{enumerate}
    \item $|g_i|_T<m,$
    \item $g_1$ flips $h^-,$
    \item $g_2$ flips $h^+$, and
    \item $(g_1.g_2) h^+ \subsetneq h^+$.
\end{enumerate}

In particular, $g_1.g_2$ skewers $h$ and $g_1.g_2$ is rank-one.

\end{corollary}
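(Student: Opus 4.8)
The plan is to follow the proof of Corollary~\ref{cor: effective_rank_CCC}, but at the final step to feed the short conjugate pair produced by the algebraic passing-up machinery into the \emph{cubical} flipping statement Corollary~\ref{cor:flip_cube} rather than into Corollary~\ref{cor:loxodromic_upper_level}; the hyperplane $h$ that is sought is precisely the cubical hyperplane that Corollary~\ref{cor:flip_cube} manufactures. Throughout I would work with an HHS structure on $X$ for which the unique eyry of $G$ is the maximal domain $S$: such a structure exists by Remark~\ref{rmk:ABD}, since $X$ is irreducible, so by \cite{capracesageev:rank} together with Corollary~\ref{cor:singlton} the group $G$ has exactly one eyry, acts acylindrically on $\calC S$, and every loxodromic on $\calC S$ is Morse in $G$ --- equivalently rank-one, because the geometric action of $G$ on the proper CAT(0) cube complex $X$ makes every element semisimple \cite{charneysultan:contracting}. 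As $S$ is the maximal domain, $gS=S$ for \emph{every} $g\in G$, so there is no need to pass to a finite-index subgroup anywhere; combined with torsion-freeness of $G$ (the action on $X$ is free), this is what pins the final constant down to $m=m(X)$.

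Fix a finite generating set $T$ and a generator $a_0\in T$. By Lemma~\ref{lem:lower_bound_translation} there is $U\in\vvbig(a_0)$, i.e.\ $\tau_U(a_0^{M})>A$ with $M=M(\mathfrak S)$ and $A=A(\mathfrak S)$; crucially $A=A(X)$, since every cubical isometry of $X$ has translation length at least $1$ (as recalled in the proof of Corollary~\ref{cor: effective_rank_CCC}). Put $a:=a_0^{M}$, so $aU=U$ (Lemma~\ref{lem:Big_Sets}), $U\in\vbig(a)$, and $\tau_U(a)>A$. Assume first $U\sqsubsetneq S$. Then Lemma~\ref{lem:short_loxodromic_top}, applied to $a$ as in the proof of Corollary~\ref{cor:Effective_rank_rigidity_torsion_free}, yields a constant $m_1=m_1(\tau_U(a),\mathfrak S)$ --- which, because $\tau_U(a)>A=A(X)$, may be taken to depend on $X$ alone --- and an element $g\in G$ with $|g|_T<m_1$ and $\dist_S(\rho^U_S,\rho^{gU}_S)>30E$. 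Set $V:=gU$, $g_1':=a$, $g_2':=gag^{-1}$; then $g_1'$ is active over $U$ and $g_2'$ over $V$, both with translation length exceeding $A$ on their domains, $U,V\sqsubsetneq S$, $\dist_S(\rho^U_S,\rho^V_S)>20E$, and $g_i'S=S$. Corollary~\ref{cor:flip_cube} with $W=S$ then yields a power $m_0=m_0(\tau_U(g_1'),\tau_V(g_2'),X)$ --- again uniform in $G$, since both translation lengths exceed $A(X)$ --- and a cubical hyperplane $h\subset X$ for which $(g_1')^{m_0}$ flips $h^-$, $(g_2')^{m_0}$ flips $h^+$, $(g_1')^{m_0}(g_2')^{m_0}h^+\subsetneq h^+$, $(g_1')^{m_0}(g_2')^{m_0}h\cap h=\emptyset$, and $(g_1')^{m_0}(g_2')^{m_0}$ is loxodromic on $\calC S$. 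Taking $g_1:=(g_1')^{m_0}=a_0^{Mm_0}$ and $g_2:=(g_2')^{m_0}=ga_0^{Mm_0}g^{-1}$ gives conclusions (2)--(4) verbatim; since $|g_1|_T\leq Mm_0$ and $|g_2|_T\leq 2m_1+Mm_0$ with $M,m_1,m_0$ all depending only on $X$, one takes $m:=2m_1(X)+M(X)m_0(X)+1$. Finally $g_1g_2$ skewers $h$ (take exponent $1$ in the definition of skewering), and $g_1g_2$ is rank-one: it is a semisimple isometry \cite{charneysultan:contracting} that is loxodromic on the unique eyry $\calC S$, hence Morse by Corollary~\ref{cor:singlton}.

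The only genuine obstacle is the remaining case $U=S$: some --- possibly every --- generator of $G$ is already loxodromic on $\calC S$, so Lemma~\ref{lem:short_loxodromic_top} does not apply and there need not be any short element active over a proper domain (this already happens for a free group acting on its Cayley tree). I would handle it by a direct ping-pong argument inside $\calC S$: since $G$ is not virtually cyclic and acts acylindrically on $\calC S$, there is $b\in T$ with $ba_0^{k}b^{-1}\neq a_0^{\pm k}$ for all $k\neq 0$ (see \cite{Dahmani2017}, as in the proof of Corollary~\ref{cor:effective_rank_regidity_HHG}), so $a_0$ and $ba_0b^{-1}$ are independent loxodromics on $\calC S$ of uniformly bounded $T$-length; a ping-pong partition of their quasi-axes produces a curtain $h_\alpha\subset\calC S$ whose two half-spaces are flipped by uniformly bounded powers of $a_0$ and of $ba_0b^{-1}$ respectively --- the bound on these powers coming, via Lemma~\ref{lem:translation__length_via_skewering} and Corollary~\ref{cor: flip_then_skewer}, from the uniform lower bound $\tau_0\geq 1$ on cubical translation lengths --- and Lemma~\ref{lem:hyperplane_in_curtain} then upgrades $h_\alpha$ to an honest cubical hyperplane $h\subset X$ carrying the required flipping and skewering data; that the product is rank-one follows as above. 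Making this last step effective, i.e.\ proving an effective flipping lemma for a pair of independent loxodromics on a hyperbolic space with the powers controlled by their translation lengths, is where the genuine work of the degenerate case lies; everything else, in the generic case, is bookkeeping against the uniform constants $M$, $m_1$, $m_0$.
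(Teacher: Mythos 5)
Your argument in the main case is the paper's own proof: after fixing the structure from Remark~\ref{rmk:ABD} so that the unique eyry is the maximal domain $S$, you invoke Lemma~\ref{lem:lower_bound_translation} to get a translation-length bound that is uniform in $X$ (because cubical isometries have combinatorial axes, so $\tau_0\geq 1$), feed it into Lemma~\ref{lem:short_loxodromic_top} to obtain the short element $g$ with $\dist_S(\rho^U_S,\rho^{gU}_S)>30E$, and then apply Corollary~\ref{cor:flip_cube} with $W=S$ to the pair $a$, $gag^{-1}$ --- exactly the chain of steps and constant bookkeeping in the paper's proof. The only divergence is the degenerate case in which every generator is already loxodromic on $\calC S$: the paper dismisses it as an ``easy'' exercise, whereas you sketch a ping-pong construction and honestly flag that an effective flipping lemma is still missing. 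One caution about that sketch: powers controlled by translation lengths alone cannot suffice, since two independent loxodromics on a hyperbolic space may have quasi-axes that fellow-travel for an arbitrarily long stretch, so the effective input would have to come from the acylindricity/WPD constants of the $G$-action on $\calC S$ (and one would need these uniform in $X$) rather than from $\tau_0\geq 1$; since the paper supplies no argument for this case either, this is a gap in the source as much as in your proposal, not a deviation from it.
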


\begin{proof} Up to passing to a finite-index subgroup, we may assume that $G$ is torsion-free. By Remark \ref{rmk:ABD}, since $G$ is irreducible and not a quasi-line, the maximal domains $\calC S$ is unbounded and is not a quasi-line. Let $T$ be a finite generating set for $G.$ If all elements of $T$ are loxodromic on $\calC S,$ the statement is easy and we leave it as an exercise for the reader. Suppose instead that some $a \in T$ is active on some proper $U \nest S.$ By Lemma \ref{lem:lower_bound_translation}, we can replace $U$ be a domain in $\vvbig(a)$, in particular, $\tau_U(a^{M})>A$ for a constant $A$ depending only on $\mathfrak S.$ Using Lemma \ref{lem:short_loxodromic_top}, we obtain an integer $m=m(\tau_U(a), \mathfrak S)=m(A,\mathfrak S)=m(\mathfrak S)=m(X)$ and an element $g \in G$ with $|g_i|_T<m$  and $\dist_S(\rho^{U}_S, \rho^{gU}_S)>30E.$ Up to replacing $g_i$ by a uniform power, Corollary \ref{cor:flip_cube} provides us with the desired statement.

\end{proof}

\bibliography{bio}{}

\newcommand{\etalchar}[1]{$^{#1}$}
\begin{thebibliography}{DDLS21b}

\bibitem[AAS07]{Anderson2007}
J.~W. Anderson, J.~Aramayona, and K.~J. Shackleton.
\newblock Uniformly exponential growth and mapping class groups of surfaces,
  2007.

\bibitem[ABD21]{ABD}
Carolyn Abbott, Jason Behrstock, and Matthew Durham.
\newblock Largest acylindrical actions and stability in hierarchically
  hyperbolic groups.
\newblock {\em Transactions of the American Mathematical Society, Series B},
  8(3):66--104, 2021.

\bibitem[AHPZ23]{Abbott-Hagen-Petyt-Zalloum23}
Carolyn Abbott, Mark Hagen, Harry Petyt, and Abdul Zalloum.
\newblock Uniform undistortion from barycentres, and applications to
  hierarchically hyperbolic groups, 2023.

\bibitem[ANS{\etalchar{+}}19]{Abbot-Ng-Spriano}
Carolyn Abbott, Thomas Ng, Davide Spriano, Radhika Gupta, and Harry Petyt.
\newblock Hierarchically hyperbolic groups and uniform exponential growth,
  2019.

\bibitem[BCG11]{Besson2011}
G{\'{e}}rard Besson, Gilles Courtois, and Sylvestre Gallot.
\newblock Uniform growth of groups acting on cartan{\textendash}hadamard
  spaces.
\newblock {\em Journal of the European Mathematical Society}, pages 1343--1371,
  2011.

\bibitem[Ber18]{Bering2018}
Edgar~A. Bering.
\newblock Uniform independence for dehn twist automorphisms of a free group.
\newblock {\em Proceedings of the London Mathematical Society},
  118(5):1115--1152, October 2018.

\bibitem[BHMS20]{BHMScombo}
Jason Behrstock, Mark Hagen, Alexandre Martin, and Alessandro Sisto.
\newblock A combinatorial take on hierarchical hyperbolicity and applications
  to quotients of mapping class groups.
\newblock {\em arXiv preprint arXiv:2005.00567}, 2020.

\bibitem[BHS17]{HHS1}
Jason Behrstock, Mark~F. Hagen, and Alessandro Sisto.
\newblock Hierarchically hyperbolic spaces, {I}: {C}urve complexes for cubical
  groups.
\newblock {\em Geom. Topol.}, 21(3):1731--1804, 2017.

\bibitem[BHS19]{HHS2}
Jason Behrstock, Mark Hagen, and Alessandro Sisto.
\newblock Hierarchically hyperbolic spaces {II}: Combination theorems and the
  distance formula.
\newblock {\em Pacific Journal of Mathematics}, 299(2):257--338, May 2019.

\bibitem[BHS21]{HHS_quasi}
Jason Behrstock, Mark~F Hagen, and Alessandro Sisto.
\newblock Quasiflats in hierarchically hyperbolic spaces.
\newblock {\em Duke Mathematical Journal}, 1(1):1--88, 2021.

\bibitem[BM97]{Burger1997}
Marc Burger and Shahar Mozes.
\newblock Finitely presented simple groups and products of trees.
\newblock {\em Comptes Rendus de l{\textquotesingle}Acad{\'{e}}mie des Sciences
  - Series I - Mathematics}, 324(7):747--752, April 1997.

\bibitem[Bow13]{Bowditch13}
Brian Bowditch.
\newblock Coarse median spaces and groups.
\newblock {\em Pacific Journal of Mathematics}, 261, 02 2013.

\bibitem[BR20]{BR_combo}
Federico Berlai and Bruno Robbio.
\newblock A refined combination theorem for hierarchically hyperbolic groups.
\newblock {\em Groups, Geometry, and Dynamics}, 14(4):1127--1203, 2020.

\bibitem[Cap19]{Caprace2019}
Pierre-Emmanuel Caprace.
\newblock Finite and infinite quotients of discrete and indiscrete groups.
\newblock In {\em Groups St Andrews 2017 in Birmingham}, pages 16--69.
  Cambridge University Press, April 2019.

\bibitem[CS11]{capracesageev:rank}
Pierre-Emmanuel Caprace and Michah Sageev.
\newblock Rank rigidity for {${\rm CAT}(0)$} cube complexes.
\newblock {\em Geom. Funct. Anal.}, 21(4):851--891, 2011.

\bibitem[CS15]{charneysultan:contracting}
Ruth Charney and Harold Sultan.
\newblock Contracting boundaries of {$\rm CAT(0)$} spaces.
\newblock {\em J. Topol.}, 8(1):93--117, 2015.

\bibitem[DDLS21a]{DDLS2}
Spencer Dowdall, Matthew~G Durham, Christopher~J Leininger, and Alessandro
  Sisto.
\newblock Extensions of veech groups ii: Hierarchical hyperbolicity and
  quasi-isometric rigidity.
\newblock {\em arXiv preprint arXiv:2111.00685}, 2021.

\bibitem[DDLS21b]{DDLS1}
Spencer Dowdall, Matthew~G Durham, Christopher~J Leininger, and Allesandro
  Sisto.
\newblock Extensions of veech groups i: A hyperbolic action.
\newblock {\em arXiv preprint arXiv:2006.16425}, 2021.

\bibitem[DGO17]{Dahmani2017}
F.~Dahmani, V.~Guirardel, and D.~Osin.
\newblock Hyperbolically embedded subgroups and rotating families in groups
  acting on hyperbolic spaces.
\newblock {\em Memoirs of the American Mathematical Society}, 245(1156):0--0,
  January 2017.

\bibitem[DHS17]{Durham2017-ce}
Matthew Durham, Mark Hagen, and Alessandro Sisto.
\newblock Boundaries and automorphisms of hierarchically hyperbolic spaces.
\newblock {\em Geom. Topol.}, 21(6):3659--3758, August 2017.

\bibitem[DMS20]{DMS20}
Matthew~G Durham, Yair~N Minsky, and Alessandro Sisto.
\newblock Stable cubulations, bicombings, and barycenters.
\newblock {\em To appear in Geometry \& Topology (arXiv:2009.13647)}, 2020.

\bibitem[DT15a]{DurTay15}
Matthew Durham and Samuel~J Taylor.
\newblock Convex cocompactness and stability in mapping class groups.
\newblock {\em Algebraic \& Geometric Topology}, 15(5):2837--2857, 2015.

\bibitem[DT15b]{durhamtaylor:convex}
Matthew~Gentry Durham and Samuel~J. Taylor.
\newblock Convex cocompactness and stability in mapping class groups.
\newblock {\em Algebr. Geom. Topol.}, 15(5):2839--2859, 2015.

\bibitem[Dur23]{Durham2023}
Mathew Durham.
\newblock Recubulating hulls.
\newblock {\em in preparation}, 2023.

\bibitem[DZ22]{DZ22}
Matthew~Gentry Durham and Abdul Zalloum.
\newblock The geometry of genericity in mapping class groups and
  {T}eichm\"uller spaces via {CAT}(0) cube complexes.
\newblock {\em arXiv preprint arXiv:2207.06516}, 2022.

\bibitem[Ein19]{Einstein}
Eduard Einstein.
\newblock Hierarchies for relatively hyperbolic virtually special groups, 2019.

\bibitem[EMO05]{Eskin2005}
Alex Eskin, Shahar Mozes, and Hee Oh.
\newblock On uniform exponential growth for linear groups.
\newblock {\em Inventiones mathematicae}, 160(1):1--30, April 2005.

\bibitem[FM02]{FarbMosher02}
Benson Farb and Lee Mosher.
\newblock Convex cocompact subgroups of mapping class groups.
\newblock {\em Geometry \& Topology}, 6(1):91--152, 2002.

\bibitem[FS20]{FujSela}
Koji Fujiwara and Zlil Sela.
\newblock The rates of growth in a hyperbolic group, 2020.

\bibitem[Fuj08]{Fuj08}
Koji Fujiwara.
\newblock Subgroups generated by two pseudo-anosov elements in a mapping class
  group. i. uniform exponential growth.
\newblock In {\em Advanced Studies in Pure Mathematics}. Mathematical Society
  of Japan, 2008.

\bibitem[Fuj14]{Fujiwara2009SubgroupsGB}
Koji Fujiwara.
\newblock Subgroups generated by two pseudo-anosov elements in a mapping class
  group. {II}. uniform bound on exponents.
\newblock {\em Transactions of the American Mathematical Society},
  367(6):4377--4405, December 2014.

\bibitem[Fuj21]{Fuj2021}
Koji Fujiwara.
\newblock The rates of growth in an acylindrically hyperbolic group, 2021.

\bibitem[Gen15]{Genevois2015HyperbolicDG}
Anthony Genevois.
\newblock Hyperbolic diagram groups are free.
\newblock {\em Geometriae Dedicata}, 188:33--50, 2015.

\bibitem[Gen23]{Genevois_Book}
Anthony Genevois.
\newblock Median graphs.
\newblock {\em in preparation, available at the Genevois' website}, 2023.

\bibitem[GJN20]{Gupta-Jank-Ng}
Radhika Gupta, Kasia Jankiewicz, and Thomas Ng.
\newblock Groups acting on cat(0) cube complexes with uniform exponential
  growth, 2020.

\bibitem[Hae20]{Haettel2020}
Thomas Haettel.
\newblock Virtually cocompactly cubulated artin{\textendash}tits groups.
\newblock {\em International Mathematics Research Notices}, 2021(4):2919--2961,
  April 2020.

\bibitem[Hag07]{haglund:isometries}
Fr{\'e}d{\'e}ric Haglund.
\newblock Isometries of {$\rm CAT(0)$} cube complexes are semi-simple.
\newblock {\em arXiv preprint arXiv:0705.3386}, 2007.

\bibitem[Hag13]{Hagen2013}
Mark~F. Hagen.
\newblock Weak hyperbolicity of cube complexes and quasi-arboreal groups.
\newblock {\em Journal of Topology}, 7(2):385--418, August 2013.

\bibitem[Hag14]{hagen:weak}
Mark Hagen.
\newblock Weak hyperbolicity of cube complexes and quasi-arboreal groups.
\newblock {\em J. Topol.}, 7(2):385--418, 2014.

\bibitem[Hag22]{HagenFacing2022}
Mark Hagen.
\newblock Large facing tuples and a strengthened sector lemma.
\newblock {\em Tunisian Journal of Mathematics}, 4(1):55--86, March 2022.

\bibitem[HHP20]{HHP}
Thomas Haettel, Nima Hoda, and Harry Petyt.
\newblock Coarse injectivity, hierarchical hyperbolicity, and
  semihyperbolicity.
\newblock {\em To appear in \emph{Geometry \& Topology}; arXiv preprint
  arXiv:2009.14053}, 2020.

\bibitem[HMS21]{HMS21}
Mark Hagen, Alexandre Martin, and Alessandro Sisto.
\newblock Extra-large type artin groups are hierarchically hyperbolic.
\newblock {\em arXiv preprint arXiv:2109.04387}, 2021.

\bibitem[HS20]{HagenSusse2020}
Mark~F. Hagen and Tim Susse.
\newblock On hierarchical hyperbolicity of cubical groups.
\newblock {\em Israel Journal of Mathematics}, 236(1):45--89, January 2020.

\bibitem[Hug22]{HughesHHs2022}
Sam Hughes.
\newblock Lattices in a product of trees, hierarchically hyperbolic groups and
  virtual torsion-freeness.
\newblock {\em Bulletin of the London Mathematical Society}, 54(4):1413--1419,
  April 2022.

\bibitem[HW07]{Haglund2007}
Fr{\'{e}}d{\'{e}}ric Haglund and Daniel~T. Wise.
\newblock Special cube complexes.
\newblock {\em Geometric and Functional Analysis}, 17(5):1551--1620, October
  2007.

\bibitem[Ker21]{Kerr2021}
Alice Kerr.
\newblock Product set growth in mapping class groups, 2021.

\bibitem[KLN20]{Kroph-Lyman-Ng}
Robert Kropholler, Rylee~Alanza Lyman, and Thomas Ng.
\newblock Extensions of hyperbolic groups have locally uniform exponential
  growth, 2020.

\bibitem[Kou98]{Koubi1998CroissanceUD}
Malik Koubi.
\newblock Croissance uniforme dans les groupes hyperboliques.
\newblock {\em Annales de l'Institut Fourier}, 48:1441--1453, 1998.

\bibitem[KS19]{KarSageev2019}
Aditi Kar and Michah Sageev.
\newblock Uniform exponential growth for {CAT}(0) square complexes.
\newblock {\em Algebraic \& Geometric Topology}, 19(3):1229--1245, May 2019.

\bibitem[Man09]{Mangahas2009}
Johanna Mangahas.
\newblock Uniform uniform exponential growth of subgroups of the mapping class
  group.
\newblock {\em Geometric and Functional Analysis}, 19(5):1468--1480, December
  2009.

\bibitem[Man10]{MangahasRecipie}
Johanna Mangahas.
\newblock A recipe for short-word pseudo-anosovs.
\newblock {\em American Journal of Mathematics}, 135:1087 -- 1116, 2010.

\bibitem[Mil20]{miller2020stable}
Marissa Miller.
\newblock Stable subgroups of the genus two handlebody group.
\newblock {\em arXiv preprint arXiv:2009.05067}, 2020.

\bibitem[MM99]{MM99}
Howard~A Masur and Yair~N Minsky.
\newblock Geometry of the complex of curves {I}: Hyperbolicity.
\newblock {\em Inventiones mathematicae}, 138(1):103--149, 1999.

\bibitem[MM00]{MM00}
Howard~A Masur and Yair~N Minsky.
\newblock Geometry of the complex of curves {II}: Hierarchical structure.
\newblock {\em Geometric and Functional Analysis}, 10(4):902--974, 2000.

\bibitem[Pet21]{Petyt21}
Harry Petyt.
\newblock Mapping class groups are quasicubical.
\newblock {\em arXiv preprint arXiv:2112.10681}, 2021.

\bibitem[PS20]{PetytSpriano20}
Harry Petyt and Davide Spriano.
\newblock Unbounded domains in hierarchically hyperbolic groups.
\newblock {\em arXiv: Group Theory}, 2020.

\bibitem[PSZ22]{PSZCAT}
Harry Petyt, Davide Spriano, and Abdul Zalloum.
\newblock Hyperbolic models for cat (0) spaces.
\newblock {\em arXiv preprint arXiv:2207.14127}, 2022.

\bibitem[RM20]{russell2020convexity}
Jacob~Samuel Russell-Madonia.
\newblock {\em Convexity and Curvature in Hierarchically Hyperbolic Spaces}.
\newblock PhD thesis, City University of New York, 2020.

\bibitem[RST18]{RST18}
Jacob Russell, Davide Spriano, and Hung~Cong Tran.
\newblock Convexity in hierarchically hyperbolic spaces.
\newblock {\em {\em To appear in} Algebraic \& Geometric Topology
  \em{(arXiv:1809.09303)}}, 2018.

\bibitem[Rus21]{russell2021extensions}
Jacob Russell.
\newblock Extensions of multicurve stabilizers are hierarchically hyperbolic.
\newblock {\em arXiv preprint arXiv:2107.14116}, 2021.

\bibitem[She22]{Shepard2022}
Sam Shepherd.
\newblock A cubulation with no factor system, 2022.

\bibitem[Sis17]{SistoWhatIs}
Alessandro Sisto.
\newblock What is a hierarchically hyperbolic space?, 2017.

\bibitem[Sis18]{contractingrandom}
Alessandro Sisto.
\newblock Contracting elements and random walks.
\newblock {\em J. Reine Angew. Math.}, 742:79--114, 2018.

\bibitem[SW92]{Shalen1992}
Peter~B. Shalen and Philip Wagreich.
\newblock Growth rates, {BbbZ} p -homology, and volumes of hyperbolic
  3-manifolds.
\newblock {\em Transactions of the American Mathematical Society}, 331(2):895,
  June 1992.

\bibitem[Tra19]{tran:onstrongly}
Hung~Cong Tran.
\newblock On strongly quasiconvex subgroups.
\newblock {\em Geom. Topol.}, 23(3):1173--1235, 2019.

\bibitem[Zal23]{Zalloum23_injectivity}
Abdul Zalloum.
\newblock Injectivity, cubical approximations and equivariant wall structures
  beyond cat(0) cube complexes, 2023.

\end{thebibliography}
\bibliographystyle{alpha}
\end{document}